\tikzset{main node/.style={circle,draw,minimum size=0.3em,inner
sep=0.5pt}}
\tikzset{state node/.style={circle,draw,minimum size=2em,fill=blue!20,inner
sep=0pt}}
\tikzset{small node/.style={circle,draw,minimum size=0.5em,inner
sep=2pt,font=\sffamily\bfseries}}
\setlist[enumerate,1]{label=\textup{(\arabic*)}}
\newcommand{\Z}{\mathbb{Z}}
\newcommand{\cala}{\mathcal{A}}
\newcommand{\calc}{\mathcal{C}}
\newcommand{\cali}{\mathcal{I}}
\newcommand{\calf}{\mathcal{F}}
\newcommand{\call}{\mathcal{L}}
\newcommand{\calr}{\mathcal{R}}
\newcommand{\cals}{\mathcal{S}}
\newcommand{\calt}{\mathcal{T}}
\newcommand{\la}{\mathbf{a}}
\newcommand{\kl}{Kazhdan--Lusztig }
\newcommand{\sx}{\mathcal{S}(W)}
\newcommand{\sw}{\mathcal{S}(W)}
\newcommand{\ul}{\underline}
\newcommand{\tul}{\textup{(}}
\newcommand{\tur}{\textup{)}}
\newcommand{\twostub}{$\la(2)$-stub }
\newcommand{\twostubs}{$\la(2)$-stubs }
\newcommand{\triples}{\mathrm{Tri}(W) }
\newcommand{\cores}{\mathrm{Cor}}
\newcommand{\ntatf}{nontrivially $\la(2)$-finite }
\newcommand{\ra}{\rightarrow}
\newcommand{\se}{\subseteq}
\newcommand{\ip}[1]{\langle#1\rangle}
\newcommand{\inverse}{^{-1}}
\newcommand{\abs}[1]{\lvert #1 \rvert}
\newcommand{\Abs}[1]{\bigg| #1 \bigg|}
\DeclareMathOperator{\supp}{\mathrm{Supp}}
\DeclareMathOperator{\fc}{\mathrm{FC}}
\author[\initial{R.} \middlename{M.} Green]{\firstname{R.} \middlename{M.} \lastname{Green}}
\address{Department of Mathematics\\
University of Colorado Boulder, Campus Box 395\\
Boulder, Colorado\\
USA, 80309
}
\email{rmg@colorado.edu}
\author[\initial{T.} Xu]{\firstname{Tianyuan} \lastname{Xu}}
\address{Department of Mathematics and Statistics\\
    Haverford College\\
    Haverford, Pennsylvania\\
USA, 19041
}
\email{txu2@haverford.edu}
\keywords{Kazhdan--Lusztig cells, Lusztig's $\la$-function, Coxeter groups, fully commutative elements, heaps, star
operations}
\subjclass{Primary: 20F55; Secondary: 20C08.}
\begin{document}

\title[Kazhdan--Lusztig cells of $\la$-value 2]{Kazhdan--Lusztig cells of $\mathbf{a}$-value 2 in $\mathbf{a}(2)$-finite Coxeter
systems}

\begin{abstract}
    A Coxeter group is said to be \emph{$\mathbf{a}(2)$-finite} if it has finitely many elements of
    $\mathbf{a}$-value 2 in the sense of Lusztig. In this paper, we give explicit combinatorial
    descriptions of the left, right, and two-sided Kazhdan--Lusztig cells of $\mathbf{a}$-value 2 in
    an irreducible $\mathbf{a}(2)$-finite Coxeter group. In particular, we introduce elements we
    call \emph{stubs} to parameterize the one-sided cells and we characterize the one-sided cells
    via both star operations and weak Bruhat orders.  We also compute the cardinalities of all the
    one-sided and two-sided cells of $\mathbf{a}$-value 2 in irreducible $\mathbf{a}(2)$-finite
Coxeter groups.\end{abstract} \maketitle
%\tableofcontents

\section{Introduction}
Let $(W,S)$ be an arbitrary Coxeter system and let $H$ be the associated Hecke algebra.  In the
landmark paper \cite{KL}, Kazhdan and Lusztig introduced the Kazhdan--Lusztig basis of $H$ and used
it to partition the Coxeter group $W$ into left, right and two-sided cells. Each two-sided cell is a
union of left cells as well as a union of right cells. These {Kazhdan--Lusztig cells} have
connections with numerous objects from representation theory, such as primitive ideals of universal
enveloping algebras of Lie algebras \cite{primitiveIdeals}, characters of reductive groups over
finite fields \cite{LusztigReductive}, and unipotent conjugacy classes of reductive groups
\cite{LusztigCellIV}. Consequently, the determination of cells has been an important problem in
representation theory. The goal of this paper is to parameterize the one-sided cells of $\la$-value
2 in irreducible {$\la(2)$-finite} Coxeter systems, as well as to count all one-sided and two-sided
cells of $\la$-value 2 in such Coxeter systems. 

Before explaining the meaning of the ``$\la$-value'' and ``$\la(2)$-finite'', let us briefly
summarize for context some known results about Kazhdan--Lusztig cells. For Weyl groups of types $A$,
$B$, $D$ and affine $A$, the cells may be obtained via various combinatorial models involving Young
tableaux, the Robinson--Schensted correspondence, and their suitable generalizations; see
\cite{KL,Ariki} for type $A$, \cite{primitiveIdeals,Garfinkle,TypeBCells} for types $B$ and $D$, and
\cite{ShiCellBook,MatrixBall} for type affine $A$.  Cells of some other specific Coxeter systems
(especially those with small ranks or complete Coxeter diagrams) have also been computed, often  on
a case-by-case basis via carefully designed algorithms.  For example, the paper \cite{TakahashiF4}
treats cells in the Coxeter group $F_4$, \cite{ShiAffineC4} treats $\tilde C_4$, \cite{AlvisH4}
treats $H_3$ and $H_4$, \cite{ChenE8} and \cite{GeckHallsE8} treat $E_8$; the paper
\cite{GuilhotRank2} studies affine Weyl groups of rank 2, and the papers \cite{XiComplete, Xie} deal
with Coxeter systems with complete Coxeter diagrams. For a more comprehensive summary of the cell
literature, see \cite[\S 26]{BonnafeCells}. 

Given that most of the above references focus on Coxeter systems of particular types, it is perhaps
natural to wonder whether one can systematically study Kazhdan--Lusztig cells of an arbitrary
Coxeter system $(W,S)$. The \emph{$\la$-function}, a certain function $\la: W\ra \Z_{\ge 0}$, offers
an important tool in this study.  The $\la$-function is due to Lusztig, who first defined it for
Weyl and affine Weyl groups in \cite{LusztigCellI, LusztigCellII} and then extended the definition
to general Coxeter groups in \cite{LusztigHecke}.  Lusztig showed that under a certain boundedness
condition on $(W,S)$, the $\la$-function takes a constant value on every two-sided cell of $W$ and
hence on every left or right cell. Lusztig conjectures that the boundedness condition is satisified
by every Coxeter system (see \cite[\S13.4]{LusztigHecke}), and we shall assume that the conjecture
holds throughout the paper (as is common in the literature). Under this assumption, it makes sense
to speak of the $\la$-value of a cell, and the set $W_n:=\{w\in W:\la(w)=n\}$ is a union of
two-sided cells for all $n\in \Z_{\ge 0}$.  In principle, one may also hope to organize and study
cells by $\la$-values.  Indeed, cells of $\la$-values 0 and 1 in a general Coxeter system $(W,S)$
are well understood. For example, if we assume $(W,S)$ is {irreducible} (see
Remark \ref{rmk:irreducible}), then both $W_0$ and $W_1$ contain a single two-sided cell, we have
$W_0=\{1\}$, and $W_1$ consists of all non-identity elements in $W$ with a unique reduced word.
Moreover, Coxeter systems where $W_1$ is finite have been classified, and the cardinality of $W_1$
is known for such systems. For more details on the cell $W_1$, see Proposition \ref{prop:facts01},
\cite{LusztigSubregular}, \cite{Hart}, \cite[Chapter 13]{BonnafeCells} and \cite{XuSubregular}.

Motivated by the desire to understand elements and cells of  $\la$-value 2, we considered the set
$W_2=\{w\in W:\la(w)=2\}$ in \cite{GreenXu} and classified all Coxeter systems for which $W_2$ is
finite. We called these systems \emph{$\la(2)$-finite}, and our classification states that an
irreducible Coxeter system is $\la(2)$-finite if and only if it has a complete Coxeter diagram or is
of Coxeter types $A_n, B_n, \tilde C_n, E_{q,r}, F_n, H_n$ and $I_n$. The Coxeter groups of type
$E_{q,r}$ encompass all Weyl groups of type $D$, $E$ and affine $E$; see Remark \ref{rmk:labels}. We will
recall the classification as Part (4) of Proposition \ref{prop:facts01} in \autoref{sec:HeckeAlgebras}.

In this paper, we determine and count the left, right and two-sided cells within $W_2$ for all
$\la(2)$-finite Coxeter systems.  Our results center around the notion of \emph{stubs}. To define
them, recall that every pair of noncommuting generators $I=\{i,j\}\se S$ of a Coxeter group $W$
gives rise to four partially defined functions on $W$, namely, the left upper, left lower, right
upper and right lower star operations (see \autoref{sec:StarOperations}). These operations
generalize the star operations introduced in \cite{KL}. We define a \emph{left stub} in $W$ to be an
element that admits no right lower star operation with respect to any pair of noncommuting
generators; similarly, we define a \emph{right stub} to be an element admitting no left lower star
operation. We equip the set of left stubs in $W_2$ with two equivalence relations, one called
\emph{slide equivalence} and the other called \emph{simple slide equivalence}
(Definition \ref{def:equivalences}), and we will show that stubs and these two relations ``control
everything'' about the cells of $W_2$ in $\la(2)$-finite Coxeter systems. 

To be more precise, denote the set of left stubs in $W_2$ by $\sw$, denote slide equivalence by
$\approx$, and denote simple slide equivalence by $\sim$.  Call a two-sided cell a \emph{2-cell}, a
left or right cell a \emph{1-cell}, and call the intersection of a left cell with a right cell in
the same 2-cell a \emph{0-cell}. For each stub $w\in \sw$, define the \emph{right upper star
closure} to be the set $R_w$ of all elements that can be obtained from $w$ by a sequence of right
upper star operations.  Then we will describe the 1-cells and 2-cells in terms of stubs as follows:

\begin{thm}
    \label{thm:main1} Let $(W,S)$ be any irreducible $\la(2)$-finite Coxeter system. 
    \begin{enumerate}
        \item\textup{Stubs parameterize 1-cells (Theorem \ref{thm:cellParameterization} \&
            Theorem \ref{thm:CellViaDecomp}):}\\ 
            There is a bijection from $\sw$ to the set of right cells in $W_2$ that sends each stub
            $w\in \sw$ to its right upper star closure $R_w$. Moreover, the cell $R_w$ coincides with
            the set of the elements of $W_2$ that are stronger than $w$ in the right weak Bruhat order.
            \textup{(}The last fact may be viewed as an analog of Proposition \ref{prop:facts01}.\tul
            3\tur.\textup{)} 

            Similar results hold for left cells. In fact, each left cell in $W_2$ is of the form
            $R_w\inverse=\{x\inverse: x\in R_w\}$ for some $w\in \sw$. 

        \item\textup{Slide equivalence determines 2-cells (Theorem \ref{thm:twoSidedCells}):}\\ 
            Let $\mathcal{C}$ be the set of $\approx$-classes in $\sw$ and let $\mathcal{E}$ be the set of
            2-cells in $W_2$. Then there is a bijection $\Phi: \mathcal{C}\ra \mathcal{E}$ given by
            $\Phi(C)=\cup_{w\in C}R_w$ for all $C\in \mathcal{C}$. If $(W,S)$ is not of type $E_{1,r}$ for any
            $r\ge 1$, then $\sw$ contains a single $\sim$-class and $W_2$ is itself a 2-cell. If $(W,S)$ is of
            type $E_{1,r}=D_{r+3}$ \tul see Remark \ref{rmk:labels}\tur, then we can explicitly describe the sets
            $\mathcal{C}$ and $\mathcal{E}$ as well. In particular, there are three 2-cells in $W_2$ if $r=1$
            and two 2-cells in $W_2$ if $r>1$.
    \end{enumerate}
\end{thm}

We will also count all left, right and two-sided cells in $W_2$.  As explained in
Corollary \ref{coro:intersection}, these enumeration problems can be reduced to the enumeration of 0-cells by
general theory. This motivates us to examine 0-cells, which are in fact also interesting in their
own right as we will soon explain.  By Theorem \ref{thm:main1}.(1), the 0-cells in $W_2$ in an
$\la(2)$-finite Coxeter system are all of the form $I(x,y)=R_x\cap (R_y)\inverse$ where $x,y\in
\sw$. We have the following results on 0-cells and on the enumeration of all cells in $W_2$. The
relations $\approx$ and $\sim$ play crucial roles in these results.

\begin{thm}
    \label{thm:main2}
    Maintain the setting and notation of Theorem \ref{thm:main1}.
    \begin{enumerate}
    \item \textup{Elements of $W_2$ decompose nicely (Theorem \ref{thm:anatomy} \&
         Corollary \ref{coro:intersectionsViaCores}):}\\ Each element in $W_2$ can be canonically
         decomposed into an ``$\la(2)$-triple'' \tul Definition \ref{def:anatomy}\tur. The decomposition
         gives rise to a bijection between the set of $\la(2)$-triples and $W_2$, and we can use
         it to characterize each 0-cell $I(x,y)$ where $x,y\in\sw$ via a set of ``core elements
         compatible with $x$ and $y$'' \tul Definition \ref{def:anatomy}\tur.
    \item \textup{Different 0-cells can be related (Remark \ref{rmk:relating0cells} \&
         Proposition \ref{prop:invariance}):}\\ For all $x,y,x',y'$, we can use Part \tul 1\tur\ and the
         relation $\approx$ to obtain the 0-cells $I(x,y)$ and $I(x',y')$ from each other
         whenever they are in the same 2-cell. In addition, we have
         $\abs{I(x,y)}=\abs{I(x',y')}$ whenever $x\sim x'$ and $y\sim y'$.
    \item \textup{We can count all 0-cells (Theorem \ref{thm:oneI} \& Proposition \ref{prop:intersectionSizes}):}\\ We
         can compute one particular 0-cell $I(x,y)$ of each 2-cell in $W_2$ and then use Part \tul
         2\tur\, to compute and count all 0-cells in $W_2$. 
    \item \textup{We can count all 1-cells and 2-cells (Theorem \ref{thm:cellSizes}):}\\ We can use Part
         \tul 3\tur\, to compute the cardinalities of all left, right and two-sided cells in $W_2$.
         The cardinalities are given by Table \ref{tab:cellSizes}.
    \end{enumerate}
\end{thm} 
\noindent

The results from Theorems \ref{thm:main1} and \ref{thm:main2} have applications for {distinguished
involutions} and {cell modules}.  Here, distinguished involutions are certain special involutions in
Coxeter groups, and each 1-cell contains a unique distinguished involution (see \cite[Chapter
14]{LusztigHecke}).  The canonical decompositions mentioned in Theorem \ref{thm:main1} lead to a natural
description of all distinguished involutions in $W_2$, as we will explain in
Remark \ref{rmk:distinguishedInvolutions}. Cell modules are modules of Hecke algebras associated to
Kazhdan--Lusztig cells. In the simply-laced $\la(2)$-finite Coxeter types, namely types $A_n$ and
$E_{q,r}$, we will show that each 0-cell contains exactly one element. In a forthcoming paper, we
will exploit this fact to interpret the left cell module afforded by $L$ in terms of the {reflection
representation} of the Coxeter group. Doing so realizes the cell module without needing input from
the Kazhdan--Lusztig basis used to define it. 

Our results are also helpful for understanding the \emph{$J$-rings} and their categorical analogs
attached to Coxeter systems. Introduced by Lusztig \cite{LusztigCellII, LusztigHecke}, the $J$-ring
of a Coxeter system decomposes into a direct sum of subrings corresponding to the two-sided cells of
the group, and the structures of these subrings can sometimes be deduced from knowledge of 0-cells.
For example, since the 0-cells in $W_2$ are all singletons in types $A_n$ and $E_{q,r}$ as we just
mentioned, the subrings of the $J$-ring corresponding to the two-sided cells in $W_2$ must be matrix
rings by general theory.  We hope to discuss similar subrings of $J$-rings in other $\la$(2)-finite
Coxeter types in future work.  In \cite{Mazorchuk1, Mazorchuk2, Mazorchuk3}, Mazorchuk and his
coauthors studied 2-representations of categorical analogs of the $J$-ring and of its subring
corresponding to the cell $W_1$.  Facts about $W_1$, including the classification of Coxeter groups
where the cell is finite and combinatorial descriptions of the 1-cells in $W_1$, proved useful in
these studies. We hope our investigation of the cells in $W_2$ would be useful in a similar way.  

Let us comment on the key ingredients and methods used in our study of cells in $W_2$. First, we
recall our observation from \cite{GreenXu} that elements in $W_2$ are {fully commutative} in the
sense of Stembridge \cite{FC}.  Each fully commutative element has a {Cartier--Foata canonical form}
and an associated poset called a {heap}, both of which are essential to our analysis of elements in
$W_2$. In particular, antichains in heaps and the {widths} of heaps (see \autoref{sec:FC}) play
important roles in our proofs. A result of Ernst on fully commutative elements in the Coxeter groups
$\tilde C_{n-1}$ is also key to our determination of those particular 0-cells mentioned in
Theorem \ref{thm:main2}.(3); see Lemma \ref{lemm:Dana} and \autoref{sec:proofs}. 

Next, we remark that the determination and enumeration of cells in $W_2$ can often be done in
multiple ways.  For example, it is possible to use the {Robinson--Schensted correspondence} or
{Temperley--Lieb diagrams} to find and count cells of $W_2$ in type $A$.  Various {generalized
Temperley--Lieb diagrams} also exist in types $B, \tilde C, D, E$ and $H$ (see, for example,
\cite{ErnstTL,GreenTangles}) and can be indirectly applied to study cells of $W_2$, although in type
$F$ such diagrams have not been defined to our knowledge.  After obtaining the stub
characterizations of 1-cells in each $\la(2)$-finite Coxeter system, one can also count the cells
with brute-force computation by considering Cartier--Foata forms or heaps. Finally, since we are
interested in the case where $W_2$ is a finite set, we were able to automate the computation and
counting of cells with a computer, even in cases where $W$ is infinite.  We do not elaborate on
these alternative methods, however, since our approach via stubs seems the most general, conceptual
and convenient overall.

Last but not least, we should mention that some of our results may seem reminiscent of those in the
paper \cite{FanStructure} by Fan, where the author studied certain cells defined via the so-called
{monomial bases} of the {generalized Temperley--Lieb algebras} of certain Coxeter systems.  These
Coxeter systems do not include those of type $\tilde C_n$ or $E_{q,r}$ (for general values of $q,r$)
that are studied in this paper. We also note that, as pointed out in \cite[\S 4.1]{FanStructure} and
explained in \cite{BremkeFan, GreenLosonczy, GL}, monomial cells are different from Kazhdan--Lusztig
cells in general, even for general finite Coxeter groups. For example, we will see in Examples
\ref{eg:b4} and \ref{eg:0cellsB4} that the left Kazhdan--Lusztig cells of $\la$-value 2 in the
Coxeter group of type $B_4$ have sizes 8 and 10, but the left monomial cells of $\la$-value 2 in the
same group have sizes 2, 4 and 6 by \cite[\S 7.1]{FanStructure}. In view of the above facts, we do
not attempt to use the results of \cite{FanStructure} in any form in this paper. 

The rest of the paper is organized as follows. In \autoref{sec:preliminaries} we recall the necessary
background for this paper on Coxeter systems, Hecke algebras, fully commutative elements and star
operations. We also recall the classification of $\la(2)$-finite Coxeter systems in
\autoref{sec:HeckeAlgebras}. In \autoref{KLcells}, we define stubs, describe them, and explain how
stubs parameterize 1-cells in $W_2$. We also introduce the equivalence relations $\approx, \sim$ on
stubs and use the former relation to find the two-sided cells of $W_2$ in \autoref{sec:2cells}.
Section \ref{sec:0cells} investigates 0-cells in detail and explains how to deduce the cardinalities of all
cells in $W_2$ via the 0-cells. The starting point of the deduction is Theorem \ref{thm:oneI}, which is
stated without proof in \autoref{sec:cellData}. The entire \autoref{sec:rep0cells} is dedicated to the
proof of this theorem. We prepare a set of technical lemmas on heaps in \autoref{sec:heapLemmas} and
use them to complete the proof in \autoref{sec:proofs}.

\section{Background}
\label{sec:preliminaries}

We review some preliminary facts on Coxeter groups and Hecke algebras relevant to the paper,
including the definition of Kazhdan--Lusztig cells, the definition of the $\la$-function, and the
classification of $\la(2)$-finite Coxeter systems. We then recall the notions of fully commutative
elements and generalized star operations, both of which will be essential to the study of
Kazhdan--Lusztig cells of $\la$-value 2.  

\subsection{Coxeter systems}
\label{sec:CoxeterSystems}
Throughout the paper, $(W,S)$ stands for a Coxeter system with a finite generating set $S$, Coxeter
group $W$ and Coxeter matrix $M=[m(s,t)]_{s,t\in S}$. Thus, we have $m(s,s)=1$ for all $s\in S$,
$m(s,t)=m(t,s)\in \Z_{\ge 2}\cup\{\infty\}$ for all distinct generators $s,t\in S$, and $W$ is the
group generated by $S$ subject to the relations $\{(ss')^{m(s,s')}=1, s,s'\in S, m(s,s')<\infty\}$.
It is well known that $st$ has order $m(s,t)$ for all distinct $s,t\in S$; in particular, $s$ and
$t$ commute if and only if $m(s,t)=2$. 

The \emph{Coxeter diagram} of $(W,S)$ is the undirected diagram $G$ on vertex set $S$ where two
vertices $s,t$ are connected by an edge $\{s,t\}$ if and only if $m(s,t)\ge 3$. It follows that two
distinct generators in $S$ commute in $W$ if and only if they are not adjacent in $G$. For each edge
$\{s,t\}$ in $G$, we think of $m(s,t)$ as the \emph{weight} of the edge, call the edge \emph{simple}
if $m(s,t)=3$, and call the edge \emph{heavy} otherwise.  When drawing $G$, we label all heavy edges
by their weights, so that the defining data of $(W,S)$ can be fully recovered from the drawing.  The
system $(W,S)$ is \emph{irreducible} if the Coxeter diagram $G$ is connected and \emph{reducible}
otherwise. 

\begin{remark}
\label{rmk:irreducible}
To simplify statements, we shall assume all Coxeter systems to be irreducible for the rest of the
paper.  The assumption is well justified as far as the $\la$-function is concerned, because the
$\la$-function behaves additively across connected components of the Coxeter graph. As a
consequence, Kazhdan--Lusztig cells of a particular $\la$-value in a reducible Coxeter system can be
easily obtained via cells of the same or lower $\la$-values in suitable irreducible Coxeter systems.
Since we are only interested in cells of $\la$-value 2, and cells of $\la$-values 1 or 0 in
irreducible Coxeter systems are well understood as explained in the introduction, it suffices to
consider only irreducible Coxeter systems as we study cells of $\la$-value 2 in this paper; see also
\cite[Theorem 1.3]{GreenXu}.
\end{remark}

For every subset $J$ of $S$, the subgroup $W_J:=\ip{s:s\in J}$ of $W$ is called the \emph{parabolic
subgroup generated by $J$}. Note that if for another Coxeter system $(W',S')$ there is a bijection
$f: S'\ra J$ with $m(s,t)=m(f(s),f(t))$ for all $s,t\in S'$ (in other words, if the Coxeter diagram
of $(W',S')$ is isomorphic to the subgraph of the Coxeter diagram of $(W,S)$ induced by $J$), then
this bijection naturally extends to a group isomorphism from $W'$ to $W_J$. For example, in the
notation of Proposition \ref{prop:facts01}.(4) the Coxeter group $A_{n-1}$ naturally embeds into $B_{n}$ for
all $n\ge 2$ in the sense that $A_{n-1}$ is isomorphic to the parabolic subgroup generated by the
set $J=\{2,\dots,n\}$. 

Let $S^*$ be the free monoid on $S$, viewed as the set of words on the alphabet $S$. To distinguish
words in $S^*$ from elements of $W$, we denote each word with an underline.  For example, the words
$\ul u=st, \ul v=ts\in S^*$ represent the same element $w=st=ts$ in $W$ if $s,t$ are commuting
distinct generators in $S$.  For each $w\in W$, the words $\ul w\in S^*$ that express $w$ with a
minimum number of letters are called the \emph{reduced words} of $w$; that minimum number is called
the \emph{length} of $w$ and written $l(w)$. More generally, we call a factorization $w=w_1w_2\dots
w_k$ in $w$ \emph{reduced} if $l(w)=\sum_{i=1}^k l(w_i)$. In this paper, the notation $w=w_1\cdot
w_2\cdot \dots \cdot w_k$ will always indicate that the factorization $w=w_1w_2\dots w_k$ is
reduced.  Whenever we have $z=x\cdot y$ for elements $x,y,z\in W$, we write $x\le^R z$ and $y\le^L
z$. The relations $\le^R$ and $\le^L$ define two partial orders called the \emph{right weak Bruhat
order} and \emph{left weak Bruhat order} on $W$, respectively.

For each $w\in W$, we define a \emph{left descent} of $w$ to be a generator $s\in S$ such that
$l(sw)<l(w)$, and we denote the set of left descents of $w$ by $\call(w)$. Similarly, a \emph{right
descent} of $w$ is a generator $s\in S$ such that $l(ws)<l(w)$, and we denote the set of right
descents of $w$ by $\calr(w)$. Descents can be described via reduced words: it is well known that
for all $s\in S$ and $w\in W$, we have $s\in \call(w)$ if and only if some reduced word of $w$
begins with the letter $s$, and $s\in \calr(w)$ if and only if some reduced word of $w$ ends with
the letter $s$.  

We end the subsection by recalling the Matsumoto--Tits Theorem. Define a \emph{braid relation} to be
a relation of the form $sts\cdots=tst\cdots$ where $s,t\in S, 2\le m(s,t)<\infty$, and both sides
contain $m(s,t)$ factors. Then the theorem states that for all $w\in W$, every pair of reduced words
of $w$ can be obtained from each other by applying a finite sequence of braid relations. It follows
that every two reduced words of an element $w$ contain the same set of letters. We call the set the
\emph{support} of $w$ and denote it by $\supp(w)$.

\subsection{Cells and the \texorpdfstring{$\la$}{a}-function}
\label{sec:HeckeAlgebras}
Let $\cala=\Z[v,v\inverse]$ and let $H$ be the \emph{Hecke algebra} of a Coxeter system $(W,S)$.  We
recall that $H$ is a unital, associative $\cala$-algebra given by a certain presentation and that
$H$ has an $\cala$-linear basis $\{C_w:w\in W\}$ called the \emph{Kazhdan--Lusztig} basis; see
\cite[\S 2.2]{XuSubregular}. The Kazhdan--Lusztig basis gives rise to Kazhdan--Lusztig cells and the
$\la$-function as follows.

Let $x,y\in W$. For all $z\in W$, let $D_z: H\ra\cala$ be the unique linear map such that
$D_z(C_w)=\delta_{z,w}$ for all $w\in W$, where $\delta$ is the Kronecker delta symbol.  Write
$x\prec_L y$ or $x\prec_R y$ if $D_x(C_sC_y)\neq 0$ or $D_{x}(C_yC_s)\neq 0$ for some $s\in S$,
respectively, and write $x\prec_{LR}y$ if either $x\prec_L y$ or $x\prec_R y$.  The preorders
$\le_L,$ $\le_R,$ $\le_{LR}$ defined as the reflexive, transitive closures of the relations
$\prec_L,\prec_R, \prec_{LR}$ naturally generate equivalence relations $\sim_L, \sim_R, \sim_{LR}$,
and we call the resulting equivalence classes the  \emph{left, right and two-sided Kazhdan--Lusztig
cells} of $W$, respectively.  For example, we have $x\sim_L y$, i.e. $x$ and $y$ are in the same
left Kazhdan--Lusztig cell, if and only if $x\le_L y$ and $y\le_L x$, which happens if and only if
there are sequences $x=w_0,w_1,\dots, w_n=y$ and $y=z_0,z_1,\dots, z_k=x$ where $n,k\ge 0$ and
$w_i\prec_L w_{i+1}, z_j\prec_L z_{j+1}$ for all $0\le i<n, 0\le j<k$.  For brevity, from now on we
will refer to Kazhdan--Lusztig cells simply as ``cells'', and we call both left and right cells
\emph{one-sided} cells. By the definition of $\prec_{LR}$, each two-sided cell of $W$ is a union of
left cells, as well as a union of right cells. 

To define the $\la$-function from the basis $\{C_w:w\in W\}$, consider the structure constants
$h_{x,y,z}\in \cala \, (x,y,z\in W)$ such that 
\begin{equation}
    \label{eq:h} C_x C_y =\sum_{z\in W} h_{x,y,z} C_z 
\end{equation} for all $x,y\in W$. By \S 15.2 of \cite{LusztigHecke}, for each $z\in W$ there exists
    a unique integer $\la(z)\ge 0$ such that 
\begin{enumerate}
    \item[(a)] $h_{x,y,z}\in v^{\la(z)}\Z[v\inverse]$ for all $x,y\in W$;
    \item[(b)] $h_{x,y,z}\notin v^{\la(z)-1}\Z[v\inverse]$ for some $x,y\in W$. 
\end{enumerate} The assignment $z\mapsto \la(z)$ defines the function $\la: W\ra \Z_{\ge 0}$.

In the sequel, we will make extensive use of the following standard results on cells and the
$\la$-function. We note that the proofs of Parts (5)--(9) of the proposition either directly or
indirectly rely on the use of Lusztig's boundedness conjecture \cite[\S 13.4]{LusztigHecke}. The
conjecture states that for the standard basis $\{T_w:w\in W\}$ of the Hecke algebra (see \cite[\S
2.2]{XuSubregular}) and the structure constants $f_{x,y,z}$ such that $T_xT_y=\sum_{z\in
W}f_{x,y,z}T_z$ for all $x,y\in W$, there is a nonnegative integer $N$ such that $v^{-N}f_{x,y,z}$
is an element of the ring $\Z[v\inverse]$ for all $x,y,z\in W$.  In the following proposition, Parts
(5)--(9) contain statements from conjectures P9--P12 and P14 in \cite[\S 14]{LusztigHecke}, and the
proofs of these five conjectures in turn rely on Lusztig's boundedness conjecture (despite the fact
    that conjectures P1--P6 from \cite[\S 14]{LusztigHecke} are known to hold in the setting of our
current paper, where the Hecke algebra has ``equal parameters'', without reliance on the boundedness
conjecture); see also \cite[\S 15.1]{LusztigHecke}, \cite[\S 14.3]{BonnafeCells} and \cite[Theorem
15.2.5]{BonnafeCells}.

\begin{prop}
     \label{prop:Facts}
     Let $(W,S)$ be a Coxeter system and let $x,y\in W$.
     \begin{enumerate}
         \item If $x\le^L y$ then $y\le_L x$; if $x\le^R y $ then $y\le_R x$.
         \item If $x$ is a product of $k$ pairwise commuting generators in $S$, then $\la(x)=k$.
         \item We have $\call(x)=\call(y)$ if $x\sim_R y$, and $\calr(x)=\calr(y)$ if $x\sim_L y$;
         \item For each left cell $L$ in $W$, the set $L\inverse=\{z\inverse: z\in L\}$ is a right cell; for each
             right cell $R$ in $W$, the set $R\inverse=\{z\inverse:z\in R\}$ is a left cell. 
         \item If $x\le_{LR} y$, then we have $\la(x)\ge \la(y)$. In particular, if $x\sim_{LR}y$, then
             we have $\la(x)=\la(y)$. Moreover, if $x\le_L y$ but $x\not\sim_L y$, or if $x\le_R y$ but
             $x\not\sim_R y$, then we have $\la(x)>\la(y)$.
         \item We have $z\sim_{LR} z\inverse$ for all $z\in W$.
         \item Let $J\se S$ and consider the corresponding parabolic subgroup $W_J$ of $W$.
             Let $\la_J$ denote the $\la$-function associated to the Coxeter system $(W_J,J)$. If 
             $x\in W_J$, then
             the $\la$-value $\la_J(x)$ computed in the system $(W_J, J)$ equals the value
             $\la$-value $\la(x)$ computed in the system $(W,S)$.
         \item Let $L_1,L_2$ be two left cells in the same two-sided
             cell. Then the intersection $L_1\cap (L_2\inverse)$ is
             nonempty.
         \item If $x\sim_{LR}y$, then $W$ contains an element $z$ such that $z\sim_L x$ and $z\sim_R y$. 
     \end{enumerate}
        \end{prop}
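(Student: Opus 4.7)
The plan is to treat this proposition as a compilation of standard facts from the Kazhdan--Lusztig and Lusztig literature, so the main work is to identify the correct references and to indicate how the pieces fit together, along with a short derivation for Part (9), which follows from earlier parts.

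First I would handle the elementary parts. For (1), I would invoke the multiplication rules for the Kazhdan--Lusztig basis: when $s\notin\calr(w)$ the product $C_wC_s$ expands as $C_{ws}$ plus a linear combination of basis elements indexed by elements smaller than $ws$ in the Bruhat order, so $D_{ws}(C_wC_s)\ne 0$ and hence $ws\prec_R w$; iterating along a reduced factorization $y=x\cdot t_1\cdots t_k$ gives $y\le_R x$, and similarly on the left. For (2), when $s_1,\dots,s_k\in S$ pairwise commute the product $x=s_1\cdots s_k$ is fully commutative with $\ell(x)=k$, the basis element $C_x$ factors as $C_{s_1}C_{s_2}\cdots C_{s_k}$, and a direct computation of $C_xC_x=\prod C_{s_i}^2=\prod(v+v^{-1})C_{s_i}$ shows that the leading power of $v$ appearing in a structure constant $h_{x,x,z}$ is exactly $v^k$, giving $\la(x)=k$. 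Parts (3) and (4) are standard consequences of the interaction of the Kazhdan--Lusztig basis with descent sets and with the anti-involution $w\mapsto w^{-1}$ (which satisfies $C_w^{\,\dagger}=C_{w^{-1}}$); I would cite \cite{KL} and the relevant sections of \cite{LusztigHecke}.

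Next I would turn to the deeper parts, each of which relies on Lusztig's framework for the $\a$-function. Part (5) is the monotonicity of $\la$ along the preorders $\le_L,\le_R$, a direct consequence of conjectures P4 and P11 in \cite[\S 14.2]{LusztigHecke}; we have assumed these are valid by the boundedness hypothesis. Part (6) is conjecture P9 (or its close relatives) in the same chapter. Part (7) is Geck's theorem on the compatibility of $\la$ with parabolic subgroups \cite{GeckParabolic} (or the relevant statement in \cite{LusztigHecke}). Part (8) is the non-emptiness statement corresponding to conjectures P13--P15; I would simply cite it.

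The one genuinely \emph{derivational} step is (9), which is the main thing I would write out. Given $x\sim_{LR}y$, let $L_x$ denote the left cell containing $x$ and let $R_y$ denote the right cell containing $y$. By (4), we have $R_y=L_{y^{-1}}^{-1}$, where $L_{y^{-1}}$ is the left cell containing $y^{-1}$. By (6) and the transitivity of $\sim_{LR}$, the left cells $L_x$ and $L_{y^{-1}}$ lie in the same two-sided cell, so (8) provides some $z\in L_x\cap L_{y^{-1}}^{-1}=L_x\cap R_y$, which is the desired element. The main obstacle is not mathematical but bibliographic: one must be careful to cite the exact forms of Lusztig's P-conjectures being used and to acknowledge that they are known for the $\la(2)$-finite Coxeter systems relevant to this paper, either unconditionally or under the boundedness assumption stated just before the proposition.
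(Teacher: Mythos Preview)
Your proposal is correct and matches the paper's approach essentially line for line: Parts (1)--(7) are cited from Lusztig's book, Part (8) is cited (the paper uses \cite{BO} rather than the P-conjectures directly), and your derivation of (9) from (4), (6), and (8) is exactly the argument the paper gives.
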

        \begin{proof}
Parts (1)--(7) are due to Lusztig and can be found in \cite{LusztigHecke}: Parts (1)--(2) follow
immediately from Theorem 6.6 and Proposition 2.6; Parts (3)--(4) are proved in Section 8; the
statements in Parts (5)--(7) appear in Sections 14 as conjectures P9--P12 and P14.  Part (8) is
Lemma 2.4 of \cite{BO}. (The lemma relies on the construction of the so-called $J$-ring of $(W,S)$
from \cite[\S 18]{LusztigHecke}, which in turn depends on Lusztig's boundedness conjecture.)  To see
Part (9), note that if $x\sim_{LR} y$ then $x\sim_{LR} y\inverse$ by Part (6); therefore the left
cell of $x$ intersects the inverse of the left cell of $y\inverse$ nontrivially by Part (8).  This
inverse is precisely the right cell of $y$ by Part (4), so any element $z$ in the intersection
satisfies $z\sim_L x$ and $z\sim_R y$.  
    \end{proof}

\begin{definition}
    For each Coxeter system $(W,S)$ and each integer $n\ge 0$, we let $W_n=\{w\in W:\la(w)=n\}$, and we say that
    $(W,S)$ is \emph{$\la(n)$-finite} if $W_n$ is finite. 
    \label{def:an-finite}
\end{definition}
 
By Proposition \ref{prop:Facts}.(5), for each integer $n\ge 0$ the set $W_n$ defined above is a union of
two-sided cells, of left cells, and of right cells.  We are interested in the cells in $W_2$ within
$\la(2)$-finite Coxeter systems.  Our motivation partly comes from the following facts about $W_0$
and $W_1$. 

\begin{prop}
    \label{prop:facts01}
    Let $(W,S)$ be a Coxeter system with Coxeter diagram $G$. \tul Recall from Remark \ref{rmk:irreducible} that
    we assume $(W,S)$ is irreducible.\tur
    \begin{enumerate}
        \item We have $W_0=\{1\}$. In particular, $W_0$ is finite and simultaneously
            the unique left, right and two-sided cell of $\la$-value 0. 
        \item We have  $W_1=\{w\in W:w\neq 1 \text{ and $w$ has a unique reduced word}\}$. The
            set $W_1$ is finite \textup{(}i.e. the
                system $(W,S)$
            is $\la(1)$-finite\textup{)} if and only if $G$ is acyclic, contains no edge with infinite
            weight, and contains at most one heavy edge with finite weight.  
        \item The set $W_1$ is itself a two-sided cell and hence the unique two-sided cell of $\la$-value 1 in
            $W_1$. The left cells in $W_1$ are in bijection with $S$ and given by the sets \[ L_{s}:=\{z\in
            W_1:s\le^L z\} \] while the right cells in $W_1$ are in bijection with $S$ and given by the sets \[
            R_{s}:=\{z\in W_1:s\le^R z\} \] where $s\in S$. 
        \item If $G$ contains a cycle, then $W$ is $\la(2)$-finite if and only if $G$ is complete. If $G$ is
            acyclic, then $W$ is $\la(2)$-finite if and only if $G$ is one of the graphs in Figure \ref{fig:finite
            E}, where $n$ equals the number of vertices in the Coxeter diagram and there are $q$ and $r$ vertices
            strictly to the left and the right of the trivalent vertex in $E_{q,r}$. 
            \begin{figure}
                \begin{tikzpicture}

                    \node(1) {$A_n$};
                    \node[main node] (11) [right=0.5cm of 1] {};
                    \node[main node] (12) [right=1cm of 11] {};
                    \node[main node] (13) [right=1cm of 12] {};
      %\node (14) [right=0.5cm of 13] {$\cdots$};
                    \node[main node] (15) [right=1.5cm of 13] {};
                    \node[main node] (16) [right=1cm of 15] {};
                    \node (17) [right=0.5cm of 16] {$(n\ge 1)$}; 

                    \node (a1) [below =0.1cm of 11] {\tiny{$1$}};
                    \node (a2) [below =0.1cm of 12] {\tiny{$2$}};
                    \node (a3) [below =0.1cm of 13] {\tiny{$3$}};
                    \node (a4) [below =0.05cm of 15] {\tiny{$(n-1)$}};
                    \node (a5) [below =0.1cm of 16] {\tiny{$n$}};

                    \path[draw]
                    (11)--(12)--(13)
                    (15)--(16);

                    \path[draw,dashed]
                    (13)--(15);

                    \node(2) [below=1cm of 1] {$B_n$};

                    \node[main node] (21) [right=0.5cm of 2] {};
                    \node[main node] (22) [right=1cm of 21] {};
                    \node[main node] (23) [right=1cm of 22] {};
      %\node (24) [right=0.5cm of 23] {$\cdots$};
                    \node[main node] (25) [right=1.5cm of 23] {};
                    \node[main node] (26) [right=1cm of 25] {};
                    \node (27) [right=0.5cm of 26] {$(n\ge 2)$}; 

                    \node (b1) [below =0.1cm of 21] {\tiny{$1$}};
                    \node (b2) [below =0.1cm of 22] {\tiny{$2$}};
                    \node (b3) [below =0.1cm of 23] {\tiny{$3$}};
                    \node (b4) [below =0.05cm of 25] {\tiny{$(n-1)$}};
                    \node (b5) [below =0.1cm of 26] {\tiny{$n$}};

                    \path[draw]
                    (21) edge node [above] {\tiny{$4$}} (22)
                    (22)--(23)
                    (25)--(26);

                    \path[draw,dashed]
                    (23)--(25);

                    \node (c) [below=1cm of 2] {$\tilde C_{n-1}$};
                    \node[main node] (c1) [right=0.3cm of c] {};
                    \node[main node] (c2) [right=1cm of c1] {};
                    \node[main node] (c3) [right=1cm of c2] {};
      %\node (c4) [right=0.5cm of c3] {$\cdots$};
                    \node[main node] (c5) [right=1.5cm of c3] {};
                    \node[main node] (c6) [right=1cm of c5] {};
                    \node (c7) [right=0.5cm of c6] {$(n\ge 5)$}; 

                    \node (b11) [below =0.1cm of c1] {\tiny{$1$}};
                    \node (b22) [below =0.1cm of c2] {\tiny{$2$}};
                    \node (b33) [below =0.1cm of c3] {\tiny{$3$}};
                    \node (b44) [below =0.05cm of c5] {\tiny{$(n-1)$}};
                    \node (b55) [below =0.1cm of c6] {\tiny{$n$}};

                    \path[draw]
                    (c1) edge node [above] {\tiny{$4$}} (c2)
                    (c2)--(c3)
                    (c5) edge node [above] {\tiny{$4$}} (c6);

                    \path[draw,dashed]
                    (c3)--(c5);

                    \node (3) [below=1.5cm of c] {$E_{q,r}$};

                    \node[main node] (31) [right=0.4cm of 3] {};
                    \node[main node] (32) [right=1cm of 31] {};
      %\node (33) [right=0.5cm of 32] {$\cdots$};
                    \node[main node] (34) [right=1.5cm of 32] {};
      %\node (37) [right=0.5cm of 34] {$\cdots$}; 
                    \node[main node] (38) [right=1.5cm of 34] {};
                    \node[main node] (39) [right=1cm of 38] {};
                    \node (40) [right=0.5cm of 39] {$(r\ge q\ge 1)$}; 
                    \node[main node] (a) [above=0.8cm of 34] {};

                    \path[draw]
                    (34)--(a)
                    (31)--(32)
                    (38)--(39);
                    \path[draw,dashed]
                    (32)--(34)--(38);

                    \node (c11) [below =0.1cm of 34] {\tiny{$0$}};
                    \node (c22) [below =0.1cm of 31] {\tiny{$-q$}};
                    \node (c33) [below =0.05cm of 32] {\tiny{$-(q-1)$}};
                    \node (c44) [below =0.05cm of 38] {\tiny{$(r-1)$}};
                    \node (c55) [below =0.1cm of 39] {\tiny{$r$}};
                    \node (c66) [right =0.02cm of a] {\tiny{$v$}};

                    \node(4) [below=1cm of 3] {${F}_n$};

                    \node[main node] (41) [right=0.5cm of 4] {};
                    \node[main node] (42) [right=1cm of 41] {};
                    \node[main node] (43) [right=1cm of 42] {};
      %\node (44) [right=0.5cm of 43] {$\cdots$};
                    \node[main node] (45) [right=1.5cm of 43] {};
                    \node[main node] (46) [right=1cm of 45] {};
                    \node (47) [right=0.5cm of 46] {$(n\ge 4)$}; 

                    \node (d11) [below =0.1cm of 41] {\tiny{$1$}};
                    \node (d22) [below =0.1cm of 42] {\tiny{$2$}};
                    \node (d33) [below =0.1cm of 43] {\tiny{$3$}};
                    \node (d44) [below =0.05cm of 45] {\tiny{$(n-1)$}};
                    \node (d55) [below =0.1cm of 46] {\tiny{$n$}};

                    \path[draw]
                    (41)--(42)
                    (42) edge node [above] {\tiny{$4$}} (43)
                    (45)--(46);
                    \path[draw,dashed]
                    (43)--(45);

                    \node(h) [below=1cm of 4] {$H_n$};

                    \node[main node] (h1) [right=0.45cm of h] {};
                    \node[main node] (h2) [right=1cm of h1] {};
                    \node[main node] (h3) [right=1cm of h2] {};
      %\node (h4) [right=0.5cm of h3] {$\cdots$};
                    \node[main node] (h5) [right=1.5cm of h3] {};
                    \node[main node] (h6) [right=1cm of h5] {};
                    \node (h7) [right=0.5cm of h6] {$(n\ge 3)$}; 

                    \node (bb1) [below =0.1cm of h1] {\tiny{$1$}};
                    \node (bb2) [below =0.1cm of h2] {\tiny{$2$}};
                    \node (bb3) [below =0.1cm of h3] {\tiny{$3$}};
                    \node (bb4) [below =0.05cm of h5] {\tiny{$(n-1)$}};
                    \node (bb5) [below =0.1cm of h6] {\tiny{$n$}};

                    \path[draw]
                    (h1) edge node [above] {\tiny{$5$}} (h2)
                    (h2)--(h3)
                    (h5)--(h6);
                    \path[draw,dashed]
                    (h3)--(h5);

                    \node(i) [below=1cm of h] {$I_2(m)$};

                    \node[main node] (i1) [right=0.25cm of i] {};
                    \node[main node] (i2) [right=1cm of i1] {};
                    \node (i3) [right=0.5cm of i2] {$(5\le m\le \infty)$};

                    \node (bb1) [below =0.1cm of i1] {\tiny{$1$}};
                    \node (bb2) [below =0.1cm of i2] {\tiny{$2$}};

                    \path[draw]
                    (i1) edge node [above] {\tiny{$m$}} (i2);
                \end{tikzpicture}
                \caption{Irreducible $\la(2)$-finite Coxeter groups with acyclic diagrams.}
                \label{fig:finite E}
            \end{figure}
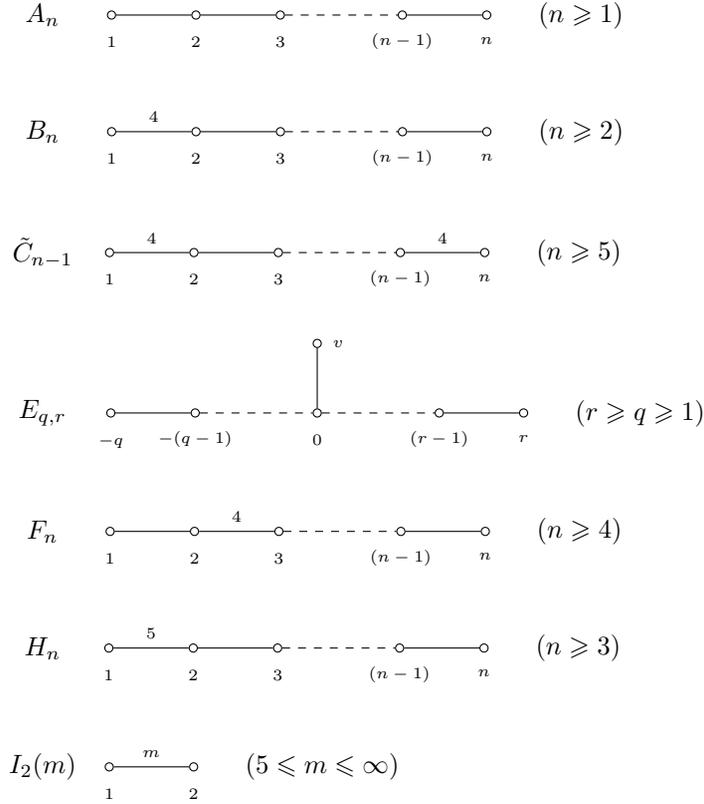   
    \end{enumerate}
\end{prop}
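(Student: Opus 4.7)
The plan is to treat the four parts separately, leaning on \cref{prop:Facts} and prior literature, since this proposition is essentially a summary collecting known results. The only parts that genuinely require new argument given the tools already in place are Part (1) and the deduction of the cell structure in Part (3) from the subregular cell literature.

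For Part (1), I would first verify $\la(1)=0$ by direct computation. Since $C_1$ is the multiplicative identity of $H$, the equality $C_xC_1=C_x$ forces $h_{x,1,z}=\delta_{x,z}$, so these structure constants lie in $\Z\subseteq v^0\Z[v\inverse]$ and, for $x=z$, equal $1\notin v\inverse\Z[v\inverse]$; thus $\la(1)=0$. Conversely, for any $w\neq 1$ pick $s\in \calr(w)$, so $s\le^R w$. Then \cref{prop:Facts}.(1) gives $w\le_R s$, and combining \cref{prop:Facts}.(5) with \cref{prop:Facts}.(2) applied to $k=1$ yields $\la(w)\ge \la(s)=1$. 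Hence $W_0=\{1\}$, and being a nonempty union of two-sided cells (each of which is a union of one-sided cells), $W_0$ must itself be the unique cell of every type.

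For Part (2), the characterization $W_1=\{w: w \text{ has a unique reduced word}\}$ is classical. The forward direction is typically verified by a direct Hecke-algebra computation showing that when $w$ has a unique reduced word, the polynomials $h_{x,y,w}$ take a particularly simple form placing $\la(w)$ exactly at $1$; the reverse direction uses that whenever a braid relation applies, one obtains structure constants pushing $\la$ strictly above $1$. I would cite \cite{LusztigSubregular,Hart,XuSubregular} for the full argument. For the finiteness criterion, a combinatorial count of elements with unique reduced words (using the Coxeter diagram $G$) shows that this set is finite precisely when $G$ is acyclic with at most one heavy edge; this is exactly the classification in \cite{Hart}, which I would cite directly.

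For Part (3), I would proceed in two steps. First, given the description in Part (2), verify via \cref{prop:Facts}.(1) and \cref{prop:Facts}.(5) that $L_s\subseteq W_1$ and $R_s\subseteq W_1$ for each $s\in S$, and that every element of $W_1$ lies in some $L_s$ (taking $s\in \call(w)$) and some $R_s$ (taking $s\in \calr(w)$). Then, for the two-sided cell statement and the fact that each $L_s$ (resp.\ $R_s$) is an entire one-sided cell, invoke the subregular cell theory from \cite{LusztigSubregular,Hart,BonnafeCells}: one shows that elements in $L_s$ are linked via left-multiplication preorder relations by applying left star operations inside $W_1$, and one uses that the left-right preorder connects any two $L_s$'s through shared elements. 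The descent compatibility \cref{prop:Facts}.(3) prevents elements in distinct $L_s$'s from being in the same right cell, completing the parameterization. Part (4) is the main theorem of \cite{GreenXu} and is cited without reproof. The principal obstacle in any self-contained treatment would be Part (4), whose case-by-case classification of $\la(2)$-finite diagrams forms the bulk of \cite{GreenXu}; the remaining parts are comparatively routine applications of \cref{prop:Facts} and the cited subregular results.
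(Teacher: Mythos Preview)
The paper's own proof is a pure citation block: Part (1) to \cite[Section 13]{LusztigHecke}, Parts (2)--(3) to \cite{LusztigSubregular}, and Part (4) to \cite{GreenXu}. Your proposal is more ambitious in that you attempt to give actual arguments for Parts (1)--(3) rather than only pointers, so in spirit you are doing more than the paper does.

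That said, your argument for Part~(1) has a genuine gap. To show $\la(1)=0$ you must verify condition~(a), namely that $h_{x,y,1}\in v^{0}\Z[v\inverse]=\Z[v\inverse]$ for \emph{all} $x,y\in W$. You only compute $h_{x,1,z}=\delta_{x,z}$, which controls $h_{x,1,1}$ but says nothing about $h_{x,y,1}$ for general $y$. The claim that every $h_{x,y,1}$ has nonpositive degree in $v$ is not formal; it is exactly the content of the result in \cite[Section 13]{LusztigHecke} that the paper cites, and it requires real input about the Kazhdan--Lusztig basis. Your argument does correctly establish the other direction, $\la(w)\ge 1$ for $w\neq 1$, via \cref{prop:Facts}, so you have shown $W_0\subseteq\{1\}$ but not $1\in W_0$.

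A smaller issue: in Part~(3) you have swapped left and right descents. With the paper's conventions, $s\le^L z$ means $z=z'\cdot s$ is reduced, so $z\in L_s$ requires $s\in\calr(z)$, not $s\in\call(z)$; similarly $z\in R_s$ requires $s\in\call(z)$. This is a slip rather than a structural error, but it should be corrected.
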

\begin{proof} Part (1) is proved in \cite[\S 13]{LusztigHecke}.  Let 
\[
C= \{w\in W:w\neq 1 \text{ and
      $w$ has a unique reduced word}\}.\] Then the first assertion in Part (2), the assertion that
      $W_1=C$, is proved in \cite[Corollary 3.1]{XuSubregular}. Given the equality $W_1=C$, the
      second assertion in Part (2) follows immediately from \cite[Proposition
      3.8(h)]{LusztigSubregular}.  Part (3) follows from  \cite[Proposition
      3.8(c)]{LusztigSubregular} and Proposition \ref{prop:Facts}(4).  Part (4) is the main result of
      \cite{GreenXu}. 
\end{proof}

\begin{remark}[Labeling conventions]
    \label{rmk:labels}
    For each $\la(2)$-finite Coxeter system $(W,S)$, we will adhere to the labeling of the set $S$
    from Figure \ref{fig:finite E} for the rest of the paper. For example, the generators in $B_n$ will
    always be labeled $1,2,\dots,n$, with $\{1,2\}$ forming the unique heavy edge, and generators in
    $E_{q,r}$ will always be labeled by the numbers $-q,-q+1,\dots, 0,\dots, r$ and the letter $v$
    under the assumption that $r\ge q$, which causes no loss of generality. Note that in the cases
    where $q=1$, $(q,r)=(2,2), (q,r)=(2,3), (q,r)=(2,4), (q,r)=(3,3)$ and $(q,r)=(2,5)$, the Coxeter
    group of type $E_{q,r}$ coincides with the Weyl or affine Weyl group of type $D_{r+3}, E_6, E_7,
    E_8, \tilde E_7$ and $\tilde E_8$, respectively; the group $F_5$ coincides with the affine Weyl
    group of type $\tilde F_4$.
\end{remark}

As we study cells within the set $W_2$ for $\la(2)$-finite Coxeter groups, Part (4) of
Proposition \ref{prop:Facts} allows us to consider only right cells because each left cell $L$ is the setwise
inverse of a right cell $R:=L^{-1}$.  Furthermore, Part (6) of the same proposition implies that the
set $R$ is not only a right cell but also a right cell in the same two-sided cell as $L$. The
following corollary, which we will use to count cells in $W_2$, is now immediate. 

\begin{corollary}
\label{coro:intersection} Let $E$ be a two-sided cell in a Coxeter group $W$. Let $\call\calc$ and
      $\calr\calc$ be the collections of left cells and right cells in $E$, respectively. Then
      $\call\calc=\{R\inverse: R\in \calr\calc\}$. In particular, for each $R\in \calr\calc$ we have
      \[ R=\bigsqcup_{L\in\call\calc}R\cap L=\bigsqcup_{R'\in \calr\calc}R\cap (R')\inverse \] and
      hence \[ \abs{R}=\sum_{L\in\call\calc}\Abs{R\cap L}=\sum_{R'\in \calr\calc}\Abs{R\cap
      (R')\inverse}.  \]
\end{corollary}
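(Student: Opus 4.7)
The plan is to deduce everything from Parts (4) and (6) of \cref{prop:Facts}, together with the standard fact that any two-sided cell is simultaneously a union of left cells and a union of right cells.

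First I would establish the identity $\mathcal{LC}=\{R\inverse:R\in\mathcal{RC}\}$. Take any $L\in\mathcal{LC}$. By \cref{prop:Facts}.(4), the set $L\inverse$ is a right cell of $W$; I only need to check that it lies inside $E$. Pick any $z\in L$. Then $z\in E$, and by \cref{prop:Facts}.(6) we have $z\sim_{LR} z\inverse$, so $z\inverse\in E$ as well. Since $L\inverse\se E$ is a right cell contained in a two-sided cell, it must be one of the right cells comprising $E$, so $L\inverse\in\mathcal{RC}$. Running the same argument in the opposite direction (starting with $R\in\mathcal{RC}$ and using the other half of \cref{prop:Facts}.(4)) and using that inversion is an involution on $W$ gives the reverse inclusion, hence equality.

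Next I would use the fact that the two-sided cell $E$ decomposes as a disjoint union of its constituent left cells: $E=\bigsqcup_{L\in\mathcal{LC}} L$. Since any $R\in\mathcal{RC}$ satisfies $R\se E$, intersecting this decomposition with $R$ yields
\[
R = R\cap E = \bigsqcup_{L\in\mathcal{LC}}(R\cap L),
\]
and the union is disjoint because distinct left cells are disjoint. Substituting the bijection $L\leftrightarrow R'=L\inverse$ from the first step rewrites this as $R=\bigsqcup_{R'\in\mathcal{RC}} R\cap(R')\inverse$.

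Finally, taking cardinalities in the two disjoint-union expressions yields the two stated formulas for $|R|$. There is no substantive obstacle here: the corollary is a bookkeeping consequence of parts (4) and (6) of \cref{prop:Facts} once one observes that $E$ is at once a union of left cells and of right cells, and that inversion intertwines these two partitions of $E$.
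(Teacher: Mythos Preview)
Your proof is correct and matches the paper's approach exactly: the paper states that the corollary is immediate from Parts (4) and (6) of \cref{prop:Facts} (Part (4) giving that inversion swaps left and right cells, Part (6) ensuring they stay in the same two-sided cell), and you have simply written out those details. One minor presentational point: you assert $L^{-1}\subseteq E$ after only checking $z^{-1}\in E$ for a single $z\in L$, but this is fine since $E$ is a union of right cells and $L^{-1}$ is a right cell meeting $E$.
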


The above corollary suggests that intersections of left and right cells in a two-sided cell are
important. This motivates the following definition.
\begin{definition} 
    We define a \emph{zero-sided cell} in a Coxeter system $(W,S)$ to be a set of the form $R\cap L$
    where $R$ is a right cell and $L$ is a left cell in the same two-sided cell as $R$.  For
    brevity, we also refer to two-sided, one-sided and zero-sided cells of $W$ as \emph{2-cells},
    \emph{1-cells} and \emph{0-cells}.
    \label{def:0cells}
\end{definition}

\begin{example}
    \label{eg:b4}
    Let $(W,S)$ be the Coxeter system of type $B_4$, which is $\la(2)$-finite by
    Proposition \ref{prop:Facts}.(10). Later we will see that $W_2$ is the unique two-sided cell of $\la$-value
    2 in $W$; moreover, the cell $W_2$ can be partitioned into 6 right cells as well as into 6 left
    cells. Under a certain labeling of the right and left cells respectively as $R(i)$ and $L(j)$
    for $1\le i,j\le 6$, the sizes of the 0-cells in $W_2$ are given by Table \ref{tab:b4example}, where
    the entry in the $R(i)$-row, $L(j)$-column equals the size of the 0-cell $R(i)\cap L(j)$. By
    Corollary \ref{coro:intersection}, we may count each 1-cell $R(i)$ by summing the entries in the
    corresponding row, and the we may count the 2-cell $W_2$ by summing all entries in the table.
    It follows that $\abs{R(i)}=10$ if $1\le i\le 4$, that $\abs{R(i)}=8$ if $5\le i\le 6$, and that
    $\abs{W_2}=56$.
    \begin{table} \centering
        \begin{tabular}{|c|c|c|c|c|c|c|} \hline & $L(1)$ & $L(2)$ & $L(3)$ & $L(4)$ & $L(5)$ &
              $L(6)$\\\hline $R(1)$ & 2& 2&  2 & 2 &  1&  1\\\hline $R(2)$ & 2& 2&  2 & 2 &  1&
              1\\\hline $R(3)$ & 2& 2&  2 & 2 & 1&  1\\\hline $R(4)$ & 2& 2&  2 & 2 &  1&  1\\\hline
              $R(5)$ & 1& 1&  1 & 1 &  2&  2\\\hline $R(6)$ & 1& 1&  1 & 1 &  2&  2\\\hline
        \end{tabular} \vspace{1em} \caption{Sizes of 0-cells of $\la$-value 2 in type $B_4$}
        \label{tab:b4example}
    \end{table}
\end{example}

\begin{remark}[Symmetry]
    \label{rmk:symmetry}
    As is evident in Propositions \ref{prop:Facts} and \ref{prop:facts01}, there is a large amount
    of ``left-right symmetry'' in the properties of Coxeter group elements and Kazhdan--Lusztig
    cells.  The rest of the paper contains many more pairs of ``left-right symmetric'' notions and
    assertions, such as the two statements in Proposition \ref{prop:StarOpAndCells}. For such pairs, we will
    often formulate one notion or assertion carefully and leave the formulation of the other to the
    reader (as we actually already did in Corollary \ref{coro:intersection} and Example \ref{eg:b4}), or prove one
    of the assertions and invoke symmetry as justification for the other.  The reader may assume
    that the obvious symmetry always works as expected. 
\end{remark}

\subsection{Fully commutative elements}
\label{sec:FC}

Recall that a braid relation in a Coxeter system is a relation of the form $st\dots =ts\dots$ where
$s,t\in S, 2\le m(s,t)<\infty$ and both sides contain $m(s,t)$ factors. We call the relation a
\emph{commutation relation} if $m(s,t)=2$ and call each side of the relation a \emph{long braid} if
$m(s,t)\ge 3$. An element $w\in W$ is called \emph{fully commutative}, or \emph{FC}, if every pair
of reduced words of $w$ can be connected by a finite sequence of commutation relations. Proposition
2.1 of \cite{FC} gives a well-known ``word criterion for full commutativity'': we have $w$ if FC if
and only if no reduced word of $w$ contains a long braid as a subword. Here and henceforth, by a
subword of a word $s_1\dots s_q$ we always mean a contiguous subword, i.e. a word of the form
$s_is_{i+1}\dots s_{j-1}s_{j}$ for some $1\le i\le j\le q$.

We denote the set of all FC elements in $W$ by $\fc(W)$. We shall study cells of $\la$-value 2 via
FC elements because of the following fact:

\begin{prop}[{\cite[Proposition 3.9]{GreenXu}}] \label{prop:a2impliesFC} 
    Let $w\in W$. If $\la(w)=2$, then $w$ is FC.  
\end{prop}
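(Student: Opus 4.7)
The plan is to prove the contrapositive: if $w$ is not fully commutative, then $\la(w) \geq 3$ and in particular $\la(w) \neq 2$.

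First, I would invoke the word criterion for full commutativity recalled just above: if $w$ is not FC, then some reduced word of $w$ contains a long braid as a contiguous subword. That is, for some pair of generators $s, t \in S$ with $m := m(s,t) \geq 3$, the reduced word factors as $\underline{u} \cdot \underline{b} \cdot \underline{v}$ where $\underline{b} = sts\cdots$ has $m$ alternating letters. The braid $\underline{b}$ is itself a reduced word for the longest element $w_0^I$ of the rank-two parabolic subgroup $W_I$ associated to $I = \{s,t\}$, so its appearance inside a reduced word of $w$ gives a reduced factorization $w = u \cdot w_0^I \cdot v$ in $W$.

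Next, I would apply monotonicity of $\la$ along the weak Bruhat orders. Combining \cref{prop:Facts}.(1) with \cref{prop:Facts}.(5), whenever $x \leq^R y$ or $x \leq^L y$ we have $\la(y) \geq \la(x)$. Applied along the chain $w_0^I \leq^R w_0^I \cdot v \leq^L w$, this yields $\la(w) \geq \la(w_0^I)$. Using the parabolic restriction property \cref{prop:Facts}.(7), $\la(w_0^I)$ equals $\la_I(w_0^I)$, the $\la$-value computed inside the dihedral Coxeter system $(W_I, I) \cong I_2(m)$.

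The crux of the argument, and what I expect to be the main obstacle, is the remaining dihedral computation $\la_I(w_0^I) \geq 3$ for every $m \geq 3$. This is classical and can be extracted from the well-known cell structure of finite dihedral groups: $I_2(m)$ has exactly three two-sided cells, namely $\{1\}$, the set of all non-identity elements other than $w_0$, and $\{w_0\}$, with respective $\la$-values $0$, $1$, and $m$. Alternatively, one can verify $\la(w_0^I) = m$ directly by computing the coefficient of $C_{w_0^I}$ in the product $C_{w_0^I} \cdot C_{w_0^I}$ inside the Hecke algebra of $I_2(m)$ and checking that its top power of $v$ is $m$. Either route yields $\la(w) \geq m \geq 3$, the desired contradiction. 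All other steps are formal reductions using monotonicity and parabolic restriction of $\la$; the only real computation lies in this dihedral base case.
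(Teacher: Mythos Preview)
Your argument is correct. Note, however, that the present paper does not give its own proof of this proposition: it is simply cited from \cite[Proposition 3.9]{GreenXu}, so there is no in-paper argument to compare against directly. Your contrapositive route---extracting the longest dihedral element $w_0^I$ from a long braid in a reduced word, then combining weak-order monotonicity of $\la$ (\cref{prop:Facts}.(1) and (5)) with parabolic invariance (\cref{prop:Facts}.(7)) to reduce to the rank-two case, and finally invoking $\la_{I_2(m)}(w_0)=m\geq 3$---is precisely the standard argument and is how the cited reference proceeds. One small remark worth making explicit: since a long braid exists only when $3\leq m(s,t)<\infty$, the parabolic $W_I$ is automatically finite, so the dihedral step is the familiar identity $\la(w_0)=l(w_0)$ for the longest element of a finite Coxeter group and the case $m=\infty$ never arises.
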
 
 
We review two important tools for studying FC elements.  The first is a canonical form called the
\emph{Cartier--Foata form}. For an FC element $w$, this refers to the reduced word $w=w_p\cdot \dots
\cdot w_2\cdot w_1$ satisfying the following properties:
\begin{enumerate}
    \item  For all $1\le i\le p$, the element $w_i$ is a product of pairwise commuting generators in
         $S$;
    \item For all $1<i\le p$, every generator $t\in \supp(w_i)$ fails to commute with some generator
         $s\in \supp(w_{i-1})$.
\end{enumerate} 
The factors $w_1,w_2,w_3, \dots$ can be obtained inductively as the product of the right descents of
the elements $x_1=w, x_2=x_1w_1, x_3=x_2w_2, \dots$, respectively, and the factorization $w=w_p\dots w_1$
is unique up to the re-ordering of the generators appearing in $w_i$ for each $i$; see
\cite{GreenStarReducible}.  We call each $w_i$ the \emph{$i$-th layer} of $w$. When the generators
in $S$ are labeled by integers, we shall insist that within each layer we order the generators in
increasing order from left to right. For example, in the group $W=A_5$ the element $w=1532\in
\fc(W)$ has Cartier--Foata form $w=w_2w_1$ where $w_1=25$ and $w_2=13$.  For more on the
Cartier--Foata form, see \cite{GreenStarReducible}.

Our second tool for studying an FC element $w$ is a labeled poset called a {heap}. To define it, we
first define the \emph{heap of a word} $\underline w=s_1\dots s_q\in S^*$ to be the labeled poset
$H(\underline w):=([q], \preceq)$ where the underlying set is $[q]=\{1,2,\dots,q\}$, the partial
order $\preceq$ is the reflexive, transitive closure of the relation $\prec$ defined by \[ i\prec j
\text{\quad if $i< j$ and $m(s_i,s_j)\neq 2$}, \] and the label of the element $i$ is $s_i$ for each
$i\in [q]$.  The heaps of two words related by a commutation relation are isomorphic as labeled
posets in the sense that there exists a poset isomorphism $f:H({\ul w})\ra H({\ul w'})$ such that
$f(i)$ and $i$ have the same label for all $i\in H({\ul w})$; see \cite[\S 2.2]{FC}.  Thus, it makes
sense to define the \emph{heap of an element} $w\in W$, up to isomorphism, to be the heap of any
reduced word of $w$; we denote this heap by $H(w)$. The following ``heap criterion for full
commutativity'' is a well-known analog of the word criterion mentioned earlier. 

\begin{prop}[{\cite[Proposition 3.3]{FC}}]
    \label{prop:FCCriterion}
    Let  $\underline w= s_1s_2\cdots s_q\in S^*$. Then $\underline w$ is the reduced word of a fully commutative
    element in $W$ if and only if the heap $H(\underline w)$ satisfies the following conditions: 
    \begin{enumerate}
        \item there is no covering relation $i\prec j$ such that $s_i=s_j$;
        \item there is no convex chain $i_1\prec i_2\prec \ldots \prec i_m$ in $H(\underline w)$ with
             $s_{i_1}=s_{i_3}=\cdots = s$ and $s_{i_2}=s_{i_4}=\cdots=t$ where $s,t\in S$ and $m = m(s,t)\ge 3$.
    \end{enumerate} 
\end{prop}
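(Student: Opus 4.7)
The plan is to leverage two facts already in hand: the word criterion for full commutativity (no reduced word contains a long braid as a contiguous subword) and the easy observation that a single commutation relation induces a labeled-poset isomorphism between the heaps of the two words. Iterating the latter, commutation-equivalent words have isomorphic heaps, so the labeled poset $H(\underline{w})$ depends only on the commutation class of $\underline{w}$. This identification between commutation classes and heap-isomorphism classes (Cartier--Foata/Viennot) is what lets me turn structural features of $H(\underline{w})$ into features of some word commutation-equivalent to $\underline{w}$.

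For the forward direction, I assume $\underline{w}$ is the reduced word of an FC element. If (1) fails, I pick a covering relation $i \prec j$ with $s_i = s_j = s$. Because the relation is a covering, no index $k$ with $i < k < j$ satisfies $m(s_i, s_k) \ne 2$ and also lies between $i$ and $j$ in the heap; so I can commute every letter between positions $i$ and $j$ past one of $s_i, s_j$, producing a commutation-equivalent word containing the subword $ss$. This contradicts $\underline{w}$ being reduced. If (2) fails with a convex chain $i_1 \prec \cdots \prec i_m$ of alternating labels $s,t,s,t,\ldots$ and $m = m(s,t) \ge 3$, convexity says the heap-interval between $i_1$ and $i_m$ contains only the $i_k$ themselves, so letters outside this chain that lie between $i_1$ and $i_m$ in position must commute with every letter in the chain. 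Commuting them away produces a reduced word containing the long braid $sts\cdots$ as a contiguous subword, contradicting the word criterion for full commutativity applied to $w$.

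For the backward direction, I proceed by induction on $q$. The base case $q \le 1$ is trivial. For the inductive step I pick any minimal element $i$ of $H(\underline{w})$ with label $s = s_i$; by definition of the heap, $s_i$ commutes with each earlier letter, so I may move it to the front to obtain a commutation-equivalent word $\underline{w} \sim s \cdot \underline{w}''$, where $H(\underline{w}'')$ is obtained from $H(\underline{w})$ by deleting $i$. Both heap conditions are inherited by $H(\underline{w}'')$ (convex chains and covering relations in the sub-heap come from convex chains and coverings in the original), so by induction $\underline{w}''$ is a reduced word of some FC element $w''$. It remains to show that $s \cdot \underline{w}''$ is itself reduced and that the element $w = s w''$ is FC. For reducedness: if it failed then $s \in \mathcal{L}(w'')$, so by \cref{prop:Descent} some reduced word of $w''$ begins with $s$; that reduced word is commutation-equivalent to $\underline{w}''$ (by full commutativity of $w''$, which is available by induction), so its heap has a minimal element labeled $s$ other than the one we removed, producing a second minimal element $j$ of $H(\underline{w})$ with $s_j = s$; but then the pair $\{i,j\}$ contributes either a covering relation with equal labels (violating (1)) or a cover in $H(\underline{w}'')$ with equal labels (contradicting the induction). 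For full commutativity: any reduced word of $w = s w''$ containing a long braid would, after shuffling to place the braid letters in positions corresponding to a convex chain in $H(\underline{w})$, violate condition (2).

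The main obstacle is the backward direction, and specifically the implication ``(1) and (2) imply $\underline{w}$ is reduced''. The forward direction amounts to a direct use of commutations. The delicate point in the backward direction is that one needs to convert the hypothetical failure of reducedness into a heap-theoretic obstruction, and this requires that the commutation class of $\underline{w}''$ is fully described by $H(\underline{w}'')$ — i.e., that distinct minimal elements with the same label in $H(\underline{w}'')$ would already yield a covering pair with equal labels in $H(\underline{w})$. Making this pigeonholing argument precise, together with the induction hypothesis guaranteeing that $w''$ is FC so that \emph{all} its reduced words share the heap $H(\underline{w}'')$, is the technical core of the argument.
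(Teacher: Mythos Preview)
The paper does not give its own proof of this proposition; it is quoted as \cite[Proposition 3.3]{FC} and stated without proof. So there is nothing to compare against, and I will simply assess your argument.

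Your forward direction is essentially correct, though one step is overstated. When (2) fails via a convex chain $i_1\prec\cdots\prec i_m$, you claim that every letter positioned between $i_1$ and $i_m$ but outside the chain commutes with \emph{every} letter in the chain. That is false in general (e.g.\ in $stust$ with $m(s,t)=3$, $m(s,u)=2$, $m(t,u)\ge 3$, the chain $1\prec 2\prec 4$ is convex but $u$ at position $3$ does not commute with $t$). What is true, and what you actually need, is that convexity of the chain guarantees a linear extension of $H(\underline{w})$ in which $i_1,\ldots,i_m$ appear consecutively; linear extensions of the heap correspond exactly to words in the commutation class, so some commutation-equivalent word contains the long braid.

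The real gap is in the backward direction, in your last sentence. You write that ``any reduced word of $w=sw''$ containing a long braid would \ldots\ violate condition (2)'' in $H(\underline{w})$. But the heap is an invariant of the \emph{commutation class}, not of the element $w$: a reduced word of $w$ that is not commutation-equivalent to $\underline{w}$ has a different heap, and you cannot read off anything about $H(\underline{w})$ from it. As written this is circular, since ``all reduced words of $w$ are commutation-equivalent to $\underline{w}$'' is precisely the full commutativity of $w$ that you are trying to establish. The standard fix is short: if $w$ is not FC, the Matsumoto--Tits theorem connects $\underline{w}$ to some reduced word outside its commutation class by a sequence of braid moves, and the \emph{first} non-commutation braid move in such a sequence is applied to a word still commutation-equivalent to $\underline{w}$; that word therefore contains a long braid, and since its heap equals $H(\underline{w})$ the braid yields a convex alternating chain violating (2). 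Your reducedness argument (producing a covering $i\prec j$ with $s_i=s_j$ from a hypothetical second minimal $s$-labeled vertex) is fine.
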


\begin{remark}
    Let $w\in \fc(W)$.  By the  definition of $H(w)$, the left and right descents of $w$ are exactly
    the labels of the minimal and maximal elements in $H(w)$, respectively. In particular, the
    support of the first layer in the Cartier--Foata form of $w$ coincides with the set of the
    labels of the maximal elements in $H(w)$, because they both equal $\calr(w)$.
    \label{rmk:heapsAndWords}
\end{remark}

\begin{remark}
    \label{rmk:chainsAndWalks}
    By the definition of heaps, for every chain of coverings $i_1\prec i_2\dots \prec i_k$ in the
    heap of an FC element $w=s_1\dots s_q$, the generators $s_{i_j}$ and $s_{i_{j+1}}$  must be
    distinct and adjacent in the Coxeter diagram. In other words, the sequence of generators
    $s_{i_1},\dots, s_{i_k}$ forms a \emph{walk} from $s_{i_1}$ to $s_{i_k}$ on the Coxeter diagram
    in the usual graph theoretical sense. We also recall, for later use, that such a walk has
    \emph{length} $(k-1)$ and that a \emph{path} from a vertex $v$ to a vertex $u$ on a graph is a
    walk from $v$ to $u$ of minimal length. On an acyclic and connected graph, there is a unique
    path from $u$ to $v$ for any two vertices $u,v$.  
\end{remark}

There is an intuitive way to visualize the heap of a word $\underline w=s_1\dots s_q\in S^*$ in the
lattice $S\times \Z_{\ge 0}$. Here, we think of the elements of $S$ as \emph{columns} and the
elements of $\Z_{\ge 0}$ as \emph{levels}, and we say two columns $s,t$ \emph{commute} if they
commute as generators, i.e. if $m(s,t)= 2$. To embed $H(\underline w)$ into the lattice, we read
the letters in $\underline w$ from left to right, and for the $j$-th letter we drop a vertex $p_j$
in the column $s_j$ to the lowest level possible subject to the condition that $p_j$ should fall
above every vertex $p_i$ that has been dropped into a column that does not commute with $s_j$, i.e.
above every vertex $p_i$ for which $i\prec j$. In addition, if a column $s$ does not commute with
$s_j$ and contains at least one vertex $p_i$ with $i<j$, i.e. if $i\prec j$, then we draw an edge
to connect $p_j$ to the highest such vertex, i.e. to the vertex $p_i$ in column $s$ with $i$
maximal. It follows that the vertices $p_1,\dots, p_q$ and the edges of the form $\{p_i,p_j\}$ where
$i\prec j$ form exactly the Hasse diagram of the heap $H(\underline w)$. 

When drawing the embedding of $H(\underline w)$, it is customary to not draw the columns or levels
and to label each point $p_i$ simply by $s_i$. This ensures that the isomorphic heaps arising from
the reduced words of $w$ are given by the same graph, the Hasse diagram of the heap $H(w)$. For
example, in Figure \ref{fig:HeapExample} the picture on the right shows the heap $H(w)$ of the
element $w = abcadb$ in the Coxeter group whose Coxeter diagram is drawn on the left. 
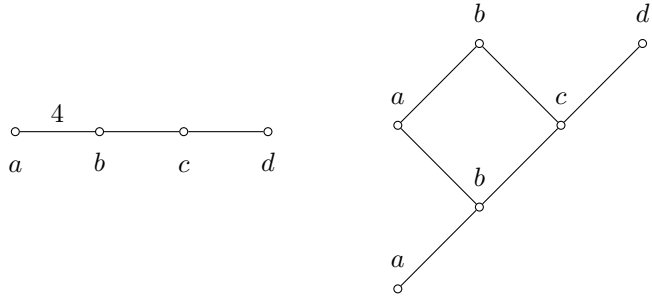
\begin{figure} \centering
    \begin{tikzpicture} \node[main node] (a) {}; \node[main node] (b) [right=1cm of a] {};
          \node[main node] (c) [right=1cm of b] {}; \node[main node] (d) [right=1cm of c] {};

        \node (aa) [below=0.2cm of a] {$a$};
        \node (bb) [below=0.1cm of b] {$b$};
        \node (cc) [below=0.2cm of c] {$c$};
        \node (dd) [below=0.1cm of d] {$d$};

        \path[draw]
        (b)--(c)--(d)
        (a) edge node [above] {$4$} (b);

        \node[main node] (1) [below right=2cm and 5cm of a] {};
   %\node[main node] (2) [right = 3cm of 1] {};
        \node[main node] (3) [above right=1cm and 1cm of 1] {};
        \node[main node] (4) [above left=1cm and 1cm of 3] {};
        \node[main node] (5) [above right=1cm and 1cm of 3] {};
        \node[main node] (6) [above left=1cm and 1cm of 5] {};
        \node[main node] (7) [above right=1cm and 1cm of 5] {};

        \node (11) [above=0.1cm of 1] {$a$};
   %\node (22) [above=0.1cm of 2] {$d$};
        \node (33) [above=0.1cm of 3] {$b$};
        \node (44) [above=0.1cm of 4] {$a$};
        \node (55) [above=0.1cm of 5] {$c$};
        \node (66) [above=0.1cm of 6] {$b$};
        \node (77) [above=0.1cm of 7] {$d$};

        \path[draw]
        (1)--(3)--(4)--(6)--(5)--(7)
        (3)--(5);
    \end{tikzpicture}
    \caption{Lattice embedding of a heap.}
    \label{fig:HeapExample}
\end{figure}

Heaps can help compute $\la$-values. Let $n(w)$ be the  \emph{width} of the poset $H(w)$, i.e. let 
\[
    n(w)= \max\{\abs{A}:A \text{\; is an antichain in $H(w)$}\}.
\]
Then $n(w)$ bounds and may equal $\la(w)$ (see also Remark \ref{rmk:a.vs.n}):

\begin{prop}[{\cite{Shi}}]\label{prop:a=n}
    Let $w\in \fc(W)$. Then
    \begin{enumerate}
        \item we have $\la(w)\ge n(w)$;
        \item we have $\la(w)=n(w)$ if $W$ is a Weyl or affine Weyl group.
    \end{enumerate}
\end{prop}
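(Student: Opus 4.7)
The plan is to handle the two parts separately: (1) is a clean combinatorial consequence of the heap structure combined with \cref{prop:Facts}, while (2) is deeper and genuinely requires input specific to Weyl and affine Weyl groups.

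For part (1), I would fix a maximum antichain $A = \{p_{i_1}, \dots, p_{i_k}\}$ in $H(w)$ with $k = n(w)$. Since $A$ is an antichain, its labels $s_{i_1}, \dots, s_{i_k}$ are pairwise commuting generators. The key step is to use the standard correspondence between order ideals of $H(w)$ and left factors of $w$ in reduced factorizations: letting $I$ be the order ideal of $H(w)$ generated by $A$, one can produce a reduced factorization $w = u \cdot v$ such that $H(u) \cong I$. Because $A$ is the set of maximal elements of $I$, \cref{rmk:heapsAndWords} identifies $\{s_{i_1}, \dots, s_{i_k}\}$ with $\calr(u)$, so the last Cartier--Foata layer of $u$ is exactly $a := s_{i_1} s_{i_2} \cdots s_{i_k}$. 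Writing $u = u' \cdot a$, we obtain a reduced factorization $w = u' \cdot a \cdot v$. From $a \le^R (a \cdot v)$ and $(a \cdot v) \le^L w$, parts (1) and (5) of \cref{prop:Facts} give $\la(w) \ge \la(a \cdot v) \ge \la(a)$, and \cref{prop:Facts}.(2) gives $\la(a) = k$. Chaining these yields $\la(w) \ge n(w)$, as desired.

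For part (2), the nontrivial content is the reverse inequality $\la(w) \le n(w)$, which can fail outside Weyl and affine Weyl types. The approach I would take is to exploit the generalized Temperley--Lieb or diagrammatic realizations of the Hecke algebras available in these types: FC elements correspond to monomial diagrams, and the Kazhdan--Lusztig structure constants $h_{x,y,w}$ from \cref{eq:h} can be bounded in $v$-degree by an explicit computation inside the diagram algebra, with the bound controlled by the width of the heap of $w$. A more conceptual alternative is to invoke Lusztig's geometric interpretation of the $\la$-function via Springer fibers or two-sided cells in Weyl and affine Weyl groups, translating the antichain width into a dimension estimate. Either route critically uses that $(W,S)$ is of Weyl or affine Weyl type.

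The main obstacle is unambiguously part (2): whereas the antichain-to-reduced-factorization argument for (1) is essentially formal, proving the upper bound $\la(w) \le n(w)$ requires controlling the degrees of the Kazhdan--Lusztig structure constants $h_{x,y,w}$, which in a general Coxeter system are not constrained by purely heap-theoretic data. Since the equality is known to fail outside Weyl and affine Weyl types, any proof of (2) must go beyond the combinatorics of $H(w)$ and appeal to the additional representation-theoretic or diagrammatic structure available in those groups.
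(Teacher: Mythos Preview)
Your argument for part (1) is essentially the same as the paper's: both extract from a maximum antichain a reduced factorization $w = x \cdot y \cdot z$ with $y$ a product of $n(w)$ pairwise commuting generators, and then apply parts (1), (2), (5) of \cref{prop:Facts}. For part (2), the paper does not attempt a proof at all but simply cites it as the main result (Theorem~3.1) of Shi's paper \cite{Shi}; your sketch of possible routes via Temperley--Lieb diagrammatics or Lusztig's geometric cell theory goes beyond what the paper does, though neither sketch is developed enough to count as a proof.

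One factual correction: you assert twice that the equality $\la(w) = n(w)$ ``can fail'' and ``is known to fail'' outside Weyl and affine Weyl types. This is not correct. As the paper itself notes later (see \cref{rmk:a.vs.n}), whether $\la(w) = n(w)$ for FC elements in a general Coxeter system is an \emph{open question}; no counterexamples are known. So the restriction to Weyl and affine Weyl groups in part (2) reflects the scope of Shi's proof, not a known obstruction elsewhere. You should remove those claims from your write-up.
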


\begin{proof}
    The equality in (2) is the main result, Theorem 3.1, of \cite{Shi}. The inequality in (1) essentially follows
    the results in \S 1.3 of the same paper, but let us give a short proof in our notation: the definition of
    heaps implies that $w$ admits a reduced factorization of the form $w=x\cdot y\cdot z$ where $y$ is a product of
    $n(w)$ pairwise commuting generators, whence Parts (1), (2) and (5) of Proposition \ref{prop:Facts} imply that
    $\la(w)\ge\la(y)=n(w)$.  
\end{proof}

\begin{corollary}
    \label{coro:layerSizeBound}
    Suppose that $\la(w)=2$ \textup{(}so $w$ is FC by Proposition \ref{prop:a2impliesFC}\textup{)}. 
    \begin{enumerate}
        \item An antichain in $H(w)$ has at most two elements,  and every antichain in $H(w)$ with two elements is
             maximal.
        \item Every layer in the Cartier--Foata form of $w$ has at most two generators.
        \item We have $\abs{\calr(w)}\le 2$.
    \end{enumerate}
\end{corollary}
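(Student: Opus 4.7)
The proof is essentially a direct application of \cref{prop:a=n}.(1) together with the correspondence between commuting generators and antichains in heaps. I would proceed in the order (1), (2), (3), since (2) and (3) both reduce to (1).

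For Part (1), the plan is to invoke \cref{prop:a=n}.(1) directly: since $\la(w)=2$, we have $n(w)\le \la(w)=2$, so by definition of $n(w)$ as the width of $H(w)$, no antichain in $H(w)$ has more than two elements. For the maximality claim, suppose $A$ is an antichain in $H(w)$ of size $2$ that is not maximal; then there exists an element $x\in H(w)$ with $x\notin A$ such that $A\cup\{x\}$ is still an antichain, which would have three elements and contradict $n(w)\le 2$.

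For Part (2), I would argue that the generators appearing in any single layer of the Cartier--Foata form of $w$ correspond to an antichain in $H(w)$, so Part (1) forces the layer to have at most two generators. To see this correspondence, recall that a layer $w_i$ is a product of pairwise commuting generators, so in any reduced word of $w$ written to display the Cartier--Foata factorization, the positions in $w_i$ have labels $s,t$ with $m(s,t)=2$ for each distinct pair, and by the definition of the heap relation $\prec$, no two such positions are comparable. Hence these positions form an antichain in $H(w)$, and (1) gives the bound.

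For Part (3), I would use \cref{rmk:heapsAndWords}: the right descents of $w$ are in bijection with the maximal elements of $H(w)$. Any set of maximal elements in a poset is pairwise incomparable and thus forms an antichain, so (1) applies and $\abs{\calr(w)}\le 2$. No step presents a real obstacle; the only thing to be careful about is making the ``layer = antichain'' identification explicit in (2), which just unwraps the definitions of the Cartier--Foata form and the heap relation.
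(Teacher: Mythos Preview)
Your proposal is correct and follows essentially the same approach as the paper: both use \cref{prop:a=n}.(1) to bound $n(w)\le 2$ for Part (1), then identify layers with antichains for Part (2). The only minor difference is in Part (3), where you invoke (1) directly via the fact that maximal elements of $H(w)$ form an antichain, whereas the paper routes through (2) by noting that $\calr(w)$ is the support of the first layer; both arguments are equally short and valid.
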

\begin{proof} 
    We must have $n(w)\le 2$ by Proposition \ref{prop:a=n}.(1). This implies (1). By the definition of heaps, each layer in
    the Cartier--Foata form of $w$ corresponds to an antichain that has as many elements as the support of the
    layer, so (1) implies (2). Finally, the set $\calr(w)$ equals the
    support of the first layer of $w$ by Remark \ref{rmk:heapsAndWords}; therefore (3) follows from (2).
\end{proof}

Antichains of heaps appear frequently in this paper. We highlight one property of maximal antichains now. Recall
that an \emph{ideal} in a poset $P$ is a set $\cali$ such that if $y\in \cali$ and $x<y$ for $x,y\in P$ then $x\in
\cali$ ; dually, a \emph{filter} in $P$ is a set $\calf$ such that if $y\in \calf$ and $x>y$ for $x,y\in P$ then
$x\in \calf$. Any set $A\se P$ \emph{generates} an ideal 
\[ \cali_A:=\{i\in H:i\le j\text{ for some $j\in A$}\} \]
and a filter 
\[ \calf_A:=\{i\in H:i\ge j\text{ for some $j\in A$}\}.\] 
If $A$ is an antichain, then any element in the set $P\setminus (\cali_A\cup\calf_A)$, if it exists, can be
adjoined to $A$ to form a larger antichain. This implies the following:

\begin{lemma}
    \label{lemm:maxAntichain} If $A$ is a maximal antichain in $P$, then $P=\cali_A\cup \calf_A$.
\end{lemma}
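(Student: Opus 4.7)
The plan is to prove the contrapositive: if there exists some $p \in P$ with $p \notin \cali_A \cup \calf_A$, then $A$ is not a maximal antichain. This is essentially the observation already made in the paragraph preceding the lemma, so the task is mainly to formalize it cleanly.

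First, I would fix $p \in P \setminus (\cali_A \cup \calf_A)$ and examine its relationship with an arbitrary element $a \in A$. By the definitions of $\cali_A$ and $\calf_A$, the assumption $p \notin \cali_A$ means $p \not\le a$ for every $a \in A$, while $p \notin \calf_A$ means $p \not\ge a$ for every $a \in A$. In particular $p \ne a$ for all $a \in A$, so $p$ and $a$ are distinct and incomparable.

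Next, I would form the set $A' = A \cup \{p\}$ and verify it is an antichain: any two elements of $A$ are incomparable since $A$ itself is an antichain, and $p$ is incomparable with every element of $A$ by the previous step. Since $p \notin A$ (otherwise $p \le p$ would put $p$ in $\cali_A$), the set $A'$ strictly contains $A$, contradicting the maximality of $A$. Hence no such $p$ exists, i.e., $P = \cali_A \cup \calf_A$.

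There is no real obstacle here; the lemma is a one-line observation once the definitions of $\cali_A$, $\calf_A$, and \emph{maximal antichain} are unpacked. The only minor subtlety is remembering that $\cali_A$ and $\calf_A$ both contain $A$ itself (since $a \le a$ for every $a \in A$), which is what rules out the degenerate case $p \in A$ and lets the argument go through uniformly.
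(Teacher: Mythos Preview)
Your proposal is correct and follows exactly the approach the paper takes: the paper's proof is just the sentence preceding the lemma, which observes that any $p \in P \setminus (\cali_A \cup \calf_A)$ can be adjoined to $A$ to form a larger antichain. You have simply unpacked that observation carefully, which is fine.
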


\subsection{Star operations} 
\label{sec:StarOperations}

In this subsection we discuss star operations on $W$. We emphasize the so-called right star operations and omit
details on the analogous left star operations in the spirit of Remark \ref{rmk:symmetry}.  

A right star operation is a function from $W$ to $W$ partially defined with respect to a pair of noncommuting
generators $I=\{s,t\}\se S$. Let $m=m(s,t)$ and consider the parabolic subgroup $W_I=\ip{s,t}$. Then each element
$w$ in $W$ admits a unique reduced factorization $w=w^I \cdot w_I$ where $\calr{(w^I)}\cap I = \emptyset$ and
$w_I\in W_I$.  This factorization is the \emph{left coset decomposition} of $w$ with respect to $I$; see \cite[\S
2.4]{BB}. Depending on the value of $w_I$, one of the following mutually exclusive conditions must hold; the
sequences in (2) and (3) are both infinite if $m=\infty$.
\begin{enumerate} 
    \item $w_I=1$, or $w_I$ is the longest element $sts\dots$ of length $m(s,t)$ in $W_I$; 
    \item $w$ is one of the $(m-1)$ elements \[ x_1:= w^I\cdot s, \quad x_2:= w^I\cdot st,\quad   x_3:= w^I\cdot
         sts,\quad \dots \] where $\call(w_I)=\{s\}$ and $l(w_I)<m$; 
    \item $w$ is one of the $(m-1)$ elements \[ y_1:= w^I\cdot t,\quad y_2:= w^I\cdot ts,\quad  y_3:= w^I\cdot tst,
         \quad\dots \] where $\call(w_I)=\{t\}$ and $l(w_I)<m$.
\end{enumerate} 

\noindent
By definition, we may apply a \emph{right upper star operation with respect to $I$} on $w$ if and only if $w$ is an
element of the form $x_i$ or $y_i$ for some $1\le i\le m-2$; in these cases, the result of the operation is denoted
by $w^*$ and defined to be $x_{i+1}$ or $y_{i+1}$, respectively. Similarly, a \emph{right lower star operation with
respect to $I$} is defined on $w$ precisely when $w$ is an element of the form $x_i$ or $y_i$ for some $2\le i\le
m-1$, whence the result of the operation is the element $w_*:=x_{i-1}$ or $w_*:=y_{i-1}$, respectively.  Left star
operations are defined similarly, and we denote the result of applying a left upper or lower star operation on $w$
by ${}^* w$ or ${}_* w$, respectively. 

When $m=m(s,t)=3$, the (generalized) star operations defined above recover the original star operations introduced
by Kazhdan and Lusztig in \cite{KL}. In this case at most one of $w^*$ and $w_*$ is defined depending on the value
of $w_I$; therefore we may unambiguously speak of \emph{the right star operation with respect to $I$}, without
indicating if the operation is upper or lower. More precisely, both $w^*$ and $w_*$ are undefined if $w_I\in
\{1,sts=tst\}$, $w^*$ is defined while $w_*$ is not if $w_I\in\{s,t\}$, and $w_*$ is defined while $w^*$ is not if
$w_I\in \{st,ts\}$. In the last two cases, we denote whichever one of $w^*$ and $w_*$ is
defined by $w*$,
placing the star sign in the middle. We call the operation $w\mapsto w*$ a \emph{simple star operation}; it is an
involution in the sense that $(w*)*=w$ whenever $w*$ is defined. Similarly, the pair $I=\{s,t\}$ gives rise to a
partially defined involution $w\mapsto *w$ on $W$ that we call the \emph{simple left star operation with respect to
$I$}.

\begin{example}
    \label{eg:StarOperation} 
    Let $(W,S)$ be a Coxeter system with $S=\{a,b,c\}$, $m(a,b)=3$, $m(b,c)=4$ and $m(a,c)=2$. Let $I=\{a,b\},
    J=\{b,c\}$, and let $w=abcab$. Then with respect to $I$, the coset decompositions of $w$ are given by \[ w=
    w_I\cdot {}^I w= aba\cdot cb, \quad w= w^I \cdot w_I = abc\cdot ab, \] so $w_*=abc\cdot a$ while $w^*,
    {}_* w, {}^* w$ are not defined.  Moreover, since $m(a,b)=3$, we have $w*=w_*=abca$ while $*w$ is undefined.
    With respect to $J$, we have \[ w=w_J\cdot {}^J w=b\cdot abcb, \quad w=w^J\cdot w_J=ba\cdot bcb,
    \] so
    ${}^{*}w=cb\cdot abcb, w_*=ba\cdot bc,$ while ${}_*w$ and $w^*$ are not defined.  
\end{example}

Star operations are intimately related to \kl cells. First, left and right cells are closed under left and right
star operations, respectively: 
\begin{prop}[{\cite[Proposition 3.3]{GreenXu}}]
    \label{prop:StarOpAndCells} 
    Let $(W,S)$ be a Coxeter system. Then the following holds with respect to every pair of noncommuting generators
    $\{s,t\}$ in  $S$:
    \begin{enumerate}
        \item  If $w\in W$ and ${}_* w$ is defined, then $w\sim_L {}_* w$;
        \item  If $w\in W$ and $w_*$ is defined, then $w\sim_R w_*$.
    \end{enumerate}
\end{prop}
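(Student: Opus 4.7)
The plan is to establish (2); part (1) then follows by combining (2) with \cref{prop:Facts}.(4), since inversion $z \mapsto z\inverse$ interchanges left and right cells. Fix noncommuting $s, t \in S$, set $m = m(s,t)$, $I = \{s,t\}$, and suppose $w_*$ is defined. To conclude $w \sim_R w_*$, I will exhibit generators $r, r' \in I$ such that $D_w(C_{w_*} C_r) \neq 0$ and $D_{w_*}(C_w C_{r'}) \neq 0$; by definition these inequalities give $w \prec_R w_*$ and $w_* \prec_R w$, hence $w \sim_R w_*$.

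After passing to the left coset decomposition $w = w^I \cdot w_I$ and using the symmetry between the two alternating chains $(x_i)$ and $(y_i)$ \textup{(}swap $s \leftrightarrow t$\textup{)}, I may assume $w = x_i$ and $w_* = x_{i-1}$ for some $2 \le i \le m-1$. Let $r$ be the unique generator in $I$ with $w_* \cdot r = w$ a reduced factorization, and let $r'$ be the other generator in $I$. A direct inspection of the alternating words $w^I \cdot sts\cdots$ yields $\calr(x_i) = \{r\}$ and $\calr(x_{i-1}) = \{r'\}$; in particular $w r' > w$ \textup{(}since $i \le m-1$ allows the chain to extend to $x_{i+1}$\textup{)} while $w_* r' < w_*$.

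The core computation applies the standard Kazhdan--Lusztig multiplication formula
\[
C_y\, C_u \;=\; \begin{cases} C_{yu} + \displaystyle\sum_{\substack{z < y\\ zu < z}} \mu(z,y)\, C_z, & yu > y, \\[2pt] (v + v\inverse)\, C_y, & yu < y, \end{cases}
\]
where $\mu(z,y)$ denotes the usual Kazhdan--Lusztig $\mu$-coefficient. For $C_{w_*} C_r$, the first case applies since $w_* r = w > w_*$, so $C_w$ appears with coefficient $1$ and $D_w(C_{w_*} C_r) = 1 \neq 0$. For $C_w C_{r'}$, the first case again applies since $w r' > w$; moreover $w_* r' < w_*$ makes $w_*$ eligible as an index $z$ in the sum, and the identity $l(w) - l(w_*) = 1$ forces $P_{w_*, w} = 1$, hence $\mu(w_*, w) = 1$. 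Therefore $D_{w_*}(C_w C_{r'}) = 1 \neq 0$, as required.

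The main obstacle is the parity bookkeeping in the second paragraph: one must verify carefully, depending on the parity of $i$ and on whether we are in the $(x_i)$- or $(y_i)$-chain, that $r'$ is simultaneously a right ascent of $w$ and a right descent of $w_*$. This verification hinges on the hypothesis $i \le m-1$, which is precisely the condition distinguishing cases (2) and (3) of the coset classification from case (1); without it, $w$ could be the longest element of the coset $w^I W_I$, the operation $w_*$ would not be defined, and the first case of the multiplication formula could not be applied to $C_w C_{r'}$ to extract $C_{w_*}$ with nonzero coefficient.
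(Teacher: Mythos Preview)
Your argument is correct, and it supplies more than the paper does: the paper does not prove this proposition at all but simply imports it as \cite[Proposition 3.3]{GreenXu}. So there is no ``paper's own proof'' to compare against here; you have filled in what the paper treats as a black box.

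One small point of precision: the assertion $\calr(x_i)=\{r\}$ is not literally true, since $x_i$ may have right descents outside $I$ coming from $w^I$. What you actually need, and what your computation uses, is $\calr(x_i)\cap I=\{r\}$ and $\calr(x_{i-1})\cap I=\{r'\}$; equivalently, $w r'>w$ and $w_* r'<w_*$. Both follow from the coset decomposition $w=w^I\cdot w_I$ together with the fact that $l(w^I z)=l(w^I)+l(z)$ for all $z\in W_I$, so the length behaviour of $w$ under right multiplication by $r'$ is governed entirely by $w_I$. With that adjustment the verification goes through exactly as you wrote it, including the edge case $i=m-1$.
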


\noindent Second, simple star operations preserve cell equivalence in the following way: 
\begin{prop}[{\cite[Corollary 4.3]{KL}}]
    \label{prop:SimpleStarOpAndCells} 
    Let $(W,S)$ be a Coxeter system, let $y,w\in W$, and suppose $m(s,t)=3$ for some generators $s,t\in S$.  Then
    the following hold with respect to the pair $\{s,t\}$:
    \begin{enumerate}
        \item if  $y\sim_L w$ and $y*,w*$ are defined, then $y*\sim_L w*$;
        \item if $y\sim_R w$ and $*y,*w$ are defined, then $*y\sim_R *w$. 
    \end{enumerate}
\end{prop}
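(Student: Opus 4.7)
The plan is to reduce the equivalence statement to a one-step statement about the preorder $\prec_L$, and then to verify it via the Kazhdan--Lusztig polynomial identities that underlie multiplication in the Hecke algebra. I would stress at the outset that this is strictly deeper than Proposition \ref{prop:StarOpAndCells}: the latter says a right lower star operation $w \mapsto w_*$ preserves the right cell, whereas here one must show that the right simple star operation preserves the \emph{left} cell. Hence Proposition \ref{prop:StarOpAndCells} alone cannot be invoked, and the hypothesis $m(s,t) = 3$ is genuinely used.

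First, by the left-right symmetry of Remark \ref{rmk:symmetry}, it suffices to prove part (1). Since $\sim_L$ is the equivalence closure of $\prec_L$, the task further reduces to showing: whenever $y \prec_L w$ and both $y*, w*$ are defined with respect to the right simple star operation for $\{s,t\}$, then $y* \leq_L w*$. Unpacking $\prec_L$, this amounts to showing that a nonzero coefficient $D_y(C_{s'} C_w)$ for some $s' \in S$ forces the corresponding $D_{y*}(C_{s'} C_{w*})$ to be nonzero, up to a chain of $\prec_L$-steps. So the proof reduces to comparing the Hecke-algebra structure constants of $C_{s'} C_w$ against those of $C_{s'} C_{w*}$.

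Next I would exploit the fact that when $m(s,t) = 3$, the map $w \mapsto w*$ exchanges $w_I = s$ with $w_I = st$ (and $w_I = t$ with $w_I = ts$) while keeping the coset representative $w^I$ fixed. The central Kazhdan--Lusztig identity here asserts a bijection at the level of leading coefficients: the coefficient of $C_z$ in $C_{s'} C_w$ matches the coefficient of $C_{z*}$ in $C_{s'} C_{w*}$ whenever the star operation is defined on both $z$ and $w$. Combined with the standard recursion defining the Kazhdan--Lusztig basis, this would transfer a nonzero $D_y(C_{s'} C_w)$ into a nonzero $D_{y*}(C_{s'} C_{w*})$, yielding $y* \leq_L w*$. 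The main obstacle will be establishing this coefficient matching, particularly when $s' \in \{s, t\}$ so that $s'$ interacts directly with the star-operation pair; the Kazhdan--Lusztig approach handles this via induction on $l(w)$ and a careful analysis of the leading coefficient in degree $(l(w) - l(y) - 1)/2$ of the Kazhdan--Lusztig polynomial $P_{y,w}$. This analysis is the content of \cite[Theorem 4.2]{KL}, from which our statement \cite[Corollary 4.3]{KL} falls out.
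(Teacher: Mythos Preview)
The paper does not prove this proposition; it simply cites \cite[Corollary~4.3]{KL} and moves on. So there is no ``paper's own proof'' to compare against beyond the original Kazhdan--Lusztig argument you are sketching.

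Your outline correctly identifies the heart of the matter---the $\mu$-invariance of \cite[Theorem~4.2]{KL}---and rightly notes that this is not a consequence of Proposition~\ref{prop:StarOpAndCells}. However, your reduction step contains a genuine gap. You write that it suffices to show: whenever $y \prec_L w$ and both $y*,w*$ are defined, then $y* \le_L w*$. But $y \sim_L w$ only gives chains $y=z_0\prec_L z_1\prec_L\cdots\prec_L z_n=w$ (and back), and nothing in the hypotheses guarantees that the intermediate $z_i$ lie in the domain of $*$, i.e., that $\lvert\{s,t\}\cap\mathcal R(z_i)\rvert=1$. Without $z_i*$ being defined you cannot chain the one-step statement, so the reduction as stated does not go through.

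The actual argument in \cite{KL} handles this by analyzing what happens at an edge $y\prec_L z$ when $y$ is in the domain of $*$ but $z$ is not (so $\{s,t\}\cap\mathcal R(z)$ is empty or all of $\{s,t\}$), and showing that in those cases $y*$ is still linked to $z$ itself. One also needs to control how left descent sets behave under the right star operation, since $\mathcal L(w*)$ need not equal $\mathcal L(w)$ and the relation $\prec_L$ involves a left-descent condition. Your sketch gestures at the coefficient matching but does not address either of these issues; both are essential to making the transitivity work.
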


\noindent 
Later, we will often combine Propositions \ref{prop:StarOpAndCells} and \ref{prop:SimpleStarOpAndCells} with
Proposition \ref{prop:Facts} to show two elements are in the same cell or in distinct cells. Note that
Proposition \ref{prop:StarOpAndCells} implies that star operations preserve $\la$-values:

\begin{corollary}
    Let $x,y\in W$. If $y$ is obtained from $x$ via any star operation, then $\la(y)=\la(x)$.
    \label{coro:starA}
\end{corollary}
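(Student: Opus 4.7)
The plan is to reduce the claim to \cref{prop:StarOpAndCells} together with \cref{prop:Facts}.(5), which asserts that $\la$ is constant on one-sided cells.

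First I would observe that there are four kinds of star operations on $W$ (left upper, left lower, right upper, right lower), and that upper and lower star operations are mutually inverse partial involutions: if $y = x^*$ is obtained from $x$ by a right upper star operation with respect to some noncommuting pair $\{s,t\}$, then by construction $y_*$ is defined and equals $x$; likewise for the left-sided pair. This reduces the four cases to two.

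Next I would invoke \cref{prop:StarOpAndCells}. If $y$ arises from $x$ by a right lower star operation, then $y = x_*$ is defined and the proposition gives $x \sim_R y$. If instead $y$ arises from $x$ by a right upper star operation, then by the reduction above $x = y_*$ is defined, and the proposition applied to $y$ yields $y \sim_R x$. Symmetrically, if $y$ is obtained from $x$ via either of the two left star operations, then $x \sim_L y$.

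Finally, in either scenario $x$ and $y$ lie in a common one-sided cell, so \cref{prop:Facts}.(5) gives $\la(x) = \la(y)$. There is no serious obstacle here; the only thing to be careful about is to notice explicitly that the upper star operations are handled by applying \cref{prop:StarOpAndCells} to the image $y$ rather than to $x$, since the proposition is stated only for the lower star operations.
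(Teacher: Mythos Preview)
Your proof is correct and follows the same approach as the paper, which simply states that the result is immediate from \cref{prop:StarOpAndCells} and \cref{prop:Facts}.(5). Your extra care in handling the upper star operations by applying \cref{prop:StarOpAndCells} to $y$ rather than $x$ is a nice clarification of what ``immediate'' means here.
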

\begin{proof}
    This is immediate from Proposition \ref{prop:StarOpAndCells} and Proposition \ref{prop:Facts}.(5).
\end{proof}

\noindent
Our discussion of star operations so far applies to arbitrary elements of $W$, rather than only FC elements. On the
other hand, if an element $w\in W$ is FC, then the heap of $w$ provides a simple characterization of what star
operations can be applied to $w$. We explain why this is true for right lower star operations below. Let
$I=\{s,t\}$ be a pair of noncommuting generators in $S$ as before. By definition, the element $w_*$ with respect to
$I$ is defined and given by $w_*=ws$ if and only if in the coset decomposition $w=w^I\cdot w_I$ we have $2 \le
l(w_I)< m(s,t)$ and the reduced word of $w_I$ is of the form  $w=\dots ts$. The last condition holds if and only if
$s \in\calr(w)$ and $t\in \calr(ws)$, in which case $w$ has a reduced word ending in $s$ and removal of that $s$
results in a reduced word of $w_*$. Remark \ref{rmk:heapsAndWords} now implies the following result:
\begin{prop}
    \label{prop:Reducibility} Let $w\in W$ be FC and suppose $3\le m(s,t)$ for some $s,t\in S$.  Then $w$ admits a
          right lower star operation with respect to $\{s,t\}$ which results in $ws$ if and only if the conditions
          below all hold:
    \begin{enumerate}
        \item the heap $H(w)$ contains a maximal element $i$ labeled by $s$;
        \item the element $i$ covers an element $j\in H(w)$ labeled by $t$;
        \item the element $i$ is the unique maximal element covering $j$ in $H(w)$.
    \end{enumerate}
\end{prop}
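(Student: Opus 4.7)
The plan is to finish the translation that the paragraph immediately preceding the statement has already set up. That discussion reduces ``$w_* = ws$'' to the conjunction $s \in \calr(w)$ together with $t \in \calr(ws)$, folding the length bound $l(w_I) < m(s,t)$ into the FC hypothesis: if $l(w_I) = m(s,t)$, then $w_I$ is the longest element of $W_I$ and $w$ has a reduced word containing the full braid $sts\cdots$ of length $m(s,t)$ as a suffix, which violates the word criterion of \cref{prop:FCCriterion}. What remains is to rephrase each of the two descent conditions in terms of the heap.

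For condition (1), \cref{rmk:heapsAndWords} says $s \in \calr(w)$ iff $H(w)$ has a maximal element labeled $s$. Any two elements of $H(w)$ sharing the label $s$ are comparable, since $m(s,s)=1 \ne 2$ forces the defining relation of the heap to relate them; the $s$-labeled elements therefore form a chain, and their maximum $i$ is unique when it exists. Moreover, deleting the final $s$ from a reduced word of $w$ ending in $s$ corresponds, in the lattice embedding, to removing the topmost vertex of the $s$-column, so $H(ws) = H(w) \setminus \{i\}$.

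Applying \cref{rmk:heapsAndWords} a second time, the condition $t \in \calr(ws)$ asks that $H(w) \setminus \{i\}$ contain a maximal element $j$ labeled $t$. Because $s, t$ do not commute and $w$ is FC, we cannot have both $s, t \in \calr(w)$ simultaneously (a reduced word ending in $s$ cannot be turned into one ending in $t$ by commutation moves alone), so $j$ is \emph{not} maximal in $H(w)$ itself. For $j$ to become maximal after deleting $i$, every element strictly above $j$ in $H(w)$ must equal $i$; this is equivalent to the pair of conditions that $i$ covers $j$ (otherwise an intermediate element above $j$ and distinct from $i$ would survive) and that $i$ is the unique cover of $j$ (otherwise a second cover would remain above $j$ in $H(ws)$), which are precisely (2) and (3). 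The reverse implication is immediate: (1)--(3) force $i$ to be the sole element strictly above $j$ in $H(w)$, so $j$ is maximal in $H(ws)$ and $t \in \calr(ws)$. The one step that warrants care, and is the main obstacle I would flag, is the implicit FC bound $l(w_I) < m(s,t)$; beyond that, the proof is a direct application of \cref{rmk:heapsAndWords} together with the observation that maximality in $H(w) \setminus \{i\}$ is controlled entirely by the upper set of $j$ in $H(w)$.
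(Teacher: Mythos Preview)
Your proof is correct and follows exactly the approach the paper intends: the paper's own argument is the single sentence ``Remark~\ref{rmk:heapsAndWords} now implies the following result'' after the preceding discussion, and you have carefully unpacked that implication. One small simplification: to see that $j$ is not maximal in $H(w)$, you do not need the separate observation that $s,t$ cannot both lie in $\calr(w)$; since $m(s,t)\ge 3$, the elements $i$ and $j$ are automatically comparable in the heap, and maximality of $i$ forces $j\prec i$.
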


\noindent 
Note that Condition (3) is equivalent to the condition that $j$ becomes maximal upon removal of $i$ from $H(w)$.
The proposition allows us to tell what lower star operations applies to an FC element from a glance at its heap.
For example, given the element $w=abcadb$ and the embedding of $H(w)$ considered in Figure \ref{fig:HeapExample},
we note that the two highest vertices in the figure are the only maximal elements in $H(w)$ and that only the
removal of top vertex labeled by $b$ results in a new maximal vertex, namely, the higher of the two vertices
labeled by $a$. It follows that only one right lower star operation is defined on $w$, namely, the operation with
respect to $\{a,b\}$ that removes the right descent $b\in \calr(w)$. 

\section{Kazhdan--Lusztig Cells via Stubs}
\label{KLcells}

Recall the following definition of {stubs} from the introduction:
\begin{definition} 
\label{def:stubs} Let $(W,S)$ be a Coxeter system and let $w\in W$. We call $w$ a \emph{left stub} if no right
      lower star operation can be applied to $w$. Similarly, we call $w$ a \emph{right stub} if $w$ admits no left
      lower star operation. We denote the set of left stubs of $\la$-value 2 in $W$ by $\sw$, and denote the set of
      right stubs of $\la$-value 2 in $W$ by $\cals'(W)$.
\end{definition} 

In this section, we assume that $(W,S)$ is an irreducible $\la(2)$-finite Coxeter system, describe
the stubs in $\sw$, and use stubs to find the left, right and two-sided cells in $W_2$. Our main
results on cells are Theorems \ref{thm:stubDescription},  \ref{thm:cellParameterization},
\ref{thm:CellViaDecomp} and \ref{thm:twoSidedCells}. As we study 2-cells in \autoref{sec:2cells}, we
also introduce two equivalence relations on $\sw$ that will play a crucial role as we study 0-cells
in \autoref{sec:0cells}.  

We note that while some results in the section apply to arbitrary Coxeter systems, stubs seem to
control cells in $W_2$ in the nicest ways only for $\la$(2)-finite systems; see
Remark \ref{rmk:a2finiteAssumption}.  We also note that since $W_2$ certainly contains no cell when it is
empty, we shall often assume that $(W,S)$ is \emph{nontrivially $\la(2)$-finite} in the sense that
$W_2$ is finite but nonempty.  By Proposition \ref{prop:facts01}.(4) and the results of \cite{GreenXu}, the
nontrivially $\la(2)$-finite systems are exactly those of types $A_n (n\ge 3), B_n (n\ge 3), \tilde
C_{n-1} (n\ge 5), E_{q,r} (r\ge q\ge 1), F_n (n\ge 4)$ and $H_n (n\ge 3)$. Our description of stubs
and cells will therefore often be given type by type. 

We will often use the prefix ``$\la(2)$-'' as an adjective to indicate an object has $\la$-value 2.
For example, the set $\sw$ is exactly the set of left $\la(2)$-stubs in $W$, and an $\la(2)$-cell
means a cell of $\la$-value 2.

\subsection{Description of stubs}
\label{sec:StubDescriptions} 
We study {$\la(2)$-stubs} in this subsection. As $\la(2)$-elements are FC, we start with the
following observation:

\begin{prop}
    \label{prop:stubCharacterization}
    Let $(W,S)$ be an arbitrary Coxeter system and let $w$ be an FC element in $W$. Let $w=w_p\dots w_2w_1$ be the
    Cartier--Foata form of $w$. Then $w$ is a left stub if and only if every generator $t\in \supp(w_2)$ fails to
    commute with at least two generators $s,s'\in \supp(w_1)$. 
\end{prop}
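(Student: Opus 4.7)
The plan is to translate the condition ``$w$ is a left stub'' via Proposition~\ref{prop:Reducibility} and then read off the resulting condition from the Cartier--Foata layer structure using the heap. By Proposition~\ref{prop:Reducibility} together with the note immediately following it, $w$ is a left stub if and only if there is no heap element $j\in H(w)$ whose unique cover in $H(w)$ is a maximal element $i$ of $H(w)$ with $m(\mathrm{label}(i),\mathrm{label}(j))\ge 3$.

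The key dictionary between the heap and the Cartier--Foata layers is the following. By Remark~\ref{rmk:heapsAndWords} the labels of the maximal elements of $H(w)$ form $\supp(w_1)$, and iterating the same observation once more, the labels of the maximal elements of the residual heap $H(w)\setminus\{\text{maxes of }H(w)\}$ form $\supp(w_2)$. Because $w$ is FC, any two heap elements sharing a common label are comparable under $\preceq$, so within each of these ``layers'' labels are distinct: there is a unique maximum $i_s$ labeled $s$ for each $s\in \supp(w_1)$, and a unique element $j_t\in H(w)$ labeled $t$ that becomes maximal after the removal of the top layer for each $t\in \supp(w_2)$. A short heap argument then identifies the covers of $j_t$ in $H(w)$: any such cover must lie in the top layer (else it would persist into the residual heap above $j_t$), and if $s'\in\supp(w_1)$ is non-commuting with $t$ then the maximum $i_{s'}$ must in fact cover $j_t$, since any hypothetical intermediate element $l$ with $j_t\prec l\prec i_{s'}$ would be a cover of $j_t$ by minimality, hence itself maximal, contradicting $l\prec i_{s'}$ with both maximal. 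Thus the covers of $j_t$ in $H(w)$ are exactly the $i_{s'}$ with $s'\in\supp(w_1)$ non-commuting with $t$.

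With the dictionary in place, the proof is a small count. For the ``only if'' direction, suppose some $t\in\supp(w_2)$ fails to commute with at most one element of $\supp(w_1)$; since property~(2) of the Cartier--Foata form forces this number to be at least one, it must be exactly one, say $s$, so $j_t$ has the single cover $i_s$, and $(s,t,i_s,j_t)$ satisfies the hypotheses of Proposition~\ref{prop:Reducibility}, showing that $w$ is not a stub. For the converse, suppose $w$ is not a stub, so Proposition~\ref{prop:Reducibility} together with its note yields a tuple $(s,t,i,j)$ in which $j$ has $i$ as its unique cover in $H(w)$; this forces $j=j_t$ and hence $t\in\supp(w_2)$, and then the dictionary forces $s$ to be the only element of $\supp(w_1)$ non-commuting with $t$.

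The main obstacle I expect is setting up the heap dictionary rigorously, in particular verifying the identification of the covers of $j_t$ with the $i_{s'}$'s. This argument rests on the FC hypothesis in two places: to force uniqueness of labels within a layer, and to preclude a non-maximum heap element from sitting strictly between $j_t$ and some $i_{s'}$ (via the observation that two maxima are never comparable in $H(w)$).
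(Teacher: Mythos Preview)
Your proof is correct and follows essentially the same approach as the paper's: both translate the stub condition via \cref{prop:Reducibility} and identify the covers of a second-layer heap element $j_t$ with the first-layer elements $i_{s'}$ whose labels fail to commute with $t$. The paper's version is much terser---it records only the key observation that for $t\in\supp(w_2)$ and $s\in\supp(w_1)$ one has $s\neq t$ (by \cref{prop:FCCriterion}.(1)) and hence $j_t$ is covered by $i_s$ iff $m(s,t)\ge 3$---whereas you spell out the cover dictionary in full; the one small point you leave implicit (that $j_t\prec i_{s'}$ holds because heap elements with non-commuting labels are automatically comparable, and that $t\notin\supp(w_1)$ by full commutativity) is exactly what the paper's invocation of $s\neq t$ is handling.
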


\begin{remark}
    Note that the above condition on $t$ is vacuously true when $w=w_1$, i.e. when $w$ is a product of commuting
    generators.
\end{remark}

\begin{proof}[Proof of Proposition \ref{prop:stubCharacterization}]
    Let $t\in \supp(w_2)$ and $s\in \supp(w_1)$. Then $s\neq t$ by
    Proposition \ref{prop:FCCriterion}.(1), so $t$ is covered by $s$ in $H(w)$ if and only if $m(s,t)\ge 3$.  The
    proposition then follows from Remark \ref{rmk:heapsAndWords} and Proposition \ref{prop:Reducibility}.
\end{proof}
\noindent 
The proposition shows that whether an FC element is a left stub is determined ``locally'', by only the first two
layers of its Cartier--Foata form. 

\begin{corollary}
    \label{coro:firstTwoLayers}
    Let $(W,S)$ be an arbitrary Coxeter system.  Let $G$ be the Coxeter diagram of $(W,S)$.  Then an
    $\la(2)$-element $w\in W$ is a left stub if and only if in the Cartier--Foata form $w=w_p\dots w_2w_1$ of $w$,
    we have 
    \begin{enumerate} 
        \item  $w_1=ss'$ for two commuting generators $s, s'$ in $S$;
        \item Every generator $t\in \supp(w_2)$ is adjacent to both $s$ and $s'$ in $G$.  
    \end{enumerate}
\end{corollary}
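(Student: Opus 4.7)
The plan is to deduce this corollary from Proposition \ref{prop:stubCharacterization} by invoking the key numerical constraint that $\la(w)=2$ places on the first layer, and then translating the commutation condition into an adjacency condition in the Coxeter diagram $G$.

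First I would record the translation: for distinct generators $s,t\in S$, $s$ and $t$ fail to commute if and only if $m(s,t)\ge 3$, which by the definition of $G$ is equivalent to saying $s$ and $t$ are adjacent in $G$. So condition (2) of the corollary is literally the condition ``every $t\in\supp(w_2)$ fails to commute with both $s$ and $s'$'' from Proposition \ref{prop:stubCharacterization}, once we know $\supp(w_1)=\{s,s'\}$.

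Next I would establish that $\abs{\supp(w_1)}\le 2$ using Corollary \ref{coro:layerSizeBound}.(2), which asserts that every layer of the Cartier--Foata form of an $\la(2)$-element contains at most two generators. I would then split into two cases depending on whether $\supp(w_2)$ is empty. If $\supp(w_2)=\emptyset$, then $w=w_1$ is a product of pairwise commuting generators, so by Proposition \ref{prop:Facts}.(2) we have $\la(w)=\abs{\supp(w_1)}$; since $\la(w)=2$, this forces $w_1=ss'$ for two commuting generators, so (1) holds and (2) is vacuous. Moreover, in this case the condition of Proposition \ref{prop:stubCharacterization} is vacuous, so $w$ is indeed a left stub.

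If $\supp(w_2)\neq\emptyset$, I would argue as follows. For the forward direction, suppose $w$ is a left stub. By Proposition \ref{prop:stubCharacterization}, every $t\in\supp(w_2)$ fails to commute with at least two distinct generators of $\supp(w_1)$. Combined with $\abs{\supp(w_1)}\le 2$, this forces $\abs{\supp(w_1)}=2$, and by the Cartier--Foata form the two elements of $\supp(w_1)$ must pairwise commute, giving $w_1=ss'$ as in (1); the adjacency translation above then gives (2). For the reverse direction, if (1) and (2) hold, then the hypothesis of Proposition \ref{prop:stubCharacterization} holds with the two generators $s,s'\in\supp(w_1)$, so $w$ is a left stub.

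There is no genuine obstacle here; the only subtle point is the degenerate case $w=w_1$, where one has to separately verify that the $\la$-value forces $\abs{\supp(w_1)}=2$ and that the quantifier in (2) is vacuous because $\supp(w_2)$ is empty. Everything else is a direct repackaging of Proposition \ref{prop:stubCharacterization} using the edge convention for the Coxeter diagram.
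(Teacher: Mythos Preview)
Your proof is correct and takes essentially the same approach as the paper: reduce to Proposition~\ref{prop:stubCharacterization}, bound $\abs{\supp(w_1)}\le 2$ via Corollary~\ref{coro:layerSizeBound}, and use the $\la$-value constraint to pin down $\abs{\supp(w_1)}=2$. Your case split on whether $\supp(w_2)$ is empty is a minor reorganization of the paper's argument, which instead directly rules out $\abs{\supp(w_1)}\in\{0,1\}$.
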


\begin{proof}
    By Proposition \ref{prop:stubCharacterization}, it suffices to show that $\supp(w_1)$ contains exactly two
    generators.  By Corollary \ref{coro:layerSizeBound}, it further suffices to show that
    $\supp(w_1)$ is not empty or a singleton.  This follows from Proposition \ref{prop:facts01}: if $w_1$ is
    empty, then $w=1$ and $\la(w)=\la(1_W)=0$; if $\supp(w_1)$ contains only one generator $s$, then
    $w_2$ must be empty by Proposition \ref{prop:stubCharacterization}, which forces $w=w_1=s$ and
    $\la(w)=\la(s)=1$.    
\end{proof}

For nontrivially $\la(2)$-finite systems, Corollary \ref{coro:firstTwoLayers} turns out to impose strong
restrictions on the first two layers of $\la(2)$-stubs.  The following observation imposes further
restrictions on the deeper layers. 

\begin{lemma}
    \label{lemm:deeperLayers}
    Let $(W,S)$ be an arbitrary Coxeter system. Let $G$ be the Coxeter diagram
    of $(W,S)$. Let $w\in \fc(W)$, let $w=w_p\dots w_1$ be the Cartier--Foata form of $w$, and let $2<i\le p$. 
    \begin{enumerate}
        \item Every element $t\in \supp(w_i)$ is adjacent in $G$ to some element $s\in \supp(w_{i-1})$.  
        \item If $s,t\in S$ are generators connected by a simple edge in $G$ such that $s$ appears in $w_{i-2}$ and
             $w_{i-1}=t$, then $s$ does not appear in $w_i$.  
    \end{enumerate}
\end{lemma}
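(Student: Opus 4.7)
The plan is to deduce both parts directly from the layer structure of the Cartier--Foata form together with the heap criterion for full commutativity (\cref{prop:FCCriterion}), in the same spirit as the proof of \cref{prop:stubCharacterization}.

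For Part (1), I would invoke the defining property of the Cartier--Foata form: every generator $t\in \supp(w_i)$ with $i > 1$ fails to commute with some $s\in \supp(w_{i-1})$, meaning either $s = t$ or $m(s,t) \ge 3$. The case $s = t$ can be ruled out using \cref{prop:FCCriterion}(1): the occurrences of $t$ in layers $i-1$ and $i$ would form a covering relation in $H(w)$ with identical labels, since the layers are consecutive and no further $t$ sits between them in the column $t$ of the lattice embedding. Hence $m(s,t) \ge 3$, so $s$ and $t$ are adjacent in $G$.

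For Part (2), I would argue by contradiction: assuming $s \in \supp(w_{i-2}) \cap \supp(w_i)$ with $w_{i-1} = t$ and $m(s,t) = 3$, I aim to exhibit a convex chain $a \prec b \prec c$ in $H(w)$ with labels $s, t, s$, which would contradict \cref{prop:FCCriterion}(2). The natural candidates for $a, b, c$ are the occurrences of $s, t, s$ in layers $i-2, i-1, i$ respectively; since $m(s,t) = 3 \ne 2$, these three vertices are pairwise comparable and chain as $a \prec b \prec c$.

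The crux of the argument, and the only place where care is needed, will be showing convexity of this chain. My plan there is as follows: a heap element $z$ strictly between $a$ and $b$ would have to lie after $a$ and before $b$ in the word, hence in layer $i-2$ (as $w_{i-1}$ contributes only $b$), yet the Cartier--Foata property forces its label to commute with $s$, making $z$ incomparable to $a$ rather than greater; a symmetric argument handles the open interval between $b$ and $c$. I expect this convexity verification to be the main, but essentially routine, obstacle; once in hand, both parts reduce to a single invocation of the heap criterion.
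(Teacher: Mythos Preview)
Your plan is sound and close to the paper's, but the paper takes a shorter path for Part~(2). For Part~(1), note that ``fails to commute'' already excludes $s=t$ (every element commutes with itself), so your detour through \cref{prop:FCCriterion}(1) is unnecessary; the paper simply cites the defining property of the Cartier--Foata form. For Part~(2), the paper uses the word criterion rather than the heap criterion: since the generators within each layer pairwise commute, one may slide the $s$ in $w_{i-2}$ to the left end of that layer and the $s$ in $w_i$ to the right end of that layer, producing a reduced word with the contiguous subword $sts$ (recall $w_{i-1}=t$), contradicting full commutativity. This avoids the convexity verification entirely.

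Your heap-based argument also works, but two points need tightening. First, the direction: in the reduced word $w_p\cdots w_1$, layer $i$ occurs to the \emph{left} of layer $i-2$, so the chain reads $c\prec b\prec a$, not $a\prec b\prec c$. Second, convexity of the chain means $[c,a]=\{c,b,a\}$, not merely that $(c,b)$ and $(b,a)$ are empty; you must also exclude $z\in(c,a)$ incomparable to $b$. Your layer argument handles this too: any such $z$ has position strictly between those of $c$ and $a$, hence lies in layer $i$, $i-1$, or $i-2$; layer $i-1$ contributes only $b$, and in the other two cases the pairwise commutativity within a layer makes $z$ incomparable to $c$ (resp.\ $a$), contradicting $z\in(c,a)$.
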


\begin{proof}
    Part (1) simply paraphrases Condition (2) from the characterization of the Cartier--Foata form
    in \autoref{sec:FC}. To prove (2), note that if $s$ appears in $w_{i}$ then we may commute the
    $s$ in $w_{i-2}$ to the left and the $s$ in $w_i$ to the right, if necessary, until they are
    both next to the $t$ in $w_{i-1}$. This results in a reduced factorization $w=x \cdot sts\cdot
    y$ of $w$, contradicting the word criterion for full commutativity. 
\end{proof}

The following example shows in detail how Conditions (1)--(2) from Corollary \ref{coro:firstTwoLayers} and
Lemma \ref{lemm:deeperLayers} strongly restrict the Cartier--Foata forms of left $\la(2)$-stubs in a
nontrivially $\la(2)$-finite Coxeter system.  Roughly speaking, the restrictions are strong because
the Coxeter diagram of such a system always contains few pairs of vertices that share a neighbor as
well as few heavy edges.

\begin{example}
    \label{eg:layerAnalysis} 
    We find all left $\la(2)$-stubs in the Coxeter system $(W,S)$ of type $B_n$ in this example.
    Table \ref{tab:B4stubs} contains the heaps of the six stubs in $\cals(B_4)$, which will serve as an example. 

    Let $w\in\sw$ and let $w=w_p\dots w_2w_1$ be the Cartier--Foata form of $w$.  By Condition (1)
    of Corollary \ref{coro:firstTwoLayers}, we must have either $w_1=ij$ for some $1\le i,j\le n$ where
    $j>i+2$ or $w_1=i(i+2)$ for some $1\le i\in n-1$. In the former case---which holds for only the
    stub in the third column in Table \ref{tab:B4stubs} if $n=4$---the second layer of $w$ cannot exist
    by Condition (2) of Corollary \ref{coro:firstTwoLayers} since $i,j$ share no neighbor in $G$.  In the
    latter case---which corresponds to the first two columns of Table \ref{tab:B4stubs}---either $w_2$
    does not exist and $w=w_1=i(i+2)$ (first row), or $w_2$ exists and is given by $w_2=(i+1)$, the
    only common neighbor of $i$ and $(i+2)$ in $G$ (second row). In the latter subcase, if $i>1$
    then we have $m(i+1,i)=m(i+1,i+2)=3$, hence Lemma \ref{lemm:deeperLayers} rules out the possibility
    of a third layer in $w$ and forces $w=w_2w_1=(i+1)\cdot i(i+2)$; if $i=1$, then
    Lemma \ref{lemm:deeperLayers} implies that we have either $w=w_{2}w_1=2\cdot (13)$ or
    $w=w_3w_2w_1=1\cdot 2\cdot (13)$. In particular, if $w$ contains at least three layers then we
    have to have $w_1=13, w_2=2$ and $w_3=1$, whence $w$ cannot have a fourth layer $w_4$: an
    element $t\in \supp(w_2)$ has to be adjacent to the only generator $1$ in $w_3$ and thus has to
    be $t=2$, so if $w$ contains a fourth layer then $w=\dots\cdot 2\cdot 1\cdot 2\cdot
    (13)=\dots\cdot (2121)\cdot 3$, contradicting the fact that $w$ is FC. The stub $1\cdot 2\cdot
    (13)$ is shown in the third row and first column in Table \ref{tab:B4stubs}.

    In summary, we can find all left $\la(2)$-stubs in $B_n$, and among them there is a unique stub
    with more than two layers, namely $1\cdot 2\cdot (13)$. The problem of finding left
    $\la(2)$-stubs in the Coxeter system $A_n$ is similar but easier.
\end{example}

\begin{table}
    \begin{tabular}{|c|c|c|}
        \hline
        \begin{tikzpicture}
            \node[main node] (1) {};
            \node[main node] (2) [right=1cm of 1] {};
            \node (11) [above = 0.1cm of 1] {\tiny{$1$}};
            \node (22) [above = 0.1cm of 2] {\tiny{$3$}};
        \end{tikzpicture}
        &
        \begin{tikzpicture}
            \node[main node] (1) {};
            \node[main node] (2) [right=1cm of 1] {};
            \node (11) [above = 0.1cm of 1] {\tiny{$2$}};
            \node (22) [above = 0.1cm of 2] {\tiny{$4$}};
        \end{tikzpicture} 
        &
        \begin{tikzpicture}
            \node[main node] (1) {};
            \node[main node] (2) [right=1cm of 1] {};
            \node (11) [above = 0.1cm of 1] {\tiny{$1$}};
            \node (22) [above = 0.1cm of 2] {\tiny{$4$}};
        \end{tikzpicture}
        \\
        \hline
        \begin{tikzpicture}
            \node[main node] (1) {};
            \node[main node] (2) [right=1cm of 1] {};
            \node[main node] (3) [below right = 0.5cm and 0.5cm of 1] {};
            \node (11) [above = 0.1cm of 1] {\tiny{$1$}};
            \node (22) [above = 0.1cm of 2] {\tiny{$3$}};
            \node (33) [below = 0.1cm of 3] {\tiny{$2$}};
            \path[draw]
            (1)--(3)--(2); 
        \end{tikzpicture}
        &
        \begin{tikzpicture}
            \node[main node] (1) {};
            \node[main node] (2) [right=1cm of 1] {};
            \node[main node] (3) [below right = 0.5cm and 0.5cm of 1] {};
            \node (11) [above = 0.1cm of 1] {\tiny{$2$}};
            \node (22) [above = 0.1cm of 2] {\tiny{$4$}};
            \node (33) [below = 0.1cm of 3] {\tiny{$3$}};
            \path[draw]
            (1)--(3)--(2);  
        \end{tikzpicture}
        &        
        \multicolumn{1}{c}{}\\
        \cline{1-2}
        \begin{tikzpicture} 
            \node[main node] (1) {};
            \node[main node] (2) [right=1cm of 1] {};
            \node[main node] (3) [below right = 0.5cm and 0.5cm of 1] {};
            \node[main node] (4) [below = 1cm of 1] {};
            \node (11) [above = 0.1cm of 1] {\tiny{$1$}};
            \node (22) [above = 0.1cm of 2] {\tiny{$3$}};
            \node (33) [below = 0.1cm of 3] {\tiny{$2$}};
            \node (11) [below = 0.1cm of 4] {\tiny{$1$}};
            \path[draw]
            (1)--(3)--(2)
            (3)--(4);
        \end{tikzpicture}            
        &
        \multicolumn{2}{c}{}\\
        \cline{1-1}
    \end{tabular}
    \vspace{1em} 
    \caption{The six stubs of $B_4$}
    \label{tab:B4stubs}
\end{table}

We classify the left $\la(2)$-stubs of all nontrivially $\la(2)$-finite Coxeter systems in the
following theorem.  In the theorem, we express every element in its Cartier--Foata form and use
$\cdot$ to separate the different layers, and the symbol $\sqcup$ denotes disjoint union. We will
draw the heaps of typical stubs immediately after the proof of the theorem.  The classification lays
the foundation for the subsequent study of $\la(2)$-cells. 

\begin{thm}
    \label{thm:stubDescription}
    Let $(W,S)$ be an irreducible and nontrivially $\la(2)$-finite Coxeter system. Suppose that $(W,S)$ is of type
    $X$, and let $G$ be the Coxeter diagram of $(W,S)$, labeled in the convention explained in Remark \ref{rmk:labels}.
    Let $\sx$ be the set of all left stubs of $\la$-value 2 in $W$.  Then $\sx$ can be described as follows. 
    \begin{enumerate}
        \item If $X=A_n$ for some $n\ge 3$, then $\sx=\mathcal{S}_1\sqcup\mathcal{S}_2$ where 
            \[
            \mathcal{S}_1=\{x_{ij}:=ij\,\vert\, 1\le i,j\le n,j>i+1\}\]
            and
            \[
                \mathcal{S}_2=\{y_i:=i\cdot
                (i-1)(i+1)\,\vert\, 1< i<n\}.
            \]
        \item If $X=B_n$ for some $n\ge 3$, then $\sx=\mathcal{S}_1\sqcup\mathcal{S}_2\sqcup \cals_3$ where
            $\mathcal{S}_1,\mathcal{S}_2$ are the same \textup{(}see Remark
            \ref{rmk:stubRemarks1}.\textup{(}1\textup{)}\textup{)} as in \textup{(}1\textup{)}
            and 
            \[
                \cals_3=\{z_1:=1\cdot 2\cdot 13\}.
            \] 
        \item If $X=\tilde C_{n-1}$ for some $n\ge 5$, then $\sx =\cals_1\sqcup \cals_2\sqcup
             \cals_3\sqcup\cals_4$ where $\cals_1,\cals_2,\cals_3$ are the same as in
             \textup{(}2\textup{)} and 
            \[
                \cals_4=\{z_n:=n\cdot (n-1)\cdot (n-2)n\}.
            \]
        \item If $X=E_{q,r}$ for some $r\ge q\ge 1$, then $\sx=\cals_1\sqcup\cals_2\sqcup
             \mathcal{T}_2\sqcup\cals_3$ for the sets 
            \[
            \cals_1=\{x_{st}\,\vert\,s,t\in S, m(s,t)=2\},\]
            \[
                \cals_2=\{y_{i}:= i\cdot (i-1)(i+1)\,\vert\, -q< i<r\},
            \]
            \[
                \mathcal{T}_2=\{y'_0:=0\cdot (-1)v\}\sqcup\{y''_0:=0\cdot 1v\},
            \]
            and $\cals_3=\{z_s: s\in S, s\neq 0\}$ where 
            \[
                z_s= 
                \begin{cases}
                    s\cdot (s-1)\cdot \ldots \cdot 0\cdot (-1)v &\text{if $s\neq v$ and $1\le s\le
                    r$},\\
                    s\cdot (s+1)\cdot \ldots \cdot 0\cdot (1)v & \text{if $s\neq v$ and $-q\le s\le -1$},\\
                    v\cdot 0 \cdot (-1)1&\text{if $s=v$}.
                \end{cases}
            \]
        \item If $X=F_n$ for some $n\ge 4$, then $\sx=\cals_1\sqcup\cals_2\sqcup\cals_3$
            where  $\cals_1,\cals_2$ are the same as in \textup{(}1\textup{)} and 
            \[
                \cals_3=\{
                    z_1:=1\cdot 2\cdot 3\cdot 24, z_2:=2\cdot 3\cdot
                24\}\sqcup\{z_i\,\vert\, 3\le i\le n\}, 
            \]
            where $z_i=i\cdot (i-1)\cdot \ldots\cdot 3\cdot 2\cdot 13$ for each $3\le i\le n$.
        \item If $X=H_n$ for some $n\ge 3$, then $\sx=\cals_1\sqcup\cals_2\sqcup\cals_3\sqcup\cals_4$ where
            $\cals_1,\cals_2,\cals_3$ are the same as in \textup{(}2\textup{)} and 
            \[
                \cals_4=\{z_i:=i\cdot (i-1)\cdot \ldots\cdot 1\cdot 2\cdot 13\,\vert\,
                2\le i\le n\}.
            \]
        \item In each of the above, we listed each stub of $\la$-value 2 exactly once. {\textup{(}}For example, two
            $x_{ij},x_{i'j'}$ from $ \cals_1$ are equal only if $i=i',j=j'$.{\textup{)}} The cardinality of $\sx$ 
            in $W$ is given by 
            \[ 
                \abs{\sx}=
                \begin{cases}\binom{n}{2} -1 & \text{if $X=A_n$ \textup{(}$n\ge 3$\textup{)}};\\ 
                    \binom{n}{2} & \text{if
                      $X=B_n$ \textup{(}$n\ge 3$\textup{)}};\\ 
                      \binom{n}{2} +1 & \text{if $X=\tilde C_{n-1}$
                      \textup{(}$n\ge 5$\textup{)}};\\ 
                      \binom{n+1}{2}-1 & \text{if $X= E_{q,r}$ \textup{(}$q,r\ge 1, n=q+r+2=\abs{S}$\textup{)}};\\ 
                      \binom{n+1}{2}-1 & \text{if $X= F_n$ \textup{(}$n\ge 4$\textup{)}};\\
                      \binom{n+1}{2}-1 & \text{if $X= H_n$ \textup{(}$n\ge 3$\textup{)}}.
                \end{cases} 
            \]
    \end{enumerate}
\end{thm}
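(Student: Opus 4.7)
The plan is to prove the theorem by a layer-by-layer analysis of the Cartier--Foata form $w = w_p \cdot w_{p-1} \cdots w_1$ of a candidate left $\la(2)$-stub $w$, using \cref{coro:firstTwoLayers} and \cref{lemm:deeperLayers} as the main structural tools, and then to read off the count of $|\sw|$ from the resulting list. The argument of \cref{eg:layerAnalysis} already illustrates this method completely for type $B_n$; my proposal is to promote that case analysis into a uniform algorithm and run it in each of the six types of \cref{prop:facts01}.(4).

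First I would classify admissible first layers $w_1$. By \cref{coro:firstTwoLayers}.(1), $w_1 = s s'$ for two distinct commuting generators, i.e., $\{s,s'\}$ is any non-edge in the Coxeter diagram $G$; the resulting one-layer stubs are precisely the family $\cals_1$ appearing in every case. Next I would classify admissible second layers $w_2$: by \cref{coro:firstTwoLayers}.(2), each generator $t \in \supp(w_2)$ must be adjacent in $G$ to both $s$ and $s'$. Inspecting the six diagrams of \cref{fig:finite E}, the pairs $\{s,s'\}$ admitting a common neighbor are very limited. In types $A_n, B_n, \tilde C_{n-1}, F_n, H_n$ the only such pairs are the consecutive-but-one pairs $\{i, i+2\}$ on the spine, with unique common neighbor $i+1$; this produces the family $\cals_2 = \{y_i\}$. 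In $E_{q,r}$ the trivalent vertex $0$ is also the unique common neighbor of the non-adjacent pairs $\{-1,v\}$ and $\{1,v\}$, which yields the extra family $\mathcal{T}_2$. By \cref{coro:layerSizeBound}.(2), $|\supp(w_2)| \le 2$; one then checks that in every type except $E_{q,r}$, the common-neighbor constraint forces $|\supp(w_2)| \le 1$, so no further case-splitting occurs at the second layer.

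Next I would climb into deeper layers $w_i$ for $i \ge 3$, using \cref{lemm:deeperLayers}. Part (1) forces every $t \in \supp(w_i)$ to be adjacent to some element of $\supp(w_{i-1})$, and Part (2) forbids producing a long braid $sts$ between layers $w_{i-2}, w_{i-1}, w_i$ across a simple edge. Combined with the FC word criterion and \cref{coro:layerSizeBound}.(2), this pins down a unique way to extend: the new layer must lie on the unique path in $G$ leading away from previously-used generators, and the presence of a heavy edge eventually terminates the process. In types $A_n$, and for ``generic'' pairs $\{i,i+2\}$ in $B_n, \tilde C_{n-1}, F_n, H_n$, no third layer can exist at all; the recursion only escapes stalling when the growing ``column'' of the heap can ultimately be absorbed into a heavy edge, producing the long families $\cals_3$ (and $\cals_4$ in $\tilde C_{n-1}$). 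For each type I would verify inductively that the resulting elements are FC and have $\la$-value $2$: fullness of commutativity is checked from the word, and $\la(w)=2$ follows from \cref{prop:a=n} together with the obvious antichain of size $2$ in $H(w)$.

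The main obstacle will be the type $E_{q,r}$ case, where the trivalent vertex means deeper layers from the family $\mathcal{T}_2$ can in principle propagate along either spine branch, and the possible heavy-edge termination is replaced by an FC-termination argument. Here I would argue that Lemma 3.5 forces a would-be third layer to be a single spine generator $\pm 1$, and then that a fourth layer would recreate a $0 \cdot v \cdot 0$ or braid pattern, producing the lists $\cals_3$ described in the theorem. Once all six lists are in hand, disjointness in (7) follows by inspection of the first layer (or the support), and the count $|\sw|$ is a direct combinatorial tally: $|\cals_1|$ equals the number of non-edges in $G$, $|\cals_2|$ equals the number of length-two paths through simple edges, and $|\cals_3|, |\cals_4|, |\mathcal{T}_2|$ contribute the small explicit remainder. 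Summing these and rewriting in terms of $\beta_n = \binom{n}{2}$ yields the stated cardinalities.
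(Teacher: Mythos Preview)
Your overall strategy---using \cref{coro:firstTwoLayers} to pin down the first two layers and \cref{lemm:deeperLayers} together with the FC word criterion to control deeper layers, exactly as in \cref{eg:layerAnalysis}---is precisely the paper's approach, and your description of how the case analysis runs in each type is accurate.

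There is, however, a genuine gap in your verification that each listed element actually has $\la$-value $2$. You write that ``$\la(w)=2$ follows from \cref{prop:a=n} together with the obvious antichain of size $2$ in $H(w)$.'' But \cref{prop:a=n}.(1) only gives the inequality $\la(w)\ge n(w)$, so an antichain of size $2$ yields $\la(w)\ge 2$, not $\la(w)=2$. Part (2) of that proposition does give equality, but only for Weyl and affine Weyl groups; it does not cover $H_n$, nor $F_n$ for $n\ge 6$, nor $E_{q,r}$ for general $q,r$. (The statement \cref{prop:a.vs.n} later in the paper does assert $\la(w)=n(w)$ for FC elements in $\la(2)$-finite systems, but its proof uses \cref{thm:stubDescription} itself, so invoking it here would be circular.)

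The paper closes this gap differently: for each listed $w$, one observes that the heap admits a sequence of left lower star operations peeling off the layers $w_p,\dots,w_2$ one at a time until only the first layer $w_1=ss'$ remains. Since star operations preserve $\la$-values by \cref{coro:starA}, this gives $\la(w)=\la(w_1)$, and $\la(w_1)=2$ follows from \cref{prop:Facts}.(2) because $w_1$ is a product of two commuting generators. This argument is uniform across all six types and does not depend on $W$ being a Weyl group. You should replace your appeal to \cref{prop:a=n} with this star-reduction argument.
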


\begin{remark}
    \label{rmk:stubRemarks1}
    \begin{enumerate}
        \item In Theorem \ref{thm:stubDescription}, by saying that a certain set $\cals_i$ in some part  is the same as
             the set $\cals_i$ given from an earlier part, we mean that the former set consists of the elements
             given by the same reduced words as those of the elements in the latter set. 
        \item By symmetry, an element $w\in W$ is right stub if and only if $w\inverse$ is a left stub, so
             $\cals'(W)=\{w\inverse: w\in \cals(W)\}$ for each Coxeter system $(W,S)$ appearing in Theorem
             \ref{thm:stubDescription}.
    \end{enumerate} 
\end{remark}

\begin{proof}[Proof of Theorem \ref{thm:stubDescription}]
    Part (7) follows from Parts (1)--(6) by inspection (to see that every element in $\sw$ is indeed
    listed only once) and by easy counting arguments, so it suffices to show that in each of Cases
    (1)--(6), an element $w\in W$ is a left \twostub if and only if $w\in \sx$. The ``only if''
    implication follows from analysis of the layers of left $\la(2)$-stubs:
    Corollary \ref{coro:firstTwoLayers} implies that the first two layers of such a stub $w$ must satisify
    Conditions (1)--(2) given in the corollary, whence Lemma \ref{lemm:deeperLayers} forces $w$ to be an
    element in $\sw$ by arguments similar to those we used for type $B_n$ in
    Example \ref{eg:layerAnalysis}. The ``if'' implication, i.e. the fact that every element $w\in \sw$ is
    a left $\la(2)$-stub, can be proven as follows: first, draw the heap of the specified reduced
    word of $w$ we gave (such as $x_{13}=13\in A_n$) and use Proposition \ref{prop:FCCriterion} to see that $w$
    is indeed FC; second, use Proposition \ref{prop:Reducibility} to see that $w$ is a stub;
    finally, note that we can transform $w$ to its first layer $w_1$ by a sequence of left lower
    star operations by the analog of Proposition \ref{prop:Reducibility} for left star operations,
    and then invoke Corollary \ref{coro:starA} to conclude that $\la(w)=\la(w_1)=2$.  The proof is complete.
\end{proof}

Let us describe and draw the heaps of stubs described in Theorem \ref{thm:stubDescription}. Call a stub
$w\in \sw$ \emph{short} if $l(w)=2$, \emph{medium} if $l(w)=3$, and \emph{long} if $l(w)>3$. Then
the short, medium, and long stubs in $\sw$ are precisely those labeled by ``$x$'', ``$y$''
(including $y'_0$ and $y''_0$ in type $E_{q,r}$) and ``$z$'' in Theorem \ref{thm:stubDescription}; they are
also precisely those with one, two, and more than two layers, respectively.  The heaps of the short
and medium stubs take the forms shown in Table \ref{tab:xyStubs}.  Long stubs exist in all
$\la(2)$-finite Coxeter systems except $A_n$.  In type $B,\tilde C, F$ and $H$, every long stub
involves a ``turn'' at the second layer, that is, there is always a heavy edge $\{s,t\}$ in the
Coxeter diagram such that $s$ appears in the first and third layers of the stub while $t$ appears in
the second layer. The stub $1213\in B_4$ pictured in Table \ref{tab:B4stubs} is such an example.
Table \ref{tab:zBCFH} contains more typical such stubs.  Finally, in type $E_{q,r}$, each long stub $w$
with Cartier--Foata form $w=w_p\dots w_2w_1$ satisfies the conditions $\supp(w_2)=\{0\},
\supp(w_1)=\{s,t\}, \supp(w_2)=\{u\}$ where $\{s,t,u\}=\{-1,1,v\}$; the heaps of the long stubs are
shown in Table \ref{tab:zE}.

    \begin{table}
        \begin{tabularx}{0.8\textwidth}{|X|X|}
            \hline
            \raisebox{-1.2em}{
    \begin{tikzpicture}
        \node (0) {short stubs:};
        \node [main node] (1) [right = 0.3cm of 0] {};
        \node [main node] (2) [right = 1cm of 1] {};
\end{tikzpicture}
}
    &
    \raisebox{-2em}{
    \begin{tikzpicture}
        \node (00) {medium stubs:};
        \node [main node] (4) [above right = 0.05cm and 0.3cm of 00] {};
        \node [main node] (5) [below right = 0.5cm and 0.5cm of 4] {};
        \node [main node] (6) [above right = 0.5cm and 0.5cm of 5] {};
        \node (x) [below =1.5em of 6] {};
        \path[draw]
        (4)--(5)--(6);
    \end{tikzpicture}
}
    \\
    \hline
\end{tabularx}
\vspace{1em} 
\caption{The short and medium stubs of $\sw$}
    \label{tab:xyStubs}
    \end{table}

    \begin{table}
        \centering
        \begin{tabular}{|c|c|}
            \hline
            \raisebox{0.8em}{
                \begin{tikzpicture}
                    \node (0) {$z_{n}\in \tilde C_{n-1}$:};
                    \node [main node] (1) [right = 0.8cm of 0] {};
                    \node [main node] (2) [above left = 0.5cm and 0.5cm of 1] {};
                    \node [main node] (3) [above right = 0.5cm and 0.5cm of 1] {};
                    \node [main node] (4) [below right = 0.5cm and 0.5cm of 1] {};

                    \node (11) [right = 0.01cm of 1] {\tiny{$(n-1)$}};
                    \node (22) [above = 0.01cm of 2] {\tiny{$(n-2)$}};
                    \node (33) [above = 0.01cm of 3] {\tiny{$n$}};
                    \node (44) [below = 0.01cm of 4] {\tiny{$n$}};
                    \path[draw]
                    (2)--(1)--(3)
                    (1)--(4);
            \end{tikzpicture}}
            & 
            \raisebox{-0.1em}{
                \begin{tikzpicture}
                    \node (x0) {$z_{1}\in F_{n}$:};
                    \node [main node] (x1) [right = 1.3cm of x0] {};
                    \node [main node] (x2) [above left = 0.5cm and 0.5cm of x1] {};
                    \node [main node] (x3) [above right = 0.5cm and 0.5cm of x1] {};
                    \node [main node] (x4) [below left = 0.5cm and 0.5cm of x1] {};
                    \node [main node] (x5) [below left = 0.5cm and 0.5cm of x4] {};

                    \node (x11) [below = 0.01cm of x1] {\tiny{$3$}};
                    \node (x22) [above = 0.01cm of x2] {\tiny{$2$}};
                    \node (x33) [above = 0.01cm of x3] {\tiny{$4$}};
                    \node (x44) [below = 0.01cm of x4] {\tiny{$2$}};
                    \node (x55) [below = 0.01cm of x5] {\tiny{$1$}};
                    \path[draw]
                    (x2)--(x1)--(x3)
                    (x1)--(x4)--(x5);
                \end{tikzpicture} 
            }
            \\ \hline

            \raisebox{0.2em}{
                \begin{tikzpicture}
                    \node (y0) {$z_i\in F_n$:};
                    \node (b) [below = 0.01cm of y0] {\tiny{$(3\le i\le n)$}};
                    \node [main node] (y1) [above right = 0.1cm and 0.8cm of y0] {};
                    \node [main node] (y2) [above left = 0.5cm and 0.5cm of y1] {};
                    \node [main node] (y3) [above right = 0.5cm and 0.5cm of y1] {};
                    \node [main node] (y4) [below right = 0.5cm and 0.5cm of y1] {};
                    \node [main node] (y5) [below right = 0.8cm and 0.8cm of y4] {};
                    \node [main node] (y6) [below right = 0.5cm and 0.5cm of y5] {};

                    \node (y11) [right = 0.01cm of y1] {\tiny{$2$}};
                    \node (y22) [above = 0.01cm of y2] {\tiny{$1$}};
                    \node (y33) [above = 0.01cm of y3] {\tiny{$3$}};
                    \node (y44) [right = 0.01cm of y4] {\tiny{$3$}};
                    \node (y44) [right = 0.01cm of y5] {\tiny{$(i-1)$}};
                    \node (y44) [right = 0.01cm of y6] {\tiny{$i$}};
                    \path[draw]
                    (y2)--(y1)--(y3)
                    (y1)--(y4)
                    (y5)--(y6);
                    \path[draw,dashed]
                    (y4)--(y5);
            \end{tikzpicture}}
            & 
            \begin{tikzpicture}
                \node (yx0) {$z_{i}\in H_{n}$:};
                \node (b) [below = 0.01cm of yx0] {\tiny{$(1\le i\le n)$}};
                \node [main node] (yx1) [right =  0.8cm of yx0] {};
                \node [main node] (yx2) [above left = 0.5cm and 0.5cm of yx1] {};
                \node [main node] (yx3) [above right = 0.5cm and 0.5cm of yx1] {};
                \node [main node] (yx4) [below left = 0.5cm and 0.5cm of yx1] {};
                \node [main node] (yx5) [below right = 0.5cm and 0.5cm of yx4] {};
                \node [main node] (yx6) [below right = 0.8cm and 0.8cm of yx5] {};
                \node [main node] (yx7) [below right = 0.5cm and 0.5cm of yx6] {};

                \node (yx11) [right = 0.01cm of yx1] {\tiny{$2$}};
                \node (yx22) [above = 0.01cm of yx2] {\tiny{$1$}};
                \node (yx33) [above = 0.01cm of yx3] {\tiny{$3$}};
                \node (yx44) [right = 0.01cm of yx4] {\tiny{$1$}};
                \node (yx55) [right = 0.01cm of yx5] {\tiny{$2$}};
                \node (yx66) [right = 0.01cm of yx6] {\tiny{$(i-1)$}};
                \node (yx77) [right = 0.01cm of yx7] {\tiny{$i$}};
                \path[draw]
                (yx2)--(yx1)--(yx3)
                (yx1)--(yx4)--(yx5)
                (yx6)--(yx7);
                \path[draw,dashed]
                (yx5)--(yx6);
            \end{tikzpicture}\\ \hline
        \end{tabular}
        \vspace{1em} 
        \caption{Typical long stubs of $\sw$ in $B_n, \tilde C_{n-1}, F_n$ and $H_n$}
        \label{tab:zBCFH}
    \end{table}

    \begin{table}
        \begin{tabularx}{\textwidth}{|X|c|X|}
            \hline
            \begin{tikzpicture}
                \node (y0) {$z_i$:};
                \node (b) [below = 0.01cm of y0] {\tiny{$(i\in \Z_{<0})$}};
                \node [main node] (y1) [right = 1.8cm of y0] {};
                \node [main node] (y2) [above left = 0.5cm and 0.5cm of y1] {};
                \node [main node] (y3) [above right = 0.5cm and 0.5cm of y1] {};
                \node [main node] (y4) [below left = 0.5cm and 0.5cm of y1] {};
                \node [main node] (y5) [below left = 0.8cm and 0.8cm of y4] {};
                \node [main node] (y6) [below left = 0.5cm and 0.5cm of y5] {};

                \node (y11) [right = 0.01cm of y1] {\tiny{$0$}};
                \node (y22) [above = 0.01cm of y2] {\tiny{$1$}};
                \node (y33) [above = 0.01cm of y3] {\tiny{$v$}};
                \node (y44) [right = 0.01cm of y4] {\tiny{$-1$}};
                \node (y44) [right = 0.01cm of y5] {\tiny{$(i+1)$}};
                \node (y44) [right = 0.01cm of y6] {\tiny{$i$}};
                \path[draw]
                (y2)--(y1)--(y3)
                (y1)--(y4)
                (y5)--(y6);
                \path[draw,dashed]
                (y4)--(y5);
            \end{tikzpicture}
            &
            \raisebox{1.5em}{
                \begin{tikzpicture}
                    \node (0) {$z_{v}$:};
                    \node [main node] (1) [right = 0.8cm of 0] {};
                    \node [main node] (2) [above left = 0.5cm and 0.5cm of 1] {};
                    \node [main node] (3) [above right = 0.5cm and 0.5cm of 1] {};
                    \node [main node] (4) [below  = 0.5cm  of 1] {};

                    \node (11) [right = 0.01cm of 1] {\tiny{$0$}};
                    \node (22) [above = 0.005cm of 2] {\tiny{$-1$}};
                    \node (33) [above = 0.01cm of 3] {\tiny{$1$}};
                    \node (44) [below = 0.01cm of 4] {\tiny{$v$}};
                    \path[draw]
                    (2)--(1)--(3)
                    (1)--(4);
                \end{tikzpicture}
            }
            & 
            \begin{tikzpicture}
                \node (y0) {$z_i$:};
                \node (b) [below = 0.01cm of y0] {\tiny{$(i\in \Z_{>0})$}};
                \node [main node] (y1) [right = 0.8cm of y0] {};
                \node [main node] (y2) [above left = 0.5cm and 0.5cm of y1] {};
                \node [main node] (y3) [above right = 0.5cm and 0.5cm of y1] {};
                \node [main node] (y4) [below right = 0.5cm and 0.5cm of y1] {};
                \node [main node] (y5) [below right = 0.8cm and 0.8cm of y4] {};
                \node [main node] (y6) [below right = 0.5cm and 0.5cm of y5] {};

                \node (y11) [right = 0.01cm of y1] {\tiny{$0$}};
                \node (y22) [above = 0.005cm of y2] {\tiny{$-1$}};
                \node (y33) [above = 0.01cm of y3] {\tiny{$v$}};
                \node (y44) [right = 0.01cm of y4] {\tiny{$1$}};
                \node (y44) [right = 0.01cm of y5] {\tiny{$(i-1)$}};
                \node (y44) [right = 0.01cm of y6] {\tiny{$i$}};
                \path[draw]
                (y2)--(y1)--(y3)
                (y1)--(y4)
                (y5)--(y6);
                \path[draw,dashed]
                (y4)--(y5);
            \end{tikzpicture}\\\hline
        \end{tabularx}
        \vspace{1em} 
        \caption{The long stubs of $\cals(E_{q,r})$}
        \label{tab:zE}
    \end{table}

We record a few features of the stubs in Theorem \ref{thm:stubDescription} for future use:

\begin{lemma}
    \label{lemm:stubRemarks}
    Let $(W,S)$ be an arbitrary $\la(2)$-finite Coxeter group from Theorem \ref{thm:stubDescription} and keep the notation
    of the theorem.  Then the stubs in $\sw$ have the following properties.  
    \begin{enumerate}
        \item The left descent set of each short stub  $x_{tt'}$, medium stub $y_t$ \tul including
             $y'_0,y''_0$ in type $E_{q,r}$\tur and long stub $z_t$ equals $\{t,t'\}, \{t\}$ and
             $\{t\}$, respectively. In particular, a stub $w\in \sw$ has two left descents if and
             only if it is a short stub, and in this case we can recover $w$ from $\call(w)$: if
             $\call(w)=\{t,t'\}$ then $w=x_{tt'}$.
        \item Let $w\in \sw$ be a medium or long stub and let $w=w_p\dots w_1$ be its Cartier--Foata
             factorization.  Then $l(w_1)=2$ and  $l(w_j)=1$ for all $2\le j\le p$, so we may pick a
             reduced word $ \ul{w}= s_1\dots s_{k}(s_{k+1}s_{k+2}) $ of $w$ such that
             $w_1=s_{k+1}s_{k+2}$, $w_2=s_{k}, w_3=s_{k-1}$, etc.
        \item Let $w$ and $\ul w$ be as in \textup{(2)}. Then in the heap $H(\ul w)$, every element
             $i<k+1$ is comparable both to $k+1$ and to $k+2$ via the convex chains of coverings
             $i\prec i+1\prec \dots k\prec k+1$ and $i\prec i+1\prec \dots k\prec k+2$. In
             particular, the element $k$ is both the only element covered by $k+1$ and the only
             element covered by $k+2$ in $H(\ul w)$. 
        \item Let $w$ and $\ul w$ be as in \textup{(2)}. Then we may start with $w$ and successively
             apply left lower star operations with respect to the noncommuting pairs of generators
             $\{s_1,s_2\}, \{s_2,s_3\},\dots, \{s_{k},s_{k+1}\}$ to obtain the sequence $w,
             s_2s_3\dots s_k(s_{k+1}s_{k+2}), s_3\dots s_{k}(s_{k+1}s_{k+2}), \dots,$
             $s_k(s_{k+1}s_{k+2}), w_1$. All elements in this sequence lie in $\sw$ and share the
             same left cell \tul and hence the same right descents\tur\,as $w$. In particular, $w$ is
             in the same left cell and has the same right descents as its first layer $w_1 =
             s_{k+1}s_{k+2}$, which is itself a short stub.
            \end{enumerate}
    \end{lemma}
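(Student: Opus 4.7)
The plan is to handle the four parts in an order that exposes their dependencies: Parts (2) and (3) furnish the structural backbone that makes Parts (1) and (4) nearly immediate. I would first establish Part (2) by inspection of the families $\cals_2$, $\mathcal{T}_2$, and $\cals_3$ of medium and long stubs listed in \cref{thm:stubDescription}: in every case the stated reduced word already displays the Cartier--Foata decomposition with $l(w_1) = 2$ and $l(w_j) = 1$ for all $j \geq 2$, so one only needs to verify that this factorization satisfies Conditions (1)--(2) of the Cartier--Foata form recalled in \autoref{sec:FC}. Part (3) would then follow from two observations: the Cartier--Foata condition forces $s_j$ and $s_{j+1}$ to fail to commute for every $1 \leq j \leq k-1$ (consecutive single-generator layers have non-commuting supports), while the stub condition from \cref{prop:stubCharacterization} forces $s_k$ to fail to commute with both $s_{k+1}$ and $s_{k+2}$. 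Hence in the heap $H(\ul w)$ each generating relation $j \prec j+1$ for $1 \leq j \leq k$ and $k \prec k+2$ holds, and each is automatically a covering relation because no integer strictly separates consecutive indices. Any element $l < k+1$ with $l < k$ is then already below $k$ in the poset via the chain, so $k$ is the unique element covered by $k+1$, and by the symmetric argument by $k+2$.

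Given Parts (2) and (3), Part (1) is a short inspection. For a short stub $x_{tt'}$, the two reduced words $tt'$ and $t't$ (available because $t$ and $t'$ commute) give $\call(x_{tt'}) = \{t, t'\}$ directly from \cref{prop:Descent}. For a medium or long stub, Part (3) identifies position $1$ of $\ul w$ as the unique minimal element of $H(w)$, so by \cref{rmk:heapsAndWords} the left descent set is the singleton $\{s_1\}$, which matches the claimed generator $t$. Recovering $w$ from $\call(w) = \{t, t'\}$ in the short case is immediate: by \cref{thm:stubDescription} the only stubs with two left descents are the $x_{tt'}$, and any pair of commuting generators in $S$ determines $x_{tt'}$ uniquely.

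For Part (4) I would proceed inductively via the left analog of \cref{prop:Reducibility}. At the $i$-th step, the current element is $v_i := s_i s_{i+1} \cdots s_{k+2}$, whose heap is the subposet of $H(\ul w)$ on positions $\{i, i+1, \dots, k+2\}$. Part (3) identifies position $i$ as the unique minimal element of this subheap, covered uniquely by position $i+1$ (labeled $s_{i+1}$), and $m(s_i, s_{i+1}) \geq 3$; thus the left lower star operation with respect to $\{s_i, s_{i+1}\}$ applies and produces $v_{i+1}$. Then \cref{prop:StarOpAndCells} gives $v_i \sim_L v_{i+1}$, \cref{coro:starA} gives $\la(v_{i+1}) = 2$, and \cref{prop:Facts}.(3) gives $\calr(v_i) = \calr(v_{i+1})$. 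Since the first two Cartier--Foata layers of $v_{i+1}$ coincide with those of $w$, each $v_{i+1}$ remains a left $\la(2)$-stub by \cref{coro:firstTwoLayers}. Iterating $i = 1, 2, \dots, k$ produces the advertised sequence terminating in $w_1 = s_{k+1} s_{k+2}$.

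The main obstacle, to my eye, is Part (3): one must be certain that the deeper-layer portion of $H(\ul w)$ is a single non-branching chain with no generating relation $j \prec l$ for $l - j \geq 2$ other than the ``fork'' $k \prec k+2$ at the top. This is exactly what the conjunction of \cref{lemm:deeperLayers} and the explicit shapes of long stubs in \cref{thm:stubDescription} is designed to yield: no $s_l$ with $l > j+1$ other than the top two generators can fail to commute with $s_j$, because the deeper layers of any long stub trace out a path in the Coxeter diagram without shortcuts. Once Part (3) is secured, Parts (1), (2), and (4) reduce to the type-by-type bookkeeping sketched above.
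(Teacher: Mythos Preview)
Your proposal is correct and follows the same approach as the paper, which dispatches the lemma in one line: ``All the claims are readily confirmed by inspection of \cref{thm:stubDescription} and the pictures in Tables \ref{tab:xyStubs}--\ref{tab:zE}.'' You have simply unpacked that inspection into a structured argument, organizing the parts in a sensible dependency order and explicitly citing the auxiliary results (\cref{prop:stubCharacterization}, \cref{coro:firstTwoLayers}, \cref{prop:Reducibility}, \cref{prop:StarOpAndCells}, \cref{coro:starA}) that make the verification go through.
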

    \begin{proof} All the claims are readily confirmed by inspection of Theorem \ref{thm:stubDescription}
          and the pictures in Tables \ref{tab:xyStubs}--\ref{tab:zE}.     
    \end{proof}

    \begin{remark}
      \label{rmk:a.vs.n}
      The question of whether the $n$-value (as defined in the paragraph before Proposition \ref{prop:a=n}) and
      $\la$-value of an FC element always agree in a general Coxeter system is open. The results of
      this subsection allow us to enhance Proposition \ref{prop:a=n}.(1) and give a partial answer to this
      question in the following way: 

\begin{prop} 
\label{prop:a.vs.n}
Let $(W,S)$ be an arbitrary Coxeter system and let $w\in \fc(W)$.  \begin{enumerate} \item We
   have $\la(w)=0$ if and only if $n(w)=0$.  \item We have $\la(w)=1$ if and only if
   $n(w)=1$.
\item If $(W,S)$ is $\la(2)$-finite, then $\la(w)=2$ if and only if $n(w)=2$.  \end{enumerate}
  \end{prop}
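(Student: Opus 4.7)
The plan is to use \cref{prop:a=n}.(1), which gives $n(w)\le \la(w)$ for every FC element $w$, as the starting point, and then separately match $\la$-values with $n$-values in the three small cases. Parts (1) and (2) are essentially bookkeeping: the identity is the unique element with empty heap and has both $\la$-value and $n$-value equal to $0$, while an element $w\ne 1$ satisfies $n(w)=1$ precisely when $H(w)$ is a chain, which by the definition of heap is equivalent to $w$ having a unique reduced word, and by \cref{prop:facts01}.(2) to $\la(w)=1$. For part (3), the implication $\la(w)=2\Rightarrow n(w)=2$ is likewise immediate from $n(w)\le\la(w)=2$ combined with parts (1) and (2), since any smaller value of $n(w)$ would force $\la(w)\le 1$.

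For the reverse implication $n(w)=2\Rightarrow \la(w)=2$, I first reduce to the case where $w$ is a left stub. Starting from $w$, I repeatedly apply right lower star operations---each of which simply deletes a maximal element of $H(w)$---until no such operation remains available. The resulting element $w'$ is a left stub with $\la(w')=\la(w)$ by \cref{coro:starA} and with $n(w')\le n(w)=2$, since removing elements from a poset only weakly decreases its width. Using the already-established parts (1) and (2), any value $n(w')<2$ would force $\la(w')<2$ and contradict $\la(w')\ge n(w)=2$, so in fact $n(w')=2$. We may also assume $(W,S)$ is nontrivially $\la(2)$-finite, because in the remaining $\la(2)$-finite systems no two generators commute---these are the complete-diagram cases, $I_2(m)$, and small-rank instances such as $A_1$, $A_2$, $B_2$---so no FC element there can have $n(w)=2$ and the implication is vacuous.

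The remaining task, which is the main obstacle, is to show that every FC left stub $w'$ with $n(w')=2$ has $\la(w')=2$. The key observation is that the layer analysis underlying \cref{thm:stubDescription}---namely \cref{coro:firstTwoLayers}, \cref{lemm:deeperLayers}, and the case-by-case arguments illustrated in \cref{eg:layerAnalysis}---uses the hypothesis $\la(w')=2$ only through \cref{coro:layerSizeBound}, whose proof invokes only the bound $n(w')\le 2$ together with the exclusions $w'\ne 1$ and $\abs{\supp(w'_1)}\ge 2$ (both of which follow from $n(w')=2$ via parts (1)--(2) and \cref{prop:stubCharacterization}). Rerunning that analysis under the weaker hypothesis ``$w'$ is an FC left stub with $n(w')=2$'' therefore still forces $w'$ to coincide with one of the elements explicitly listed in \cref{thm:stubDescription}, and the ``if'' direction of that theorem then exhibits a sequence of left lower star operations reducing $w'$ to its first Cartier--Foata layer $w'_1$, a product of two commuting generators with $\la(w'_1)=2$ by \cref{prop:Facts}.(2). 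Hence $\la(w')=\la(w'_1)=2$ by \cref{coro:starA}, completing the argument. The delicate point to verify will be that the full layer analysis from \cref{eg:layerAnalysis} onward truly only uses antichain-size bounds, and never secretly requires $\la(w')=2$ beyond what $n(w')=2$ already provides.
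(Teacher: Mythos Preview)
Your proof is correct and follows essentially the same approach as the paper's: both arguments handle parts (1) and (2) by direct bookkeeping, then for part (3) reduce to the case of a left stub and invoke the layer analysis underlying \cref{thm:stubDescription} to identify the stub and conclude $\la=2$. The paper packages the reduction as an induction on length (using that star operations preserve both $\la$ and $n$), whereas you perform the reduction in one step; this is a cosmetic difference. You are in fact slightly more explicit than the paper on two points: you spell out why the implication is vacuous in the trivially $\la(2)$-finite cases, and you flag precisely why \cref{coro:firstTwoLayers} and \cref{coro:layerSizeBound} may be invoked under the hypothesis $n(w')=2$ rather than $\la(w')=2$, which is exactly the subtlety one must check to avoid circularity.
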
 
\begin{proof} (1) By Proposition \ref{prop:facts01}.(1), the equations $\la(w)=0$ and $n(w)=0$ both hold
     exactly when $w$ is the identity element and has an empty reduced word, so they are
     equivalent.

(2) If $\la(w)=1$, then $n(w)\neq 0$ by Part (1) and $n(w)\le 1$ by Proposition \ref{prop:a=n}.(1), and
therefore $n(w)=1$.  Conversely, if $n(w)=1$ then the heap $H(w)$ is a chain, so no reduced word
of $w$ contains two consecutive generators that commute. The definition of FC elements then
implies that $w$ has a unique reduced word, so $\la(w)=1$ by Proposition \ref{prop:facts01}.(2). It follows
that $\la(w)=1$ if and only if $n(w)=1$.

(3) Suppose $(W,S)$ is $\la(2)$-finite. If $\la(w)=2$, then $n(w)\notin\{0,1\}$ by Parts (1)--(2)
and $n(w)\le 2$ by Proposition \ref{prop:a=n}.(1), and therefore $n(w)=2$. Now suppose $n(w)=2$. We prove that
$\la(w)=2$ by induction on the length $l(w)$ of $w$. We have $l(w)= \abs{H(w)}\ge n(w)=2$, so in the
base case we have $l(w)=2$. Since $n(w)=2$, we must have $w=st$ for two commuting generators $s,t\in
S$ in this case; therefore $\la(w)=2$ by Proposition \ref{prop:Facts}.(2), as desired.  If $l(w)>2$, then $w$
either admits or does not admit a right lower star operation. In the former case, if a right lower
star operation takes $w$ to some element $w'$ then we have $\la(w)=\la(w')=n(w')=n(w)=2$, where the
first equality holds by Corollary \ref{coro:starA}, the second equality holds by induction, and the third
equality holds because star operations do not change $n$-values of FC elements (see the proof of
Proposition 3.15 in \cite{GreenXu}). In the latter case, the element $w$ is a left stub, so
Corollary \ref{coro:firstTwoLayers} implies that the first two layers of $w$ satisfy Conditions (1)--(2) from
the corollary. As pointed out in the proof of Theorem \ref{thm:stubDescription}, this forces $w$ to be an
element in the set $\sw$ given in the theorem, and therefore $w$ indeed has $\la$-value 2.  It
follows by induction that $\la(w)=2$. We have proved that $\la(w)=2$ if and only if $n(w)=2$.
\end{proof}
    \end{remark}

\subsection{Stubs parameterize 1-cells}
\label{sec:1cells}
Let $(W,S)$ be a \ntatf Coxeter system throughout this subsection. We characterize 1-cells in $W_2$
via stubs in two ways, first in terms of star operations in Theorem \ref{thm:cellParameterization} and then
in terms of reduced words and the weak Bruhat order in Theorem \ref{thm:CellViaDecomp}. To start, we show
that distinct left stubs lie in distinct right cells in the following two results.

\begin{lemma}
    \label{lemm:stubDescents}
    Let $u,w$ be two distinct left $\la(2)$-stubs in $W$.  Then we either have $\call(u)\neq \call(w)$ or have
    $\call(u)=\call(w)=\{s\}$ for some $s\in S$. Moreover, in the latter case the elements $u':=su$ and $w':=sw$
    are also stubs, and we have $\call(u')\neq \call(w')$. 
\end{lemma}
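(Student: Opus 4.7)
The plan is to reduce the claim to the explicit classification of left $\la(2)$-stubs in \cref{thm:stubDescription} and to invoke the structural facts from \cref{lemm:stubRemarks}. First I would establish the opening dichotomy: by \cref{lemm:stubRemarks}.(1), every stub has either a size-$2$ left descent set (in which case it is a short stub $x_{tt'}$, uniquely determined by its descent set) or a size-$1$ left descent set (in which case it is medium or long). If $u\neq w$ share their descent set, they cannot both be short (uniqueness would force $u=w$), and they cannot have descent sets of different cardinalities, so $\call(u)=\call(w)=\{s\}$ for some $s\in S$, as required.

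Next I would construct $u'=su$ and $w'=sw$ and show they lie in $\sw$. Since $\call(u)=\{s\}$, the stub $u$ is medium or long, so \cref{lemm:stubRemarks}.(2) furnishes a Cartier--Foata form $u=u_p\cdots u_1$ whose topmost layer is the single generator $u_p=s$. Stripping this layer yields $su=u_{p-1}\cdots u_1$, which is precisely the output of the first left lower star operation in the sequence in \cref{lemm:stubRemarks}.(4); that lemma then guarantees $u'\in \sw$, and the same argument gives $w'\in\sw$.

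The main task is to verify $\call(u')\neq \call(w')$ whenever $u\neq w$ share $\call=\{s\}$, and this I would do by inspecting \cref{thm:stubDescription} type by type. The governing structural observation is that each long stub $z_t$ in the classification is indexed by its unique left descent $t$, so for any fixed $s$ there is at most one long stub with $\call=\{s\}$; similarly at most one medium stub $y_t$ has $\call=\{s\}$, the sole exception being that in type $E_{q,r}$ the three medium stubs $y_0, y'_0, y''_0$ all have $\call=\{0\}$. Consequently every collision $u\neq w$ with $\call(u)=\call(w)=\{s\}$ falls into one of two subcases: either (a) one of $u,w$ is medium and the other long, so peeling $s$ produces a short stub on one side (descent set of size $2$) and a medium or long stub on the other (descent set of size $1$), immediately giving $\call(u')\neq\call(w')$; or (b) $u$ and $w$ lie among $y_0, y'_0, y''_0$ in $\cals(E_{q,r})$, whose peelings are the short stubs $x_{-1,1}, x_{-1,v}, x_{1,v}$ with pairwise distinct descent sets $\{-1,1\}, \{-1,v\}, \{1,v\}$. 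The main obstacle is really just the bookkeeping in this last step: no single subcase is deep, but one must scan the lists in \cref{thm:stubDescription} to confirm that no type produces two long stubs sharing a singleton left descent, nor two ``ordinary'' medium stubs doing so outside of the exceptional $E_{q,r}$ triple.
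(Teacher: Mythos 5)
Your proposal is correct and follows essentially the same route as the paper: the dichotomy via \cref{lemm:stubRemarks}.(1), the fact that $u',w'\in\sw$ via \cref{lemm:stubRemarks}.(4), and a case check against \cref{thm:stubDescription} showing that every collision is either a medium--long pair (where peeling gives a short stub versus a non-short stub, so the descent sets have different sizes) or the exceptional triple $y_0,y'_0,y''_0$ in type $E_{q,r}$ (handled by direct inspection). The paper organizes the same case analysis into its Cases 1(a), 1(b), 2, 3, but the substance is identical.
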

\begin{proof} 
    We use Lemma \ref{lemm:stubRemarks}. Suppose that $\call(u)=\call(w)$.  Since $u\neq w$ by assumption and left stubs
    with two left descents can be recovered from the descent by Part (1) of the lemma, we must have
    $\call(u)=\call(w)=\{s\}$ for some $s\in S$. This proves the first claim.  For the second claim, note that in
    the notation of Theorem \ref{thm:stubDescription}, we can have $\call(u)=\call(w)=\{s\}$ for distinct stubs $u,w$ only
    in the following cases where $(W,S)$ is of type $E_{q,r}, F_n$ or $H_n$; outside these types, the claim holds
    vacuously.

    \begin{enumerate}[label=\textup{(\roman*)}]
        \item \textbf{Case 1:} $(W,S)$ is of type $E_{q,r}$, and we have either one of two subcases:
            \begin{enumerate}                
                \item $s=0$ and $u,w$ are two $y$-stubs from the set $\{y_0,y'_0,y''_0\}$ where \[y_0=0\cdot
                    x_{(-1)1}, \quad y_0'=0\cdot x_{(-1)v},\quad y''_0=0\cdot x_{1v};\]
                \item $s=i$ for some nonzero integer $i$ with $-q<i<r$, and 
                    \[ 
                        \{u,w\}=\{y_i=i\cdot
                    x_{(i-1)(i+1)},\; z_i=i\cdot z'\} \] 
                    where $z'$ is a long stub.
            \end{enumerate}
        \item\textbf{Case 2:} 
            $(W,S)$ is of type $F_n$, and there is some $1<i<n$ such that 
            \[ \{u,w\}=\{y_i=i\cdot x_{(i-1)(i+1)},\; z_i=i\cdot z'\} \]
            where $z'$ is a long stub.
        \item\textbf{Case 3:} $(W,S)$ is of type $H_n$, and there is some $1<i<n$ such that 
            \[ \{u,w\}=\{y_i=i\cdot x_{(i-1)(i+1)},\; z_i=i\cdot z'\}  \]
            where $z'$ is a long stub.
    \end{enumerate} 
   We have $\call(u')\neq \call(w')$ for the stubs  $u'=su$ and $w'=sw$ in Case 1(a) by inspection,
   and the same is true in all the other cases by Lemma \ref{lemm:stubRemarks}.(1) because one of $u',w'$
   is a short stub while the other is not.
\end{proof}

\begin{prop}
    \label{prop:distinctCells}
    Let $u,w$ be distinct left $\la(2)$-stubs of $W$. Then $u\not\sim_R w$. 
\end{prop}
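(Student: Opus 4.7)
The plan is a proof by contradiction. I will assume $u\sim_R w$, reduce the problem to a single ``peeling'' step on each stub that strips off its leftmost letter, and derive a contradiction from the resulting mismatch of left descent sets.

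Suppose $u\sim_R w$. Then \cref{prop:Facts}.(3) forces $\call(u)=\call(w)$. Since $u\neq w$, \cref{lemm:stubDescents} rules out the case $|\call(u)|=2$ and leaves us with $\call(u)=\call(w)=\{s\}$ for some $s\in S$; the same lemma further produces the stubs $u':=su$ and $w':=sw$ with $\call(u')\neq\call(w')$. That $u',w'$ still lie in $\sw$ (so that $\la(u')=\la(w')=2$) can be read off from \cref{lemm:stubRemarks}.(4): the passage $u\mapsto su$ is exactly the first step of the chain of left lower star operations reducing $u$ to its first Cartier--Foata layer, and every element of that chain remains a $\la(2)$-stub.

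Next, I will show $u\sim_R u'$, and by symmetry $w\sim_R w'$. Since $s$ is the unique left descent of $u$, the factorization $u=s\cdot u'$ is reduced, so $u'\le^R u$; by \cref{prop:Facts}.(1) this gives $u\le_R u'$. Combined with the equality $\la(u)=\la(u')=2$, the equality case of \cref{prop:Facts}.(5) then upgrades this weak Bruhat comparison to the cell equivalence $u\sim_R u'$.

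Finally, transitivity of $\sim_R$ together with the standing assumption $u\sim_R w$ yields $u'\sim_R w'$, contradicting $\call(u')\neq\call(w')$ via \cref{prop:Facts}.(3). Hence $u\not\sim_R w$, as desired. The sole nontrivial ingredient is confirming that the reduction $u\mapsto u'$ stays inside $\sw$ so that the $\la$-values match; this is what licenses the promotion from weak Bruhat order to cell equivalence, and everything else is a direct bookkeeping application of the cell machinery already in place.
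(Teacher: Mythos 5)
There is a genuine gap, and it is fatal: the step ``$u=s\cdot u'$ is reduced, so $u'\le^R u$'' has the weak orders backwards. In the paper's conventions, a reduced factorization $z=x\cdot y$ gives $x\le^R z$ (prefix) and $y\le^L z$ (suffix); since $u'=su$ is a \emph{suffix} of $u$, you get $u'\le^L u$, hence $u\le_L u'$ by \cref{prop:Facts}.(1), and the equality case of \cref{prop:Facts}.(5) then yields $u\sim_L u'$ --- a \emph{left}-cell equivalence, not $u\sim_R u'$. This is consistent with \cref{lemm:stubRemarks}.(4), which records exactly that peeling off the leftmost letter of a stub is a left lower star operation and preserves the left cell. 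With only $u\sim_L u'$ and $w\sim_L w'$ in hand, the chain $u'\sim_L u\sim_R w\sim_L w'$ mixes the two preorders and gives no relation between $u'$ and $w'$, so the contradiction with $\call(u')\neq\call(w')$ never materializes. Worse, the intermediate claim $u\sim_R u'$ is actually \emph{false} in general: $u$ and $u'=su$ are distinct left $\la(2)$-stubs, so the very proposition you are proving asserts $u\not\sim_R u'$ (and one can see directly, e.g.\ for $u=y_2=2\cdot 13$ and $u'=13$ in type $A$, that these lie in different right cells).

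The correct mechanism --- and the one the paper uses --- is \cref{prop:SimpleStarOpAndCells}.(2): a \emph{simple} left star operation $*$ with respect to a single pair $\{s,t\}$ with $m(s,t)=3$ transports right-cell equivalence, so from $\call(*u)\neq\call(*w)$ one gets $*u\not\sim_R *w$ and hence $u\not\sim_R w$. This is more delicate than a one-step peeling: the operation must be taken with respect to the \emph{same} pair $\{s,t\}$ for both $u$ and $w$, and that pair must be a simple edge, which is why the paper has to choose $t$ case by case (e.g.\ taking $t$ to be the common generator of $su$ and $sw$ in type $E_{q,r}$ with $s=0$, and making an exceptional choice in type $F_n$ where $m(2,3)=4$). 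Your peelings of $u$ and $w$ are in general star operations with respect to different pairs, and sometimes with respect to heavy edges, so they cannot be fed into \cref{prop:SimpleStarOpAndCells} either.
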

\begin{proof}
    Keep the notation from Theorem \ref{thm:stubDescription}, Lemma \ref{lemm:stubDescents}, and the proof of
    the lemma.  We have $u\not\sim_R w$ if $\call(u)\neq \call(w)$ by Proposition \ref{prop:Facts}.(3), so
    Lemma \ref{lemm:stubDescents} implies that it suffices to treat Cases 1--3 from its proof, where
    $\call(u)=\call(w)=\{s\}$ for some $s\in S$. We may show $u\not\sim_R w$ by finding a generator
    $t\in S$ such that $m(s,t)=3$ and $\call(*u)\neq \call(*w)$, where $*$ denotes the simple left
    star operation with respect to $\{s,t\}$: the fact that $\call(*u)\neq \call(*w)$ implies that
    $*u\not\sim_R *w$, so $u\not\sim_R w$ by Proposition \ref{prop:SimpleStarOpAndCells}.(2). We explain how to
    find such a generator $t$ below.

    In Case 1(a) of the proof of Lemma \ref{lemm:stubDescents}, we may take $t$ to be the unique
    generator appearing in both the stubs $u'=su$ and $w'=sw$.  For example, if $u=y_0$ and
    $w=y_0'$, then $u'= x_{(-1)1}, w'=x_{(-1)v}$ and we may take $t=-1$, whence $*u=u', *w=w'$ and
    $\call(*u)\neq \call(*w)$.  In Cases 1(b), 2 and 3, we note that $s$ has a numerical label $i$
    and that $i-1,i+1$ are also generators in $S$. By our labeling of generators
    (Remark \ref{rmk:labels}), it follows that $m(i,i+1)=3$ except when $(W,S)$ is of Coxeter type $F_n$,
    $i=2$ and $\{u,w\}=\{y_2=2(13),z_2=2\cdot 3\cdot (24)\}$.  In this exceptional case we pick
    $t=i-1=1$, so that $m(s,t)=3$ and $\{*u,*w\}=\{13=x_{13},1\cdot 2\cdot 3\cdot (24)=z_1\}$ with
    respect to $\{s,t\}$. In all other cases we may pick $t=i+1$, whence $m(s,t)=3$ and
    $\{*u,*w\}=\{*y_i,*z_i\}=\{x_{(i-1)(i+1)},z_{i+1}\}$ with respect to $\{s,t\}$. Here, the fact
    that $*z_i=z_{i+1}$ in the last set can be easily seen from heaps in Tables \ref{tab:zBCFH} and
    \ref{tab:zE}, with the star operation on $z_i$ being a lower one if and only if $(W,S)$ is of
    type $E_{q,r}$ and $-q<i<0$. In all cases, one of $*u$ and $*w$ is an short stub and the other
    is a long stub, so $\call(*u)\neq \call(*w)$ by Lemma \ref{lemm:stubRemarks}.(1), as desired.
\end{proof}

We are ready to describe cells in terms of stubs and star operations.

\begin{definition}
    \label{def:closure}
    For each element $w\in W$, we define the \emph{right upper star closure} of $w$ to be the set of
    all elements $y\in W$ for which there exists a sequence $z_1=w, z_2,\dots, z_q=y$ such that
    $z_{i+1}$ can be obtained from $z_{i}$ via a right upper star operation for all $1\le i\le q-1$;
    we denote the set by $R_w$. Similarly, for each $w\in W$ we define its \emph{left upper star
    closure} to be the set $L_w$ containing all elements that can be obtained from $w$ via a
    sequence of left upper star operations.
\end{definition}

\begin{thm}
    \label{thm:cellParameterization}
    Let $W$ be a nontrivially $\la(2)$-finite Coxeter group, let $W_2=\{w\in W:\la(w)=2\}$, and let
    $\cals(W)$ be the set of left $\la(2)$-stubs in $W$.  Then the set $R_w$ forms a right \kl cell
    for every $w\in \sx$.  Moreover, we have  $R_w\cap R_{w'}=\emptyset$ for distinct
    $w,w'\in\cals(W)$, and $W_2=\sqcup_{w\in \cals(W)}R_w$. In particular, the number of right cells
    in $W_2$ equals the cardinality of $\sw$ given in Part \textup{(}7\textup{)} of
    Theorem \ref{thm:stubDescription}.
\end{thm}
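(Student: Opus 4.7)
The plan is to partition $W_2$ into the sets $R_w$ by reducing every element of $W_2$ to a left stub via right lower star operations and then reconstructing it via the inverse right upper operations. The tools are all already in place: \cref{coro:starA} (star operations preserve $\la$-values), \cref{prop:StarOpAndCells}.(2) (right lower star operations preserve right cell equivalence), and \cref{prop:distinctCells} (distinct left stubs lie in distinct right cells).

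First I would check that for each $w \in \sw$, we have $R_w \se W_2$ and $R_w$ sits inside the right cell of $w$. Any $y \in R_w$ arises from $w$ by finitely many right upper star operations, so $\la(y) = \la(w) = 2$ by \cref{coro:starA}. Because a right upper and the corresponding right lower star operation are mutually inverse partial functions on $W$, the same sequence read backwards consists of right lower star operations carrying $y$ to $w$, and \cref{prop:StarOpAndCells}.(2) then yields $y \sim_R w$.

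Next I would establish the decomposition $W_2 = \bigsqcup_{w \in \sw} R_w$. Given $y \in W_2$, apply right lower star operations greedily. Each application strictly decreases length (in the left coset decomposition $z = z^I \cdot z_I$, the factor $z_I \in W_I$ is replaced by an element one letter shorter) and preserves $\la$-value by \cref{coro:starA}, so the process stays in $W_2$ and terminates at some $w$ of $\la$-value $2$ that admits no further right lower star operation, i.e., $w \in \sw$ by \cref{def:stubs}. Reversing the sequence expresses $y$ as a result of right upper operations applied to $w$, giving $y \in R_w$. For disjointness, if $y \in R_w \cap R_{w'}$ then $w \sim_R y \sim_R w'$ by the previous step, forcing $w = w'$ by \cref{prop:distinctCells}.

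To conclude, let $C$ be any right cell contained in $W_2$ and fix $y \in C$. The previous paragraph gives $y \in R_w$ for a unique $w \in \sw$, whence $w \sim_R y$ places $w$ in $C$ and forces $R_w \se C$. Therefore $C$ is a union of sets of the form $R_{w'}$ with $w' \in C$, and \cref{prop:distinctCells} reduces this union to a single term, giving $C = R_w$. The bijection between $\sw$ and the right cells in $W_2$ then follows, together with the cardinality count from \cref{thm:stubDescription}.(7). I do not anticipate a serious obstacle: the only delicate point is the termination step, which depends on the twin facts that reductions stay in $W_2$ and that terminal elements are automatically in $\sw$; both are direct consequences of \cref{coro:starA} and \cref{def:stubs}.
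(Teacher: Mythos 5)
Your proposal is correct and follows essentially the same route as the paper's proof: show $R_w$ lands inside the right cell of $w$ via \cref{prop:StarOpAndCells}, observe that every element of $W_2$ reduces to a left stub by right lower star operations (so $W_2=\bigcup_{w\in\sw}R_w$), and invoke \cref{prop:distinctCells} to separate the pieces. The paper compresses the termination and covering steps into the phrase ``by the definition of stubs,'' whereas you spell them out; the content is identical.
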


\begin{proof}
    Let $R(w)$ be the right \kl cell containing $w$ for each $w\in W$. Then we have $R_w\se R(w)$ by
    Proposition \ref{prop:StarOpAndCells}.  On the other hand, by the definition of stubs we have
    $W_2=\cup_{w\in \sw} R_w$.  It follows that to prove the theorem it suffices to show that
    $w\not\sim_R w'$ whenever $w,w'$ are distinct elements in $\sw$. This holds by
    Proposition \ref{prop:distinctCells}.
\end{proof}

Using the fact that the set $R_w$ is a right cell for each $w\in\sw$, we now work towards a second
description of the cells, in terms of reduced words.

\begin{definition}
    \label{def:stubDecomp}
    Let $w\in W$. We define a \emph{left stub decomposition} of $w$ to be a reduced factorization of
    the form $w=x\cdot z$ where $x$ is a left stub and $z\in W$.  Similarly, we define a \emph{right
    stub decomposition} of $w$ to be a reduced factorization of the form $w=z\cdot y$ where $y$ is a
    right stub.  
\end{definition}

\begin{thm}
    \label{thm:CellViaDecomp}
    Let $x\in \sw$ and let $w$ be an element of $\la$-value 2 in $W$. Let $R(x)$ be the right cell containing $x$
    \textup{(}so that $R(x)=R_x$ by Theorem \ref{thm:cellParameterization}\textup{)}.
    \begin{enumerate}
        \item We have  $w\in R_x$ if and only if $w$ has a left stub decomposition of the form $w=x\cdot z$.
        \item We have $w\in R(x)$ if and only if $x\le^R w$, i.e. we have 
        \[
            R(x)=\{z\in W_2: x\le^R z\}.
        \]
        \item The element $w$ has a unique left stub decomposition in the sense that the stub $x'$ and the
             element  $z'$ in the decomposition $w=x'\cdot z'$ are both unique.
    \end{enumerate}
\end{thm}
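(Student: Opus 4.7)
The plan is to prove Part~(1) first and then deduce Parts~(2) and~(3) from it together with \cref{thm:cellParameterization}. For the ``$\Rightarrow$'' direction of Part~(1), I would unwind the definition of right upper star operations: by the explicit formulas in \cref{sec:StarOperations}, such an operation sends $v = v^I \cdot v_I$ to $v^I \cdot v_I \cdot (\text{next letter in the alternating sequence})$, increasing the length by exactly one. Hence a sequence $x = w_0, w_1, \ldots, w_q = w$ of right upper star operations yields $w_i = w_{i-1}\cdot s_i$ for suitable generators $s_i$, and concatenation produces $w = x \cdot s_1 s_2 \cdots s_q$ with $l(w) = l(x) + q$, which is a reduced left stub decomposition.

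The ``$\Leftarrow$'' direction of Part~(1) is the heart of the theorem, and my approach avoids any combinatorial manipulation of star operations. Given a reduced factorization $w = x \cdot z$, by definition we have $x \le^R w$, so \cref{prop:Facts}.(1) gives $w \le_R x$. Since $x\in \sw$ and $w\in W_2$, we have $\la(x) = \la(w) = 2$, so the equality case of \cref{prop:Facts}.(5) forces $w \sim_R x$, and hence $w \in R(x)$. By \cref{thm:cellParameterization} we have $R(x) = R_x$, so $w \in R_x$ as desired.

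Part~(2) is then immediate: $w \in R(x) = R_x$ iff $w$ has a left stub decomposition $w = x \cdot z$ (by Part~(1)) iff $x \le^R w$ (by the definition of $\le^R$). For Part~(3), if $w = x \cdot z = x' \cdot z'$ are two left stub decompositions of $w$ with $x, x' \in \sw$, then Part~(1) gives $w \in R_x \cap R_{x'}$; since \cref{thm:cellParameterization} states that the sets $R_y$ for $y \in \sw$ partition $W_2$, this forces $x = x'$, and cancellation in $W$ yields $z = z'$. I do not anticipate a serious obstacle in this proof: the whole theorem reduces to applying \cref{prop:Facts} and \cref{thm:cellParameterization}, both of which have already been established. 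The only minor point requiring care is verifying the length-increase formula for right upper star operations used in the forward direction of Part~(1), but this is essentially a direct reading of their definition.
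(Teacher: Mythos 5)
Your proof is correct, but for the crucial ``if'' direction of Part (1) it takes a genuinely different, and shorter, route than the paper. The paper writes $w=x\cdot z$, lets $y$ be the unique stub with $w\in R_y$ (supplied by \cref{thm:cellParameterization}), and proves $x=y$ by a combinatorial comparison of left descent sets: $\call(x)=\call(w)$ via \cref{lemm:decompAndDescent}, $\call(w)=\call(y)$ via \cref{prop:Facts}.(3), and then the explicit stub classification (\cref{lemm:stubRemarks}, \cref{lemm:stubDescents}) to force two stubs with matching descent data to coincide. You instead argue directly through the $\la$-function: $x\le^R w$ gives $w\le_R x$ by \cref{prop:Facts}.(1), and since $\la(w)=\la(x)=2$ the equality case of \cref{prop:Facts}.(5) yields $w\sim_R x$, after which \cref{thm:cellParameterization} identifies the cell $R(x)$ with the star closure $R_x$. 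This is legitimate under the paper's standing assumptions (the boundedness conjecture underlying Parts (5)--(9) of \cref{prop:Facts} is assumed throughout), and the paper itself uses exactly this pattern in \cref{rmk:a2finiteAssumption} to show two stubs of $\tilde C_2$ are right-equivalent. What your route buys is brevity and a clean localization of where $\la(2)$-finiteness enters (only through $R(x)=R_x$); what the paper's route buys is that this particular step rests only on the non-conjectural fact that descents are constant on cells, rather than on the deeper equality statement in \cref{prop:Facts}.(5). Your handling of the ``only if'' direction and of Parts (2) and (3) matches the paper's, up to the minor observation that the containment $R(x)\se W_2$ in the displayed equality of Part (2) also requires \cref{prop:Facts}.(5).
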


\noindent
Note that Part (2) of the theorem, which characterizes the right cell in $W_2$ in terms of the right
weak Bruhat order, may be viewed as an analog of Proposition \ref{prop:facts01}.(3). To prove the theorem, we
will use the following lemma. 

\begin{lemma}
    \label{lemm:decompAndDescent}
    Let $(W,S)$ be an arbitrary Coxeter system and let $w\in \fc(W)$.
    \begin{enumerate}
        \item Suppose that $w$ has a reduced word of the form 
            \[
                \ul w=s_1\dots s_k (s_{k+1}s_{k+2}\dots s_{k+j})s_{k+j+1}\dots s_{q}
         \]
         where the set $A:=\{k+1,k+2,\dots,k+j\}$ forms a maximal antichain in
             the heap $H(w)$. Let $x=s_1\dots s_k(s_{k+1}\dots s_{k+j})$ and $y=(s_{k+1}\dots s_{k+j})s_{k+j+1}\dots s_q$.
             Then we have $H(x)=\cali_A$ and $H(y)=\calf_A$ as sets. 
            
        \item In the above setting, we have $\call(w)=\call(x)$ and $\calr(w)=\calr(y)$.  

        \item Let $x\in \sw$ and let $w$ be an element of $\la$-value 2 with a
    left stub decomposition of the form $w=x\cdot z$. Then $\call(w)=\call(x)$.
    \end{enumerate}
\end{lemma}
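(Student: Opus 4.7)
\textbf{Proof plan for \cref{lemm:decompAndDescent}.}

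For Part~(1), the plan is to use \cref{lemm:maxAntichain} applied to the maximal antichain $A$ in $H(w)$, which gives $H(w)=\cali_A\cup\calf_A$. I would then argue that $\cali_A=\{1,2,\dots,k+j\}$ (and $\calf_A=\{k+1,\dots,q\}$) as sets. For this, recall that the heap order $\preceq$ on $H(w)$ is generated by covering relations between indices $i<j$ with $m(s_i,s_j)\neq 2$, so $p\preceq r$ in $H(w)$ forces $p\leq r$ numerically. Given an index $i\leq k$: if $i\in\calf_A$ then $i\succeq a$ for some $a\in A$, forcing $i\geq a\geq k+1>i$, a contradiction, so $i\in\cali_A$. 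Dually, any $i>k+j$ cannot lie in $\cali_A$ (again using numerical monotonicity) and so must lie in $\calf_A$. Since $A\subseteq\cali_A\cap\calf_A$ by reflexivity, the underlying sets of $\cali_A$ and $\calf_A$ are exactly those of $H(x)$ and $H(y)$, and the induced partial orders agree because the generating covering relations only depend on the pair $(s_i,s_j)$.

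For Part~(2), I would show that the set of minimal elements of $H(w)$ coincides with the set of minimal elements of $H(x)$ (and dually for $H(y)$); since left descents are the labels of minimal heap elements by \cref{rmk:heapsAndWords}, this gives $\call(w)=\call(x)$. If $m$ is minimal in $H(w)$, then Part~(1) puts $m\in\cali_A=H(x)$; any $p\prec_x m$ would also satisfy $p\prec m$ in $H(w)$, contradicting minimality, so $m$ is minimal in $H(x)$. Conversely, if $m$ is minimal in $H(x)$ and $p\prec m$ in $H(w)$, then the ideal $\cali_{\{m\}}$ is contained in $\cali_A=H(x)$, so $p\in H(x)$ with $p\prec_x m$, again contradicting minimality. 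The argument for $\calr(w)=\calr(y)$ is symmetric, using filters in place of ideals.

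For Part~(3), the plan is to exhibit a reduced word of $w$ of the form required by Part~(1) with $j=2$, then invoke Part~(2). By \cref{lemm:stubRemarks}.(2) (with the case $l(x)=2$ being immediate), we may pick a reduced word $\ul{x}=s_1\cdots s_k(s_{k+1}s_{k+2})$ of $x$ whose last two letters form the first layer $w_1$, so that $s_{k+1}$ and $s_{k+2}$ commute. Concatenating with any reduced word of $z$ gives a reduced word $\ul{w}=s_1\cdots s_k(s_{k+1}s_{k+2})s_{k+3}\cdots s_q$ of $w$, and the set $A=\{k+1,k+2\}$ is an antichain in $H(w)$ since $m(s_{k+1},s_{k+2})=2$. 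Because $\la(w)=2$, \cref{coro:layerSizeBound}.(1) forces every two-element antichain in $H(w)$ to be maximal, so $A$ is a maximal antichain. Part~(2) then yields $\call(w)=\call(x)$. The main obstacle is really only the bookkeeping in Part~(1): once one correctly uses the numerical monotonicity of the heap order together with \cref{lemm:maxAntichain}, Parts~(2) and~(3) follow quickly.
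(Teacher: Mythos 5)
Your proposal is correct and follows essentially the same route as the paper's proof: Part (1) via \cref{lemm:maxAntichain} together with the numerical monotonicity of the heap order, Part (2) by matching the minimal (resp.\ maximal) elements of $H(w)$ with those of $H(x)$ (resp.\ $H(y)$) and invoking \cref{rmk:heapsAndWords}, and Part (3) by producing the reduced word from \cref{lemm:stubRemarks}.(2) and using \cref{coro:layerSizeBound}.(1) to see that $\{k+1,k+2\}$ is a maximal antichain. Your extra care in spelling out why indices on either side of $A$ fall into $\cali_A$ or $\calf_A$, and in handling the short-stub case of Part (3), only makes explicit what the paper leaves implicit.
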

\begin{proof}
    (1) Since $A$ is a maximal antichain in $H(w)$, we have $H(\ul w)=\cali_{A}\cup\calf_{A}$ by
    Lemma \ref{lemm:maxAntichain}. For any $k+j+1\le i\le q$, we have $i\notin\cali_A$ by the definition
    of heaps; therefore $i\in \calf_A$. Similarly, we have $i\in \cali_A$ for all $k+1\le i\le q$.
    It follows that $H(x)=\cali_A$ and $H(y)=\calf_A$.

    (2) Part (1) implies that the set of minimal elements of $H(x)$ and $H(w)$ coincide, so
    $\call(w)= \call(x)$ by Remark \ref{rmk:heapsAndWords}. Similarly we have $\calr(w)=\calr(y)$.

    (3) By Lemma \ref{lemm:stubRemarks}.(2), there is a reduced word $\ul x=s_1\dots
    s_{k}(s_{k+1}s_{k+2})$ of $x$ where $(s_{k+1}s_{k+2})$ equals the first layer of $x$. Let
    $s_{k+3}\dots s_{q}$ be a reduced word of $z$, so that $\ul{w}:=s_1\dots s_k\cdot
    (s_{k+1}s_{k+2})\cdot s_{k+3}\dots s_{q}$ is a reduced word of $w$.  The set $A:=\{k+1,k+2\}$
    forms a maximal antichain in the heap $H(\ul w)$ by Corollary \ref{coro:layerSizeBound}.(1), and
    therefore $\call(w)=\call(x)$ by Parts (1) and (2).
\end{proof}

\begin{proof}[Proof of Theorem \ref{thm:CellViaDecomp}]
    Part (1) implies Part (2) by the definition of $\le^R$ and Proposition \ref{prop:Facts}.(5). Part (3) also
    follows from Part (1): if $w=x''\cdot z''$ is another left stub decomposition of $w$ then $w\in
    R(x')$ and $w\in R(x'')$, and therefore $x\sim_R w\sim_R x''$; this forces $x'=x''$, and hence
    $z'=z''$, by Proposition \ref{prop:distinctCells}.

    It remains to prove Part (1). The ``only if'' implication follows from the definition of $R_x$
    and upper star operations. To prove the ``if'' implication, suppose that $w$ has a stub
    factorization of the form $w=x\cdot z$, and let $y\in \sw$ be the unique stub such that $w\in
    R_y$. The unique existence of such a stub is guaranteed by Theorem \ref{thm:cellParameterization}, and
    we need to prove that $x=y$.

    Since $w\in R_y$, we have $w\sim_R y$ and hence  $\call(y)=\call(w)$ by Proposition \ref{prop:Facts}.(3).
    On the other hand, since $w=x\cdot z$, we have $\call(x)=\call(w)$ by
    Lemma \ref{lemm:decompAndDescent}. It follows that $\call(x)=\call(y)$, where $1\le
    \abs{\call(x)}=\abs{\call(y)}\le 2$ by the analog of Corollary \ref{coro:layerSizeBound}.(3) for left
    descents. If $\abs{\call(x)}=\abs{\call(y)}=2$, then $x=y$ by Lemma \ref{lemm:stubRemarks}.(1).  If
    $\abs{\call(x)}=\abs{\call(y)}=1$, say with $\call(x)=\call(y)=\{s\}$, then $l(x),l(y)>2$ and
    the elements $x':=sx$ and $y'=sy$ are stubs with $\call(x')\neq \call(y')$ by
    Lemma \ref{lemm:stubRemarks}.(4). The element $w':=sw$ has stub decompositions of the form
    $w'=x'\cdot z'$ and $w'=y'\cdot z''$, which forces $\call(x')=\call(w')=\call(y')$ by
    Lemma \ref{lemm:decompAndDescent}. Since $\call(x)=\call(y)$ and $\call(x')=\call(y')$,
    Lemma \ref{lemm:stubDescents} implies that $x=y$.  The proof is complete.  
\end{proof}

\begin{remark}
    \label{rmk:a2finiteAssumption}
    The assumption that $(W,S)$ be $\la(2)$-finite is crucial for the validity of most results in
    the subsection.  For example, consider the affine Weyl system $\tilde C_2$, whose Coxeter
    diagram is shown in \autoref{fig:ac2}.  The system is not $\la(2)$-finite, and the elements
    $x=acb$ and $y=acbac$ in $W$ are both left $\la(2)$-stubs by Corollary \ref{coro:firstTwoLayers}. Note
    that $\call(x)=\call(y)=\{a,c\}$, that $\la(x)=n(x)=\la(y)=n(y)=2$ by Proposition \ref{prop:a=n}.(2), that
    $x\le^R y$, and that consequently $x\sim_{R}y $ by Proposition \ref{prop:Facts}.(5). It is then easy to
    check that the conclusions of Lemma \ref{lemm:stubDescents}, Proposition \ref{prop:distinctCells},
    Theorem \ref{thm:cellParameterization} and Theorem \ref{thm:CellViaDecomp} all fail for $x$ and $y$. On the
    other hand, the assertions in Lemma \ref{lemm:decompAndDescent} do hold for all Coxeter systems, as
    indicated in the statement of the lemma. 

    \begin{figure} \begin{center} \begin{tikzpicture} \node[main node] (1) {}; \node[main node] (2) [right = 1cm of
          1] {}; \node[main node] (3) [right = 1cm of 2] {};

                \node (11) [below = 0.18cm of 1] {$a$};
                \node (22) [below = 0.1cm of 2] {$b$};
                \node (33) [below = 0.18cm of 3] {$c$};

                \path[draw]
                (1) edge node [above] {$4$} (2)
                (2) edge node [above] {$4$} (3);
            \end{tikzpicture}
        \end{center}
        \caption{The Coxeter system of type $\tilde C_2$}
        \label{fig:ac2}
    \end{figure}
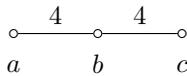
\end{remark}

As usual, Theorem \ref{thm:cellParameterization} and Theorem \ref{thm:CellViaDecomp} have obvious left-handed
analogs which hold by symmetry.  First, for every right $\la(2)$-stub $x'$, the left upper star
closure $L_{x'}$ of $x'$ must be identical with the left cell that contains $x'$; moreover, we have
$W_2=\sqcup_{x'\in\cals'(W)}L_{x'}$ where $\cals'(W)$ is the set of all right stubs in $W_2$.
Second, for each right stub $x'\in \cals'(W)$, the left cell $L_{x'}$ consists precisely of all
$\la(2)$-elements $w$ that admit a right stub decomposition of the form $w=z\cdot x'$.  It is also
possible to describe left cells in $W_2$ via left stubs: for every $w\in \sw$,
Remark \ref{rmk:stubRemarks1}.(1) and Proposition \ref{prop:Facts}.(4) imply that
\begin{equation}
    \label{eq:1cellSymmetry} L_{w\inverse}=R_w\inverse
\end{equation} where $R_w\inverse=\{x\inverse: x\in R_w\}$, because both sets can be described as
    the left cell containing $w\inverse$. We can now label the 0-cells in $W_2$ with pairs of left
    stubs: 

\begin{definition}  
For all $x,y\in \sw$, we define $I(x,y):=R_{x}\cap R_y\inverse=R_x\cap L_{y\inverse}$ and define
$N(x,y):=\abs{I(x,y)}$.
\label{def:intersection}
\end{definition} 

\begin{remark}
    By Equation \eqref{eq:1cellSymmetry}, for all $x,y\in \sw$, the 0-cell $I(y,x)=R_y\cap R_x\inverse$ is in
    bijection with the 0-cell $I(x,y)$ via the inversion map $I(x,y)\ra I(y,x), w\mapsto w\inverse$.
    In particular, we have $N(x,y)=N(y,x)$.
    \label{rmk:Isymmetry}
\end{remark}

The following proposition, where the last part follows from the first two parts, is immediate from
Corollary \ref{coro:intersection}.

\begin{prop}
    \label{prop:a2intersections}
    Let $\sw$ be the set of left $\la(2)$-stubs of $W$, and let $E$ be a two-sided $\la(2)$-cell of $W$. Set
    $\mathcal{T}=E\cap\sw$. Then
    \begin{enumerate}
        \item For each $x\in \calt$, the right cell $R_x\se E $ equals the disjoint union \[
             R_{x}=\bigsqcup_{y\in \calt} I(x,y).\] Consequently, we have \[
             \abs{R_x}=\sum_{y\in\calt} N(x,y).  \]
        \item  The decomposition of the two-sided cell $E$ into right cells is given by
             \[E=\bigsqcup_{x\in \calt}R_x.\] Consequently, we have \[ \abs{E}=\sum_{x\in
             \calt}\abs{R_x}.\] 
         \item The two-sided cell $E$ equals the disjoint union \[ E=\bigsqcup_{x,y\in \mathcal{T}}
              I(x,y).  \] Consequently, we have \[ \abs{E}=\sum_{x,y\in\mathcal{T}}N(x,y).  \]
    \end{enumerate}
\end{prop}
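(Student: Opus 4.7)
The plan is to reduce everything to \cref{coro:intersection} together with \cref{thm:cellParameterization} and \cref{prop:Facts}.(6), by first matching up the right and left cells in $E$ with stubs in $\calt$.

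First I would establish that $\{R_x : x \in \calt\}$ is precisely the set of right cells contained in $E$, and that $\{R_y\inverse : y \in \calt\}$ is precisely the set of left cells contained in $E$. For the right-cell side, \cref{thm:cellParameterization} says every right cell in $W_2$ is $R_x$ for a unique $x\in\sw$, and $R_x\se E$ iff $x\in E$ (since $x\sim_R w$ for every $w \in R_x$ implies $x\sim_{LR} w$), iff $x\in\calt$. For the left-cell side, each left cell in $W_2$ has the form $R_y\inverse=L_{y\inverse}$ for a unique $y\in\sw$ by \cref{eq:1cellSymmetry}, and $R_y\inverse\se E$ iff $y\inverse\in E$, iff $y\in E$ by \cref{prop:Facts}.(6), iff $y\in\calt$.

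With this identification in hand, Part (1) follows from the right-cell decomposition in \cref{coro:intersection}: for each $x\in\calt$,
\[
R_x = \bigsqcup_{L\in\call\calc} R_x\cap L = \bigsqcup_{y\in\calt} R_x\cap R_y\inverse = \bigsqcup_{y\in\calt} I(x,y),
\]
and taking cardinalities gives $\abs{R_x}=\sum_{y\in\calt} N(x,y)$. Part (2) is then just the tautology that a two-sided cell is the disjoint union of the right cells it contains, using the identification above to index those right cells by $\calt$. Part (3) is obtained by substituting the decomposition from Part (1) into the one from Part (2).

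There is essentially no obstacle here; the only thing to be careful about is verifying the bijective correspondence between $\calt$ and the right (respectively, left) cells in $E$ so that the index sets in \cref{coro:intersection} become $\calt$. Once this bookkeeping is done, everything is a direct rewriting of \cref{coro:intersection} in the stub notation set up in \cref{def:intersection}.
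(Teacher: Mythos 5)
Your proposal is correct and matches the paper's approach: the paper simply declares the proposition ``immediate from \cref{coro:intersection}'' (with Part (3) following from Parts (1)--(2)), and your argument is exactly that reduction, with the bookkeeping identification of the right and left cells in $E$ with $\{R_x: x\in\calt\}$ and $\{R_y\inverse: y\in\calt\}$ via \cref{thm:cellParameterization}, \cref{eq:1cellSymmetry} and \cref{prop:Facts}.(6) spelled out. Nothing is missing.
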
 
\noindent
Note that in light of the proposition, to count the cells in $W_2$ it suffices to understand the
0-cells $I(x,y)$. We will study these 0-cells in \autoref{sec:0cells}. 

\subsection{Slide equivalence and 2-cells}
\label{sec:2cells} 
In this subsection, we introduce two equivalence relations on $\sw$ and use one of them to determine
how the right cells $R_w (w\in \sw$ coalesce into 2-cells in $W_2$.  Recalling that the first layer
of every stub $w\in \sw$ is itself a short stub by Lemma \ref{lemm:stubRemarks}.(4), we define the
relations in two steps, as follows.

\begin{definition}
    \label{def:slidings} We define a \emph{slide} to be a transformation taking a short stub $st\in
          \sw$ to a short stub $su\in \sw$ where $m(t,u)\ge 3$, i.e, where $\{t,u\}$ forms an edge
          in the Coxeter diagram. We say the move is \emph{along the edge $\{t,u\}$} and call it a
          \emph{simple} slide if the edge $\{t,u\}$ is simple, i.e. if $m(t,u)=3$.  
\end{definition}
\begin{definition}
    \label{def:equivalences} Let $w,w'\in \sw$ and let $x,x'$ be their respective first layers.
    \begin{enumerate}
    \item We say that $w$ and $w'$ are \emph{slide equivalent}, and write $w\approx w'$, if $x$ and
         $x'$ can be related by a (possibly empty) sequence of slides. 
    \item We say that $w$ and $w'$ are \emph{simple slide equivalent}, and write $w\sim w'$, if $x$
         and $x'$ can be related by a (possibly empty) sequence of simple slides.
    \end{enumerate}
\end{definition}

It is clear that $\approx$ and $\sim$ are equivalence relations, and that $\sim$ refines $\approx$. 

\begin{remark}
\label{rmk:slidings}
Let $w,w'$ and $x,x'$ be as in Definition \ref{def:equivalences}. 
\begin{enumerate}
\item Short stubs equal their first layers, so $w\approx x$, that is, every stub is slide equivalent
     to a short stub.  It also follows that every slide equivalence class $C$ in $\sw$ can be
     recovered from the set $Z_C$ of short stubs it contains, namely, as the set of stubs whose
     first layer lies in $Z_C$. The above facts also hold for the relation $\sim$.  For these
     reasons, we will often first focus on short stubs when studying $\approx$ and $\sim$.
\item If $w\approx w'$ then $w\approx x\approx x'\approx w'$, where we can connect $w$ to $x$ and
     connect $x'$ to $w'$ with left star operations by Lemma \ref{lemm:stubRemarks}.(4).  Since $x$ and
     $x'$ can be related by slides by Definition \ref{def:equivalences}, it follows that  the relation
     $\approx$ is generated by left star operations and slides.  Similarly, the relation $\sim$ is
     generated by left star operations and simple slides.
\end{enumerate} 
\end{remark}

For the rest of this subsection we will focus on the relation $\approx$ and its connection with
2-cells. The relation $\sim$ will be studied further in \autoref{sec:0cells} in connection with
0-cells; see Proposition \ref{prop:invariance} and \autoref{sec:cellData}.

Slide equivalent stubs lie in the same 2-cells:

\begin{lemma}
\label{lemm:sliding2cell}
    Let $w,w'\in \sw$. If $w\approx w'$, then we have $w\sim_{LR} w'$. 
\end{lemma}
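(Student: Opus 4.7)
The plan is to exploit \cref{rmk:slidings}.(2), which identifies $\approx$ as the equivalence relation generated by left star operations and (single) slides on short stubs. By transitivity of $\sim_{LR}$, it therefore suffices to verify that each of these two basic moves keeps a stub inside its current two-sided cell.

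The left star operation case is immediate: if $w'$ arises from $w$ by a single left star operation, then $w\sim_L w'$ by \cref{prop:StarOpAndCells}.(1), and left cells are contained in 2-cells, so $w\sim_{LR}w'$.

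The slide case is the real content. Suppose $w$ and $w'$ are short stubs related by a single slide, i.e., $w=st$ and $w'=su$ with $m(s,t)=m(s,u)=2$ and $m(t,u)\ge 3$. I would bridge them by the element $\omega:=stu\in W$. The heap of the word $stu$ has three vertices whose only covering is $t\prec u$, so $\omega$ has length $3$, is FC by \cref{prop:FCCriterion}, and has width $n(\omega)=2$ (witnessed by the antichain $\{s,u\}$). Because $(W,S)$ is $\la(2)$-finite, \cref{prop:a.vs.n}.(3) upgrades this to $\la(\omega)=2$. Now $stu=st\cdot u$ is a reduced factorization, so $w\le^R \omega$; by \cref{prop:Facts}.(1), $\omega\le_R w$, and since $\la(\omega)=\la(w)=2$, \cref{prop:Facts}.(5) yields $\omega\sim_R w$. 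Dually, $s,t$ commute, so $stu=t\cdot su$ is a reduced factorization, giving $w'\le^L\omega$ and hence $\omega\sim_L w'$ by the same argument. Chaining, $w\sim_R\omega\sim_L w'$, so $w\sim_{LR}w'$.

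The only nontrivial ingredient is verifying that the bridge $\omega=stu$ has $\la$-value exactly $2$; this is where the $\la(2)$-finite hypothesis enters essentially, via \cref{prop:a.vs.n}.(3) (equivalently, one could note that $\omega$ is itself a short stub or an immediate left star image of one, and appeal to the description in \cref{thm:stubDescription}). Once this is in hand, the two reduced factorizations $stu=st\cdot u$ and $stu=t\cdot su$ do all the work, and the lemma follows by induction on the length of a sequence of star operations and slides connecting $w$ to $w'$.
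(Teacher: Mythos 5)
Your proof is correct and follows essentially the same route as the paper: both reduce, via \cref{rmk:slidings}.(2), to checking a single left star operation and a single slide $st\to su$, and both handle the slide by bridging through the element $stu$. The only difference is in the justification of the two equivalences $st\sim_R stu\sim_L su$ --- the paper notes that the slide factors as a right upper star operation followed by a left lower star operation and invokes \cref{prop:StarOpAndCells}, whereas you use the weak order together with Parts (1) and (5) of \cref{prop:Facts}, at the cost of the extra (correctly executed) verification that $\la(stu)=2$.
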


\begin{proof}
    Slides can be achieved by star operations: for any two short stubs $x=st$ and $x'=su$ where
    $m(t,u)\ge 3$, we have ${}_*(x^*)={}_* (stu)=x'$ where the two star operations are performed
    with respect to $\{t,u\}$. It follows from Remark \ref{rmk:slidings}.(2) that if $w\approx w'$, then
    $w$ and $w'$ can be related by left and right star operations, hence $w\sim_{LR} w'$ by
    Proposition \ref{prop:StarOpAndCells},
\end{proof}

Lemma \ref{lemm:sliding2cell} suggests that to find the 2-cells of $W_2$ it is helpful to find the
equivalence classes of $\approx$. We do so in Lemma \ref{lemm:path}, Lemma \ref{lemm:Esliding} and
Proposition \ref{prop:desClasses} below. The lemmas focus on short stubs, as justified in
Remark \ref{rmk:slidings}.(1). For convenience, we will say a subset of $\sw$ is \emph{slide connected} if
its elements are pairwise slide equivalent.

\begin{lemma}
    \label{lemm:path}
    Let $G$ be the Coxeter diagram of an arbitrary Coxeter system $(W,S)$. Suppose a subset $S'$ of
    $S$ induces a linear subgraph in $G$ in the sense that we can label the elements of $S'$ as
    $1,2,\dots, k$ in a way such that $m(i,{j})\ge 3$ if and only if $\abs{j-i}=1$ for all $1\le
    i,j\le k$.  Let $X$ be the set of short stubs in $\sw$ whose supports lie in $S'$. 
    \begin{enumerate}
        \item We have $X= \{{i}j: 1\le i,j\le n, \abs{i-j}>1\}$.
        \item The set $X$ is slide connected.
    \end{enumerate}
\end{lemma}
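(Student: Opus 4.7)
The plan is to dispatch Part (1) directly from the characterization of short stubs and to prove Part (2) by exhibiting an explicit slide path from an arbitrary element of $X$ to a distinguished ``extremal'' element.

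For Part (1), I would recall that by \cref{thm:stubDescription} (or equivalently by \cref{coro:firstTwoLayers} applied to a one-layer element), a short left $\la(2)$-stub is exactly an element of the form $st$ with $s,t\in S$, $s\neq t$ and $m(s,t)=2$; the $\la$-value is $2$ automatically by \cref{prop:Facts}.(2). Restricting to $w\in X$ means both generators in the support lie in $S'$. Under the hypothesis that $S'$ induces a linear subgraph labeled $1,2,\dots,k$, distinct $s,t\in S'$ satisfy $m(s,t)=2$ if and only if they are non-adjacent, i.e.\ $\abs{s-t}\neq 1$, i.e.\ $\abs{s-t}>1$. This yields the claimed description of $X$.

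For Part (2), the strategy is to pick the distinguished element $1k\in X$ (assuming $k\ge 3$; otherwise $X$ is empty or a singleton and there is nothing to prove) and show that every $ij\in X$, with $i<j$ and $j-i\ge 2$, is linked to $1k$ by a chain of slides. I would split the path into two stages. First, holding the smaller index fixed, slide the larger index upward along the simple edges of $S'$:
\[
ij\;\to\; i(j{+}1)\;\to\; i(j{+}2)\;\to\;\cdots\;\to\; ik.
\]
Each step is a slide along the edge $\{j',j'{+}1\}$ of $G$, and the intermediate stub $ij'$ lies in $X$ because $j'-i\ge j-i\ge 2$. Second, holding the larger index fixed at $k$, slide the smaller index downward:
\[
ik\;\to\; (i{-}1)k\;\to\;\cdots\;\to\; 1k.
\]
Here each intermediate stub $i'k$ is still in $X$ because $i\le j-2\le k-2$ implies $k-i'\ge 2$ for all $1\le i'\le i$. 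Concatenating the two stages gives the desired slide path from $ij$ to $1k$, proving $X$ is slide connected.

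The whole argument is essentially bookkeeping; the only thing requiring attention is the invariant ``$\abs{i-j}>1$'' for every intermediate short stub, which is why I would slide the ``outer'' coordinate to the boundary first before touching the ``inner'' one. The main obstacle is purely notational: making sure the order of the two stages guarantees that no intermediate element accidentally has adjacent indices and falls out of $X$. No deeper input (heaps, star operations, or cell theory) is needed here, since by \cref{def:slidings} a slide is defined as a move on short stubs along an edge of $G$, and we have just produced such a sequence.
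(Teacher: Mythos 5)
Your proposal is correct and matches the paper's argument in substance: the paper proves Part (2) by arranging the elements of $X$ into a triangular array whose row- and column-adjacent entries differ by a slide along an edge $\{i,i+1\}$, which encodes exactly your two-stage path (slide the larger index out to $k$, then the smaller index down to $1$), and Part (1) is likewise dispatched directly from the definitions. Your explicit check that every intermediate stub keeps index gap at least $2$ is a worthwhile detail the paper leaves implicit.
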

\begin{proof} Part (1) follows from definitions. Part (2) holds since the elements of $X$ can be
      arranged into the array \[
        \begin{matrix} 13 & 14 & 15 & \dots & 1n\\ & 24 & 25 & \dots & 2n\\ &&\dots &\dots&\dots\\
              &&&(n-3)(n-1)& (n-3)n\\ &&&& (n-2)n
        \end{matrix} \] where every two adjacent entries in a row or in a column are short stubs
            related by a slide along an edge of the form $\{i,i+1\}$ in $G$.
\end{proof}

\begin{lemma}
\label{lemm:Esliding}
Let $(W,S)$ be a Coxeter system of type $E_{q,r}$ where $r\ge q\ge 1$. Consider the short stubs $x_1:=(-1)v,
x_2:=1v$ and $x_3:=(-1)1$ in $\sw$, and let $Z$ be the set of all short stubs in $\sw$. 
    \begin{enumerate}
        \item If $q=r=1$, then none of $x_1,x_2,x_3$ admits any slides.
        \item If $q=1$ and $r>1$, then $x_1$ admits no slides, we have $x_2\approx x_3$, and the set
            $Z\setminus\{x_1\}$ is slide connected. 
         \item If $r\ge q>1$, then $Z$ is slide connected. 
    \end{enumerate}
\end{lemma}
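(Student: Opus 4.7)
The plan is to use the explicit Coxeter diagram of $E_{q,r}$, which consists of the linear chain $-q, -(q-1), \dots, 0, 1, \dots, r$ together with an extra vertex $v$ joined only to $0$, together with the definition of a slide: a slide from $st$ to $su$ requires $\{t, u\}$ to be an edge of the diagram and $s, u$ to commute (i.e.\ to be non-adjacent in the diagram). Because every vertex of $E_{q,r}$ has at most three neighbors, checking which slides are available reduces in each case to a very short finite case analysis.

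For Part (1), when $q = r = 1$, I would simply observe that the only neighbor of each of $-1, 1, v$ is $0$, and $0$ is adjacent to all three of them. Any candidate slide from $x_i$ would therefore force the result to have a pair of adjacent letters, contradicting commutativity; hence none of $x_1, x_2, x_3$ admits a slide.

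For Part (2), when $q = 1$ and $r > 1$, the same observation still shows that $x_1 = (-1)v$ admits no slides, since the only neighbor of $-1$ and of $v$ is $0$, which is adjacent to the other letter. To prove that $Z \setminus \{x_1\}$ is slide connected, I plan to apply \cref{lemm:path} to the linear subgraph on $S' = \{-1, 0, 1, \dots, r\}$, obtaining a slide-connected set $Y$ consisting of all short stubs supported in $S'$ (in particular $x_3 = (-1)1 \in Y$). I would then attach each remaining short stub, which necessarily involves $v$, to $Y$ by an explicit slide: for $2 \le j \le r$ the single move $jv \to j0$ along the edge $\{0, v\}$ is allowed because $j$ commutes with both $v$ and $0$; for $j = 1$ the chain $1v \to 2v \to 20$ works, the first slide being along the edge $\{1, 2\}$ with $v$ fixed. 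This connects everything in $Z \setminus \{x_1\}$, and in particular yields $x_2 \approx x_3$.

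For Part (3), when $q \ge 2$, the same strategy succeeds and now captures $x_1$ as well, because the generator $-2 \in S$ produces the new slide $(-1)v \to (-2)v$ along the edge $\{-2, -1\}$ (with $v$ fixed), valid since $-2$ and $v$ are non-adjacent. I would apply \cref{lemm:path} to the full linear subgraph on $\{-q, \dots, r\}$ to obtain a slide-connected set $Y$, then attach each short stub $jv$ to $Y$ via $jv \to j0$ when $|j| \ge 2$, via $1v \to 2v \to 20$ when $j = 1$, and via $(-1)v \to (-2)v \to (-2)0$ when $j = -1$. The main obstacle throughout is purely bookkeeping: the only delicate point is that replacing $v$ by $0$, or $-1$ by $-2$, must not break commutativity with the fixed letter, and it is exactly this verification that separates the behavior of $x_1$ between the cases $q = 1$ (isolated) and $q \ge 2$ (no longer isolated).
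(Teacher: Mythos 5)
Your proof is correct and follows essentially the same route as the paper's: both arguments isolate $x_1$ (when $q=1$) by noting that $-1$ and $v$ have $0$ as their only neighbor, both invoke \cref{lemm:path} on the linear subgraph obtained by deleting $v$, and both connect the stubs containing $v$ to the rest by explicit slides through the edge $\{0,v\}$ (your direct moves $jv\to j0$ versus the paper's single bridging chain $v1\to v2\to 02\to(-1)2\to(-1)1$ are interchangeable bookkeeping). No gaps.
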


\begin{proof}
    We start with some observations, the key one being that if $r>1$, then we have $x_2\approx x_3$ via the
    sequence of slides (see Figure \ref{fig:sliding}) 
    \begin{equation}
    \label{eq:slidings} 
    x_2=1v=v1\ra v2 \ra 02\ra (-1)2\ra (-1)1=x_3.
    \end{equation} 
    Similarly, we have $x_1\approx x_3$ whenever $q>1$. We also consider the partition of the set
    $Z':=Z\setminus\{x_1\}$ into the sets \[ X=\{x\in Z\setminus\{x_1\}: v\in
    \supp(x)\},\, Y=\{x\in
    Z\setminus\{x_1\}: v\not\in \supp(x)\}  \] 
    where we have $x_2=1v\in X$ and $x_3=(-1)1\in Y$. The set $X$ is slide connected because its
    elements can be listed in the sequence $1v, 2v, \dots, rv$ where every two adjacent stubs are
    related by a slide. The set $Y$ is slide connected by an application of Lemma \ref{lemm:path} with
    $S'=S\setminus\{v\}$.  
 
    To prove the lemma, first note that if $q=1$ then $x_1=(-1)v$ admits no slide because  both the
    generators $-1$ and $v$ are adjacent to and only to $0$ in the Coxeter graph, so the claim about
    $x_1$ from Part (2) holds. Part (1) holds by similar arguments.  Next, suppose that $r>1$. Then
    $x_2\approx x_3$ as we observed, and the set $Z'=X\sqcup Y$ is slide connected because $x_2\in
    X, x_3\in Y$, and $X,Y$ are slide connected. This completes the proof of Part (2). Finally, if
    in addition we have $q>1$ then $x_3\approx x_1$. Since the set $Z'=Z\setminus\{x_1\}$ is slide
    connected and contains $x_3$, it follows that $Z$ is slide connected, so Part (3) holds.
\end{proof}

\begin{example}
    \label{eg:sliding}
    The sequence of slides in \eqref{eq:slidings} is depicted in Figure \ref{fig:sliding}, where in each
    copy of the Coxeter diagram we have omitted the vertices $3,4,\dots,r$, filled the generators in
    the support of the short stub, and indicated the slide to be performed with arrows. Note that
    the slides rely on the existence of the vertex $2$, and hence on the assumption that $r>1$, in a
    crucial way: sliding from $x_2$ to $x_3$ is impossible when $r=1$ but becomes possible when
    $r>1$ because the edge $\{1,2\}$ permits more slides.

    \begin{figure}
        \centering
        \begin{tikzpicture}
            \node[main node] (z) {};
            \node[main node] (0) [right=0.4cm of z] {};
            \node[main node, fill] (v) [above=0.4cm of 0] {};
            \node[main node,fill] (1) [right=0.4cm of 0] {};
            \node[main node] (2) [right=0.4cm of 1] {};
            \node (3) [right=0.2cm of 2] {\tiny{$\ra$}};

            \path[draw]
            (z)--(0)--(1)--(2)
            (0)--(v);

            \node(a) [above =0.01cm of 1] {};
            \node(b) [above =0.01cm of 2] {};

            \path[draw,->]
            (a)--(b);

            \node (c11) [below =0.1cm of z] {\tiny{$-1$}};
            \node (c22) [below =0.1cm of 0] {\tiny{$0$}};
            \node (c33) [below =0.1cm of 1] {\tiny{$1$}};
            \node (c44) [below =0.1cm of 2] {\tiny{$2$}};
            \node (c66) [above =0.01cm of v] {\tiny{$v$}};

            \node[main node,right=0.1cm of 3] (1z) {};
            \node[main node] (10) [right=0.4cm of 1z] {};
            \node[main node, fill] (1v) [above=0.4cm of 10] {};
            \node[main node] (11) [right=0.4cm of 10] {};
            \node[main node,fill] (12) [right=0.4cm of 11] {};
            \node (13) [right=0.2cm of 12] {\tiny{$\ra$}};

            \path[draw]
            (1z)--(10)--(11)--(12)
            (10)--(1v);

            \node(1a) [right =0.01cm of 1v] {};
            \node(1b) [right =0.01cm of 10] {};

            \path[draw,->]
            (1a)--(1b);

            \node (1c11) [below =0.1cm of 1z] {\tiny{$-1$}};
            \node (1c22) [below =0.1cm of 10] {\tiny{$0$}};
            \node (1c33) [below =0.1cm of 11] {\tiny{$1$}};
            \node (1c44) [below =0.1cm of 12] {\tiny{$2$}};
            \node (1c66) [above =0.01cm of 1v] {\tiny{$v$}};

            \node[main node,right=0.1cm of 13] (2z) {};
            \node[main node,fill] (20) [right=0.4cm of 2z] {};
            \node[main node] (2v) [above=0.4cm of 20] {};
            \node[main node] (21) [right=0.4cm of 20] {};
            \node[main node,fill] (22) [right=0.4cm of 21] {};
            \node (23) [right=0.2cm of 22] {\tiny{$\ra$}};

            \path[draw]
            (2z)--(20)--(21)--(22)
            (20)--(2v);

            \node(2a) [above =0.01cm of 20] {};
            \node(2b) [above =0.01cm of 2z] {};

            \path[draw,->]
            (2a)--(2b);

            \node (2c11) [below =0.1cm of 2z] {\tiny{$-1$}};
            \node (2c22) [below =0.1cm of 20] {\tiny{$0$}};
            \node (2c33) [below =0.1cm of 21] {\tiny{$1$}};
            \node (2c44) [below =0.1cm of 22] {\tiny{$2$}};
            \node (2c66) [above =0.01cm of 2v] {\tiny{$v$}};

            \node[main node,right=0.1cm of 23,fill] (3z) {};
            \node[main node] (30) [right=0.4cm of 3z] {};
            \node[main node] (3v) [above=0.4cm of 30] {};
            \node[main node] (31) [right=0.4cm of 30] {};
            \node[main node,fill] (32) [right=0.4cm of 31] {};
            \node (33) [right=0.2cm of 32] {\tiny{$\ra$}};

            \path[draw]
            (3z)--(30)--(31)--(32)
            (30)--(3v);

            \node(3a) [above =0.01cm of 32] {};
            \node(3b) [above =0.01cm of 31] {};

            \path[draw,->]
            (3a)--(3b);

            \node (3c11) [below =0.1cm of 3z] {\tiny{$-1$}};
            \node (3c22) [below =0.1cm of 30] {\tiny{$0$}};
            \node (3c33) [below =0.1cm of 31] {\tiny{$1$}};
            \node (3c44) [below =0.1cm of 32] {\tiny{$2$}};
            \node (3c66) [above =0.01cm of 3v] {\tiny{$v$}};

            \node[main node,right=0.1cm of 33,fill] (4z) {};
            \node[main node] (40) [right=0.4cm of 4z] {};
            \node[main node] (4v) [above=0.4cm of 40] {};
            \node[main node,fill] (41) [right=0.4cm of 40] {};
            \node[main node] (42) [right=0.4cm of 41] {};

            \path[draw]
            (4z)--(40)--(41)--(42)
            (40)--(4v);

            \node (4c11) [below =0.1cm of 4z] {\tiny{$-1$}};
            \node (4c22) [below =0.1cm of 40] {\tiny{$0$}};
            \node (4c33) [below =0.1cm of 41] {\tiny{$1$}};
            \node (4c44) [below =0.1cm of 42] {\tiny{$2$}};
            \node (4c66) [above =0.01cm of 4v] {\tiny{$v$}}; 
        \end{tikzpicture}
        \caption{Sliding from $x$ to $z$ when $r>1$}
        \label{fig:sliding}
    \end{figure}
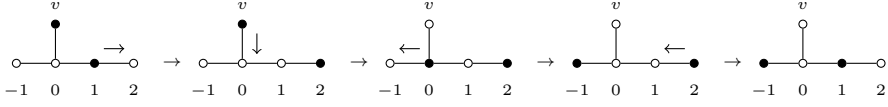 
\end{example}

    We can now describe the slide equivalence classes of $\sw$.

    \begin{prop}
        \label{prop:desClasses}
        Let $(W,S)$ be an $\la(2)$-finite Coxeter system.
\begin{enumerate}
    \item Suppose $(W,S)$ is of type $A_n (n\ge 3), B_n (n\ge 3), \tilde C_{n-1} (n\ge 5), F_n (n\ge
         4), H_n (n\ge 3)$ or of type $E_{q,r}$ where $r\ge q>1$. Then $\sw$ contains a single slide
         equivalence class.
    \item Suppose $(W,S)$ is of type $E_{q,r}$ where $r\ge q=1$. Let $x_1=(-1)v, x_2=1v,x_3=(-1)1$
         and let $C_i=\{w\in \sw: w\approx x_i\}$ for $1\le i\le 3$. 
        \begin{enumerate}
            \item[\tul a\tur] If $r=1$, then $\sw$ consists of three slide equivalence classes. The
                 classes are exactly $C_1, C_2, C_3$, and we have \[ C_1=\{(-1)v, 0(-1)v,
                 10(-1)v\},\] \[C_2= \{1v, 01v, (-1)0v\},\] \[C_3=\{(-1)1, 0(-1)1, v0(-1)1\}.\]
            \item[\tul b\tur] If $r>1$, then $C_2=C_3$ and the set $\sw$ consists of two slide
                 equivalence classes. The classes are exactly $C_1, C_2$, and we have \[
                 \hspace{2.5em} C_1=\{(-1)v, 0(-1)v, 10(-1)v,210(-1)v,\dots, r\dots 210(-1)v\}.  \]
        \end{enumerate}
\end{enumerate}
    \end{prop}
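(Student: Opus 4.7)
The plan is to reduce the question to slide equivalence among short stubs via \cref{rmk:slidings}.(1), which says that every stub in $\sw$ is slide equivalent to its first layer. Once the slide classes among short stubs are known, each remaining stub is placed in the class of its first layer, and the stubs enumerated in \cref{thm:stubDescription}.(4) can be sorted accordingly.

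For Part (1), the key observation for types $A_n$, $B_n$, $\tilde C_{n-1}$, $F_n$, and $H_n$ is that in each case the Coxeter diagram is topologically a path: the standard labeling $1,2,\ldots,n$ from \cref{fig:finite E} satisfies $m(i,j)\ge 3$ if and only if $\abs{i-j}=1$, so $S$ itself qualifies as a linear subgraph in the sense of \cref{lemm:path} (the precise edge weights are irrelevant to that lemma, only the condition $m\ge 3$). Applying \cref{lemm:path} with $S'=S$ shows that the short stubs form a single slide class. For type $E_{q,r}$ with $r\ge q>1$, the same conclusion is provided directly by \cref{lemm:Esliding}.(3). Combined with the reduction above, $\sw$ consists of a single slide equivalence class in each of these cases.

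For Part (2), the system is of type $E_{1,r}$, and I split according to whether $r=1$ or $r>1$. When $r=1$, the Coxeter diagram has four vertices $\{-1,0,1,v\}$ with $0$ adjacent to the other three, so the non-adjacent pairs are exactly $\{-1,v\}, \{1,v\}, \{-1,1\}$; hence $\cals_1=\{x_1,x_2,x_3\}$. By \cref{lemm:Esliding}.(1), none of these admits a slide, so at the level of short stubs they generate three distinct slide classes. I then read off the medium and long stubs in $\cals_2\sqcup\mathcal{T}_2\sqcup\cals_3$ from \cref{thm:stubDescription}.(4), compute each one's first layer, and assign it to the class of that first layer; the resulting partition gives the claimed three-element sets $C_1, C_2, C_3$. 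When $r>1$, \cref{lemm:Esliding}.(2) yields that $\{x_1\}$ and $Z\setminus\{x_1\}$ are the only two slide classes of short stubs, so $x_2\approx x_3$ and therefore $C_2=C_3$. The class $C_1$ comprises exactly those stubs whose first layer equals $x_1=(-1)v$, and inspection of $\cals_1\sqcup\cals_2\sqcup\mathcal{T}_2\sqcup\cals_3$ shows these are $x_1$, $y'_0 = 0\cdot(-1)v$, and the stubs $z_s = s\cdot(s-1)\cdots 0\cdot(-1)v$ for $1\le s\le r$, matching the explicit description in the proposition.

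There is no real conceptual obstacle; the proof is case analysis mediated by \cref{lemm:path} and \cref{lemm:Esliding}. The one verification that requires a moment of thought is that the Coxeter diagrams of $A$, $B$, $\tilde C$, $F$, $H$ (some of which carry heavy edges) satisfy the hypothesis of \cref{lemm:path} under its precise formulation, and the remaining work is routine bookkeeping with the stub enumeration in \cref{thm:stubDescription}.(4).
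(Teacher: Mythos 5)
Your proposal is correct and follows essentially the same route as the paper's proof: both reduce to short stubs via \cref{rmk:slidings}.(1), invoke \cref{lemm:path}.(2) with $S'=S$ for the path-shaped diagrams and \cref{lemm:Esliding} for type $E_{q,r}$, and then sort the remaining stubs by their first layers to obtain the explicit classes. Your extra remark that only the condition $m\ge 3$ (not the precise edge weights) matters for \cref{lemm:path} is a worthwhile clarification but does not change the argument.
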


    \begin{proof}
        (1) By Remark \ref{rmk:slidings}.(1) it suffices to show that the set of all short stubs in $\sw$
        is slide connected. This holds in type $E_{q,r}$ when $r\ge q>1$ by Lemma \ref{lemm:Esliding}.(3)
        and holds in types $A_n, B_n, \tilde C_{n-1}, F_n, H_n$ by applications of
        Lemma \ref{lemm:path}.(2) with $S'=S$.  

        (2) If $r=1$, then the stubs $x_1,x_2,x_3$ are all the short stubs in $\sw$, and they lie in
        three distinct slide equivalence classes by Lemma \ref{lemm:Esliding}.(1). Thus,
        Remark \ref{rmk:slidings}.(1) implies that  $C_1, C_2, C_3$ are the slide equivalence classes in
        $\sw$ and are given by \[ C_i=\{w\in \sw: \text{the first layer of $w$ equals $x_i$}\} \]
        for each $i$. The equations in Part (a) then follow from the above equality by direct
        computation. Part (b) can be proved using Lemma \ref{lemm:Esliding}.(2) in a similar way, with
        $C_2=C_3$ since $x_2\approx x_3$ when $r>1$. 
\end{proof}

We are ready to find the 2-cells in $W_2$ in all $\la(2)$-finite Coxeter systems:

\begin{thm}
\label{thm:twoSidedCells}
Let $(W,S)$ be an irreducible \ntatf Coxeter system.
\begin{enumerate}
    \item Suppose $(W,S)$ is of type $A_n (n\ge 3), B_n (n\ge 3), \tilde C_{n-1} (n\ge 5), F_n (n\ge 4), H_n (n\ge
        3)$ or of type $E_{q,r}$ where $r\ge q>1$. Then $W_2$ contains a single 2-cell.
    \item Suppose $(W,S)$ is of type $E_{q,r}$ where $r\ge q=1$. Let $C_1, C_2,C_3$ be as in
        Proposition \ref{prop:desClasses}.(2) and let $E_i= \sqcup_{w\in C_i}R_w$ for all $1\le i\le 3$. 
        If $r=1$, then $W_2$ contains three 2-cells and they are exactly 
        $E_1, E_2$ and $E_3$. If $r>1$, then $W_2$ contains two 2-cells and they are exactly 
        $E_1$ and $E_2$.
    \item Let $\mathcal{C}$ be the set of slide equivalence classes in $\sw$ and let $\mathcal{E}$
         be the set of 2-cells in $W_2$. Then there is a bijection $\Phi: \mathcal{C}\ra
         \mathcal{E}$ given by $\Phi(C)=\cup_{w\in C}R_w$ for all $C\in \mathcal{C}$. 
\end{enumerate}
\end{thm}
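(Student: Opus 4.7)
The plan is to deduce the theorem from the preceding results on slide equivalence, principally \cref{prop:desClasses} and \cref{lemm:sliding2cell}. The key preliminary observation combines \cref{lemm:sliding2cell} (slide equivalent stubs lie in a common 2-cell) with \cref{thm:cellParameterization} (which gives the disjoint decomposition $W_2=\sqcup_{w\in\sw}R_w$): together they show that each union $\bigcup_{w\in C}R_w$ indexed by a slide equivalence class $C\subseteq \sw$ is contained in a single 2-cell, and therefore every 2-cell in $W_2$ arises as a union of sets $\bigcup_{w\in C}R_w$ for $C$ ranging over some collection of slide equivalence classes. The rest is a matter of showing that this collection is always a single class.

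Part (1) is then immediate: for each listed type, \cref{prop:desClasses}.(1) supplies a unique slide equivalence class, so the preliminary observation places $W_2$ inside a single 2-cell; since $W_2\neq\emptyset$ under the nontriviality hypothesis, it \emph{equals} that 2-cell. Part (3) will follow formally from Parts (1) and (2): the map $\Phi$ is well-defined by the preliminary observation, injective because the right cells $R_w$ are pairwise disjoint by \cref{thm:cellParameterization}, and surjective because Parts (1) and (2) exhaust the 2-cells of $W_2$.

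The substantive work is Part (2). The preliminary observation already places each $E_i$ inside some 2-cell, so the only remaining task is the \textbf{merge-prevention step}: show that no two distinct $E_i$'s lie in a common 2-cell. The route I propose is to exploit the fact that $E_{1,r}$ coincides with the Weyl group $D_{r+3}$ (\cref{rmk:labels}), whose Kazhdan--Lusztig cell structure is classical. In $D_{r+3}$, the 2-cells can be enumerated, for instance via Garfinkle's domino-tableau description or Lusztig's symbol parameterization; the count of 2-cells of $\la$-value $2$ is exactly three when $r=1$ and exactly two when $r>1$. Because this matches the number of slide equivalence classes, the $E_i$'s cannot merge, and so each $E_i$ is a full 2-cell.

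The \emph{main obstacle} is precisely this merge-prevention step, which cannot be settled by the internal tools developed in this section. The difficulty is most visible when $r=1$: the Coxeter diagram of $D_4$ admits an $S_3$-symmetry permuting $\{-1,1,v\}$, cyclically permuting $E_1, E_2, E_3$, so any invariant built purely from descents, supports, heap widths, or star operations treats the three $E_i$'s on equal footing and cannot distinguish ``three 2-cells permuted by the symmetry'' from ``one $S_3$-invariant 2-cell.'' When $r>1$, partial progress is available internally, since a direct count gives $\abs{C_1}=r+2$ while $\abs{C_2}=\binom{r+4}{2}-r-3$, and these disagree, which can be leveraged against any merger that would violate a cell-size-based invariant; but in the symmetric $r=1$ case the cardinalities coincide and external input is unavoidable. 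At that point the rigidity of the known cell count in type $D$ closes the argument.
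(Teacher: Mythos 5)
Your skeleton matches the paper's: the preliminary observation (each $\bigcup_{w\in C}R_w$ lands inside a single 2-cell, by \cref{lemm:sliding2cell} and \cref{thm:cellParameterization}), Part (1) from \cref{prop:desClasses}.(1), Part (3) as a formal consequence of Parts (1)--(2), and the reduction of Part (2) to showing that distinct $E_i$'s do not merge. The divergence, and the problem, is at that last step. You assert that merge-prevention ``cannot be settled by the internal tools developed in this section'' and outsource it to the classical cell theory of $D_{r+3}$ (domino tableaux / symbols), quoting the counts ``three when $r=1$, two when $r>1$'' without verification. That count is the entire content of Part (2), so as written your proof reduces the theorem to an unproved citation. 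Moreover your stated reason for why an internal argument must fail is incorrect: the $S_3$-symmetry of $D_4$ only shows that no invariant of a \emph{single} cell can distinguish the three $E_i$'s, but merge-prevention is a statement about a \emph{pair} of one-sided cells, namely whether some element links them, and that is testable internally. (Your ``partial progress'' for $r>1$ via $\abs{C_1}\neq\abs{C_2}$ is also not an argument: nothing forbids a 2-cell from containing slide classes of different sizes.)

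The paper's actual argument is short and self-contained, and you should compare it with your route. After using the diagram symmetry of $E_{1,1}$ to reduce to showing $x:=(-1)v$ and $y:=1v$ are not two-sided equivalent, it argues by contradiction: if $x\sim_{LR}y$, then \cref{prop:Facts}.(9) produces $z$ with $z\sim_L x$ and $z\sim_R y$, hence $\calr(z)=\{-1,v\}$, $\call(z)=\{1,v\}$ and $\la(z)=2$. The Cartier--Foata form of $z$ then has first layer $(-1)v$ and second layer $0$, so $z$ is a left stub by \cref{coro:firstTwoLayers}; but a stub with $\call(z)=\{1,v\}$ must be the short stub $1v$ by \cref{lemm:stubRemarks}.(1), contradicting $l(z)\ge 3$. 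This is exactly the kind of pairwise linking criterion your symmetry heuristic overlooks. Your external route could in principle be completed for the finite Weyl groups $D_{r+3}$, but it would require actually extracting the $\la$-value-2 two-sided cell count from the type-$D$ literature, it says nothing uniform about the other types, and it is strictly heavier than what the paper's own machinery already delivers.
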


\begin{proof}
Part (1) follows directly from Theorem \ref{thm:cellParameterization}, Lemma \ref{lemm:sliding2cell} and
Proposition \ref{prop:desClasses}.(1). To prove Part (2), first note that $E_i$ is a subset of a 2-cell
by Theorem \ref{thm:cellParameterization} and Lemma \ref{lemm:sliding2cell} for all $1\le i\le 3$. Also
note that by Proposition \ref{prop:desClasses}.(2), we have  $W_2=E_1\sqcup E_2\sqcup E_3$ if $r=1$ and
$W_2=E_1\sqcup E_2$ if $r>1$.  These facts, together with the symmetry in the Coxeter diagram
of $E_{1,1}$, imply that to show the 2-cells of $W_2$ in type $E_{1,r} (r\ge 1)$ are as
claimed it remains to prove that $E_1$ and $E_2$ lie in different 2-cells for all $r\ge 1$.
We do so by showing below that the stubs $x:=(-1)v\in E_1$ and $y:=1v\in E_2$ are not in the
same 2-cell.

Suppose $x\sim_{LR} y$. Then Part (9) of Proposition \ref{prop:Facts} implies that some element $z\in W$
satisfies $z\sim_L x$ and $z\sim_R y$. Parts (3) and (5) of the same proposition then imply that
$\calr(z)=\calr(x)=\{-1,v\}, \call(z)=\call(y)=\{1,v\}$ and $\la(z)=\la(x)=2$. Let $z=w_p\dots
w_2w_1$ be the Cartier--Foata form of $z$. Recall that $w_1$ must equal the product of the right
descents of $z$ and that every generator in $\supp(w_2)$ must fail to commute with some
generator in $\supp(w_1)$. It follows that $w_1=(-1)v$ and $w_2=0$. Corollary \ref{coro:firstTwoLayers}
then implies that $z\in \sw$, whence the fact that $\call(z)=\{1,v\}$ forces $z=1v$ by
Lemma \ref{lemm:stubRemarks}.(1).  This contradicts the fact that $z=w_p\dots w_3\cdot
w_2w_1=w_p\dots w_3\cdot (01v)$ has length at least 3, so $x\not\sim_{LR} y$, and the proof of
Part (2) is complete. 

Part (3) follows immediately from comparison of Parts (1)--(2) of the theorem with
Proposition \ref{prop:desClasses}.
\end{proof}
   
As mentioned in the introduction, the partition of some Weyl and affine Weyl groups such as $A_n$
and $B_n$ into cells are known in the literature. Cells in the finite Coxeter groups of types $H_3$
and $H_4$ have also been determined using computational methods in \cite{AlvisH4}. In this sense,
the descriptions of the Kazhdan--Lusztig cells of $W_2$ from Theorems
\ref{thm:cellParameterization}, \ref{thm:CellViaDecomp} and \ref{thm:twoSidedCells} are not new for
those groups.  On the other hand, we note that these three theorems offer convenient combinatorial
descriptions of the cells of $W_2$, and it does so in uniform way for all $\la(2)$-finite Coxeter
groups. For the Coxeter systems of types $F_n$ where $n\ge 6$, $H_n$ where $n\ge 5$, $\tilde
C_{n-1}$ for general values of $n$, and $E_{q,r}$ for general values of $q,r$, the descriptions of
cells from these theorems are new to our knowledge.

\section{Enumeration via 0-cells}
\label{sec:0cells}

We maintain the setting and notation in \autoref{KLcells}, with $(W,S)$ being an irreducible \ntatf
Coxeter system.  In this section we compute the cardinalities of the 1-cells and 2-cells of $W_2$
found in \autoref{sec:1cells} and \autoref{sec:2cells}. By Proposition \ref{prop:a2intersections}, to do so it
suffices to understand the 0-cells $I(x,y)$ and their cardinalities $N(x,y)$ in $W_2$ where $x,y\in
\sw$, so the study of 0-cells occupies almost the entire section. We will interpret each 0-cell as a
certain set of \emph{core elements} in \autoref{sec:anatomy}, explain how different 0-cells relate
to each other in \autoref{sec:relating0cells}, and exploit such relationships to count all 0-cells,
1-cells and 2-cells of $W_2$ in \autoref{sec:cellData}.

\subsection{Anatomy of \texorpdfstring{$\la(2)$}{a(2)}-elements}
\label{sec:anatomy}
In this subsection we establish a canonical decomposition that illuminates the structure of both the
elements and the 0-cells in $W_2$. The decomposition uses the following notions: 

\begin{definition}
\label{def:anatomy}
\begin{enumerate}
    \item We call an element $w\in W_2$ a \emph{core element}, or simply a \emph{core}, if
         $\abs{\call(w)}=\abs{\calr(w)}=2$.
    \item We say that an ordered pair $(w,w')$ of FC elements in $W$ are \emph{descent compatible}
         if the sets $\calr(w)=\call(w')$ and $\abs{\calr(w)}=\abs{\call(w')}=2$.  When this is the
         case, Remark \ref{rmk:heapsAndWords} guarantees the existence of reduced words 
        \begin{equation*} \ul w=s_1\dots s_{k}(s_{k+1}s_{k+2}), \quad \ul w'
              =(s_{k+1}s_{k+2})s_{k+3}\dots s_{k'}
        \end{equation*} of $w$ and $w'$, respectively, such that
            $\calr(w)=\call(w')=\{s_{k+1},s_{k+2}\}$, and we define the \emph{glued product} of $w$
            and $w'$ to be the element of $W$ expressed by the word \[ \ul w*\ul w':=s_1\dots
            (s_{k+1}s_{k+2})\dots s_{k'}.  \] We denote the glued product by $w*w'$.
    \item Let $x\in \sw, y'\in \cals'(W)$ and let $w\in W$ be a core. We say $w$ is \emph{compatible
         with both $x$ and $y$} if both the pairs $(x,w)$ and $(w,y')$ are descent compatible. We
         denote the set of cores compatible with both $x$ and $y'$ by $\cores(x,y')$. 
    \item We define an $\la(2)$-\emph{triple} in $W$ to be a triple $(x,w,y')$ of elements in $W_2$
         such that $x\in \sw$, $y'\in \cals'(W)$, and $w\in\cores(x,y')$.  We denote the set of
         $\la(2)$-triples in $W$ by $\triples$.
\end{enumerate}
\end{definition}

\begin{remark}
    \label{rmk:glue}
    The element $w*w'$ in Part (2) of Definition \ref{def:anatomy} is well-defined, independent of the choice
    of the reduced words $\ul w$ and $\ul w'$. Indeed, in the notation of the definition, we have
    $w*w'=w{s_{k+1}s_{k+2}}w'$. It is worth noting, however, that the element $w*w'$ need not be FC
    when $w$ and $w'$ are FC. For example, in the Coxeter system of type $A_4$ the glued product of
    the FC elements $w=124,w'=241$ equals $1241=(121)4$, which is not FC.
\end{remark}

We describe the canonical decomposition of $\la(2)$-elements below.

\begin{thm}
    \label{thm:anatomy}
    Let $(W,S)$ be an irreducible \ntatf Coxeter system. Then there is a bijection $g:\triples
    \ra W_2$ given by 
    \[
        g(x,w,y')=(x*w)*y'
    \]
    for all $(x,w,y')\in \triples$. Furthermore, for all $x\in \sw$
    and $y'\in \cals'(W)$, the map $g$ restricts to a bijection $g_{xy'}: \{x\}\times \cores(x,y')\times \{y'\}\ra
    R_x\cap L_{y'}$.
\end{thm}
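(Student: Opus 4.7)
The plan is to construct the inverse of $g$ by slicing each $w \in W_2$ through the two maximal $2$-antichains in its heap determined by the unique left and right stub decompositions. For well-definedness, given $(x, c, y') \in \triples$, descent compatibility yields reduced factorizations $x = x_0 \cdot (st)$, $c = (st) \cdot c_0 = c'_0 \cdot (s't')$, and $y' = (s't') \cdot y'_0$, so unpacking the glued product gives $(x*c)*y' = x_0 \cdot c \cdot y'_0$. To see that this element lies in $W_2$, I would amalgamate the heaps $H(x), H(c), H(y')$ along the shared $2$-antichains $\{s,t\}$ and $\{s',t'\}$; \cref{lemm:maxAntichain} together with the heap criterion \cref{prop:FCCriterion} ensures the result is the heap of an FC element whose width remains $2$, whence $\la = 2$ by Parts (1), (2) and (5) of \cref{prop:Facts}.

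For surjectivity, take $w \in W_2$ and apply \cref{thm:CellViaDecomp}.(3) together with its dual (via \cref{eq:1cellSymmetry}) to produce unique decompositions $w = x \cdot z = z' \cdot y'$ with $x \in \sw$ and $y' \in \cals'(W)$. Let $A_1$ and $A_2$ be the $2$-antichains of $H(w)$ corresponding respectively to the top layer of $x$ and the bottom layer of $y'$; both are maximal in $H(w)$ by \cref{coro:layerSizeBound}.(1). The key structural claim is that $\cali_{A_1} \se \cali_{A_2}$ in $H(w)$. Granting this, applying \cref{lemm:decompAndDescent}.(1) first with antichain $A_1$ and then, on the filter side, with antichain $A_2$ produces a reduced factorization $w = x_0 \cdot c \cdot y'_0$ with $H(c) = \calf_{A_1} \cap \cali_{A_2}$. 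The element $c$ is FC as a factor of $w$; the bound $\la(c) \ge 2$ follows from \cref{prop:a=n}.(1) applied to the antichain $A_1 \se H(c)$, while $\la(c) \le 2$ is deduced from the reduced factorization $w = x_0 \cdot c \cdot y'_0$ via Parts (1) and (5) of \cref{prop:Facts}. Moreover, \cref{lemm:decompAndDescent}.(2) yields $\call(c) = \{s, t\}$ and $\calr(c) = \{s', t'\}$, so $c \in \cores(x, y')$ and $g(x, c, y') = w$.

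Injectivity is immediate from uniqueness: if $g(x, c, y') = g(\tilde x, \tilde c, \tilde y') = w$, then \cref{thm:CellViaDecomp}.(3) and its dual force $x = \tilde x$ and $y' = \tilde y'$, after which $c = \tilde c$ is determined as the middle factor in the reduced factorization $w = x_0 \cdot c \cdot y'_0$. For the restriction $g_{xy'}$, \cref{thm:CellViaDecomp}.(2) and its symmetric analog show that $w \in R_x \cap L_{y'}$ exactly when $x$ and $y'$ are the stubs appearing in the canonical stub decompositions of $w$; hence $g$ restricts to a bijection from $\{x\} \times \cores(x, y') \times \{y'\}$ onto $R_x \cap L_{y'}$. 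The main obstacle is the antichain comparison $\cali_{A_1} \se \cali_{A_2}$: this structural fact about width-$2$ heaps of FC elements I would handle by decomposing $H(w)$ into two chains via Dilworth's theorem and tracking how the positions of $A_1$ and $A_2$ lie within those chains, using the word-level interpretations supplied by the left- and right-stub decompositions.
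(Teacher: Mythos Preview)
Your proposal has the right skeleton but glosses over precisely the point the paper identifies as the crux: well-definedness of $g$. As \cref{rmk:glue} warns, the glued product of two FC elements need not be FC; your sentence ``\cref{lemm:maxAntichain} together with the heap criterion \cref{prop:FCCriterion} ensures the result is the heap of an FC element whose width remains $2$'' is not an argument. You must actually verify that amalgamating $H(x)$, $H(c)$, $H(y')$ creates no new convex chain violating \cref{prop:FCCriterion}.(2), and this is where the paper does real work in \cref{prop:plugin}: its proof uses the specific heap structure of $\la(2)$-stubs recorded in \cref{lemm:stubRemarks}.(3), namely that in $H(x)$ every element below the top antichain $\{k+1,k+2\}$ is comparable to \emph{both} of its members. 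Without invoking this $\la(2)$-finite--specific feature of stubs, there is no reason the amalgamated heap should be FC. Relatedly, your deduction of $\la=2$ from width $2$ via Parts (1), (2), (5) of \cref{prop:Facts} only gives $\la\ge 2$; the converse requires either \cref{prop:a.vs.n}.(3) or the inductive star-operation argument the paper uses in \cref{prop:plugin}.

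Your ``main obstacle'' $\cali_{A_1}\se\cali_{A_2}$ is an artifact of performing the left and right stub decompositions of $w$ simultaneously. The paper sidesteps it entirely by working sequentially: \cref{prop:glueAndCells}.(2) factors $w=x*w'$ uniquely, and then the \emph{dual} statement factors $w'=c*y'$ uniquely, giving $w=(x*c)*y'$ without any comparison of antichains in $H(w)$. If you adopt this two-step approach, the Dilworth argument becomes unnecessary and the proof reduces to establishing \cref{prop:plugin} carefully.
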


\noindent
The theorem immediately relates 0-cells to sets of cores:

\begin{corollary}
    Let $(W,S)$ be an irreducible \ntatf Coxeter system. Then for all
    $x,y\in\sw$, there is a bijection $f_{xy}: \cores(x,y\inverse)\ra I(x,y)$ given by 
    \[
        f_{xy}(w)=(x*w)*y\inverse
    \]
    for every $w\in\cores(x,y\inverse)$. In particular, we have $N(x,y)=\abs{\cores(x,y\inverse)}$.
    \label{coro:intersectionsViaCores}
\end{corollary}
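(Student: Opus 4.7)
The plan is to derive this corollary essentially for free from Theorem \ref{thm:anatomy} by specializing $y'=y^{-1}$ and unfolding the definition of $I(x,y)$. The first step is to rewrite $I(x,y)$ in a form that matches the target of the restricted bijection $g_{xy'}$ in the theorem. By \cref{def:intersection} and the identity $L_{y^{-1}}=R_y^{-1}$ recorded in \eqref{eq:1cellSymmetry}, we have
\[
I(x,y)=R_x\cap R_y^{-1}=R_x\cap L_{y^{-1}}.
\]
Next, since $y\in\sw$, \cref{rmk:stubRemarks1}.(2) guarantees that $y^{-1}$ is a right $\la(2)$-stub, i.e., $y^{-1}\in\cals'(W)$. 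This is the observation that lets us plug $y'=y^{-1}$ into \cref{thm:anatomy}.

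Now I would apply the second assertion of \cref{thm:anatomy} with this choice of $y'$, which yields a bijection
\[
g_{x,y^{-1}}\colon \{x\}\times\cores(x,y^{-1})\times\{y^{-1}\}\longrightarrow R_x\cap L_{y^{-1}}=I(x,y),
\]
acting by $(x,w,y^{-1})\mapsto (x*w)*y^{-1}$. Composing with the obvious identification $\cores(x,y^{-1})\to \{x\}\times\cores(x,y^{-1})\times\{y^{-1}\}$ sending $w\mapsto (x,w,y^{-1})$ produces the claimed map $f_{xy}(w)=(x*w)*y^{-1}$ and shows that it is a bijection. The cardinality equality $N(x,y)=\abs{\cores(x,y^{-1})}$ is then immediate from \cref{def:intersection}, which defines $N(x,y)=\abs{I(x,y)}$.

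There is no real obstacle here: all substantive work—verifying that the glued product $(x*w)*y'$ is well-defined, that it has $\la$-value $2$, that it lies in $R_x\cap L_{y'}$, and that the assignment is bijective—has already been carried out in \cref{thm:anatomy}. The only things to check at this stage are the symmetry $L_{y^{-1}}=R_y^{-1}$ (already established) and the inversion symmetry for stubs (\cref{rmk:stubRemarks1}.(2)), so the proof reduces to a short bookkeeping argument.
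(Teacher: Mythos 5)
Your proposal is correct and follows exactly the route the paper intends: the corollary is stated there as an immediate consequence of \cref{thm:anatomy}, obtained by setting $y'=y^{-1}$ (a right stub by \cref{rmk:stubRemarks1}.(2)) and using $L_{y^{-1}}=R_y^{-1}$ from \eqref{eq:1cellSymmetry}. No gaps; the bookkeeping is all that is needed.
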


\begin{example}
    Suppose $(W,S)$ is of type $B_4$. Consider the left stubs $x=13, y=213$ and the core element
    $w=132413\in \cores(x,y')$ compatible with both $x$ and $y':=y\inverse$. The canonical bijection
    $g$ of Theorem \ref{thm:anatomy} sends the $\la(2)$-triple $(x,w,y)$ to the element $z:=1324132\in
    I(x,y)$, as shown in Figure \ref{fig:anatomy}.  In consequence, we have $f_{xy}(w)=z$ for the induced
    bijection $f_{xy}:\cores(x,y\inverse)\ra I(x,y)$ of Corollary \ref{coro:intersectionsViaCores}.

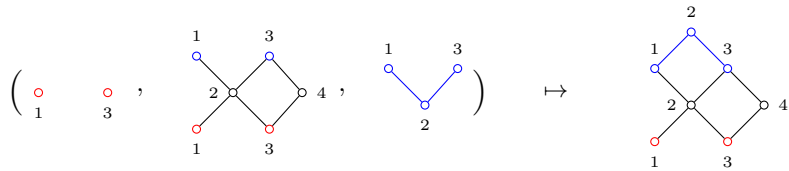
\begin{figure}
    \begin{center}
        \begin{tikzpicture}
            \node (xx) {\huge{$($}};
                \node[main node, red] (1) [right=0.01cm of xx] {};
                \node[main node, red] (2) [right=0.8cm of 1] {};
                \node (y1) [right=0.2cm of 2] {,};
                \node[main node] (5) [right = 1cm of y1] {}; 
                \node[main node,blue] (3) [above left = 0.4cm and 0.4cm of 5] {}; 
                \node[main node,red] (4) [below left = 0.4cm and 0.4cm of 5] {}; 
                \node[main node,blue] (6) [above right = 0.4cm and 0.4cm of 5] {}; 
                \node[main node,red] (7) [below right = 0.4cm and 0.4cm of 5] {}; 
                \node[main node] (8) [right = 0.8cm of 5] {}; 
                \node (y2) [right=0.3cm of 8] {,};
                \node[main node,blue] (9) [above right = 0.1cm and 0.4cm of y2] {}; 
                \node[main node,blue] (10) [below right = 0.4cm and 0.4cm of 9] {}; 
                \node[main node,blue] (11) [right = 0.8cm of 9] {}; 
            \node (xxx) [right=5.7cm of xx] {\huge{$)$}};

            \node (x) [right =0.5cm of xxx] {\small{$\mapsto$}};
            \node[main node,blue] (l) [right = 2.5cm of 11] {};
            \node[main node,blue] (n) [above right = 0.4cm and 0.4cm of l] {};
            \node[main node] (o) [below right = 0.4cm and 0.4cm of l] {};
            \node[main node,red] (m) [below left = 0.4cm and 0.4cm of o] {};
            \node[main node,blue] (p) [above right = 0.4cm and 0.4cm of o] {};
            \node[main node,red] (q) [below right= 0.4cm and 0.4cm of o] {};
            \node[main node] (r) [below right= 0.4cm and 0.4cm of p] {};

            \node (1x) [below = 0.01cm of 1] {\tiny{$1$}};

            \node (2x) [below = 0.01cm of 2] {\tiny{$3$}};
            \node (3x) [above = 0.01cm of 3] {\tiny{$1$}};
            \node (4x) [below = 0.01cm of 4] {\tiny{$1$}};
            \node (5x) [left = 0.01cm of 5] {\tiny{$2$}};
            \node (6x) [above = 0.01cm of 6] {\tiny{$3$}};
            \node (7x) [below = 0.01cm of 7] {\tiny{$3$}};
            \node (8x) [right = 0.01cm of 8] {\tiny{$4$}};
            \node (9x) [above = 0.01cm of 9] {\tiny{$1$}};
            \node (10x) [below = 0.01cm of 10] {\tiny{$2$}};
            \node (11x) [above = 0.01cm of 11] {\tiny{$3$}};

            \node (lx) [above= 0.01cm of l] {\tiny{$1$}};
            \node (mx) [below = 0.01cm of m] {\tiny{$1$}};
            \node (nx) [above = 0.01cm of n] {\tiny{$2$}};
            \node (ox) [left = 0.01cm of o] {\tiny{$2$}};
            \node (px) [above = 0.01cm of p] {\tiny{$3$}};
            \node (qx) [below = 0.01cm of q] {\tiny{$3$}};
            \node (rx) [right = 0.01cm of r] {\tiny{$4$}};

            \path[draw]
            (m)--(o)--(l)
            (p)--(r)--(q)
            (p)--(o)--(q)
            (3)--(5)--(6)--(8)--(7)--(5)--(4);

            \path[draw,blue]
            (9)--(10)--(11)
            (l)--(n)--(p);
        \end{tikzpicture}
    \end{center}
    \caption{The glued product of an $\la(2)$-triple}
    \label{fig:anatomy}
\end{figure}
\end{example}

We now begin the proof of Theorem \ref{thm:anatomy}. An important subtlety about the theorem concerns the
definition of the map $g$: in order for the supposed output $(x*w)*y'$ to be defined, we need $x*w$
to be an FC element that is descent compatible with $y$, yet as we explained in Remark \ref{rmk:glue} the
glued product of two FC elements need not be FC in general.  This subtlety is addressed in the
following proposition and reflected in its rather technical proof, where we frequently have to
invoke certain features of $\la(2)$-stubs in $\la(2)$-finite Coxeter systems highlighted in
Lemma \ref{lemm:stubRemarks}.

\begin{prop}
    \label{prop:plugin}
    Let $x\in \sw$ and let $w\in W_2$. Suppose that the pair $(x,w)$ is descent compatible, and let
    $\ul x, \ul w, \ul x*\ul w$ and $x*w$ be as in Part \textup{(2)} of Definition \ref{def:anatomy}. Then
    $\ul x*\ul w$ is a reduced word, the element $x*w$ is FC, and $\la(x*w)=2$. Moreover, we have
    $\calr(x*w)=\call(y)$, so that $x*w$ is descent compatible with $y$. 
\end{prop}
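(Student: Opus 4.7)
The strategy is to analyze the heap $H(\ul x * \ul w)$, which is obtained by gluing $H(\ul x)$ and $H(\ul w)$ along the shared antichain $\{k+1, k+2\}$ corresponding to the common letters $s_{k+1}s_{k+2}$. First I would verify both conditions of the FC criterion (\cref{prop:FCCriterion}) for this glued heap, which simultaneously yields that $\ul x * \ul w$ is reduced and that $x * w$ is FC. The $\la$-value and right descent claims then follow from further heap considerations combined with the $\la = n$ equivalence in $\la(2)$-finite systems (\cref{prop:a.vs.n}(3)). The case when $x$ is a short stub is trivial since then $\ul x * \ul w = \ul w$ and $x*w = w$; I therefore assume $x$ is medium or long, in which case \cref{lemm:stubRemarks}(2) supplies the specific reduced word $\ul x = s_1 \dots s_k(s_{k+1}s_{k+2})$ with $k \geq 1$.

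The key structural observation underpinning the whole argument is that in the glued heap every position $i \in \{1, \dots, k\}$ sits strictly below every position $j \in \{k+3, \dots, k'\}$, with $k+1$ or $k+2$ strictly between them. This combines two facts. On the $\ul x$-side, \cref{lemm:stubRemarks}(3) says every $i \leq k$ is connected to both $k+1$ and $k+2$ by convex chains of covers $i \prec i+1 \prec \dots \prec k \prec k+j$ for $j \in \{1,2\}$. On the $\ul w$-side, $w$ is FC by \cref{prop:a2impliesFC} with $n(w) = 2$ by $\la(2)$-finiteness and \cref{prop:a.vs.n}(3), so the size-$2$ antichain of minimal elements $\{k+1, k+2\}$ in $H(\ul w)$ is maximal by \cref{coro:layerSizeBound}(1); \cref{lemm:maxAntichain} then forces every position $j \geq k+3$ in $H(\ul w)$ to lie above $k+1$ or above $k+2$.

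With this observation, the FC conditions follow by case analysis. Any forbidden cover $i \prec j$ with $s_i = s_j$, or any forbidden convex alternating chain of length $m(s,t) \geq 3$, confined to the $\ul x$-part $\{1, \dots, k+2\}$ or to the $\ul w$-part $\{k+1, \dots, k'\}$, would contradict the FC-ness of $x$ or $w$; any configuration straddling the two parts is blocked by $k+1$ or $k+2$ sitting strictly between the straddling endpoints. For the $\la$-value, \cref{prop:a.vs.n}(3) reduces $\la(x*w) = 2$ to $n(x*w) = 2$: the antichain $\{k+1, k+2\}$ gives $n(x*w) \geq 2$, while an antichain of size $3$ in $H(\ul x * \ul w)$ would need members in both $\{1, \dots, k\}$ and $\{k+3, \dots, k'\}$ (as $n(x), n(w) \leq 2$), contradicting the observation. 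For the descent set, the same observation shows that no $i \in \{1, \dots, k\}$ is maximal in the glued heap, while the maximality of positions in $\{k+1, \dots, k'\}$ is inherited from $H(\ul w)$; \cref{rmk:heapsAndWords} then gives $\calr(x*w) = \calr(w)$, which is the asserted $\call(y)$ under the descent-compatibility hypothesis supplying $y$ from the ambient context. The main obstacle is establishing the structural observation, which relies essentially both on the rigid form of stubs from \cref{lemm:stubRemarks} and on $\la(2)$-finiteness; compare \cref{rmk:a2finiteAssumption} for why analogous control fails in general Coxeter systems.
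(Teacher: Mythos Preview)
Your overall strategy is sound, and for the claim $\la(x*w)=2$ you take a genuinely different route from the paper: the paper proves this by induction on $l(x)$, peeling off the leftmost letter of $x$ via a left lower star operation (using \cref{lemm:stubRemarks}(4)) and invoking \cref{coro:starA}; you instead compute $n(x*w)=2$ directly and appeal to \cref{prop:a.vs.n}(3). Your approach is cleaner and equally valid (there is no circularity, since \cref{prop:a.vs.n} depends only on results from \cref{sec:preliminaries}--\cref{KLcells}). Your descent-set argument is also correct and more direct than the paper's, which deduces $\calr(x*w)=\calr(w)$ from \cref{lemm:decompAndDescent}(2) only after first establishing $\la(x*w)=2$.

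However, your verification of the FC criterion has a gap in the convex-chain case. Saying that a straddling forbidden chain is ``blocked by $k+1$ or $k+2$ sitting strictly between the straddling endpoints'' is not enough: a convex chain can and will absorb a single intermediate element without contradiction. The paper's argument is more delicate here. Any such chain must pass through some $b\in\{k+1,k+2\}$, and the element immediately before $b$ in the chain is forced by \cref{lemm:stubRemarks}(3) to be $k$; its label $s_k$ lies in the second layer of $x$ and hence, by \cref{coro:firstTwoLayers}(2), fails to commute with \emph{both} $s_{k+1}$ and $s_{k+2}$. Since the element immediately after $b$ carries the same label $s_k$, it too is comparable to both $k+1$ and $k+2$, so \emph{both} elements of $\{k+1,k+2\}$ lie strictly between two consecutive chain members. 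That contradicts convexity, because $\{k+1,k+2\}$ is an antichain and a chain cannot contain two incomparable elements. Your structural observation alone does not supply this; you need the extra input that $s_k$ is adjacent to both $s_{k+1}$ and $s_{k+2}$ in the Coxeter diagram.
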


\begin{proof}
    Let $\ul x= s_1\dots s_k(s_{k+1}s_{k+2})$ and $\ul w=(s_{k+1}s_{k+2})s_{k+3}\dots s_{k'}$ be
    reduced words of $x$ and $w$, respectively, with $\calr(x)=\call(w)=\{s_{k+1},s_{k+2}\}$. Let
    $\ul z=\ul x*\ul w=x_1\dots s_{k'}$. We first use Proposition \ref{prop:FCCriterion} to show that $\ul z$
    is a reduced word of an FC element, and hence $x*w$ is FC, by examining the heap $H(\ul
    z)=\{1,\dots,k'\}$. To do so, we view the heaps $H(\ul x)=\{1,\dots, k+2\}$ and $H(\ul
    w)=\{k+1,\dots, k'\}$ of $\ul x$ and $\ul w$ naturally as sub-posets of $H(\ul z)$. Let
    $A=\{k+1,k+2\}$. Recall from Lemma \ref{lemm:stubRemarks}.(3) that for every element $i\in H(\ul x)$,
    we have both $i\preceq k+1$ and $i\preceq k+2$ in $H(\ul x)$. On the other hand, we claim that
    every element $j\in H(\ul w)$ satisfies either $k+1\preceq j$ or $k+2 \preceq j$ in $H(\ul w)$.
    To see this, note that since $\call(w)=\{s_{k+1},s_{k+2}\}$, the set $A$ consists of minimal
    elements in $H(w)$, so it is an antichain in $H(\ul w)$ by Remark \ref{rmk:heapsAndWords}. Since
    $\la(w)=2$, this antichain is maximal by Corollary \ref{coro:layerSizeBound}. Since $A$ is a maximal
    antichain consisting of all minimal elements in $H(\ul w)$, the filter generated by $A$ in
    $H(\ul w)$ must equal $H(\ul w)$ by Lemma \ref{lemm:maxAntichain}. Our claim follows.

    By Proposition \ref{prop:FCCriterion}, to prove that $\ul z$ is the reduced word of an FC element it
    suffices to show that $H(\ul z)$ contains no covering relation $i\prec j$ with $s_i=s_j$ or
    convex chains $C=(i_1\prec i_2\prec \dots \prec i_m)$ where the elements' labels alternate in
    two noncommuting generators $s,t\in S$ and $m=m(s,t)$.  Since $x$ and $w$ are both FC, such
    covering relations or convex chains cannot exist in $H(\ul x)=\{1,\dots, k+2\}$ or in $H(\ul
    w)=\{k+1,\dots,q\}$, so if they exist in $H(\ul z)$, then they must involve indices from both
    $\ul x$ and $\ul w$, that is, we must have either (a) a covering relation $i\prec j$ where
    $i<k+1, j>k+2$, and $s_i=s_j$ or (b) a convex chain of the form $C$ where $i_1<k+1,i_m>k+2$.
    Write $i=i_1$ and $j=i_m$ in Case (b).  Then in both Cases (a) and (b), the last paragraph shows
    that in $H(\ul z)$ the index $i$ is smaller than both elements of $A$ in the poset order and the
    index $j$ is larger than at least one element of $A$. It follows that that $i\preceq k+1\preceq
    j$ or $i\preceq k+2\preceq j$, so $j$ cannot cover $i$ and Case (a) cannot occur. It also
    follows that in Case (b) the convex chain $i_1,\dots, i_m$ contains three consecutive elements
    $a, b, c$ such that $a<k+1,b\in \{k+1,k+2\}, c>k+2$, and $s_a=s_c$. By
    Lemma \ref{lemm:stubRemarks}.(3), the element $a$ covered by $b$ must be $k$. Let $t=s_a=s_k$. Then
    $s_c=t$ and $t$ commutes with neither $s_{k+1}$ nor $s_{k+2}$ by Corollary \ref{coro:firstTwoLayers}.(2).
    But then we must have both $a\preceq k+1\preceq c$ and $a\preceq k+2\preceq c$ in $H(\ul z)$,
    contradicting the convexity of the chain $i_1,\dots, i_m$. We may now conclude that $x*w$ is a
    FC element with reduced word $\ul x*\ul w$. 

    Next, we show that $\la(x*w)=2$ by induction on the length $l(x)=k+2\ge 2$ of the stub $x$. In
    the base case, we have $k=2$ and $x*w=w$, and therefore $\la(x*w)=\la(w)=2$. If $k+2>2$, then
    the element $x':=s_1x=s_2\dots s_{k+1}s_{k+2}$ is also a stub and satifies $\calr(x')=\calr(x)$
    by Lemma \ref{lemm:stubRemarks}.(4), so $x'*w$ is defined and has $\la$-value 2 by induction. The
    element $1$ is the unique minimal element in the heap $H(\ul x)$, so it is also the unique
    minimal element in $H(\ul z)$ since we argued that $k+1\preceq j$ or $k+2\preceq j$ in $H(\ul
    z)$ for all $j\in H(\ul z)\setminus H(\ul x)$. Also, in the heap $H(\ul x)$ we have the covering
    relation $1\prec 2$ by Lemma \ref{lemm:stubRemarks}.(3). It then follows from the left-sided analog
    of Proposition \ref{prop:Reducibility} that $x*w$ admits a left lower star operation with respect to
    $\{s_1,s_2\}$ that takes it to the element $s_2s_3\dots s_k'=x'*w$; therefore
    $\la(x*w)=\la(x'*w)=2$ by Corollary \ref{coro:starA}, as desired. 

    It remains to show that $\calr(x*w)=\calr(w)$. Since $\la(x*w)=2$, the antichain $A=\{k+1,k+2\}$
    in $H(\ul z)=H(x*w)$ must be maximal by Corollary \ref{coro:layerSizeBound}. Since $x*w$ is FC, it
    follows from Definition \ref{def:anatomy}.(2) and Lemma \ref{lemm:decompAndDescent}.(2) that
    $\calr(x*w)=\calr(w)=\call(y)$, so $x*w$ is descent compatible with $y$.
\end{proof}

Besides Proposition \ref{prop:plugin}, we prepare one more ingredient for the proof of Theorem \ref{thm:anatomy},
namely how suitable glued products relate to stub decompositions and hence 1-cells. 

\begin{prop}
    \label{prop:glueAndCells}
      \begin{enumerate}
          \item For every stub $x\in \sw$ and every element $w\in W_2$ such that the pair $(x,w)$ is descent
              compatible, the element $z:=x*w$ is in the right cell of $x$.
        \item For every element $z\in W_2$, there exists a unique way to write $z$ as a glued product $z=x*w$ where
             $x\in \sw$ and $w$ is an $\la(2)$-element such that the pair $(x,w)$ is compatible.
    \end{enumerate}
\end{prop}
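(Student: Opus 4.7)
For Part (1), the plan is a short deduction: given $z := x * w$ with $x \in \sw$ and $(x,w)$ descent compatible, \cref{prop:plugin} already yields $\la(z) = 2$ and tells us $\ul x * \ul w = s_1\dots s_k(s_{k+1}s_{k+2})s_{k+3}\dots s_{k'}$ is a reduced word in the notation of \cref{def:anatomy}.(2). Since $\ul x = s_1\dots s_{k+2}$ is a prefix, I read off a reduced factorization $z = x \cdot z'$ with $\ul{z'} = s_{k+3}\dots s_{k'}$; this is a left stub decomposition of $z$, so \cref{thm:CellViaDecomp}.(1) puts $z \in R_x$.

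For the existence half of Part (2), my plan is to start from the unique left stub decomposition $z = x \cdot z'$ guaranteed by \cref{thm:CellViaDecomp}.(3) and to write a reduced word $\ul z = \tilde{\ul x}\cdot st \cdot \ul{z'}$ of $z$, where $st$ is the first layer of $x$ (so $s,t$ commute and $\calr(x) = \{s,t\}$). I will set $w := st \cdot z'$ and show that $(x,w)$ is descent compatible with $x * w = z$. Reducedness of $w = st \cdot z'$ is inherited from the reducedness of $\ul z$; this gives $\{s,t\} \se \call(w)$ and makes $x*w = z$ automatic once compatibility is in place. To obtain $\la(w) = 2$, I identify $H(w)$ inside $H(z)$ via \cref{lemm:decompAndDescent}.(1) as the filter $\calf_A$ generated by the $2$-element antichain $A$ consisting of the two positions of $\ul z$ labeled $s$ and $t$; this antichain is maximal in $H(z)$ by \cref{coro:layerSizeBound}.(1). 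Then $A \se H(w)$ forces $n(w) \ge 2$, while any antichain in $H(w)$ is also one in $H(z)$ and so has at most two elements; hence $n(w) = 2$. The element $w$ is FC because its reduced word $st \cdot \ul{z'}$ is a contiguous subword of the reduced word $\ul z$ of the FC element $z$ (\cref{prop:a2impliesFC}) and so contains no long braid. \cref{prop:a.vs.n}.(3) then upgrades $n(w) = 2$ to $\la(w) = 2$, which is where the $\la(2)$-finiteness hypothesis enters. Finally, the left-sided analog of \cref{coro:layerSizeBound}.(3) gives $\abs{\call(w)} \le 2$, pinching $\call(w) = \{s,t\}$ and completing descent compatibility.

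For uniqueness in Part (2), suppose $z = x' * w'$ is another such representation. Part (1) places $z$ in $R_{x'}$, and the parameterization in \cref{thm:cellParameterization} (one stub per right cell) forces $x' = x$. Reading off $z = x \cdot \tilde{w'}$ where $\tilde{w'}$ is the element expressed by the tail of $\ul{w'}$ after stripping its initial block $st$, the uniqueness of the left stub decomposition from \cref{thm:CellViaDecomp}.(3) forces $\tilde{w'} = z'$, hence $w' = w$. The main obstacle I anticipate is the $\la(w) = 2$ verification in the existence argument: it requires me to recognize $H(w)$ as a heap-theoretic filter inside $H(z)$ via \cref{lemm:decompAndDescent}.(1), to propagate the no-long-braid property from $z$ to $w$ to secure FC-ness, and only then to invoke \cref{prop:a.vs.n}.(3)—the rest of the argument is essentially bookkeeping around the stub decomposition and \cref{thm:CellViaDecomp}.
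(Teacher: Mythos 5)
Your proposal is correct and follows the same skeleton as the paper's proof: Part (1) is argued identically via \cref{prop:plugin} and \cref{thm:CellViaDecomp}.(1), and both the existence and uniqueness halves of Part (2) proceed, as in the paper, from the left stub decomposition $z=x\cdot z'$ together with the parameterization of right cells by stubs. The one place you diverge is the verification that $\la(w)=2$ for $w=st\cdot z'$. The paper gets this more cheaply: since $\ul w$ is a suffix of a reduced word of $z$ we have $w\le_L z$, hence $\la(w)\le\la(z)=2$ by \cref{prop:Facts}.(1) and (5), and the two commuting left descents $s,t$ give $\la(w)\ge 2$; this yields $\la(w)=2$ without any appeal to \cref{prop:a.vs.n}.(3). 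Your route through $n(w)=2$ also works, but your justification of the FC-ness of $w$ is stated too weakly: exhibiting one reduced word of $w$ without a long braid does not establish full commutativity, since the word criterion quantifies over all reduced words. What you actually need (and what is true) is that every reduced word of $w$ extends, by prepending a reduced word of the complementary factor in the reduced factorization $z=u\cdot w$, to a reduced word of the FC element $z$, so that none of them can contain a long braid. With that one-line repair your argument is complete; the paper's version simply sidesteps the issue by never needing $w$ to be FC at this step.
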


\begin{proof} 
(1) We have $\la(z)=2$ by Proposition \ref{prop:plugin}, and writing $\ul x, \ul w$ and $\ul z=\ul x*\ul w$ as
in Proposition \ref{prop:plugin} yields a left stub decomposition $z=x\cdot w'$ where $w'=s_{k+3}\dots s_{k'}$.
It follows that $z\sim_R x$ by Theorem \ref{thm:CellViaDecomp}.(1).

\noindent
(2) We can find the candidates for the elements $x$ and $w$ as follows. Let $x\in \sw$ be the unique
stub such that $z\in R_x$, so that by Theorem \ref{thm:CellViaDecomp} we have $z=x\cdot w'$ for some
element $w'$. Let $\ul w'=s_{k+3}s_{k+4}\dots s_k'$ be a reduced word of $w'$, and let $w$ be the
element expressed by the word $\ul w:=(s_{k+1}s_{k+2}) s_{k+3}\dots s_{k'}=s_{k+1}\dots s_{k'}$.

We now check that $\la(w)=2$ and $z=x*w$. To do so, note that since the factorization $z=x\cdot
w'=s_1\dots s_{k'}$ is reduced, the word $\ul w$ is reduced and $w\le_L z$. In particular, we have
$\la(w)\le \la(z)=2$ by Parts (1) and (5) of Proposition \ref{prop:Facts}. Meanwhile, the elements $s_{k+1}$
and $s_{k+2}$ are left descents of $w$, so the elements $(k+1)$ and $(k+2)$ form an antichain in the
heap $H(w)=\{k+1, k+2,\dots,k'\}$ and $\la(w)\ge n(w)\ge 2$. It follows that $\la(w)=2$ and
$\call(w)=\{s_{k+1},s_{k+2}\}$.  It further follows that $z=x*w$, as desired.

To prove the uniqueness claim, suppose $z=x_0*w_0$ is another factorization where $x_0\in \sw,
\la(w_0)=2$ and $(x_0,w_0)$ is descent compatible. Then $x_0\sim_R z$ by Part (1), so $x_0=x$ by
Theorem \ref{thm:cellParameterization}. Remark \ref{rmk:glue} now implies that
$xs_{k+1}s_{k+2}w=z=x_0s_{k+1}s_{k+2}w_0$ where $\{s_{k+1},s_{k+2}\}=\calr(x)=\calr(x_0)$, so
$w=w_0$.  The uniqueness claim follows. 
\end{proof}
  
We are ready to prove Theorem \ref{thm:anatomy}.

\begin{proof}[Proof of Theorem \ref{thm:anatomy}]
    Let $(x,w,y')\in \triples$ and let $z=g(x,w,y')$. By Proposition \ref{prop:plugin}, the pair $(x*w,y')$ is
    descent compatible since $x*w$ is an $\la(2)$-element with $\calr(x*w)=\calr(w)=\call(y')$. A
    second application of the proposition then shows that $(x*w)*y'$ has $\la$-value 2, so the
    proposed formula for $g$ does define a map from $\triples$ to $W_2$. Note that by symmetry, we
    have $z=(x*w)*y'=x*(w*y')$, and therefore $z\in R_x$ by Proposition \ref{prop:glueAndCells}.(1). Similarly
    we have $z\in L_{y'}$, and therefore $g$ restricts to a map
    $g_{xy'}:\{x\}\times\cores(x,y')\times \{y'\}\ra R_x\cap L_{y'}$ as claimed. 

    It remains to prove that $g$ is bijective. The surjectivity and injectivity of $g$ follow
    respectively from the existence and uniqueness claims in Proposition \ref{prop:glueAndCells}.(2). More
    precisely, for every $z\in W_2$, the proposition guarantees a unique factorization $z=x* w'$
    with $x\in\sw, w'\in W_2$ and $(x,w')$ descent compatible, and the obvious analog of the
    proposition, involving right stubs and left cells, guarantees a unique factorization $w'=w*y'$
    with $w\in W_2, y'\in \cals'(W)$ and $(w,y')$ descent compatible; it follows that $(x,w,y')$ is
    the unique triple in $\triples$ such that $g(x,w,y')=z$.
\end{proof}

\begin{remark}[Distinguished involutions]
    \label{rmk:distinguishedInvolutions}
    The tools developed in this subsection are well-suited for describing the distinguished
    involutions in $W_2$.  Distinguished involutions are certain important involutions in Coxeter
    groups, and it is known that every right cell $R$ contains a unique distinguished involution,
    which is a member of the 0-cell $R\cap R\inverse$. For each $\la(2)$-finite Coxeter system
    $(W,S)$ and each stub $x\in \sw$, we claim that the distinguished involution in the cell $R_x$
    is precisely the glued product $d:=x*x^{-1}$. This can be proved by showing $d$ is the unique
    involution in $R_x$ for which a certain number $\gamma_{x,x\inverse, d}$ is nonzero. Here, the
    number $\gamma_{x,x\inverse,d}$ can be obtained by directly computing the product
    $C_{x}C_{x\inverse}$ in the Hecke algebra of $(W,S)$ (see \cite[\S 14]{LusztigHecke}), but we
    omit the details. Instead, we note that $d$ is clearly an involution and we have $d\in R_x$ by
    Proposition \ref{prop:glueAndCells}.(1). Consequently we have $d\inverse=d\in R_x$, so that $d\in
    R_x^{\inverse}$ and $d\in R_x\cap R_{x}\inverse=R_x\cap L_{x\inverse}=I(x,x)$. In particular,
    when the 0-cell $I(x,x)$ is a singleton (which is sometimes the case, see
    Proposition \ref{prop:intersectionSizes}), we can conclude that $d$ is the distinguished involution in
    $R_x$ without any computation.
\end{remark}

\subsection{Relating 0-cells}
\label{sec:relating0cells}
In this subsection we use  the relations $\sim$ and $\approx$ introduced in Definition \ref{def:slidings} to
connect different 0-cells in $W_2$. Our results explain how two 0-cells $I(x,y)$ and $I(x',y')$
relate to each other for $x,y,x',y'\in \sw$ when $x\approx x', y\approx y$ or when $x\sim x',y\sim
y'$. Since $\approx$ and $\sim$ are generated by left star operations and suitable slides, we first
study how these generating transformations affect 0-cells. The effect of the left star operations
can be understood via the canonical decomposition of $\la(2)$-elements described in
\autoref{sec:anatomy}:

\begin{lemma}
    \label{lemm:staropAnd0cells}
    Let $x,x',y\in \sw$, suppose that $x$ and $x'$ can be related by left star operations, and let 
\[
f_{xy}: \cores(x,y\inverse)\ra I(x,y), \quad 
f_{x'y}: \cores(x',y\inverse)\ra I(x',y)   
\]
   be the bijections defined in Corollary \ref{coro:intersectionsViaCores}.  Then the sets $\cores(x,y\inverse)$ and
   $\cores(x',y\inverse)$ are equal and the map 
   \[
       \varphi_{xx'}:=f_{x'y}\circ f_{xy}\inverse: I(x,y)\ra I(x',y)
\]
is a bijection. In particular, we have $N(x,y)=N(x',y)$. 
\end{lemma}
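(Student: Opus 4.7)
The plan is to reduce the lemma to a single observation about descent sets, from which everything else follows almost mechanically. First I would note that if $x$ and $x'$ are related by left (upper or lower) star operations, then $x \sim_L x'$. Indeed, \cref{prop:StarOpAndCells}.(1) gives $w \sim_L {}_*w$ whenever ${}_* w$ is defined, and since upper and lower star operations are inverse to each other on their domains of definition, left upper star operations also preserve left cells. Applying this step by step along the sequence of left star operations connecting $x$ and $x'$ yields $x \sim_L x'$, and then \cref{prop:Facts}.(3) gives $\calr(x) = \calr(x')$.

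Next I would observe that whether a core element $w$ belongs to $\cores(x, y\inverse)$ depends on $x$ only through $\calr(x)$: by \cref{def:anatomy}.(3), the membership $w \in \cores(x, y\inverse)$ amounts to the conditions $\calr(x) = \call(w)$, $\calr(w) = \call(y\inverse)$, and $|\calr(x)| = |\call(w)| = |\calr(w)| = |\call(y\inverse)| = 2$. Since only the first condition mentions $x$ and we have just shown $\calr(x) = \calr(x')$, we immediately conclude $\cores(x, y\inverse) = \cores(x', y\inverse)$.

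Finally I would invoke \cref{coro:intersectionsViaCores}, which guarantees that both $f_{xy}$ and $f_{x'y}$ are bijections from this common set of cores onto $I(x,y)$ and $I(x',y)$ respectively. Their composition $\varphi_{xx'} = f_{x'y} \circ f_{xy}\inverse$ is therefore a bijection $I(x,y) \to I(x',y)$, and the equality $N(x,y) = N(x',y)$ of cardinalities follows at once. There is no substantial obstacle here: the entire argument rests on the fact that left star operations preserve right descent sets together with the core-interpretation of 0-cells set up in the previous subsection.
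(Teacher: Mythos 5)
Your proposal is correct and follows essentially the same route as the paper's own proof: both deduce $\calr(x)=\calr(x')$ from \cref{prop:StarOpAndCells} and \cref{prop:Facts}.(3), observe that $\cores(x,y\inverse)$ depends on $x$ only through $\calr(x)$, and then let \cref{coro:intersectionsViaCores} do the rest. Your extra remark that upper star operations also preserve left cells (being inverse to lower ones) is a worthwhile detail the paper leaves implicit.
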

\begin{proof} 
    Since $x$ and $x'$ can be related by left star operations, we have $\calr(x)=\calr(x')$ by
    Propositions \ref{prop:StarOpAndCells} and \ref{prop:Facts}.(3). By Definition \ref{def:anatomy}, for any
    two stubs  $z,z'\in\sw$ the set $\cores(z,z')$ depends only on the descent sets $\calr(z)$ and
    $\call(z')$, so $\cores(x,y)=\cores(x',y)$. The remaining claims follow immediately.
\end{proof}

\begin{remark}
    \label{rmk:swap}
    The bijection $\varphi_{xx'}$ from Lemma \ref{lemm:staropAnd0cells} has two simple descriptions.
    First, in the notation of Theorem \ref{thm:anatomy} and Corollary \ref{coro:intersectionsViaCores}, the elements
    of $I(x,y)$ and $I(x',y)$ are precisely the glued products $g(x,w,y)=x*w*y$ and
    $g(x',w,y)=x'*w*y$ where $w\in\cores(x,y)=\cores(x',y)$, so for each element in $I(x,y)$ the map
    $\varphi_{xx'}$ simply replaces the left stub $x$ with $x'$ in the canonical decomposition of
    the element into $\la(2)$-triples. Second, the above description implies that
    $\varphi_{xx'}(z)=x'x\inverse z$ by Remark \ref{rmk:glue}; that is, applying $\varphi_{xx'}$ amounts
    to left multiplication by $x'x\inverse$ in the group $W$. 
\end{remark}

Next, we study how slides on short stubs affect relevant 0-cells.

\begin{lemma}
    \label{lemm:slidingMoves}
    Let $(W,S)$ be an arbitrary Coxeter system. Suppose $s,t,u\in S$ are generators such that $m(s,t)=m(t,u)=2$ and
    $m(t,u)\ge 3$, and let $J=\{t,u\}$. 
    \begin{enumerate}
        \item Let $w\in \fc(W)$ and suppose that $\call(w)=\{s,t\}$. Then $w$ admits at least one left star
             operation $\sigma$ with respect to $J$, and for any such operation we have $\{s,u\}\se
             \call(\sigma(w))$.
         \item Now suppose that $(W,S)$ is $\la(2)$-finite and consider the short stubs $x=st, x'=su$ in $\sw$.
             For each subset $A$ of $W$, let \[ \hspace{1.5em} \Sigma(A)=\{\sigma(w): w\in A, \sigma
         \text{ is a left star operation with respect to } J\}.  \] Then we have $\Sigma(R_{x})=R_{x'}$ and
         $\Sigma(R_{x'})=R_x$. For every $y\in \sw$, we also have $\Sigma(I(x,y))=I(x',y)$ and $\Sigma(I(x',y))=I(x,y)$.
     \item Let $(W,S), x,x'$ and $y$ be as in Part (2), and suppose that $m(t,u)=3$.  Then
          $N(x',y)=N(x,y)$.
    \end{enumerate}
\end{lemma}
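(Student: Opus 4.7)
For Part (1), the plan is to analyze the ``left $J$-coset'' structure of $w$ with respect to $J = \{t,u\}$. By the left-right analog of the coset decomposition used to define right star operations in \autoref{sec:StarOperations}, write $w$ so that the portion ``leftmost in $W_J$'' is a reduced word in $J$ whose leading letter is determined by the left descents. Since $\call(w) = \{s,t\}$, we have $t$ as a left descent but $u \notin \call(w)$, which forces this $J$-piece to be a nontrivial word strictly shorter than $m(t,u)$ beginning with $t$. Hence one of the left upper and left lower star operations with respect to $J$ is applicable, proving the first claim. For any such $\sigma$, the operation toggles the parity of this leading $J$-chain, producing an element whose left $J$-piece begins with $u$; this gives $u \in \call(\sigma(w))$. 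The generator $s$ commutes with both $t$ and $u$ by hypothesis, so it is disjoint from the $J$-coset combinatorics and remains a left descent of $\sigma(w)$. Thus $\{s,u\} \subseteq \call(\sigma(w))$.

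For Part (2), I would first show $\Sigma(R_x) \subseteq R_{x'}$. Take $w \in R_x$ and a left star operation $\sigma$ with respect to $J$ applicable to $w$, which exists by Part (1) since $\call(w) = \call(x) = \{s,t\}$ by \cref{prop:Facts}.(3). By \cref{coro:starA}, $\la(\sigma(w)) = 2$; by Part (1), $\{s,u\} \subseteq \call(\sigma(w))$; and by \cref{coro:layerSizeBound}.(3), $\abs{\call(\sigma(w))} \le 2$, so $\call(\sigma(w)) = \{s,u\} = \call(x')$. By \cref{thm:cellParameterization}, $\sigma(w) \in R_z$ for a unique left stub $z$, and then $\call(z) = \{s,u\}$, which forces $z = x'$ by \cref{lemm:stubRemarks}.(1). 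Hence $\sigma(w) \in R_{x'}$. The reverse inclusion $R_{x'} \subseteq \Sigma(R_x)$ follows by applying the symmetric argument to any $v \in R_{x'}$, using that upper and lower star operations are mutual inverses where both defined: any $\sigma$ applicable to $v$ produces $\sigma(v) \in R_x$, and the inverse star operation then recovers $v$ from $\sigma(v) \in R_x$. The identities $\Sigma(I(x,y)) = I(x',y)$ and $\Sigma(I(x',y)) = I(x,y)$ now follow because left star operations preserve left cells by \cref{prop:StarOpAndCells}.(1), so intersecting with the fixed left cell $L_{y\inverse}$ on both sides is compatible with $\Sigma$.

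For Part (3), when $m(t,u)=3$ the left star operations with respect to $J$ collapse into the simple left star operation described in \autoref{sec:StarOperations}: for each $w$ where it is defined, exactly one of ${}^*w$ and ${}_*w$ exists, and this single operation $*w$ is an involution wherever both sides are defined. Part (1) guarantees $*w$ is defined for every $w \in I(x,y)$, and Part (2) shows $w \mapsto *w$ carries $I(x,y)$ into $I(x',y)$. The same reasoning with $x$ and $x'$ swapped produces a map $I(x',y) \to I(x,y)$, which by involutivity is the inverse of the first. Hence $w \mapsto *w$ is a bijection $I(x,y) \to I(x',y)$, giving $N(x,y) = N(x',y)$.

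The main obstacle is Part (1): one must carefully verify, in every possible position of the left $J$-piece and for both upper and lower star operations when applicable, that the operation swaps $t$ with $u$ as the leading letter in the coset piece and that the external generator $s$ (commuting with both $t$ and $u$) genuinely stays a left descent, independently of where $s$ sits relative to the $J$-chain. Once Part (1) is pinned down, Parts (2) and (3) are largely formal consequences of it together with \cref{thm:cellParameterization}, \cref{coro:starA}, and \cref{prop:StarOpAndCells}.
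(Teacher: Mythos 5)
Your proposal is correct and follows essentially the same route as the paper's proof: coset analysis of the $J=\{t,u\}$-prefix for Part (1), then the combination of \cref{prop:Facts}.(3), \cref{coro:starA}, \cref{coro:layerSizeBound}, \cref{lemm:stubRemarks}.(1) and \cref{thm:cellParameterization} to pin down the target cell, reversibility of star operations for the reverse inclusions, left-cell closure (\cref{prop:StarOpAndCells}) for the $0$-cell identities, and involutivity of the simple star operation for Part (3). The only cosmetic difference is in Part (1), where you bound the length of the $J$-prefix strictly below $m(t,u)$ using $u\notin\call(w)$, while the paper uses the word criterion for full commutativity; both are valid.
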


\begin{proof}
    (1)    Suppose $\call(w)=\{s,t\}$ and consider the right coset decomposition $w=w_J\cdot {}^J w$
    of $w$ relative to $J$. Since $w\in \fc(W)$, we have $l(w_I)<m(s,t)$ by the word criterion for
    full commutativity (\autoref{sec:FC}). Since $t\in \call(w)$, we have $l(w_I)\ge 1$.  It follows
    that $w$ admits at least one left star operation with respect to $I$. By the definition of star
    operations, any such operation must introduce $u$ as a new  left descent and keep $s$ as a
    descent, hence we have $\{s,u\}\se\call(\sigma(w))$.

    (2) Let $w\in R_x$ and let $L$ be a left cell in $W_2$.  Then $\call(w)= \call(x)=\{s,t\}$ by
    Proposition \ref{prop:Facts}.(3), so Part (1) guarantees that $w$ admits at least one left star operation,
    say $\sigma$, with respect to $J$. Let $z\in \sw$ be the unique stub such that $\sigma(w)\in
    R_z$. Then $\{s,u\}\se \call(\sigma(w))=\call(z)$ by Part (1) and Proposition \ref{prop:Facts}.(3). This
    forces $\call(z)=\{s,u\}$ by Corollary \ref{coro:layerSizeBound}, which in turn forces $z=su=x'$ by
    Lemma \ref{lemm:stubRemarks}.(1). It follows that $\sigma(w)\in R_{x'}$, and therefore
    $\Sigma(R_{x})\se R_{x'}$. A similar argument shows that $\Sigma(R_{x'})\se R_x$. Note that
    since left cells are closed under left star operations by Proposition \ref{prop:StarOpAndCells}, it further
    follows that $\Sigma(R_x\cap L)\se R_{x'}\cap L$ and $\Sigma(R_{x'}\cap L)\se R_x\cap L$.

The operation $\sigma$ can be reversed by another left star operation $\sigma'$ with respect to $J$,
with $\sigma'$ being a lower operation if $\sigma$ is upper and vice versa; therefore
$w=\sigma'\circ\sigma(w)\in \Sigma(\Sigma(R_{x}))$.  It follows that $R_x\se \Sigma(\Sigma(R_x))$.
The same argument shows that $R_x\cap L\se \Sigma(\Sigma(R_{x}\cap L))$.

We have shown that  $\Sigma(R_x)\se R_{x'}$ and $R_x\se \Sigma(\Sigma(R_{x}))$, so $R_{x}\se
\Sigma(\Sigma(R_{x}))\se \Sigma(R_x')$.  Since $\Sigma(R_{x'})\se R_x$, it follows that
$\Sigma(R_x)=R_{x'}$.  Similarly we may conclude that $\Sigma(R_{x'})=R_x, \Sigma(R_x\cap
L)=R_{x'}\cap L$ and $\Sigma(R_{x'}\cap L)=R_x\cap L$ from the previous two paragraphs. The last two
equalities imply the desired claims on $I(x,y)$ and $I(x',y)$ when we set $L=R_y\inverse$, and the
proof of (2) is complete.

(3) Let $w\in R_x$. Since $m(t,u)=3$, the element $w$ admits at most one left star operation
relative to $J$, namely, the simple left star operation $*$. On the other hand $w$ admits at least
one left star operation relative to $J=\{t,u\}$ by Part (1).  It follows that the set $\Sigma(R_x)$
defined in Part (2) equals the set $*(R_x):=\{*w:w\in R_x\}$.  Being an involution, the map $*$ is
injective on the set of elements it is defined on, so we have
$\abs{R_x'}=\abs{\Sigma(R_x)}=\abs{*(R_x)}=\abs{R_x}$, where the first equality holds by Part (2).
Similarly we have $\abs{I(x',y)}=\abs{\Sigma(I(x,y))}=\abs{*(I(x,y))}=\abs{I(x,y)}$, i.e. we have
$N(x',y)=N(x,y)$.
\end{proof}

Lemma \ref{lemm:staropAnd0cells} and Lemma \ref{lemm:slidingMoves}.(3) lead to a numerical invariance that
will be very useful for counting 1-cells and 2-cells in $W_2$ (see Proposition \ref{prop:Nij}): 

\begin{prop}
    \label{prop:invariance}
    Let $x,y,x',y'\in \sw$ and suppose $x'\sim x, y'\sim y$. Then $N(x',y')=N(x,y)$. 
\end{prop}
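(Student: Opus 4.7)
The plan is to reduce the claim to the two generating moves of simple slide equivalence, namely left star operations and simple slides on short stubs, and then handle each via an earlier result together with the symmetry $N(x,y)=N(y,x)$.

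First, I would handle one argument at a time, aiming to show that $N(x,y)=N(x',y)$ whenever $x\sim x'$. By \cref{rmk:slidings}.(2) (applied to $\sim$ rather than $\approx$), the relation $\sim$ is generated by left star operations and simple slides on short stubs, and by \cref{lemm:stubRemarks}.(4) any stub is connected by left star operations to its first layer. So it suffices to establish invariance of $N(\cdot,y)$ under each generator separately, and concatenate.

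For the left-star-operation generator, \cref{lemm:staropAnd0cells} directly gives $N(x,y)=N(x',y)$ (in fact via an explicit bijection between the underlying sets of cores, which depend only on the left descent set of $x$). For the simple-slide generator, \cref{lemm:slidingMoves}.(3) gives the same equality, because the hypothesis $m(t,u)=3$ built into a simple slide is exactly the hypothesis of that statement. Thus for any chain of generators connecting $x$ to $x'$ we can transport $N$ unchanged through each link and conclude $N(x,y)=N(x',y)$.

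Next, I would pass to the second argument by invoking the inversion symmetry \cref{rmk:Isymmetry}, which gives $N(x,y)=N(y,x)$. Applying the previous paragraph with the roles reversed and using $y\sim y'$ yields $N(y,x)=N(y',x)$, hence $N(x,y)=N(x,y')$. Chaining the two reductions gives $N(x',y')=N(x,y')=N(x,y)$, which is the desired conclusion.

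No step is genuinely difficult once the earlier lemmas are in place; the only mild point of care is checking that the generating moves for $\sim$ really are exactly left star operations plus simple slides (as opposed to arbitrary slides, which would require the heavier \cref{lemm:slidingMoves}.(2) rather than part (3)). Since \cref{def:equivalences} restricts to simple slides by definition, this is immediate, so the proof is essentially just a two-line combination of \cref{lemm:staropAnd0cells}, \cref{lemm:slidingMoves}.(3), and \cref{rmk:Isymmetry}.
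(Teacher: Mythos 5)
Your proposal is correct and follows essentially the same route as the paper's own proof: reduce to the generators of $\sim$ (left star operations handled by \cref{lemm:staropAnd0cells}, simple slides by \cref{lemm:slidingMoves}.(3)) to get invariance in the first argument, then transfer to the second argument via the inversion symmetry $N(x,y)=N(y,x)$ from \cref{rmk:Isymmetry}. No substantive differences.
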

\begin{proof}
    The relation $\sim$ is generated by left star operations and simple slides by
    Remark \ref{rmk:slidings}.(2), we have $N(x',y)=N(x,y)$ when $x'$ and $x$ are related by left star
    operations by Lemma \ref{lemm:staropAnd0cells}, and we have $N(x',y)=N(x,y)$ if $x'$ and $x$ are
    short stubs related by a simple slide by Lemma \ref{lemm:slidingMoves}.(3), hence $N(x',y)=N(x,y)$.
    Similarly we have $N(y',x')=N(y,x')$. Remark \ref{rmk:Isymmetry} now implies that
    $N(x',y')=N(y',x')=N(y,x')=N(x',y)=N(x,y) $, as desired. 
\end{proof}

\begin{remark}
    \label{rmk:relating0cells}
In the same way that  Lemma \ref{lemm:slidingMoves}.(3) is a reflection of Lemma \ref{lemm:slidingMoves}.(2)
in the special case where the slide involved in simple, Proposition \ref{prop:invariance} is a numerical
reflection of the more general fact that in an $\la(2)$-finite Coxeter system $(W,S)$, we can always
deduce all 0-cells in the same 2-cell of $W_2$ from each other. More precisely, if the system
$(W,S)$ is not of type $E_{1,r}$ for some integer $r\ge 1$, then $W_2$ is itself a single 2-cell by
Theorem \ref{thm:twoSidedCells}, and we can deduce $I(x',y')$ from $I(x,y)$ for all stubs $x,y,x',y'\in
\sw$ as follows: \begin{enumerate}[leftmargin=2.5em]
    \item[(1.a)] If $x$ and $x'$ are short, then we may obtain $x'$ from $x$ by a sequence of slides
         because all short stubs in $\sw$ are slide equivalent by the proof of
         Theorem \ref{thm:twoSidedCells}. By Lemma \ref{lemm:slidingMoves}.(2), this implies that we may start
         from $I(x,y)$ and apply a corresponding sequence of left star operations setwise to obtain
         $I(x',y)$. 
     \item[(1.b)] More generally, let $x_1$ and $x_1'$ be the first layers of $x$ and $x'$,
          respectively. Then $x$ and $x'$ can be related to $x_1$ and $x_1'$ by left star operations
          by Lemma \ref{lemm:stubRemarks}.(4), respectively, so by Lemma \ref{lemm:staropAnd0cells} and
          Remark \ref{rmk:swap} we may obtain $I(x_1,y)$ from $I(x,y)$, and $I(x',y)$ from $I(x_1',y)$,
          via suitable left multiplications. Since $x_1$ and $x_1'$ are short stubs, we can obtain
          $I(x_1',y)$ from $I(x_1,y)$ as explained in (1.a), so we can obtain $I(x',y)$ from
          $I(x,y)$.
     \item[(2)] By symmetry, the suitable counterparts of Lemma \ref{lemm:staropAnd0cells},
          Remark \ref{rmk:swap}, and Lemma \ref{lemm:slidingMoves}.(2) allow us to obtain $I(x',y')$ from
          $I(x',y)$ via right star operations and right multiplications. It follows that we can
          obtain $I(x',y')$ from $I(x,y)$.
 \end{enumerate} When $(W,S)$ is of type $E_{1,r}$ where $r\ge 1$, we may obtain different 0-cells
     from each other in each 2-cells $E_i$ (in the notation of Theorem \ref{thm:twoSidedCells}) by the same
     reasoning because the short stubs in $E_i$ are always pairwise slide equivalent by
     Lemma \ref{lemm:Esliding}.
 \end{remark}

 \begin{example}
     \label{eg:0cellsB4}
     The Coxeter system $(W,S)$ of type $B_4$ contains six $\la(2)$-stubs, namely $x_1:=1\cdot
     2\cdot 13, x_2:=2\cdot 13, x_3=13, x_4=14, x_5=24$ and $x_6=3\cdot 24$. The six stubs are all
     slide equivalent but fall into two simple slide equivalence classes, with $x_1\sim x_2\sim
     x_3\sim x_4$ and $x_5\sim x_6$. Let $R(i)=R_{x_i}$, $L(i)=R(i)\inverse$, $I(i,j)=R(i)\cap L(j)$
     and $N(i,j)=\abs{I(i,j)}$ for all $1\le i,j\le 6$. Then the 0-cells in $W_2$ are given in
     Table \ref{tab:0cellsB4}, where $x_i$ labels the $i$-th row and $i$-th column and where $I(i,j)$
     appears in the $i$-th row and $j$-column for all $1\le i,j\le 6$.

     We may use the procedures described earlier to determine the rows of the table from each other,
     i.e. to obtain an entry from another entry in the same column. For example, since $x_3$ and
     $x_4$ differ by a sliding move along the edge $\{3,4\}$, we may obtain the third and fourth row
     from each other via left  star operations with respect to $\{3,4\}$; since $x_5$ and $x_6$ are
     related to each other by left star operations and $x_5x_6\inverse=x_6x_5\inverse=3$, we may
     obtain the fifth and sixth row from each other using left multiplication by the generator $3$.
     Similarly, we can determine the columns of the table from each other via right star operations
     and right multiplications, so we can recover the entire table from any single entry.
     
     We note that taking the cardinality $N(i,j)$ of each listed 0-cell $I(x,y)$ in
     Table \ref{tab:0cellsB4} recovers Table \ref{tab:b4example} from Example \ref{eg:b4}. Given that $x_1\sim
     x_2\sim x_3\sim x_4$ and $x_5\sim x_6$, both these tables conform to Proposition \ref{prop:invariance}.
     The tables also demonstrate the fact that the conclusion $N(x',y')=N(x,y)$ of
     Proposition \ref{prop:invariance} no longer holds when we weaken the assumption $x\sim x',y\sim y$ to the
     assumption $x\approx x', y\approx y'$. In this sense, the theorem cannot be strengthened.

\begin{table}\centering \def\arraystretch{1.5}
\resizebox{\columnwidth}{!}{
    \begin{tabular}{|c|c|c|c|c|c|c|} \hline
        %\begin{tabularx}{\textwidth}{|@{\extracolsep{\fill} }c|X@{\extracolsep{\fill} }|} \hline
        & $1\cdot 2\cdot13$ & $2\cdot 13$ & $13$ & $14$ & $24$ & $3\cdot 24$\\\hline 
        $1\cdot 2\cdot 13$ & $\{121321,1213241321\}$& $\{12132,121324132\}$&  $\{1213,12132413\}$ & $\{12134,1213241\}$ & $\{121324\}$&
        $\{1213243\}$\\\hline
        $2\cdot 13$ & $\{21321,213241321\}$& $\{2132,21324132\}$&  $\{213,2132413\}$ & $\{2134,213241\}$ &
        $\{21324\}$&  $\{213243\}$\\\hline
        $13$ & $\{1321,13241321\}$& $\{132,1324132\}$& $\{13,132413\}$ & $\{134,13241\}$ & $\{1324\}$&  $\{13243\}$\\\hline
        $14$ & $\{41321,1241321\}$& $\{4132,124132\}$&  $\{413,12413\}$ & $\{14,1241\}$ & $\{124\}$&  $\{1243\}$\\\hline 
        $24$ & $\{241321\}$& $\{24132\}$&  $\{2413\}$ & $\{214\}$ & $\{24,2124\}$&  $\{243,21243\}$\\\hline
        $3\cdot 24$ & $\{3241321\}$& $\{324132\}$&  $\{32413\}$ & $\{3214\}$ & $\{324,32124\}$&
        $\{3243,321243\}$\\\hline 
        \end{tabular}
    }\vspace{1em} 
    \caption{Zero-sided cells of $\la$-value 2 in type $B_4$}
        \label{tab:0cellsB4}
    \end{table} 
 \end{example}

\subsection{Preparation for cell enumeration}
\label{sec:cellData}
Proposition \ref{prop:invariance} brings the relation $\sim$ on $\sw$ to the forefront of the enumerations of
1-cells and 2-cells in $W_2$. To be more precise, suppose $w_1,\dots,w_d$ is a complete, irredundant
list of representatives of the $\sim$-classes in $\sw$, let $n_i$ be the size of the $\sim$-class of
$w_i$, let $N_i=\abs{R_{w_i}}$ and let $I_{ij}=I(w_i,w_j)$ and $N_{ij}=N(w_i,w_j)$ for all $1\le
i,j\le d$. Then by Proposition \ref{prop:invariance} and Proposition \ref{prop:a2intersections}, we can count all cells in
$W_2$ via the following results: 

\begin{prop}
    \label{prop:Nij}
    \begin{enumerate} Maintain the notation of Theorem \ref{thm:twoSidedCells}.
           \item For all $x,y\in \sw$, if $x\not\sim_{LR}y$ then $N(x,y)=0$. 
           \item For all $x,y\in\sw$, we have $N(x,y)=N_{ij}$ where $i$ and $j$ are the unique integers such that
            $x\sim w_i$ and $y\sim w_j$.
        \item For all $x\in \sw$, we have 
            \[
                \abs{R_x}=\sum_{j=1}^d n_j N_{ij} =\abs{
                R_{w_i}}=N_i
            \]
            where $i$ is the unique integer such that $x\sim w_i$.
        \item Suppose $(W,S)$ is not of type $E_{1r}$ for any $r\ge 1$. Then the unique two-sided cell
            in $W_2$, i.e. the set $W_2$ itself, has cardinality
            \[
                \abs{W_2}=\sum_{1\le i,j\le d} n_in_jN_{ij}= \sum_{1\le i\le d} n_i N_{i}. 
            \] 
        \item Suppose $(W,S)$ is of type $E_{1r}$ for some $r\ge 1$, and let
          $1\le i,j\le d$. Then we have $N_{ij}=0$
                if $i\neq j$. Moreover, each two-sided cell $E_i$ in $W_2$ has cardinality
            \[
                \abs{E_i}=\abs{\sqcup_{y\sim w_i}R_y}=\sum_{y\sim w_i}\abs{R_y}=n_i N_i=n_i\cdot \sum_{j=1}^d
                n_jN_{ij}= n_i^2N_{ii}.
            \]
        \item We have $N_{ij}=N_{ji}$ for all $1\le i,j\le d$. 
    \end{enumerate} 
\end{prop}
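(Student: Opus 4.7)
The plan is to derive each part from the counting formulas in \cref{prop:a2intersections}, the invariance \cref{prop:invariance}, and the symmetry \cref{rmk:Isymmetry}. Claims (1) and (6) are almost immediate. For (1), any $z\in I(x,y)=R_x\cap R_y\inverse$ satisfies $z\sim_R x$ and $z\inverse\sim_R y$, so \cref{prop:Facts}.(6) yields $x\sim_{LR}z\sim_{LR}z\inverse\sim_{LR}y$; contrapositively, $x\not\sim_{LR}y$ forces $I(x,y)=\emptyset$ and hence $N(x,y)=0$. Claim (6) follows at once from the inversion bijection $I(w_i,w_j)\ra I(w_j,w_i)$ of \cref{rmk:Isymmetry}.

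Claims (2), (3) and (4) amount to bookkeeping on top of \cref{prop:invariance}. Claim (2) is the invariance statement itself. For (3), \cref{prop:a2intersections}.(1) writes $\abs{R_x}$ as a sum of $N(x,y)$ over the stubs $y$ in the two-sided cell containing $x$; by (1) we may extend this harmlessly to a sum over all of $\sw$, and grouping the terms by $\sim$-class while invoking (2) collapses each group of $n_j$ stubs to the single contribution $n_jN_{ij}$, yielding $\sum_{j=1}^{d} n_jN_{ij}$. Since this expression depends only on the $\sim$-class of $x$, it equals $\abs{R_{w_i}}=N_i$. For (4), when $(W,S)$ is not of type $E_{1,r}$, \cref{thm:twoSidedCells}.(1) says $W_2$ is a single 2-cell, so \cref{prop:a2intersections}.(3) gives $\abs{W_2}=\sum_{x,y\in\sw}N(x,y)$; grouping pairs of stubs by pairs of $\sim$-classes and using (2) collapses this to $\sum_{1\le i,j\le d} n_in_jN_{ij}$, which by (3) rewrites as $\sum_{i}n_iN_i$.

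The most delicate piece is (5), where the key observation is that in type $E_{q,r}$ every edge of the Coxeter diagram has weight $3$, so every slide is automatically simple and the two equivalence relations $\sim$ and $\approx$ coincide on $\sw$. Consequently \cref{thm:twoSidedCells}.(3) tells us that in type $E_{1,r}$ the $\sim$-classes in $\sw$ are in bijection with the 2-cells of $W_2$. In particular, if $i\neq j$ then $w_i$ and $w_j$ lie in distinct 2-cells, so $w_i\not\sim_{LR}w_j$ and (1) forces $N_{ij}=0$. For the cardinality of $E_i$, \cref{thm:twoSidedCells}.(2) gives $E_i=\bigsqcup_{y\sim w_i}R_y$, so $\abs{E_i}=\sum_{y\sim w_i}\abs{R_y}=n_iN_i$ by (3); substituting $N_{ij}=0$ for $j\neq i$ into the formula $N_i=\sum_{j}n_jN_{ij}$ from (3) collapses it to $n_iN_{ii}$, giving $\abs{E_i}=n_i^2N_{ii}$. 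The principal obstacle in the whole proof is identifying $\sim$ with $\approx$ on $\sw$ in type $E_{q,r}$; once that bridge is in place, everything else is a routine translation of the earlier cell-counting results to the $\sim$-class level.
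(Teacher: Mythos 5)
Your proposal is correct and follows essentially the same route as the paper: Part (1) via \cref{prop:Facts}.(6), Part (2) via \cref{prop:invariance}, Parts (3)--(4) by extending the sums in \cref{prop:a2intersections} over all of $\sw$ and grouping by $\sim$-class, Part (5) via \cref{thm:twoSidedCells} and Part (1), and Part (6) via \cref{rmk:Isymmetry}. Your explicit remark that $\sim$ and $\approx$ coincide in type $E_{q,r}$ because all edges are simple is a correct and helpful unpacking of the paper's terser citation of \cref{thm:twoSidedCells} in Part (5), but it is not a different argument.
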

\begin{proof}
Let $x,y\in \sw$. If $x\not\sim_{LR} y$, then $x\not\sim_{LR} y\inverse$ by Proposition \ref{prop:Facts}.(6);
therefore $R_x\cap L_{y\inverse}=\emptyset$ and $N(x,y)=0$. This proves Part (1).  Part (2) follows
from Proposition \ref{prop:invariance}.  To prove (3), note that by Proposition \ref{prop:a2intersections}.(1) we have
$\abs{R_x}=\sum_{y\in \mathcal {T}}N(x,y)$ where $\mathcal{T}=E\cap \sw$ and $E$ is the 2-cell
containing $x$. By Part (1), we may enlarge the set $\mathcal{T}$ to obtain $\abs{R_x}=\sum_{y\in
\sw}N(x,y)$.  Part (3) then follows from Part (2).  Part (4) follows directly from
Proposition \ref{prop:a2intersections}.(3) and Part (3). To see Part (5), first note that if $i\neq j$ then
$w_i\not\sim_{LR} w_j$ by Theorem \ref{thm:twoSidedCells}, so $N_{ij}=0$ by Part (1). This implies the last
equality in the displayed equation; the other equalities hold by the definition of $E_i$ and Part
(3).  Finally, Part (6) follows from Remark \ref{rmk:Isymmetry}.
\end{proof} 

We now present the data $w_i, n_i, N_{ij}$ defined earlier in this subsection, starting with the
description of the equivalence classes of $\sim$ in terms of the set $\{w_1:n_1, w_2:n_2,\dots, w_d:n_d\}$.

\begin{prop}
\label{prop:classes} 
Let $(W,S)$ be an irreducible \ntatf Coxeter system of type $X$, and let $\sw$ be the set of left
\twostubs in $W$, denoted in the same way as in Theorem \ref{thm:stubDescription}. Let
$\beta_n=\binom{n}{2}$ for all $n\ge 2$.  Then the equivalence classes of $\sim$ are described by
Table \ref{tab:classSizes}.

\begin{table} \centering
  \begin{tabularx}{\textwidth}{|@{\extracolsep{\fill} }c|X@{\extracolsep{\fill} }|} \hline $X$           &
      representatives and cardinalities of $\approx$-classes\\         \hline $A_n, n\ge 3$ & $\{13: \beta_n-1\}$\\
      \hline $B_3$         & $\{13:3\}$\\                                   \hline $B_n, n>3$    & $\{13:n,
      \quad 24: \beta_{n-1}-1\}$ \\         \hline $\tilde C_{n-1}, n\ge 5$ & $\{13: n-1,\; 24:
      \beta_{n-2}-1,\; (n-2)n: n-1,\; 1n: 1\}$\\\hline $E_{1,1}=D_4$ & $\{(-1)v:3, 1v: 3, \; (-1)1:3\}$\\
      \hline $E_{q,r}, r> q= 1$ & $\{(-1)v: 2+r=n-1, \; 1v: \beta_n\}$\\\hline $E_{q,r}, r\ge q\ge 2$ &
      $\{(-1)1:\beta_{q+r+3}-1=\beta_{n+1}-1\} $\\\hline $F_4$ & $\{13:9\}$\\\hline $F_n, n> 4$ &
      $\{13:3n-3,\; 35:\beta_{n-2}-1\}$\\               \hline $H_3$ & $\{13: 5\}$\\\hline $H_n, n>3$ &
      $\{13:2n-1, \; 24:\beta_{n-1}-1\}$\\\hline
  \end{tabularx} \vspace{1em} \caption{The slide equivalence classes of $\sw$, where
  $\beta_n=\binom{n
}{2}$ for each integer $n$}
  \label{tab:classSizes}
\end{table}
\end{prop}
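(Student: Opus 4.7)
The plan is to verify the table type-by-type, using two structural reductions that reduce each case to diagram combinatorics. First, by \cref{rmk:slidings}(1), each $\sim$-class in $\sw$ is determined by the set of short stubs it contains, and by \cref{lemm:stubRemarks}(4), every medium or long stub is $\sim$-equivalent to its first layer. So it suffices to partition the short stubs into $\sim$-classes and then assign each medium or long stub from \cref{thm:stubDescription} to the class of its first layer. Second, by \cref{def:slidings}, a simple slide from $st$ to $su$ requires $\{t,u\}$ to be a simple edge, so the partition of short stubs under $\sim$ is controlled entirely by the weight-$3$ edges of the Coxeter diagram, with heavy edges (weights $\ge 4$) acting as impassable barriers.

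For the short-stub analysis, I would treat each type as follows. In type $A_n$, all edges are simple, so \cref{lemm:path} with $S'=S$ gives a single class. In types $B_n$, $H_n$, and $F_n$, the unique heavy edge ($\{1,2\}$ in $B_n, H_n$; $\{2,3\}$ in $F_n$) partitions the diagram into two simple-edge components, so \cref{lemm:path} applied to each component yields two classes---one on each side---except in the degenerate ranks $B_3, H_3, F_4$, where only one component supports any short stubs and a single class results. In type $\tilde C_{n-1}$, both end-edges are heavy, giving three path-style classes (stubs containing $1$, stubs containing $n$, and interior stubs supported in $\{2,\ldots,n-1\}$) plus the singleton class $\{1n\}$, isolated because the generators $1$ and $n$ are incident only to heavy edges. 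In type $E_{q,r}$, every edge is simple, so \cref{lemm:Esliding} applies directly; inspection of the sequence \eqref{eq:slidings} and its analogues confirms that every slide appearing in the proof of that lemma is along a simple edge, so the $\approx$-classes of \cref{lemm:Esliding} coincide with the $\sim$-classes and the three cases there (one, two, or three short-stub classes) carry over verbatim.

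Next I would distribute the medium and long stubs by reading off first layers from \cref{thm:stubDescription}: every $y_i$ has first layer $(i-1)(i+1)$; in $E_{q,r}$ the stubs $y'_0, y''_0$ have first layers $(-1)v$ and $1v$; and each long stub $z_s$ has a first layer that is the rightmost two-letter factor of its given Cartier--Foata recipe (always $13$ or $24$ in types $B, F, H$, and always $(-1)v, 1v$, or $(-1)1$ in type $E$). Summing within each class then yields the entries of \cref{tab:classSizes}. For instance, in $B_n$ with $n>3$ the class of $13$ consists of $\{1j: 3\le j\le n\}$ together with the medium stub $y_2=2\cdot 13$ and the long stub $z_1=1\cdot 2\cdot 13$, for a total of $(n-2)+1+1=n$; and the class of $24$ consists of $\{ij:2\le i<j\le n,\, j>i+1\}$ together with the $n-3$ medium stubs $y_3,\ldots,y_{n-1}$, for a total $\binom{n-1}{2}-(n-2)+(n-3)=\beta_{n-1}-1$ after a binomial simplification.

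The main obstacle is not any single conceptual step but the volume of bookkeeping across the ten cases, particularly verifying the correct first-layer placement of each medium and long stub within each class and then matching the resulting arithmetic to the formulas in \cref{thm:stubDescription}(7). The most delicate sub-case is $E_{q,r}$ with $q=1$, where one must confirm that $(-1)v$ is $\sim$-isolated by checking that the only possible simple slides out of $(-1)v$ would traverse the edges $\{-1,0\}$ or $\{v,0\}$, both of which produce the non-stubs $0v$ or $(-1)0$ because $0$ does not commute with $v$ or with $-1$. Once this observation is in place, the totals in \cref{tab:classSizes} follow by direct summation, and consistency with \cref{thm:stubDescription}(7) provides a convenient numerical sanity check in each type.
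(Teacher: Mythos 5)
Your proposal is correct and follows essentially the same route as the paper: reduce to short stubs via the first-layer map, treat heavy edges as impassable barriers for simple slides (the paper formalizes this with the distribution tuple $\mathrm{dist}(z)=(\supp(z)\cap S_i)_i$ over the simple components obtained by deleting heavy edges), and then assign each medium and long stub to the class of its first layer before counting. The only caveat is that your phrase ``two classes, one on each side'' is loosely worded --- in types $B_n$, $F_n$, $H_n$ one class consists of short stubs \emph{straddling} the heavy edge and the other of those supported entirely in the larger simple component --- but your worked $B_n$ computation shows you identify the classes correctly, and the paper's counting shortcut (the interior class has size $\beta_k-1$ via the type-$A_k$ parabolic, the remaining sizes by subtraction or diagram symmetry) is only a minor variation on your direct enumeration.
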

\begin{proof}
Let $G$ be the Coxeter diagram of $(W,S)$. Recall that the number $\abs{\sw}$ is given in
Theorem \ref{thm:stubDescription}.(7), which we will use without comment from now on. 

When $(W,S)$ is of type $A_n (n\ge 3)$ or $E_{q,r} (r\ge q\ge 1)$, all edges of $G$ are simple, so
the relations $\approx$ and $\sim$ coincide. It follows from Proposition \ref{prop:desClasses} and
Theorem \ref{thm:stubDescription}.(7) that Table \ref{tab:classSizes} gives the correct information in these
types. In particular, in type $E_{1,r}$ where $r>1$, counting the set $C_1$ in
Proposition \ref{prop:desClasses}.(2).(b) yields $n_1=\abs{C_1}=r+2=n-1$, and therefore
$n_2=\abs{C_2}=\abs{\sw}-\abs{C_1}=(\beta_{n+1}-1)-(n-1)=\beta_n$.

For the Coxeter systems of types $B_n (n\ge 3), \tilde C_{n-1} (n\ge 5), F_n (n\ge 4)$ and $H_n
(n\ge 3)$, we sketch the key ideas for obtaining the set $\{w_1:n_1, \dots, w_d:n_2\}$. Let $Z$ be
the set of all short stubs of $\sw$. By Remark \ref{rmk:slidings}.(1), we may choose all the class
representatives $w_1,\dots, w_d$ from $Z$, and to understand the $\sim$-classes of $\sw$ it suffices
to understand how the short stubs in $Z$ fall into different $\sim$-classes. To this end, note that
removing the heavy edge or heavy edges in $G$ results in $r=2$ or $r=3$ linear subgraphs of $G$
whose edges are simple and whose vertex sets partition $S$: these are the sets $S_1=\{1\},
S_2=\{2,\dots, n-1\}, S_3=\{n\}$ in type $\tilde C_{n-1}$, the sets $S_1=\{1,2\}, S_2=\{3,\dots,n\}$
in type $F_n (n\ge 4)$, and the sets $S_1=\{1\}, S_2=\{2,\dots,n\}$ otherwise.  Associate a tuple
$\mathrm{dist}(z):=(\supp(z)\cap S_i)_{1\le i\le r}$ to each stub $z\in Z$ to record how its support
is distributed among these subsets, and note that by the definition of simple slides, we have $z\sim
z'$ if and only if $\mathrm{dist}(z)=\mathrm{dist}(z')$ for all $z,z'\in Z$.  Observe that in every
Coxeter type we have included in the list $w_1,\dots, w_d$ one short stub corresponding to every
possible distribution.  It follows that $w_1,\dots,w_d$ is a complete, irredundant list of class
representatives of $\sim$ in $\sw$.  

To obtain the sizes $n_1,\dots, n_d$ of the equivalence classes, start with the class represented by
the short stub whose support lies entirely in the set $S_2$ specified earlier. This stub is always
$w_2$ in our list of representatives. If $\abs{S_2}=k$ then the size of its $\sim$-class in $\sw$ is
$n_2=\beta_{k}-1$ because $S_2$ generates a parabolic subgroup of $W$ isomorphic to $A_{k}$ and
$\cals(A_k)=\beta_{k}-1$. Three $\sim$-classes remain in type $\tilde C_{n-1}$ and one class remains
otherwise. In the latter case we can obtain cardinality $n_1$ of the remaining class as
$n_1=\abs{\sw}-n_2$. In the former case, we note that the element $w_4=1n$ admits no slides and is
the only stub with first layer $1n$ by Corollary \ref{coro:firstTwoLayers}, so $\{1n\}$ is a $\sim$-class and
$n_4=1$. By the symmetry in $G$, it further follows that $n_1=n_3=(\abs{\sw}-n_2-n_4)/2=(n-1)$.
Alternatively, in both cases we may list the elements of the remaining equivalence classes and count
them without difficulty with the help of Corollary \ref{coro:firstTwoLayers} and Theorem \ref{thm:stubDescription},
but we omit the details.  
\end{proof}

Our next result describes selected 0-cells and will lead to the cardinalities $N_{ij}$ for all $1\le
i,j\le d$. We postpone its proof to \autoref{sec:rep0cells}, where we will refer to the cells given in
the theorem as \emph{representative 0-cells}.

\begin{thm}
    \label{thm:oneI}
   Let $(W,S)$ be a \ntatf Coxeter system of type $X$. Then the following set equalities hold in $W$. 
    \begin{enumerate}
        \item If $X=A_n$ where $n\ge 3$, then $I(13,13)=\{13\}$.
        \item If $X=B_3$, then $I(13,13)=\{13\}$. 
        \item If $X=B_n$ where $n>3$, then $I(24,24)=\{24,2124\}$.
        \item If $X=\tilde C_{n-1}$ where $n\ge 5$, then $I(24,24)=\{24, 2124, 2\cdot z, 212\cdot z\}$ where
             $z=45\cdots (n-1)n(n-1)\cdots54$.
         \item If $X=E_{q,r}$ where $r\ge q\ge 1$, then for all $x\in \{(-1)v,1v,(-1)1\}$ we have $I(x,x)=\{x\}$. 
        \item If $X=F_4$, then $I(24,24)=\{24\}$. 
        \item If $X=F_n$ where $n>4$, then $I(24,24)=\{24,243524\}$. 
        \item If $X=H_3$, then $I(13,13)=\{13\}$. 
        \item If $X=H_n$ where $n>3$, then $I(24,24)=\{24,2124\}$. 
    \end{enumerate}
\end{thm}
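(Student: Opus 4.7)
By \cref{coro:intersectionsViaCores}, each 0-cell $I(x,y)$ is in bijection with the set $\cores(x,y\inverse)$ of cores compatible with $x$ and $y\inverse$ via the map $w \mapsto x*w*y\inverse$. I plan to prove each of the statements (1)--(9) by showing that the displayed list matches $\cores(x,y\inverse)$: for every listed element, the corresponding core is produced by stripping the prefix $x$ and the suffix $y\inverse$ from a suitable reduced word, and the nontrivial task is to show that no other core with the prescribed left and right descent sets exists. A candidate core $w$ is strongly constrained: $w$ is FC by \cref{prop:a2impliesFC}; both $\call(w)$ and $\calr(w)$ are prescribed two-element sets of commuting generators; and by \cref{coro:layerSizeBound} every layer of the Cartier--Foata form of $w$ contains at most two generators while every maximal antichain of $H(w)$ has exactly two elements. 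Applying the symmetric ``two-ended'' analogue of \cref{coro:firstTwoLayers} and \cref{lemm:deeperLayers} to the bottom layers of $w$ (via $\call(w)$) and to the top layers (via $\calr(w)$) then sharply limits the possible Cartier--Foata sequences, and \cref{prop:a=n}.(1) rules out any third simultaneous generator appearing in a layer.

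With these tools in hand the enumeration proceeds type by type. In cases (1), (2), (5), (6), (8) the relevant diagrams admit so few short walks between the two prescribed descent generators that the restrictions above immediately force $w$ to be the stub itself (or one of the three listed stubs in case (5)). In cases (3), (7), (9) and in the short-stub part of case (4), the heavy edge of the diagram permits exactly one additional ``round trip'' core, namely $2124$ in $B_n$ and $H_n$ and $243524$ in $F_n$; each is readily verified to be FC of $\la$-value $2$ by drawing its heap, and any longer round trip would force an $sts$ braid subword, contradicting full commutativity by \cref{prop:FCCriterion}. In each instance one checks directly that the glued products $x*w*y\inverse$ recover the displayed elements of $I(x,y)$.

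The main obstacle is case (4), the affine system $\tilde C_{n-1}$. Here $W$ is infinite and the Coxeter diagram carries heavy edges at \emph{both} ends, so a candidate core could a priori contain arbitrarily long chains of generators traversing the whole diagram before returning; the elementary finiteness arguments from the other cases are therefore no longer sufficient, and indeed the element $2\cdot z$ (with $z = 45\cdots(n-1)n(n-1)\cdots 54$) crosses the diagram from one heavy edge to the other. I plan to handle this case by invoking the structural result of Ernst (\cref{lemm:Dana}) on FC elements in $\tilde C_{n-1}$, which pins down the relevant FC elements finely enough to isolate exactly the four cores contributing to $\cores(24, 24)$, namely those yielding $24$, $2124$, $2\cdot z$ and $212\cdot z$. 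The bulk of the work will go into the technical heap lemmas assembled in \cref{sec:heapLemmas}, which I expect to translate descent-set data into concrete constraints on convex chains and antichains in $H(w)$; these will then be applied uniformly in \cref{sec:proofs} to close each of the nine cases, with case (4) the most delicate.
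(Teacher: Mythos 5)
Your overall architecture is sound and, for the genuinely hard case, coincides with the paper's: the containment $K(x,y)\se I(x,y)$ via star operations, the reduction to cores/stub decompositions, and the use of Ernst's result (\cref{lemm:Dana}) together with the heap lemmas of \autoref{sec:heapLemmas} to settle type $\tilde C_{n-1}$. The paper, however, does \emph{not} re-run a direct heap analysis in each of the remaining types: it proves $\tilde C_{n-1}$ first and then obtains $B_n$ by parabolic restriction (\cref{lemm:parabolic}), $A_n$ from $B_{n+1}$, $E_{q,r}$ and $F_n$ by first excluding a generator ($v$, resp.\ $1$) from the support via \cref{prop:trapped} and then restricting to a type-$A$ or type-$B$ parabolic, and $H_n$ by transferring reduced words to $B_n$. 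This chaining is what lets a single hard computation do all the work; your plan to treat each type on its own is workable in principle but duplicates effort and, more importantly, hides where the real difficulties lie.

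The concrete gap is in your treatment of cases (3), (7), (9) and especially (9). The assertion that ``any longer round trip would force an $sts$ braid subword'' is false as stated in type $H_n$: there $m(1,2)=5$, so a convex alternating chain $i\prec j\prec i'\prec j'$ with labels $1,2,1,2$ in $H(w)$ is \emph{not} a long braid and does not by itself contradict \cref{prop:FCCriterion}. Ruling out such a configuration is exactly the delicate point of the paper's \cref{prop:H}: one must first use vertical completion to produce a fifth element $h$ below the chain, then use horizontal completion, \cref{lemm:3chain} and \cref{prop:trapped} to show that \emph{nothing else} can lie in the interval $[h,j']$, so that the five-term chain is convex and only then yields a forbidden braid. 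A similar issue affects your claim that cases (2), (5), (6), (8) are ``immediate'': in $E_{q,r}$ one must prove $v\notin\supp(w)$ and in $F_n$ that $1\notin\supp(w)$, and both require the trapped-antichain proposition (\cref{prop:trapped}), whose proof in turn uses the star-operation characterization of stubs in an essential way. You do gesture at the heap lemmas carrying ``the bulk of the work,'' but as written the proposal asserts conclusions (no longer round trips, immediacy of the small cases) whose justification is precisely the missing content; without the trapped-antichain and completion arguments the enumeration of cores does not close in types $B$, $E$, $F$, $H$, or even in the short-stub part of $\tilde C$.
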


\begin{prop}
    \label{prop:intersectionSizes}
    Maintain the notation of Proposition \ref{prop:classes}. Then the numbers $N_{ij}$ where $1\le i\le j\le d$ are given by
    Table \ref{tab:intersectionSizes}. 
    \begin{table}
        \centering
        \begin{tabularx}{\textwidth}{|@{\extracolsep{\fill} }c|X@{\extracolsep{\fill} }|} \hline
            $X$                  & Values of $N_{ij}\,(1\le i<j\le d)$\\         \hline
            $A_n, n\ge 3$        & $N_{11}=1$\\                           \hline
            $B_3$                & $N_{11}=1$\\                                    \hline
            $B_n, n>3$           & $N_{11}=N_{22}=2, N_{12}=1$ \\               \hline
            $\tilde C_n, n\ge 5$ & $N_{11}=N_{22}=N_{33}=N_{44}=4,\newline N_{12}=N_{23}= N_{14}=N_{34}=2, N_{13}= N_{24}=1$\\\hline
            $E_{q,r}, r>q=1$     & $N_{ii}=1$ for all $1\le i\le d$, $N_{ij}=0$ whenever $i\neq j$\\ \hline
            $E_{q,r}, r>q\ge 2$  & $N_{11}=1$\\\hline
            $F_4$                & $N_{11}=1$\\\hline
            $F_n, n> 4$          & $N_{11}=N_{22}=2, N_{12}=1 $\\\hline
            $H_3$                & $N_{11}=1$\\\hline
            $H_n, n>3$           & $N_{ij}=2$ for all $1\le i\le j\le 2$.\\\hline
        \end{tabularx}
        \vspace{1em}
        \caption{Sizes of 0-cells of $\la$-value 2} 
        \label{tab:intersectionSizes}
    \end{table}
\end{prop}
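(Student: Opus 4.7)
The plan is to verify \cref{tab:intersectionSizes} type by type, combining four ingredients: \cref{thm:oneI} (explicit representative 0-cells), \cref{prop:invariance} (numerical $\sim$-invariance of $N$), \cref{lemm:slidingMoves}.(2) (transport of 0-cells via left or right star operations across slides along heavy edges), and \cref{prop:Nij}.(1) (vanishing of $N_{ij}$ across distinct 2-cells). The overarching idea is that \cref{thm:oneI} furnishes a ``seed'' 0-cell in each 2-cell, and the remaining entries $N_{ij}$ within the same 2-cell are obtained by transporting this seed across the different $\sim$-classes.

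For the types with a single $\sim$-class in $\sw$ (namely $A_n$, $B_3$, $E_{q,r}$ with $q\ge 2$, $F_4$, and $H_3$), only $N_{11}$ is required, and it is read off directly from the cardinality of the representative 0-cell in \cref{thm:oneI}. For type $E_{1,r}$, the $\sim$-classes coincide with distinct 2-cells by \cref{thm:twoSidedCells}, so \cref{prop:Nij}.(1) immediately forces $N_{ij}=0$ for $i\ne j$, while \cref{thm:oneI}.(5) gives each $N_{ii}=1$ as the size of a singleton 0-cell. For the remaining types $B_n$ ($n>3$), $\tilde C_{n-1}$, $F_n$ ($n>4$), and $H_n$ ($n>3$), all $\sim$-classes sit in a single 2-cell by \cref{thm:twoSidedCells}, so we invoke \cref{lemm:slidingMoves}.(2) to transport the seed $I(w_k,w_k)$ to any other $I(w_i,w_j)$ by applying the appropriate sequence of left and right star operations corresponding to heavy-edge slides. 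For instance, in $B_n$ ($n>3$) the seed $I(24,24)=\{24,2124\}$ from \cref{thm:oneI}.(3) is transported to $I(14,24)$ via left star operations with respect to the heavy edge $\{1,2\}$; an explicit computation on $24$ (whose $W_{\{1,2\}}$-coset is $2$) and on $2124$ (whose coset is $212$) collapses both elements to the single element $124$, yielding $N_{12}=N(13,24)=N(14,24)=1$ via \cref{prop:invariance} and $13\sim 14$. A symmetric computation applied to $I(24,14)$ yields $I(14,14)=\{14,1214\}$ and hence $N_{11}=2$. Analogous arguments will handle $F_n$ ($n>4$) and $H_n$ ($n>3$), with the minor subtlety that the heavy edge $\{1,2\}$ of $H_n$ has weight $5$, so both upper and lower star operations on $2124$ remain defined and can contribute distinct outputs.

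The main obstacle will be the bookkeeping in type $\tilde C_{n-1}$, where four $\sim$-classes populate the single 2-cell $W_2$ and two heavy edges $\{1,2\}$ and $\{n-1,n\}$ both contribute nontrivial slides. For each of the ten entries $N_{ij}$ with $i\le j$, we will start from the seed $I(24,24)$ of \cref{thm:oneI}.(4) and apply a suitable combination of left and right star operations with respect to $\{1,2\}$ and $\{n-1,n\}$, possibly after preparing representatives via simple star operations, to land in some 0-cell of the form $I(x,y)$ with $x\sim w_i$ and $y\sim w_j$; then \cref{prop:invariance} furnishes $N_{ij}$. The diagram automorphism $i\leftrightarrow n+1-i$ of $\tilde C_{n-1}$ halves the number of independent cases by identifying $N_{11}=N_{33}$, $N_{12}=N_{23}$, and $N_{14}=N_{34}$. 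The individual star-operation computations remain short because the seed $I(24,24)$ has only four elements whose $W_{\{1,2\}}$- and $W_{\{n-1,n\}}$-cosets can be read off immediately from the Cartier--Foata forms given in \cref{thm:oneI}.(4).
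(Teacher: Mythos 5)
Your proposal is correct and follows essentially the same route as the paper: the authors likewise take the representative 0-cells of \cref{thm:oneI} as seeds, transport them to the other pairs $(w_i,w_j)$ using the star-operation machinery of \cref{sec:relating0cells} (i.e.\ \cref{lemm:slidingMoves}, \cref{prop:invariance}, and \cref{prop:Nij}.(1) for the type $E_{1,r}$ vanishing), and read off the cardinalities. Your worked examples in type $B_n$ and your remarks on the weight-$5$ edge in $H_n$ and the diagram automorphism of $\tilde C_{n-1}$ are consistent with the paper's sketch and with \cref{rmk:shortcut}.
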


\begin{proof}
    We sketch the proof. In Theorem \ref{thm:oneI} we have described $I_{ij}=I(w_i,w_j)$ for some
    particular $i,j$.  Taking the cardinality of this set shows that the number $N_{ij}$ given in
    Table \ref{tab:intersectionSizes} is correct. For every other pair $(k,l)$ with $1\le k\le l\le d$,
    we can obtain $I_{kl}$ from $I_{ij}$ as explained in \autoref{sec:relating0cells} and check that
    that $N_{kl}=\abs{I_{kl}}$ agrees with the value given by Table \ref{tab:intersectionSizes}.
\end{proof}

\begin{remark}
    In the above proof we mentioned computing $N_{kl}$ by computing the 0-cell $I_{kl}$ and taking
    its cardinality, but the computation of $N_{kl}$ can often be simplified. Since we are not
    interested in the set $I_{kl}=I(w_k,w_l)$ per se, we do not have to compute $I_{kl}$ from the
    set $I_{ij}$. Instead,  it is often convenient to compute a 0-cell $I'_{kl}:=I(a,b)$ from a
    0-cell $I'_{ij}:=I(c,d)$ where $a,b,c,d$ are short stubs such that $a\sim w_k, b\sim w_l, c\sim
    w_i, d\sim w_j$ and such that we can relate $c$ to $a$, and $d$ to $b$, with a small number of
    slides.  Here the set $I'_{ij}$ has $N_{ij}$ elements by Proposition \ref{prop:invariance} and $N_{ij}$ is
    at most 4 by Table \ref{tab:intersectionSizes}, so in practice it is not difficult to obtain $I_{ij}$
    by finding $N_{ij}$ elements that are obviously in it.  The point of using the stubs $a,b,c,d$
    is that if we select them carefully then we may obtain $I'_{kl}$ from $I'_{ij}$ using a smaller
    number of setwise star operations than we need for obtaining $I_{kl}$ from $I_{ij}$. For
    example, in the group $\tilde C_{n-1}$ we have $w_1=13, w_2=24, w_4=1n$ by Proposition \ref{prop:classes}
    and $N_{22}=\abs{I_{22}}=4$ by Theorem \ref{thm:oneI}.(4), and to compute $N_{14}$ it is convenient to
    use the stubs $a:=14\sim w_1, b:=1n\sim w_4, c:=24\sim w_2, d:=2(n-1)\sim w_2$ as follows: the
    set $I'_{22}=I(c,d)=I(24,2(n-1))$ should contain $N_{22}=4$ elements, so once we note that the
    set \[ I':=\{2\cdot z', 2\cdot z'\cdot n(n-1), 212\cdot z', 212\cdot z'\cdot n(n-1) \,\vert\,
    z'=45\cdots (n-1)\} \] is contained in $I'_{22}$ we can conclude that $I'_{22}=I'$. Since $a$
    can be obtained from $c$ with one slide along the edge $\{1,2\}$, applying the corresponding
    setwise left star operations to $I'_{22}=I(c,d)$ yields \[ I(a,d)=\{12\cdot z', 12\cdot z'\cdot
    n(n-1) \,\vert\, z'=45\cdots (n-1)\}.  \] Similarly, since $b$ can be obtained from $d$ with two
    slides, along the edges $\{1,2\}$ and $\{n-1,n\}$, applying the two corresponding setwise right
    star operations in succession to $I(a,d)$ yields \[ I'_{14}=I(a,b)=\{1\cdot z'', 121\cdot
    z''\,\vert\, z''=45\cdots n\}, \] so $N_{14}=2$. Note that we only need three setwise star
    operations, even if $n$ is large.  $n$. In contrast, if we computed $I_{14}$ from $I_{22}$ by
    working with $w_1=13,w_2=24$ and $w_4=(n-1)n$ directly, then we need many more setwise star
    operations to obtain $I_{14}$ when $n$ is large.
    \label{rmk:shortcut}
\end{remark}   

\subsection{Sizes of \texorpdfstring{$\la(2)$}{a(2)}-cells}
\label{sec:cellSizes}
We are ready to compute the sizes of all left, right, and two-sided Kazhdan--Lusztig cells of
$\la$-value 2 in all $\la(2)$-finite Coxeter groups:

\begin{thm}
    \label{thm:cellSizes}
    Let $(W,S)$ be an irreducible \ntatf Coxeter system of type $X$ and maintain the notation of
    \autoref{sec:cellData}. In particular, let $\beta_n =\binom{n}{2}$ for all $n\ge 2$.  Then the
    sizes of 1-cells and 2-cells in $W_2$ are given by Table \ref{tab:cellSizes}. When $W_2$ is a single
    two-sided cell of size $N$, we simply write $N$ instead of $\abs{W_2}=N$ in the corresponding
    box in the table.

    \begin{table} \centering
        \begin{tabularx}{\textwidth}{|c|X@{\hspace{-1em}}|X@{\extracolsep{\fill} }|} \hline $X$           & sizes
              of 1-cells & sizes of 2-cells \\      \hline $A_n, n\ge 3$ & $N_{1}=\beta_n-1$                 &
              $(\beta_n-1)^2$   \\ \hline $B_3$         & $N_1=3$ & $9$ \\\hline $B_n, n>3$    & $N_1=\beta_{n+1},
              N_2= n^2-2n$    & ${n^4}/{2}-2n^3+{7n^2}/{2}$ \\\hline $\tilde C_{n-1},n \ge 5$  & $N_1=N_3=n^2+1,
              \newline N_2=2n^2-6n+5,\newline N_4=n^2/2+{3}n/2 +2$ & $n^4-6n^3+20n^2-21n+10$ \\\hline $E_{11}=D_4$
              & $N_1=N_2=N_3=3$ & $\abs{C_1}=\abs{C_2}=\abs{C_3}=9$ \\\hline $E_{q,r}, r>q=1$        &
              $N_1=n-1,\newline N_2=\beta_{n}$ & $\abs{C_1}=(n-1)^2,\newline \abs{C_2}=\beta_n^2$ \\\hline
              $E_{q,r}, r\ge q\ge 2$        & $N_1=\beta_{n+1}-1$ & $N_{1}=(\beta_{n+1}-1)^2$ \\\hline $F_4$ &
              $N_1=9$ & $81$\\\hline $F_n, n> 4$        & $N_1=n^2/2+{7}n/2-4,\newline N_2=(n-1)^2$ &
              $n^4/2-2n^3+{33}n^2/2-29n+14$ \\\hline $H_3$ & $N_1=5$ & $25$\\\hline $H_n,n>3$ &
              $N_1=N_2=2(\beta_{n+1}-1)$&$2(\beta_{n+1}-1)^2$\\\hline
          \end{tabularx} \vspace{1em} \caption{Sizes of 1-cells and 2-cells of $\la$-value 2, where $\beta_n=\binom{n}{2}$ for each integer
        $n$}
        \label{tab:cellSizes}
    \end{table}
\end{thm}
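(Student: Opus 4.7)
The plan is to deduce Theorem \ref{thm:cellSizes} entirely from three inputs already at hand: the cardinalities $n_1,\dots,n_d$ of the $\sim$-equivalence classes in $\sw$ supplied by \cref{prop:classes}, the values $N_{ij}$ of the representative 0-cells supplied by \cref{prop:intersectionSizes}, and the summation formulas collected in \cref{prop:Nij}. In essence, once the hard work of finding the representative 0-cells is packaged into \cref{prop:intersectionSizes}, the remainder is arithmetic.

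First, for each $\la(2)$-finite type $X$ and each index $1\le i\le d$, I would compute the size of a one-sided cell $R_{w_i}$ using
\[
N_i \;=\; \abs{R_{w_i}} \;=\; \sum_{j=1}^d n_j\,N_{ij},
\]
which is \cref{prop:Nij}.(3). Substituting the data from Tables \ref{tab:classSizes} and \ref{tab:intersectionSizes}, I would expand and simplify the resulting polynomial in $n$. For instance, in type $B_n$ with $n>3$ one has $n_1=n,\, n_2=\beta_{n-1}-1,\, N_{11}=N_{22}=2,\, N_{12}=1$, so $N_1=2n+(\beta_{n-1}-1)=\beta_{n+1}$ and $N_2=n+2(\beta_{n-1}-1)=n^2-2n$; the same routine applies to every row of \cref{tab:cellSizes}. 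The analog of \cref{prop:Nij}.(3) for left cells (via \cref{rmk:Isymmetry}) shows that left and right cells within a fixed $\sim$-class have the same size, so \cref{tab:cellSizes} is simultaneously the table of left cell sizes.

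Second, for the two-sided cell sizes I would split into two cases according to \cref{thm:twoSidedCells}. If $(W,S)$ is not of type $E_{1,r}$, then $W_2$ is a single 2-cell and \cref{prop:Nij}.(4) yields
\[
\abs{W_2}\;=\;\sum_{i=1}^d n_i N_i,
\]
which I would evaluate by summing the products of the quantities already computed above. If $(W,S)$ is of type $E_{1,r}$, then by \cref{prop:Nij}.(5) each 2-cell $E_i$ satisfies $\abs{E_i}=n_i^2 N_{ii}$, and the values of $n_i$ and $N_{ii}$ from Tables \ref{tab:classSizes} and \ref{tab:intersectionSizes} give directly the entries of the $E_{1,1}$ and $E_{1,r}\,(r>1)$ rows of \cref{tab:cellSizes}.

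There is no real obstacle beyond bookkeeping: the main task is simply organizing the case analysis across the ten rows of \cref{tab:cellSizes} and carrying out the polynomial simplifications cleanly (the messiest being $\tilde C_{n-1}$, which has $d=4$ classes and therefore requires a $4\times 4$ table of products $n_j N_{ij}$). A useful sanity check is that the formulas in \cref{tab:cellSizes} must specialize correctly at the smallest values of $n$ where each family starts, and must also match the worked computations in \cref{eg:b4} and \cref{eg:0cellsB4} for $B_4$, where the table predicts $\abs{R_{w_1}}=\beta_5=10,\ \abs{R_{w_2}}=8$ and $\abs{W_2}=56$, in agreement with \cref{tab:b4example}.
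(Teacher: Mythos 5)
Your proposal is correct and follows essentially the same route as the paper: the paper's proof likewise reduces everything to the data of \cref{prop:classes} and \cref{prop:intersectionSizes} fed into the formulas of \cref{prop:Nij}.(3)--(5), and then illustrates the arithmetic only for the $\tilde C_{n-1}$ case (the one you correctly identify as the messiest, with $d=4$). Your sample computation for $B_n$ and the sanity check against \cref{eg:b4} are both accurate.
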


\begin{proof}
  We know the numbers $n_i$ and $N_{ij}$ for all $1\le i,j\le d$ by Propositions \ref{prop:classes},
  \ref{prop:intersectionSizes} and \ref{prop:Nij}.(6), so the sizes of all 1-cells and 2-cells can
  be calculated by directly applying the formulas from Proposition \ref{prop:Nij}.(3)--(5).  The calculations
  are straightforward and similar for all $\la(2)$-finite Coxeter types, so instead of carrrying out
  the computations in detail for all types, we only illustrate below how to count the cells in
  $\tilde C_{n-1} (n\ge 5)$, where the number $d$ of $\sim$-classes in $\sw$ is the largest and the
  computations are the most complex.  Recall from Proposition \ref{prop:classes} that $\sw$ contains $d=4$
  slide equivalence classes and their sizes are $n_1=n-1, n_2=\beta_{n-2}-1,n_3=n-1$ and $n_4=1$.
  Consider the following $d\times d$ table, where the rows and columns are labeled by $n_i$ and the
  entry in the $i$-th row, $j$-th column is the number $N_{ij}$ from Table \ref{tab:intersectionSizes}. 

    \begin{table} \centering
    %\begin{center}
        \begin{tabular}{|c|c|c|c|c|} \hline 
            &$n-1$ & $\beta_{n-2}-1$ & $n-1$ & $1$\\\hline
           $n-1$ & 4 & 2  & 1 & 2\\\hline
           $\beta_{n-2}-1$ & 2 & 4  & 2 & 1\\\hline
           $n-1$ & 1 & 2  & 4 & 2\\\hline
           $1$ & 2 &  1&2   & 4\\\hline
       \end{tabular} \vspace{1em} 
        \caption{}
    \end{table}
    By Proposition \ref{prop:Nij}.(2), for each $1\le i\le d$ we can compute $N_i$ as a weighted sum of the $i$-th row in the
    above table, with the column labels as weights. For example, we have
    \[
        N_1 =\sum_{j=1}^d n_jN_{ij}= 4\cdot (n-1) + 2\cdot (\beta_{n-2}-1) + 1 \cdot (n-1) + 2 \cdot 1 = n^2+1.
    \]
    Similar computations show that $N_2= 2n^2-6n+5, N_3=n^2+1$ and $N_4= n^2/2+3n/2+2$. 
    Having computed $N_1,\dots, N_d$, we can then compute the size of the 2-cell $W_2$ as the
    weighted sum $\sum_{i=1}^d n_i N_i$ by Proposition \ref{prop:Nij}.(4). Doing so yields $ \abs{W_2}= n^4-6n^3+20n^2-21n+10$.
\end{proof}

\section{Representative 0-cells}
\label{sec:rep0cells}

We assume $(W,S)$ is a \ntatf Coxeter system and prove Theorem \ref{thm:oneI} in this section. We complete
the proof in \autoref{sec:proofs} after preparing a series of technical definitions and lemmas in
\autoref{sec:heapLemmas}. 

\subsection{Lemmas on heaps}
\label{sec:heapLemmas}
Let $G$ denote the Coxeter diagram of $(W,S)$. Then $G$ has no cycles; therefore for any two
vertices $u,v$ in $G$ there is a unique path from $u$ to $v$ (see Remark \ref{rmk:chainsAndWalks}). For
pairwise distinct vertices $u,v,t$, we say that $u$ and $v$ \emph{lie on different sides of} a
vertex $t$ if the path connecting $u$ to $v$ passes through $t$. We call two vertices
\emph{neighbors} of each other if they are adjacent in $G$, define the \emph{degree} of a vertex $v$
in $G$ to be the number of its neighbors, and call a vertex an \emph{end vertex} if it has degree 1.
For each element $w\in \fc(W)$ and any two elements $i,j$ in the heap $H(w)$, we denote the label of
$i$ by $e(i)$, write $(i,j):=\{h\in H(w):i\precneqq h\precneqq j\}$, and write $[i,j]:=\{h\in
H(w):i\preceq h\preceq j\}$.

The following two lemmas allow us to deduce the existence of additional elements and structure in
$H(w)$ from certain local configurations.  The vertex whose existence is asserted in the lemmas
allows us to expand the lattice embedding of the heap vertically and horizontally, which is why we
name the lemmas vertical and horizontal completion.  

\begin{lemma}[Vertical Completion] 
    \label{lemm:vertical}
    Let $a$ be an end vertex in $G$ and let $b$ be its only neighbor. Suppose $b\in \call(w)$ and
    $e(i)=a$ for an element $i\in H(w)$. Then $H(w)$ contains an element that is covered by $i$ and
    has label $b$.
\end{lemma}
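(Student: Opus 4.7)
The plan is to reduce the lemma to showing that $i$ is not minimal in $H(w)$, by first observing that every element covered by $i$ is forced to carry label $b$. Since $a$ is an end vertex in $G$ whose only neighbor is $b$, the only generators in $S$ failing to commute with $a$ are $a$ itself and $b$. If $j \lessdot i$ in $H(w)$, then by \cref{rmk:chainsAndWalks} we have $e(j)$ adjacent to $e(i)=a$ in $G$, so $e(j)\in\{a,b\}$; however, \cref{prop:FCCriterion}\tul 1\tur forbids a covering relation with equal endpoint labels, so $e(j)=b$. Thus it suffices to show that $i$ covers at least one element of $H(w)$.

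Assume for contradiction that $i$ is minimal in $H(w)$. Then by \cref{rmk:heapsAndWords} we have $a\in \call(w)$. Since we are also given $b\in\call(w)$, there exists a minimal element $j\in H(w)$ with $e(j)=b$. Being both minimal, $i$ and $j$ are distinct and incomparable in $H(w)$.

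Next I would derive a contradiction from this incomparability. Fix any reduced word $\underline w=s_1\dots s_q$ realizing $H(w)$, and let $i$ and $j$ correspond to positions $p$ and $r$ in $[q]$ respectively. Without loss of generality $p<r$. Since $m(s_p,s_r)=m(a,b)\ge 3\neq 2$, the defining relation of the heap gives $p\prec r$, hence $p\preceq r$ in $H(\underline w)$. This contradicts the fact that $i$ and $j$ are incomparable, so $i$ cannot be minimal. Therefore $i$ covers some element $k\in H(w)$, and by the first paragraph $e(k)=b$, which is exactly the conclusion.

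The proof is short, and the only mild subtlety is step three, where one must remember that two positions in a reduced word whose labels fail to commute are automatically comparable in the heap; this is immediate from the definition of $\preceq$ but is the key ingredient that rules out $\{a,b\}\subseteq \call(w)$ holding simultaneously with $i$ minimal. No use of the $\la(2)$-finite hypothesis is needed, so the lemma holds for arbitrary FC elements $w$ in any Coxeter system with $a$ an end vertex.
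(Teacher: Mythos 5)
Your proof is correct and uses exactly the same ingredients as the paper's (minimal heap elements correspond to left descents, elements with noncommuting labels are comparable, and labels of a covering pair are distinct and adjacent in $G$); the paper argues directly by taking a covering chain from a minimal $b$-labeled element up to $i$ and reading off the penultimate label, whereas you reorganize this as a contradiction showing $i$ cannot be minimal, which is only a cosmetic difference. Your closing observation that neither acyclicity nor $\la(2)$-finiteness is needed is also accurate.
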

\begin{proof} 
    Since $b\in \call(w)$, some minimal element $j\in H(w)$ has label $b$ by
    Remark \ref{rmk:heapsAndWords}. The elements $i$ and $j$ are comparable since $a$ and $b$ are adjacent
    in $G$. Moreover, we must have $j\le i$ since $j$ is minimal, so there is a chain $j=j_1\prec
    \dots j_{p-1}\prec j_p=i$ of coverings connecting $j$ to $i$ in $H(w)$. Since $b$ is the only
    neighbor of $a$, we must have $e(j_{p-1})=b$. It follows that $j_{p-1}$ has the desired
    properties.
\end{proof}

\begin{lemma}[Horizontal Completion] 
    \label{lemm:horizontal}
    Let $a$ be an end vertex in $G$ and let $b$ be its only neighbor. Suppose that $C=(j\prec i\prec j')$ is
    a chain of coverings in $H(w)$ with $e(i)=a$ and $e(j)=e(j')=b$. Then $i,j,j'$ account for all
    elements in $[j,j']$ that are labeled by $a$ or $b$. Moreover, if $C$ is not convex, then  $j$
    and $j'$ are connected by a chain $D=(j=j_1\prec j_2\prec \dots \prec j_q=j')$ of coverings
    where $q\ge 3$. In this case, at least one element in $D$ is labeled by a neighbor of $b$ in $G$
    that is distinct from $a$.
\end{lemma}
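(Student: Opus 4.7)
The plan is to prove the three assertions in order, leveraging heavily the FC property of $w$ via \cref{prop:FCCriterion} together with the fact that $a$ has only $b$ as a neighbor in $G$.

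\textbf{Claim 1 (unique placement of the $a,b$-labeled elements in $[j,j']$):} Suppose for contradiction there is some $k \in [j,j'] \setminus \{i,j,j'\}$ with $e(k) \in \{a,b\}$. Since any two elements of $H(w)$ whose labels fail to commute are comparable, $k$ is comparable with $i$ in both subcases: $e(k)=a$ forces $m(a,a)=1\ne 2$, and $e(k)=b$ forces $m(a,b)\ge 3$. So we can split $k\prec i$ or $i\prec k$ and obtain either $j\preceq k\prec i$ or $i\prec k\preceq j'$. Eliminating the trivial equalities (each of which forces a label mismatch, e.g.\ $k=j$ forces $e(k)=b\ne a$ in the first subcase), we land in one of $j\prec k\prec i$ or $i\prec k\prec j'$, contradicting the covering relations $j\lessdot i$ and $i\lessdot j'$ respectively.

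\textbf{Claim 2 (existence of the longer chain $D$):} Non-convexity of $C$ means there exists $k$ with $j\prec k\prec j'$ and $k\notin\{i,j,j'\}$; by Claim 1, $e(k)\notin\{a,b\}$. By the same covering argument as above, $k$ must be incomparable with $i$ (both $k\prec i$ and $i\prec k$ would contradict a covering). Now choose any saturated chain from $j$ up to $k$ and any saturated chain from $k$ up to $j'$; concatenating yields a saturated (i.e., covering) chain $D=(j=j_1\lessdot j_2\lessdot \cdots\lessdot j_q=j')$ passing through $k$. Since $j\ne k\ne j'$, both pieces have length at least $1$, so $q\ge 3$.

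\textbf{Claim 3 (a new neighbor of $b$ appears in $D$):} Consider $j_2$. Because $j_1=j\lessdot j_2$, the labels $e(j_1)=b$ and $e(j_2)$ fail to commute, so $e(j_2)$ is a neighbor of $b$ in $G$. It remains to rule out $e(j_2)=a$. If $e(j_2)=a$, then $j_2\in [j,j']$ with label $a$ forces $j_2=i$ by Claim 1. In the case $q=3$ we would have $j_2=k$, contradicting $e(k)\ne a$; and in the case $q\ge 4$ we would have $i=j_2\lessdot j_3\preceq j_{q-1}\prec j'$, so in particular $i\prec j_3\preceq j_{q-1}\prec j'$, contradicting the fact that $j'$ covers $i$. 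Hence $e(j_2)$ is a neighbor of $b$ distinct from $a$, completing the proof.

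The main technical obstacle is bookkeeping in Claim 3: we must exclude the possibility that the detour chain $D$ sneaks back through $i$, but the covering relations $j\lessdot i\lessdot j'$ are quite rigid and (combined with Claim 1) make the case analysis short.
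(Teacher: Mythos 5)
Your proof is correct and follows essentially the same route as the paper's: comparability with $i$ pins down all $a$- and $b$-labeled elements of $[j,j']$, the chain $D$ is built by concatenating covering chains through a witness $k$ of non-convexity, and the neighbor of $b$ is located at $j_2$. Your treatment of the last step is in fact slightly more explicit than the paper's, since you carefully rule out the possibility that $D$ passes back through $i$ (via the case split on $q$), a point the paper's proof leaves somewhat compressed.
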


\begin{proof}
    An element in $(j,j')$ with label $b$ would be comparable to $i$ and thus contradict the fact
    that $i$ covers $j$ and $j'$ covers $i$, so $j$ and $j'$ are the only elements with label $b$ in
    $[j,j']$. Since $b$ is the only neighbor of $a$, it further follows that $(j,j')$ contains no
    element with label $a$ other than $i$, because otherwise such an element would cover or be
    covered by $i$, violating Condition (1) of Proposition \ref{prop:FCCriterion}. We have proved the first
    claim.

    If $C$ is not convex, then $j$ and $j'$ must both be comparable to some element $k\in (j,j')$
    that is not in $C$.  Concatenating chains of coverings $j\prec \dots \prec k$ and $k\prec \dots
    \prec j'$ connecting $j$ to $k$ and $k$ to $j'$ yields a chain $D$ in $[j,j']$ that connects $j$
    to $j'$ and has at least three elements, as desired. The chain $D$ corresponds to a walk of
    length at least 3 from $b$ to $b$ in $G$ by Remark \ref{rmk:chainsAndWalks}. Such a walk must involve
    a neighbor of $b$, and such a neighbor cannot be $a$ by the first claim since $D$ lies in the
    interval $[j,j']$. The proof is complete.
\end{proof}

The next result proves that certain subsets of heaps are antichains.

\begin{lemma}
    Let $x,y,z$ be elements of $S$ such that $x$ and $z$ lie on different sides of $y$.  Suppose
    that $H(w)$ contains two elements $i$ and $k$ labeled by $x$ and $z$, respectively. Then the set
    $\{i,k\}$ is an antichain in $H(w)$ whenever one of the following conditions holds.
    \begin{enumerate}
        \item The heap $H(w)$ contains two elements $j,j'$ labeled by $y$ such that $j\prec i\prec
             j'$ and $k\in (j,j')$. 
        \item The heap $H(w)$ contains an element $j$ labeled by $y$ and the sets $\{i,j\}$ and
             $\{j,k\}$ are antichains in $H(w)$. 
    \end{enumerate}
    \label{lemm:3chain}
\end{lemma}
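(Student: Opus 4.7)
The plan is to assume for contradiction that $i \preceq k$ (the reverse direction $k \preceq i$ will be handled analogously at the end) and, in both conditions, produce an element labeled $y$ sitting where none can exist. Since $i \preceq k$, there is a chain of coverings $i = i_1 \prec i_2 \prec \cdots \prec i_p = k$ in $H(w)$, and by \cref{rmk:chainsAndWalks} the label sequence $e(i_1), e(i_2), \ldots, e(i_p)$ traces a walk in the acyclic diagram $G$ from $x$ to $z$. Because $G$ is a tree and removing $y$ disconnects $x$ from $z$, this walk must visit $y$. So some $h := i_\ell$ on the chain satisfies $e(h) = y$, and since $y$ differs from both $x$ and $z$ the relations are strict: $i \precneqq h \precneqq k$.

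For condition (2), the elements $h$ and $j$ are both labeled $y$; since $m(y,y) = 1 \neq 2$, the initial relation generating the heap forces $h$ and $j$ to be comparable in $H(w)$. If $h \preceq j$, then $i \preceq h \preceq j$ violates the antichain $\{i,j\}$; if $j \preceq h$, then $j \preceq h \preceq k$ violates $\{j,k\}$. Either way we reach a contradiction, completing condition (2) in the forward direction.

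For condition (1), combining $i \precneqq h$ with $h \precneqq k \precneqq j'$ gives $i \precneqq h \precneqq j'$, so $h \in (i, j')$; but $(i, j') = \emptyset$ since $j'$ covers $i$, contradiction. The reverse direction $k \preceq i$ is entirely parallel: the chain from $k$ to $i$ again crosses $y$, producing $h'$ labeled $y$ with $k \precneqq h' \precneqq i$, whence $j \precneqq k \precneqq h'$ and $h' \precneqq i$ place $h'$ in $(j, i)$, contradicting $i$ covering $j$. For condition (2) the reverse direction is already symmetric in $i$ and $k$ (swap their roles in the antichain argument).

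The only subtle ingredient is the tree property of $G$, which forces every walk from $x$ to $z$ through $y$; once this is exploited, both cases collapse to short local arguments using only covering relations and the fact that two heap elements with equal label are always comparable. No lattice-embedding case analysis on positions is needed, so the proof is uniform across all \ntatf types considered in this section.
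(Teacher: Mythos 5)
Your proof is correct and follows essentially the same route as the paper's: argue by contradiction, take a saturated chain from $i$ to $k$, use that the label walk in the acyclic diagram $G$ must pass through $y$ to produce an element $h$ labeled $y$ strictly between $i$ and $k$, and then derive a contradiction from the covering/antichain hypotheses. The only (harmless) difference is in condition (1), where you place $h$ directly in the empty interval $(i,j')$ or $(j,i)$, whereas the paper uses comparability of same-labeled elements to force $h\preceq j$ or $j'\preceq h$ and then contradicts $i,k\in(j,j')$; these are equivalent local arguments.
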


\begin{proof}
    We prove the lemma by contradiction.  Suppose $i$ and $k$ are comparable and assume, without
    loss of generality, that $i\preceq k$ in $H(w)$. Then by Remark \ref{rmk:chainsAndWalks}, there is a
    chain of coverings $i=i_1\prec i_2\prec \dots\prec i_l=k$ in $H(w)$ whose elements' labels form
    a walk on $G$ from $x$ to $z$. The walk must pass through $y$, so $e(i_p)=y$ for some $1\le p\le
    l$. Elements sharing a label are comparable in a heap, so $i_p$ is comparable to $j$ and $j'$
    under Condition (1) and is comparable to $j$ under Condition (2).

    Suppose Condition (1) holds. Then the interval $(j,j')$ contains no element with label $y$ by
    Lemma \ref{lemm:horizontal}, so we have either $i_p\preceq j$ or $j'\preceq i_p$ in $H(w)$. But in
    these cases we would have $i\preceq i_p\preceq j$ or $j'\preceq i_p\preceq k$, respectively,
    contradicting the fact that $k$ and $i$ lie in $(j,j')$.
    
    Under Condition (2), we have either $i_p\prec j$ or $j\prec i_p$, which would imply that
    $i\preceq i_p\preceq j$ or $j\preceq i_p\preceq k$ and violate the assumption that $\{i,j\}$ and
    $\{j,k\}$ are antichains.
\end{proof}

Lemma \ref{lemm:3chain} will be used in tandem with Lemma \ref{lemm:horizontal} in \autoref{sec:proofs}. It also
helps establish our next proposition, Proposition \ref{prop:trapped}, which concerns the following notions:

\begin{definition}
    We define a \emph{special quadruple in $S$} to be an ordered tuple $(a,b,c,d)$ of four distinct
    elements in $S$ with the following properties. 
    \begin{enumerate}
        \item We have $m(a,c)=m(a,d)=m(b,d)=2$, $m(a,b)\ge 3, m(b,c)\ge 3$ and $m(c,d)=3$. In
             particular, the generators $a,b,c,d$ induce a subgraph of the form $a$--$b$--$c$--$d$
             in $G$ if we ignore edge weights. 
        \item The vertices $a$ and $c$ have degrees 1 and 2 in $G$, respectively. In other words,
             the vertex $b$ is the only neighbor of $a$, and the vertices $b,d$ are the only
             neighbors of $c$.
    \end{enumerate}
    \label{def:quadruple}
\end{definition}

\begin{definition}
    Let $(a,b,c,d)$ be a special quadruple in $S$. We call an antichain $\{i,j\}$ of size 2 in $H(w)$ a
    \emph{trapped antichain relative to $(a,b,c,d)$} if the following conditions hold.
    \begin{enumerate}
        \item Neither $i$ nor $j$ is maximal or minimal in $H(w)$.
        \item The label of $i$ is $a$ and the label of $j$ is $c$. 
    \end{enumerate}
    \label{def:trapped}
\end{definition}

\begin{example}
    In each Coxeter system $(W,S)$ of type $\tilde C_{n-1}$ where $n\ge 5$, the tuple $(1,2,3,4)$ is
    a special quadruple.  In the heap of the FC element $w=2413524\in W$, the elements with labels 1
    and 3 form a trapped antichain relative to $(1,2,3,4)$. Note that $n(w)=3$, hence
    $\la(w)=n(w)>2$ by Proposition \ref{prop:a=n}. The next proposition shows that the fact that $\la(w)\neq 2$
    is to be expected.
\end{example}

\begin{prop}
    Let $(W,S)$ be a \ntatf Coxeter system, let $w\in W_2$, and suppose
    $\abs{\call(w)}=\abs{\calr(w)}=2$. Then the heap $H(w)$ cannot contain any trapped antichain
    relative to a special quadruple in $S$.
    \label{prop:trapped}
\end{prop}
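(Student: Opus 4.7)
Suppose, for contradiction, that $\{i,j\}$ is a trapped antichain relative to a special quadruple $(a,b,c,d)$, so $e(i)=a$, $e(j)=c$, and both $i,j$ are non-minimal and non-maximal in $H(w)$. The plan is to exhibit a 3-element convex chain in $H(w)$ whose labels alternate between $c$ and $d$; since $m(c,d)=3$, this violates Condition~(2) of \cref{prop:FCCriterion} and contradicts the full commutativity of $w$, which holds by \cref{prop:a2impliesFC} since $w\in W_2$. First I set up local structure: because $a$'s only neighbor in $G$ is $b$ and $i$ is neither minimal nor maximal, there exist $b$-labeled elements $i^-,i^+\in H(w)$ with $i^-\prec i\prec i^+$, and these are the unique $b$-labeled covers of $i$ from below and above. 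Since same-label elements are comparable in a heap, $i^\pm$ and $j$ are comparable; the antichain condition then forces $i^-<j<i^+$. \cref{lemm:horizontal} applied to $i^-\prec i\prec i^+$ shows $[i^-,i^+]$ contains no element labeled $a$ or $b$ apart from $i,i^-,i^+$, and the same comparability argument forces any $b$-labeled cover of $j$ from above (resp.\ below) to be $i^+$ (resp.\ $i^-$). The remaining covers of $j$ are labeled $d$ by the neighbor structure of $c$; let $k^+,k^-$ denote the $d$-labeled covers of $j$ from above and below, each possibly absent.

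The crucial step is showing that $k^+$ and $k^-$ both exist. Since $w\in W_2$ in a \ntatf system, \cref{prop:a.vs.n}.(3) gives $n(w)=2$, so $\{i,j\}$ is a maximal antichain and \cref{lemm:maxAntichain} yields $H(w)=\cali_{\{i,j\}}\cup\calf_{\{i,j\}}$. Let $M_1,M_2$ be the two maximal elements of $H(w)$, whose labels form $\calr(w)$; these labels are distinct, so at most one of them is $b$. Suppose, for the sake of contradiction, that $k^+$ does not exist: then $i^+$ is the unique cover of $j$ from above, so any chain from $j$ to a maximum begins $j\prec i^+$, and likewise any chain from $i$ begins $i\prec i^+$; hence both $M_1,M_2\geq i^+$. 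Tracing these chains upward from $i^+$ in a heap of width at most two, while invoking Condition~(2) of \cref{prop:FCCriterion} to preclude convex chains with alternating labels $a$-$b$-$a$ or $c$-$b$-$c$, I expect to derive a contradiction with $|\calr(w)|=2$ and the distinctness of the two maximal labels. Hence $k^+$ exists; a dual argument using the minimal elements together with $|\call(w)|=2$ shows $k^-$ exists.

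Finally, I verify that $k^-\prec j\prec k^+$ is convex in $H(w)$. The element $i$ is incomparable to $k^+$: a chain $i\prec\cdots\prec k^+$ would begin $i\prec i^+$, forcing $i^+\leq k^+$; but $i^+$ and $k^+$ both cover $j$ and so are incomparable. Similarly, $i$ is incomparable to $k^-$, and $i^\pm$ lie outside $(k^-,k^+)$ because $i^\pm$ are incomparable to $k^\mp$ as joint covers of $j$. Any element labeled $c$ in $(k^-,k^+)$ is comparable to $j$ and thus lies in the empty intervals $(k^-,j)$ or $(j,k^+)$; any element labeled $a$ or $b$ in $(k^-,k^+)\subseteq (i^-,i^+)$ is excluded by \cref{lemm:horizontal}; and elements with other labels are ruled out by combining \cref{lemm:3chain} with the acyclicity of $G$. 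Hence $(k^-,k^+)=\{j\}$, the chain $k^-\prec j\prec k^+$ is convex with labels $d,c,d$, and this violates Condition~(2) of \cref{prop:FCCriterion}, contradicting full commutativity. The main obstacle I anticipate is the existence proof for $k^\pm$: the walk-on-diagram argument requires careful bookkeeping, especially when $i^\pm$ are not themselves maximal or minimal in $H(w)$, and may need to invoke \cref{lemm:vertical} in addition to the tools mentioned above.
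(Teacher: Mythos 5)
The decisive step of your argument---the existence of the $d$-labeled elements $k^-$ and $k^+$ covered by and covering $j$---is exactly the point where your proof is not actually carried out: ``I expect to derive a contradiction'' is a plan, not a proof, and you flag it yourself as the main obstacle. This is a genuine gap, and I do not believe the chain-tracing argument you sketch (width $\le 2$ plus Condition (2) of \cref{prop:FCCriterion} plus $\abs{\calr(w)}=2$) closes it without importing the one ingredient you never invoke: the classification of stubs in $\la(2)$-finite systems. The paper produces $k^\pm$ precisely from that classification. Since $\{i,j\}$ is a maximal antichain, \cref{lemm:maxAntichain} splits $w=x*y$ with $H(x)=\cali_{\{i,j\}}$ and $H(y)=\calf_{\{i,j\}}$; because $i$ and $j$ are neither minimal nor maximal, both factors have length $>2$ while having two descents on the $\{a,c\}$ side, so by \cref{lemm:stubRemarks}.(1) neither is a stub, hence each admits a lower star operation, and \cref{prop:Reducibility} together with the degree conditions in \cref{def:quadruple} eliminates $\{a,b\}$ and $\{b,c\}$, forcing the operation to be with respect to $\{c,d\}$. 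That is exactly the statement that $j$ covers and is covered by a $d$-labeled element. If you try to argue directly as you propose (assuming $k^+$ absent, so everything above $\{i,j\}$ funnels through the unique $b$-labeled cover $i^+$), what you end up showing is that the filter part $y$ satisfies the first-two-layer criterion of \cref{coro:firstTwoLayers} and is therefore a long left stub (after inverting) with two descents on the wrong side---i.e., you rediscover the stub argument. Without citing that structural input, the step does not go through.

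The remainder of your plan is sound but runs the contradiction in the opposite direction from the paper, and it is worth seeing why both work. You aim to show $(k^-,k^+)=\{j\}$, making $k^-\prec j\prec k^+$ a convex $d$-$c$-$d$ chain that violates full commutativity; your interval analysis (covers force any element of $(k^-,k^+)$ comparable to $j$ to equal $j$, and \cref{lemm:3chain} plus acyclicity handle the rest) can indeed be completed, because an element of $(k^-,k^+)$ labeled by a neighbor of $d$ other than $c$ would combine with $i$ and $j$ into a three-element antichain, which is impossible when $\la(w)=2$. The paper instead observes that since $w$ \emph{is} FC and $m(c,d)=3$, the chain $k^-\prec j\prec k^+$ \emph{cannot} be convex, then uses \cref{lemm:horizontal} to extract an element $k$ labeled by such a neighbor of $d$ and \cref{lemm:3chain} to conclude $\{i,j,k\}$ is an antichain, contradicting $\la(w)\ge n(w)\ge 3$. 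So your endgame is a legitimate variant, but the proposal as a whole cannot stand until the existence of $k^\pm$ is proved, and the natural way to prove it is the stub decomposition you omitted.
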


\begin{proof}
Suppose that $H(w)$ contains a trapped antichain $A=\{i,j\}$ relative to a special quadruple
$(a,b,c,d)$ in $S$, with $e(i)=a$ and $e(j)=c$. We will derive a contradiction.  To start, note that
$A$ is a maximal antichain in $H(w)$ by Corollary \ref{coro:layerSizeBound}. By Lemma \ref{lemm:maxAntichain} and
Definition \ref{def:anatomy}.(2), it follows that $w$ can be written as a product $w=x*y$ such that $H(x)$ and
$H(y)$ coincide with the ideal $\mathcal{I}_A$ and filter $\mathcal{F}_A$ when viewed naturally as
subsets of $H(w)$.  Lemma \ref{lemm:decompAndDescent} then implies that $\call(x)=\call(w)$, so $x$ has
two left descents. On the other hand, Condition (1) of Definition \ref{def:trapped} implies that
$\mathcal{I}_A$ properly contains $A$, so $x\neq ac$ and $l(x)>2$.  It follows that $x\notin \sw$,
because no stub in $\sw$ with length at least 3 has two left descents by
Lemma \ref{lemm:stubRemarks}.(1).

Since $i$ is not minimal, it must cover an element $g\in H(w)$. The label of $g$ must be a neighbor
of $e(i)=a$, which has to be $b$ by Condition (2) of Definition \ref{def:quadruple}. Since $b$ is also
adjacent to $c$ in $G$, the element $g$ is comparable to $j$ as well. If $j\preceq g$ then $j\preceq
g\prec i$ and $A$ would not be antichain, so $g\preceq j$. It follows that neither the set
$H(x)\setminus\{i\}$ nor the set $H(x)\setminus\{j\}$ contains a maximal element labeled by $b$;
therefore $x$ admits no right lower star operation with respect to $\{a,b\}$ or $\{c,b\}$ by
Proposition \ref{prop:Reducibility}. On the other hand, since $x$ is not a stub it must admit a right lower
star operation with respect to some pair $\{s,t\}$ of noncommutating generators, and one of $s,t$
has to come from the set $\calr(x)=\{a,c\}$.  Condition (2) of Definition \ref{def:quadruple} now forces
$\{s,t\}=\{c,d\}$. It follows that $j$ covers an element $h\in H(x)\se H(w)$ labeled by $d$.  A
similar argument shows that $H(w)$ contains an element $h'$ that lies in $\mathcal{F}_A$, covers
$j$, and has label $d$.

Since $m(c,d)=3$ by Definition \ref{def:quadruple}, the chain $h\preceq j\preceq h'$ cannot be convex by
Proposition \ref{prop:FCCriterion}, so Lemma \ref{lemm:horizontal} implies that $h$ and $h'$ are connected by a
chain of coverings containing an element $k$ labeled by some neighbor $e$ of $d$ distinct from $c$.
The vertices $a,b,c,d,e$ induce a subgraph of $G$ of the form $a$--$b$--$c$--$d$--$e$, so we may now
apply Lemma \ref{lemm:3chain} as follows: first, since $c,e$ lie on different sides of $d$, Part (1) of
the lemma implies that $\{j,k\}$ forms an antichain; next, since $a,e$ lie on different sides of $c$
and $\{i,j\},\{j,k\}$ are both antichains, Part (2) of the lemma implies that $\{i,k\}$ forms an
antichain.  It follows that $\{i,j,k\}$ is an antichain in $H(w)$. Proposition \ref{prop:a=n} then implies that
$\la(w)\ge n(w)\ge 3$, contradicting the assumption that $\la(w)=2$.
\end{proof}

\subsection{Proof of \texorpdfstring{Theorem \ref{thm:oneI}}{Theorem 4.17}}
\label{sec:proofs}
Let $(W,S)$ be a \ntatf Coxeter system, and recall that each part of Theorem \ref{thm:oneI} claims that if
$(W,S)$ has a certain type $X$, then a representative 0-cell $I(x,y):=I(x,y)\se W$ where $x,y\in
\sw$ equals a certain set. Denoting this set by $K(x,y)$, we first note that $K(x,y)\se I(x,y)$
regardless of what $X$ is:

\begin{prop}
We have $K(x,y)\se I(x,y)$ for every Coxeter system $(W,S)$ in Theorem \ref{thm:oneI}.  
\end{prop}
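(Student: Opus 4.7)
The plan is to verify the containment case by case for each pair $(x,y)$ listed in \cref{thm:oneI}. For every element $w$ of the explicit set $K(x,y)$, by \cref{thm:CellViaDecomp}(1) it suffices to exhibit a left stub decomposition $w = x \cdot z$ to conclude that $w \in R_x$; by the symmetric version of \cref{thm:CellViaDecomp} together with \cref{rmk:stubRemarks1}(2)---which ensures that $y^{-1}$ is a right $\la(2)$-stub---it suffices to exhibit a right stub decomposition $w = z' \cdot y^{-1}$ to conclude that $w \in L_{y^{-1}}$. Combining the two memberships yields $w \in R_x \cap L_{y^{-1}} = I(x,y)$.

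In every part of \cref{thm:oneI} we have $x = y$, and $x$ is a short stub that equals its own inverse (being a product of two commuting generators). Hence $w = x$ always lies in $K(x,x)$ and receives the trivial decompositions $w = x \cdot 1 = 1 \cdot x^{-1}$. This disposes of parts (1), (2), (5), (6), (8), and the element $24 \in K(24,24)$ in each of parts (3), (4), (7), (9). For the nontrivial elements the factorizations are produced by pushing generators across commuting letters in the Coxeter diagram. For instance, in parts (3) and (9) the element $2124$ admits the factorizations $2124 = 24 \cdot 12 = 21 \cdot 42$, since $m(2,4) = m(1,4) = 2$ in types $B_n$ and $H_n$. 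The elements $2 \cdot z$ and $212 \cdot z$ in part (4) (where $z = 45 \cdots (n-1)n(n-1) \cdots 54$) are handled by commuting the leading $2$ past $45$ to form a left factor $24$, and symmetrically separating a trailing $42$ off $\cdots 54$, using that every generator appearing in $z$ commutes with $2$ in $\tilde C_{n-1}$; for $212 \cdot z$ one combines this with the $B_n$-type identity $2124 = 24 \cdot 12 = 21 \cdot 42$. The element $243524$ in part (7) of type $F_n$ is treated analogously, using the commutations available between $\{2,4\}$ and the inner letters $3, 5$.

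The main obstacle is verifying reducedness of each factorization. Since length is additive on reduced products, it suffices to check that lengths add, which can be done either by inspecting the explicit reduced word and invoking the word criterion for full commutativity, or by drawing the heap of $w$ and identifying a maximal antichain whose ideal and filter correspond to the factors of the decomposition in the sense of \cref{lemm:decompAndDescent}(1). Since in each case the elements of $K(x,y)$ have either constant length or length linear in $n$ with a completely explicit word, these verifications reduce to direct inspection and routine application of commutation relations.
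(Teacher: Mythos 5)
There is a genuine gap: you apply \cref{thm:CellViaDecomp} to conclude $w\in R_x$ from a left stub decomposition $w=x\cdot z$, but that theorem is stated under the hypothesis that $w$ is ``an element of $\la$-value 2,'' and nowhere do you verify that $\la(w)=2$ for the elements of $K(x,y)$. This hypothesis is not a formality and cannot be extracted from the decomposition itself: having a reduced factorization $w=x\cdot z$ with $x\in\sw$ does not force $\la(w)=2$. For instance, in type $A_5$ the element $135=13\cdot 5$ is FC and admits a left stub decomposition beginning with the short stub $13$, yet $\la(135)=3$, so $135\notin R_{13}$. Your closing paragraph treats reducedness (and full commutativity) as ``the main obstacle,'' but full commutativity is strictly weaker than having $\la$-value 2, so checking reduced words and heaps does not close this gap.

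The paper's proof is otherwise the same as yours --- it exhibits the factorizations $w=x\cdot w'$ and $w=w''\cdot y\inverse$ by inspection and then invokes \cref{thm:CellViaDecomp} --- but it explicitly supplies the missing ingredient: each $w\in K(x,y)$ is either a product of two commuting generators, whence $\la(w)=2$ by \cref{prop:Facts}.(2), or is obtained from such a product by a sequence of right upper star operations (e.g.\ $243524$ in type $F_n$ arises from $24$ by upper star operations with respect to $\{4,5\},\{2,3\},\{2,3\},\{4,5\}$), whence $\la(w)=2$ by \cref{coro:starA}. Adding such a verification --- or, alternatively, using those star-operation sequences directly to place $w$ in the closures $R_x$ and $L_{y\inverse}$ by \cref{def:closure}, bypassing \cref{thm:CellViaDecomp} altogether --- would repair your argument.
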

\begin{proof} Let $w\in K(x,y)$. Then $w$ has reduced factorizations $w=x\cdot w'$ and $w=w''\cdot
      y\inverse$ by inspection, so we can conclude that $w\in I(x,y)$ by Theorem \ref{thm:CellViaDecomp}
      once we can show $\la(w)=2$.  If $w$ is a product $st$ of two commuting generators $s,t\in S$,
      then $\la(w)=2$ by Proposition \ref{prop:Facts}.(2). Otherwise, we observe that $w$ can be obtained from
      such a product $w=24$ (i.e. the product $w=s_2s_4$) via a sequence of right upper star
      operations; therefore $\la(w)=\la(24)=2$ by Corollary \ref{coro:starA}.  Specifically, the element
      $w=243524$ in Part (7) can be built from $24$ by applying right upper star operations with
      respect to $\{4,5\}, \{2,3\}, \{2,3\}$ and $\{4,5\}$ successively, and a suitable sequence of
      star operations taking $24$ to $w$ is straightforward to find in all other cases. 
\end{proof}

To prove Theorem \ref{thm:oneI} it remains to prove that $I(x,y)\se K(x,y)$ for each group $W$ in the
theorem. We will do so by treating one Coxeter type at a time below, and the following notions will
be used frequently. For every element $w\in \fc(W)$ and any generator $p\in S$, we denote the number
of elements labeled by $p$ in the heap by $o_w(p)$, where we drop the subscript $w$ if there is no
danger of confusion. We define a \emph{$p$-interval} in $H(w)$ to be an interval of the form
$[j,j']\se H(w)$ where $j,j'$ are consecutive elements labeled by $p$ in the sense that $j\preceq
j'$, $j\neq j'$, and $j,j'$ are the only elements with label $p$ in the interval $[j,j']$.  Finally,
when we wish to emphasize the ambient group $W$ that a 0-cell $I(x,y)$ lies in, we will write
$I_{W}(x,y)$ for $I(x,y)$. 

The following facts will also be used frequently without further comment. First, if $x,y$ are short
stubs, then every element $w\in I(x,y)$ has left descent set $\call(w)=\call(x)=\supp(x)$ and right
descent set $\calr(w)=\calr(y\inverse)=\calr(y)=\supp(y)$ by Lemma \ref{lemm:stubRemarks}.(1).  Second,
recall from Remark \ref{rmk:heapsAndWords} that the left and right descents of an FC elements are
precisely the labels of the minimal and maximal elements of its heap, respectively.  Third, consider
the situation where an $\la(2)$-finite Coxeter system $(W,S)$ is isomorphic to a parabolic subgroup
$W'_J$ of another $\la(2)$-finite Coxeter system $(W',S')$ for some subset $J$ of $S'$ via a group
isomorphism $\phi:W\ra W'_J$ induced by a bijection $f: S\ra J$ with the property that
$m(f(s),f(t))=m(s,t)$ for all $s,t\in S$.  (By ``induced'' we mean the restriction of $\phi$ to $S$
coincides with $f$.) Then the next lemma holds for any two stubs $x,y\in \sw$, where
$f(Z)=\{f(z):z\in Z\}$ for every subset $Z$ of a Coxeter group and any map $f$ whose domain contains
$Z$. The lemma allows us to deduce 0-cells of $\la$-value 2 in $W$ from such cells in $W'$ by
``parabolic restriction''.

\begin{lemma}
    \label{lemm:parabolic}
    In the above setting, let $x'=\phi(x),y'=\phi(y)$, denote the 0-cell $I_W(x,y)$ in $W$ by $I_1$,
    denote the 0-cell $I_{W'_J}(x',y')$ in $W'_J$ by $I_{2}$, and denote the 0-cell $I_{W'}(x',y')$
    in $W'$ by $I_3$.  Then we have $I_2=\phi(I_1)$ and $I_2=\{z\in I_3: \supp(z)\se J\}$. In
    particular, we have 
    \begin{equation*} I_W(x,y)=\{\phi\inverse(z):z\in I_{W'}(x',y'), \supp(z)\se J\}.
      \label{eq:parabolic}
    \end{equation*}
\end{lemma}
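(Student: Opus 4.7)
The plan is to establish the two claimed set equalities $I_2 = \phi(I_1)$ and $I_2 = \{z \in I_3 : \supp(z) \subseteq J\}$ separately, then combine them and apply $\phi^{-1}$ to obtain the displayed equation.

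For the first equality, I would argue that since the bijection $f: S \to J$ preserves the Coxeter matrix, the induced group isomorphism $\phi$ is an isomorphism of Coxeter systems in the strongest sense: it preserves lengths, reduced words, left and right descent sets, the weak Bruhat orders $\le^L$ and $\le^R$, star operations, and hence Kazhdan--Lusztig cells as well as the $\la$-function (the $\la$-value is determined by Hecke-algebra structure constants which depend only on the presentation). In particular $\phi$ sends $\sw$ bijectively onto $\cals(W'_J)$, sends $R_x \subseteq W_2$ to $R_{x'} \subseteq (W'_J)_2$, and sends $R_y^{-1}$ to $R_{y'}^{-1}$; intersecting yields $\phi(I_1) = I_2$.

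For the second equality, a preliminary check is needed to ensure $I_3$ is even defined, that is, that $x'$ and $y'$ are left and right $\la(2)$-stubs in $W'$. Using \cref{prop:Facts}.(7), the $\la$-value of $x'$ in $W'$ equals that in $W'_J$, namely $2$. For the stub condition, suppose $x'$ admits a right lower star operation in $W'$ with respect to $I = \{s,t\} \subseteq S'$; from the left coset decomposition $x' = (x')^I \cdot x'_I$ with $x'_I \in W'_I$, since $x' \in W'_J$ forces $x'_I \in W'_J \cap W'_I = W'_{J \cap I}$, and since the operation requires $l(x'_I) \ge 2$, we must have $\{s,t\} \subseteq J$. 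Thus the right lower star operations available to $x'$ in $W'$ coincide with those available in $W'_J$, and $x'$ is a stub in $W'$ since it is a stub in $W'_J$; the same holds for $y'$.

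The core argument for $I_2 = I_3 \cap W'_J$ then rests on two inputs: (i) \cref{prop:Facts}.(7), which gives $\la_J(z) = \la(z)$ for all $z \in W'_J$; and (ii) the fact that the right weak Bruhat order on $W'_J$ coincides with the restriction of the right weak Bruhat order on $W'$ to $W'_J$. Point (ii) holds because any reduced word of $z \in W'_J$ uses only letters of $J$ (by the Matsumoto--Tits theorem), so a reduced factorization $z = u \cdot v$ in $W'$ automatically has $u, v \in W'_J$ and is reduced inside $(W'_J, J)$ as well. Combining (i) and (ii) with the characterization of right cells from \cref{thm:CellViaDecomp}.(2) gives
\[
R_{x'}^{W'_J} = \{z \in (W'_J)_2 : x' \le^R z \text{ in } W'_J\} = \{z \in W'_J : \la(z) = 2,\ x' \le^R z \text{ in } W'\} = R_{x'}^{W'} \cap W'_J,
\]
and symmetrically $L_{(y')^{-1}}^{W'_J} = L_{(y')^{-1}}^{W'} \cap W'_J$. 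Intersecting yields $I_2 = I_3 \cap W'_J = \{z \in I_3 : \supp(z) \subseteq J\}$, since $W'_J$ is exactly the set of elements with support in $J$. The final displayed formula follows by applying $\phi^{-1}$ to $I_2 = \phi(I_1)$ and substituting the description of $I_2$ just derived. The main (mild) obstacle is the stub-transfer argument in the preliminary check, since nothing in the statement immediately guarantees $x'$ and $y'$ are stubs in the larger system $W'$; the rest is direct application of the earlier characterizations.
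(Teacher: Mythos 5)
Your proof is correct and takes essentially the same route as the paper's: the first equality via the isomorphism of Coxeter systems (and hence of Hecke algebras and all cell structure) induced by $f$, and the second by applying the weak-Bruhat-order characterization of 1-cells from \cref{thm:CellViaDecomp}.(2) in both $W'_J$ and $W'$ together with the agreement of $\la$-values on parabolic subgroups from \cref{prop:Facts}.(7). You are in fact somewhat more careful than the paper, which leaves implicit both the verification that $x'$ and $y'$ remain $\la(2)$-stubs in the larger system $W'$ (needed for $I_3$ and \cref{thm:CellViaDecomp} to apply there) and the observation that the weak order on $W'_J$ is the restriction of that on $W'$.
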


\begin{proof}
  We first sketch why $I_2=\phi(I_1)$: the fact that $\phi$ is induced by the bijection $f: S\ra J$
  implies that $\phi$ is in fact an {isomorphism of Coxeter systems}, so that $\phi$ in turn induces
  an isomorphism from the Hecke algebra of $(W,S)$ to the Hecke algebra of $(W'_J,J)$ that sends the
  Kazhdan--Lusztig basis element $C_{w}$ to $C_{\phi(w)}$ for all $w\in W$. It follows that
  $I_2=\phi(I_1)$. 

  Next, let $\la_{S'}$ and $\la_{J}$ denote the $\la$-functions associated to the Coxeter system $(W',S')$ and
  $(W'_J, J)$, respectively. By Proposition \ref{prop:Facts}.(7), the system $(W'_J,J)$ is $\la(2)$-finite since $(W',S')$ is,
  so Theorem \ref{thm:CellViaDecomp} and its analog for left cells imply that
  \[
    I_2=\{z\in W'_J: \la_J(z)=2, x\le^R z, y\le^L z\}  
  \]
  and
  \[
    I_3=\{z\in W': \la_{S'}(z)=2, x\le^R z, y\le^L z\}.
  \]
  An element $z\in W'$ is in the parabolic subgroup $z\in W'_J$ if and only if $\supp(w)\se J$, and for such an
  element we have $\la_{S'}(z)=\la_J(z)$ by Proposition \ref{prop:Facts}.(8). It follows that $I_2=\{z\in I_3:\supp(z)\se
  J\}$. 
\end{proof}

\subsubsection{Type $\tilde C_{n-1}$}
Suppose $X=\tilde C_{n-1} (n\ge 5)$. We complete the proof of Theorem \ref{thm:oneI}.(4) by proving the
following:
\begin{prop}    \label{prop:affineC} If $X=\tilde C_{n-1} (n\ge 5)$, then we have \[ I(24,24)\se
      \{24,2124,2\cdot z,212\cdot z\}\] where $z=45\dots (n-1)n(n-1)\dots 54$.
\end{prop}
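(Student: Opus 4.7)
Given $w \in I(24,24)$, I record from \cref{lemm:stubRemarks}.(1) that $\call(w) = \calr(w) = \{2,4\}$, and I reduce the whole statement to proving $3 \notin \supp(w)$. Once that support claim is in hand, $w$ factors reducedly as a product in the commuting parabolic subgroups $\langle 1,2\rangle$ and $\langle 4,5,\ldots,n\rangle$, and a direct enumeration of each factor will recover the four listed elements.

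\textbf{Step 1: analyzing lower covers of a minimal $3$-element.} To rule out $3 \in \supp(w)$, I choose $j$ minimal among $3$-elements of $H(w)$ and observe that $j$ is non-extremal (since the extremal labels are $2$ and $4$). Its lower covers form an antichain of size at most $2$ by \cref{coro:layerSizeBound}.(1), whose labels must lie in the diagram-neighborhood $\{2,4\}$ of $3$. If the only lower cover were a single element $c$ labeled $2$, every element below $j$ would factor through $c$, forcing the minimum $4$-element $d_1$ (comparable to $j$ since $m(3,4) \ne 2$) to satisfy $d_1 \prec c$; but any covering chain from $d_1$ to $c$ traces a walk on the Coxeter diagram from $4$ to $2$ and must visit $3$, producing a $3$-element strictly below $j$ and contradicting its minimality. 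A symmetric argument rules out a unique $4$-labeled lower cover, so $j$ covers two elements $c, d$ with labels $2$ and $4$ respectively.

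\textbf{Step 2: deriving an FC violation.} I then split on $j$'s upper covers. If $j$ has an upper cover $c'$ labeled $2$, the chain $c \prec j \prec c'$ has alternating labels $2,3,2$ with $m(2,3) = 3$, and I verify its convexity: the same covering-style argument excludes other $2$- or $3$-elements from $(c,c')$, so $(c,c')$ can contain only $j$ and possibly some $1$-elements, and any such $1$-element $g$ yields an antichain $\{g,j\}$ of two non-extremal elements labeled $1$ and $3$, i.e.\ a trapped antichain relative to the special quadruple $(1,2,3,4)$, contradicting \cref{prop:trapped}. Hence $(c,c') = \{j\}$, the chain is convex, and \cref{prop:FCCriterion} is violated. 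If instead $j$'s only upper cover is some $d'$ labeled $4$, then when $j$ is the unique $3$-element the chain from $j$ up to the maximum $2$-element (a heap-maximum since $2 \in \calr(w)$) must factor through $d'$ and visit $3$ again, contradicting uniqueness; and when further $3$-elements exist, taking $j_2$ the smallest $3$ above $j$, a label-intersection argument shows $(d',j_2) = \emptyset$ and $(j,j_2) = \{d'\}$, so $j \prec d' \prec j_2$ is a convex chain with labels $3,4,3$ and $m(3,4) = 3$, again violating \cref{prop:FCCriterion}. Either way, $3 \notin \supp(w)$.

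\textbf{Step 3: enumerating.} With $3$ absent from $\supp(w)$, $w$ factors reducedly as $w = w^{(1,2)} \cdot w^{(4,\ldots,n)}$ in commuting parabolics, yielding $H(w) = H(w^{(1,2)}) \sqcup H(w^{(4,\ldots,n)})$. By \cref{prop:a=n}.(2), $\la(w) = n(w)$ equals the sum of the two sub-heap widths, and since both are nonempty and sum to $2$, each sub-heap is a chain. On the $\{1,2\}$ side, the only chains of $\langle 1,2\rangle$ that start and end with $2$ and avoid the long braid of length $m(1,2) = 4$ are $2$ and $212$. On the $\{4,\ldots,n\}$ side, the chain traces a walk on the path $4 - 5 - \cdots - n$ from $4$ back to $4$; any turn at a vertex $k < n$ creates a convex subchain $k-1,k,k-1$ with $m(k-1,k) = 3$, violating \cref{prop:FCCriterion}, so the only admissible walks are the trivial walk at $4$ and the full mountain up to $n$, giving $w^{(4,\ldots,n)} \in \{4,z\}$. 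The four combinations recover exactly $\{24, 2124, 2 \cdot z, 212 \cdot z\}$. The technical heart of the argument is Step 2, particularly the careful verification of convexity of the two candidate length-$3$ chains and the use of \cref{prop:trapped} in the first case.
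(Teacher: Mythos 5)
Your overall strategy --- reduce everything to showing $3\notin\supp(w)$, then split the heap into the commuting pieces supported on $\{1,2\}$ and on $\{4,\dots,n\}$ and enumerate chains --- matches the paper's, and Steps 1 and 3 are sound; Step 3 is in fact a pleasant elementary substitute for the paper's second appeal to Ernst's lemma. Case A of Step 2 is also correct: the argument via \cref{lemm:3chain} and \cref{prop:trapped} really does force $(c,c')=\{j\}$ and hence an FC violation.

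The gap is in Case B, at the assertion that ``a label-intersection argument shows $(d',j_2)=\emptyset$ and $(j,j_2)=\{d'\}$.'' Label intersections only exclude elements labeled $1$, $2$ or $3$ from $(j,j_2)$; nothing local prevents $(d',j_2)$ from containing an excursion into the generators $5,6,\dots,n$, for instance a chain labeled $5,6,\dots,(n-1),n,(n-1),\dots,6,5$ capped by the unique $4$-element covered by $j_2$. That configuration is exactly the heap of $45\cdots(n-1)n(n-1)\cdots 54$ and violates no local FC condition, precisely because the edge $\{n-1,n\}$ is heavy. Moreover, your Case B hypothesis already forces extra width that your picture ignores: $d\prec j\prec d'$ is a covering chain with labels $4,3,4$ and $m(3,4)=3$, so \cref{prop:FCCriterion} forces $(d,d')$ to contain a $5$-element incomparable to $j$; the neighborhood of $j$ is therefore not a chain, and since $\{3,5\}$ is not the label pair of a trapped antichain for either special quadruple, \cref{prop:trapped} gives no contradiction there. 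Ruling out these excursions toward the far end of the diagram is a genuinely global matter, and it is exactly where the paper brings in Ernst's result (\cref{lemm:Dana}): a $2$-interval containing no $1$-element forces $w=u\cdot z_2'\cdot u'$ with $\supp(u)\cup\supp(u')\subseteq\{1\}$, which contradicts $4\in\calr(w)$; hence every $2$-interval contains a $1$-element, and that $1$-element together with the $3$-element lying in the same $2$-interval is a trapped antichain. Without this input (or an equivalent induction that chases the excursion all the way to the heavy edge $\{n-1,n\}$ and back), Case B does not close.
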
 To prove the proposition, let \[ z_p=p(p-1)\dots 212\dots (p-1)p, \] \[ z_p'=p(p+1)\dots
    (n-1)n(n-1)\dots (p+1)p \] for all $1<p<n$. For example, the elements $212$ and $z$ from the
    proposition equal $z_2$ and $z_4'$, respectively.  We will take advantage of the following
    result of Ernst:

\begin{lemma}[\cite{ErnstTL}]
    \label{lemm:Dana}
    Let $n\ge 4$ and let $w\in \fc(\tilde C_{n-1})$. Let $1<p<n$ and suppose $[j,j']$ is a $p$-interval in $H(w)$
    for two elements $j\preceq j'$ in $H(w)$.
\begin{enumerate}
    \item If the interval $[j,j']$ contains no element labeled by $(p+1)$, then
    either $\la(w)=1$ or $w$ has a reduced factorization $w=u\cdot z_p\cdot u'$
  where $\supp(u)\cup\supp(u')\se\{p+1,p+2\dots, n\}$.  \item If the interval
    $[j,j']$ contains no element labeled by $(p-1)$, then either $\la(w)=1$ or
    $w$ has a reduced factorization $w=u\cdot z'_p\cdot u'$ where
    $\supp(u)\cup\supp(u')\se\{1,2\dots, p-1\}$.
\end{enumerate} \end{lemma}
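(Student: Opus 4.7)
The plan is to prove Part (1); Part (2) then follows by applying Part (1) to the image of $w$ under the diagram automorphism $i\mapsto n+1-i$ of $\tilde C_{n-1}$, which swaps the roles of $p+1$ and $p-1$ and carries $z_p$ to $z_{n+1-p}'$. My first task would be to confine labels in $[j,j']$ to $\{1,\dots,p\}$. Any element $k\in[j,j']$ is linked to $j$ by a chain of coverings in $H(w)$ whose consecutive labels are non-commuting, producing a walk on the Coxeter diagram from $p$ to $e(k)$. Since the diagram of $\tilde C_{n-1}$ is a path and $p+1$ is the unique separator of $p$ from $\{p+2,\dots,n\}$, the assumption that no element of $[j,j']$ is labeled $p+1$ confines this walk to $\{1,\dots,p\}$; the $p$-interval hypothesis further restricts interior labels to $\{1,\dots,p-1\}$.

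Next, a linear extension of the convex sub-poset $[j,j']$ yields a reduced word of an FC element $v$ in the parabolic subgroup $W_{\{1,\dots,p\}}\cong B_p$ whose heap is exactly $[j,j']$, with $j$ and $j'$ as unique minimum and maximum, both labeled $p$. The combinatorial core of the argument is to show $v=z_p$. I plan to proceed by induction on $p$. In $B_p$ the generator $p$ has unique neighbor $p-1$, so the element covering $j$ in $H(v)$ must be labeled $p-1$; call it $k_1$, and let $k_2$ be the analogous element covered by $j'$. For $p>2$, assuming $k_1=k_2$ would make $j\prec k_1\prec j'$ a convex chain of length $m(p-1,p)=3$ alternating in labels, violating the FC word criterion; hence $k_1\prec k_2$ strictly. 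I would then verify that $[k_1,k_2]$ satisfies the hypotheses of the lemma with $p-1$ in place of $p$ inside the parabolic of type $B_{p-1}$, apply the inductive hypothesis to obtain $[k_1,k_2]\cong H(z_{p-1})$, and glue on $j$ and $j'$ to recover $H(v)=H(z_p)$. The base case $p=2$ is direct: exactly one interior element labeled $1$ is forced by the FC criterion (no interior element would give $j\prec j'$ with equal labels; two would give consecutive equal-label coverings), and $m(1,2)=4$ accommodates the length-3 chain $2\prec 1\prec 2$.

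Having identified $[j,j']$ with $H(z_p)$, I would extract the factorization. Interior elements of $[j,j']$ are labeled in $\{1,\dots,p-1\}$ and, by convexity of $[j,j']$, have all their heap-neighbors inside the interval. Any $h\in H(w)\setminus[j,j']$ whose label lies in $\{1,\dots,p\}$ would, together with elements of $[j,j']$, either produce a forbidden convex chain alternating in labels $p,p-1$ or $1,2$ of length $m(s,t)$ (ruled out by the FC criterion), or force an expansion of $[j,j']$ beyond its definition. Hence every element outside $[j,j']$ is labeled in $\{p+1,\dots,n\}$; such labels commute with every generator in $\{1,\dots,p-1\}$. A linear extension of $H(w)$ that processes the elements below $j$ first, then $[j,j']$, then the elements above $j'$, produces a reduced factorization $w=u\cdot z_p\cdot u'$ with $\supp(u),\supp(u')\subseteq\{p+1,\dots,n\}$. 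If instead $H(w)=[j,j']$, then $H(w)=H(z_p)$ is itself a chain, so $n(w)=1$ and $\la(w)=1$ by \cref{prop:a=n}.(2), giving the first alternative.

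The main obstacle will be the inductive classification of $v$: verifying that $[k_1,k_2]$ truly inherits the hypotheses for $p-1$ inside the parabolic of type $B_{p-1}$, and tracing the FC convex-chain obstructions correctly through each inductive step, requires careful bookkeeping of which labels can appear where. The base case at the heavy edge $\{1,2\}$, where the FC criterion behaves differently because length-3 zigzags are permitted, also demands separate attention, as does the verification in the final step that no extra label-$1$ element can sit below $j$ (or above $j'$) once $p=2$—the absence of such an element is a consequence of the FC criterion in conjunction with the existing chain $2\prec 1\prec 2$.
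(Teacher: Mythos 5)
Two things to note up front. First, the paper does not actually prove this lemma internally: Part (1) is cited verbatim from Lemma 3.6 of \cite{ErnstTL} (with Ernst's ``Type I'' elements translated into the $\la(w)=1$ alternative via \cref{prop:facts01}.(2)), and Part (2) is deduced from Part (1) by the diagram symmetry of $\tilde C_{n-1}$ --- exactly the reduction you propose. So your proposal is an attempt to reprove Ernst's lemma from scratch, and it must be judged on its own merits. Your local analysis --- confining the labels of $[j,j']$ to $\{1,\dots,p\}$, passing to the convex subheap, and classifying it as $H(z_p)$ by induction --- is sound in outline. The hole you yourself flag there is real but repairable: $[k_1,k_2]$ is not a priori a $(p-1)$-interval, since nothing yet excludes further elements labeled $p-1$ strictly between $k_1$ and $k_2$; one should instead apply the induction to \emph{consecutive} occurrences of $p-1$ and then rule out a third occurrence by exhibiting a convex chain with labels $(p-2)(p-1)(p-2)$ (or, when $p=3$, a convex $1212$ chain), both forbidden by \cref{prop:FCCriterion}.

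The fatal gap is the global step. Your claim that any $h\in H(w)\setminus[j,j']$ with label in $\{1,\dots,p\}$ must ``either produce a forbidden convex chain alternating in labels $p,p-1$ or $1,2$, or force an expansion of $[j,j']$'' is false as stated. Take $n=5$, $p=4$, and $w=432123454\in\fc(\tilde C_{4})$: consecutive letters never commute, so $H(w)$ is a chain with label sequence $4,3,2,1,2,3,4,5,4$, and the only alternating convex chains are $212$ and $454$, both of length $3<4=m(1,2)=m(4,5)$, so $w$ is FC and the word is reduced. The $4$-interval $[j,j']$ spanned by the first two occurrences of $4$ contains no $5$, so the hypothesis of Part (1) holds; yet a third $4$ (and a $5$) sits above $j'$, outside $[j,j']$, with no FC violation and with $[j,j']$ still a perfectly good $p$-interval. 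Of course the lemma survives because $\la(w)=1$ here --- but that is precisely the point: the dichotomy with $\la(w)=1$ must be threaded through the global step itself, whereas your argument invokes it only in the degenerate case $H(w)=[j,j']$, and otherwise asserts unconditionally that all labels $\le p$ are confined to the $z_p$ block, which chain heaps refute. Relatedly, ``by convexity of $[j,j']$, interior elements have all their heap-neighbors inside the interval'' is a non sequitur: convexity only forbids outside elements lying \emph{between} two interval elements, and excluding a Hasse neighbor of an interior element from below requires the same-label comparability arguments, not convexity. Showing that an exterior element of label $\le p$ forces the whole heap to collapse to a chain (equivalently $\la(w)=1$) is the genuinely global content of Ernst's Lemma 3.6, and it is missing from your sketch.
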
 
\begin{proof} Part (1) is a restatement of Lemma 3.6 of \cite{ErnstTL}. (We note that each element ``of
        Type I'' in \cite{ErnstTL} has a unique reduced word by \cite[\S3.1]{ErnstTL} and thus has $\la$-value 1 by
Proposition \ref{prop:facts01}.(2).) Part (2) follows from Part (1) by the symmetry in the Coxeter diagram of
      $\tilde C_{n-1}$.
\end{proof}

\begin{proof}[Proof of Proposition \ref{prop:affineC}]
    Let $w\in I(24,24)$.  Then $2$ labels both a maximal and a minimal element in $H(w)$, as does 4.
    It follows that $o(2),o(4)\ge 1$, and that if $3\in \supp(w)$ then every element labeled by 3 in
    $H(w)$ has to lie in a 2-interval.  Note that every 2-interval in $H(w)$ must contain an element
    labeled by $1$: otherwise, Lemma \ref{lemm:Dana}.(2) implies that $w=u\cdot z'_2\cdot u'$ where $4$
    labels no maximal element in $z_2'$ and does not appear in $u$ or $u'$, so $4$ labels no maximal
    element in $H(w)$, a contradiction.  On the other hand, since $(1,2,3,4)$ is a special quadruple
    in $S$, no 2-interval in $H(w)$ can contain both an element labeled by 1 and an element labeled
    by 3 by Proposition \ref{prop:trapped}. It follows that $3\notin\supp(w)$.  For $i\in \{2,4\}$, let $a_i$
    and $b_i$ stand for the minimal and maximal element in $H(w)$ labeled by $i$, respectively.
    Since $3\notin\supp(w)$, Remark \ref{rmk:chainsAndWalks} implies that no two elements in $H(w)$
    labeled by two generators in $S$ that lie on different sides of 3 are comparable. Since every
    element in $H(w)$ must be comparable to a maximal and to a minimal element, it follows that
    $H(w)$ equals the disjoint union of the two convex intervals $I_2:=[a_2,b_2]$ and
    $I_4:=[a_4,b_4]$ as sets, where each element in $I_2$ and $I_4$ carries a label smaller and
    larger than 3, respectively. It follows that $w=x\cdot y$ for FC elements $x,y$ such that
    $H(x)=I_1$ and $H(y)=I_2$. Applying Lemma \ref{lemm:Dana}.(1) with $p=2$ and Lemma \ref{lemm:Dana}.(2)
    with $p=4$ reveals that $x\in \{2,212\}$ and $y\in \{4,z'_4=z\}$, respectively, so we have $w\in
    \{24,2124, 2z, 212z\}$. We conclude that $I(24,24)\se \{24,2124, 2z, 212z\}$.  
\end{proof}

\subsubsection{Type $B_n$}
We complete the proof of Theorem \ref{thm:oneI}.(2)--(3) by deducing the following proposition from
Proposition \ref{prop:affineC}. 

\begin{prop}
    \label{prop:B}
    If $X=B_n (n\ge 3)$, then $I(13,13)\se \{13\}$ if $n=3$ and $I(24,24)\se \{24,2124\}$ if $n>3$.
\end{prop}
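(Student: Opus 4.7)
The plan is to reduce both assertions of \cref{prop:B} to the already-proved \cref{prop:affineC} by viewing $B_n$ as a parabolic subgroup of a suitable $\tilde{C}$ system, and to handle the small case $n=3$ by a direct Cartier--Foata argument. The key observation is that, in the labeling of \cref{fig:finite E}, the Coxeter diagram of $\tilde{C}_n$ (which has $n+1$ vertices labeled $1,\ldots,n+1$ with heavy end-edges $\{1,2\}$ and $\{n,n+1\}$) becomes the diagram of $B_n$ upon deleting the vertex $n+1$. Consequently $B_n$ embeds as the parabolic subgroup of $\tilde{C}_n$ generated by $J:=\{1,\ldots,n\}$, and \cref{lemm:parabolic} gives
\[
    I_{B_n}(x,y)=\{w\in I_{\tilde{C}_n}(x,y):\supp(w)\subseteq J\}
\]
for any stubs $x,y$ common to both systems.

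For $n\ge 4$, I will apply \cref{prop:affineC} to $\tilde{C}_n$ (which has $n+1\ge 5$ generators) with the stubs $x=y=24$, obtaining $I_{\tilde{C}_n}(24,24)\subseteq \{24,\,2124,\,2z,\,212z\}$ where $z=45\cdots n(n+1)n\cdots 54$. Since $2z$ and $212z$ both involve the generator $n+1\notin J$, they are discarded by the support filter, leaving $I_{B_n}(24,24)\subseteq\{24,2124\}$, as required. For $n=3$ the representative 0-cell uses the stub $13$ instead (since $24$ is not a stub of $B_3$), and \cref{prop:affineC} does not directly yield $I_{\tilde{C}_4}(13,13)$, so I will instead analyze the Cartier--Foata form. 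Letting $w\in I_{B_3}(13,13)$ and writing $w=w_k\cdots w_1$ in CF form, the right-descent condition $\calr(w)=\{1,3\}$ forces $w_1=13$. The only generator in $B_3$ non-commuting with both $1$ and $3$ is $2$, so $\supp(w_2)\subseteq\{2\}$; then \cref{lemm:deeperLayers}.(2) applied to the simple edge $\{2,3\}$ forces $\supp(w_3)\subseteq\{1\}$; and a hypothetical fourth layer $w_4=2$ would place the long braid $2121$ as a subword of $w$, violating full commutativity since $m(1,2)=4$. Thus $k\le 3$, and a short check shows that the remaining candidates $w=213$ and $w=1213$ have $\call(w)=\{2\}$ and $\call(w)=\{1\}$ respectively, so only $w=13$ is compatible with the hypothesis.

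I expect the main obstacle to lie in the small verification for $n=3$ that $3\notin\call(1213)$: this reduces to confirming that $3\cdot 1213=13213$ does not simplify in $B_3$ to a word shorter than length $5$, which is routine via the braid relations. For $n\ge 4$ the parabolic step is almost immediate once the embedding is identified, so the entire argument should fit on a single page.
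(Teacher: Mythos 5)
Your argument is correct, and for $n\ge 4$ it is essentially the paper's proof: both reduce to \cref{prop:affineC} via \cref{lemm:parabolic}, the only cosmetic difference being that you embed $B_n$ into $\tilde C_n$ (deleting one vertex) while the paper embeds it into $\tilde C_{n+1}$ (deleting two); either way the elements $2z$ and $212z$ are killed by the support filter. The genuine divergence is in the case $n=3$. The paper stays inside the affine group: it takes $\tilde C_4$, transports the representative 0-cell $I_{\tilde C_4}(24,24)$ to $I_{\tilde C_4}(13,13)=\{13,134543,132413,13245413\}$ by the slide/star-operation machinery of \cref{rmk:relating0cells}, and then applies the support filter, which leaves only $\{13\}$. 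You instead run a direct Cartier--Foata analysis in $B_3$ itself: $\calr(w)=\{1,3\}$ forces $w_1=13$, the diagram of $B_3$ forces $\supp(w_2)\se\{2\}$, \cref{lemm:deeperLayers}.(2) forces $\supp(w_3)\se\{1\}$, and a fourth layer would create the long braid $2121$; of the three survivors only $13$ has $\call(w)=\{1,3\}$. Your route avoids computing a second 0-cell in the affine group and is more self-contained, at the cost of a case-specific hand computation; the paper's route is more uniform with the rest of \cref{sec:proofs} but requires trusting the star-operation transport. One small imprecision: the reason $\supp(w_2)\se\{2\}$ is not that a generator of $w_2$ must fail to commute with \emph{both} $1$ and $3$ (the Cartier--Foata condition only requires non-commutation with \emph{some} generator of $w_1$, and repetitions of $1$ or $3$ are instead excluded by full commutativity); in $B_3$ the conclusion is the same because $2$ is the only neighbour of either $1$ or $3$, but the justification as written would not survive in a larger diagram.
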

\begin{proof} 
    Let $k=n+2\ge 5$, consider the Coxeter system of type $(W',S')$ of type $\tilde C_{k-1}$, and
    let $J=\{1,2,\dots,n\}\se S'$. The group $B_n$ is naturally isomorphic to parabolic subgroup
    $W'_J$ of $W'$ via the isomorphism $\phi: W\ra W'_J$ with $\phi(i)=i$ for each generator $i\in
    S$. Thus, Lemma \ref{lemm:parabolic} implies that if $n>3$ then $I(24,24)=I_{W}(24,24)\se
    \{\phi\inverse(w'):w'\in I_{W'}(24,24),\supp(w')\se J\}$, where the last set equals
    $\{\phi\inverse(24),\phi\inverse(2124)\}=\{24,2124\}$. It follows that $I(24,24)\se
    \{24,2124\}$. Similarly, if $n=3$, then using the methods from Remark \ref{rmk:relating0cells} we can
    see that $I_{W'}(13,13)=\{13,134543,132413, 13245413\}$, whence Lemma \ref{lemm:parabolic} implies
    that $I(13,13)\se \{13\}$ since $4\notin J$.
\end{proof} 

\subsubsection{Type $A_n$}
We complete the proof of Theorem \ref{thm:oneI}.(1) by deducing the following proposition from Proposition \ref{prop:B}.
\begin{prop}
    \label{prop:A} If $X=A_n (n\ge 3)$, then $I(13,13)\se \{13\}$.
\end{prop}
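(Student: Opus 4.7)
The plan is to mimic the parabolic-restriction argument used for type $B_n$ in \cref{prop:B}, dropping one more rung down the ladder $\tilde C \supset B \supset A$. Specifically, $A_n$ embeds as the parabolic subgroup of $B_{n+1}$ generated by $J := \{2, 3, \ldots, n+1\} \se S'$, where $S' = \{1, 2, \ldots, n+1\}$ is the generating set of $B_{n+1}$; the shift $i \mapsto i+1$ on generators induces a Coxeter-system isomorphism $\phi: A_n \ra (B_{n+1})_J$. Under this identification the left stub $13 \in \sw$ in $A_n$ corresponds to $\phi(13) = 24 \in (B_{n+1})_J$, which is itself a short left $\la(2)$-stub of $B_{n+1}$ since $2$ and $4$ commute and are not end-adjacent in the type-$B$ diagram.

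The execution then consists of a single application of \cref{lemm:parabolic}: we obtain
\[
  I_{A_n}(13,13) \se \{\phi\inverse(w) : w \in I_{B_{n+1}}(24,24),\; \supp(w) \se J\}.
\]
Because $n+1 \ge 4 > 3$, \cref{prop:B} applies to $B_{n+1}$ and gives $I_{B_{n+1}}(24,24) \se \{24, 2124\}$. Filtering by support, $\supp(24) = \{2,4\} \se J$ but $\supp(2124) = \{1,2,4\} \not\se J$ since $1 \notin J$, so only $24$ survives. Since $\phi\inverse(24) = 13$, we conclude $I(13,13) \se \{13\}$.

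There is no substantive obstacle: the only things to verify are that the induced subgraph of the $B_{n+1}$ diagram on $J$ is exactly the $A_n$ diagram (immediate, since removing vertex $1$ kills the heavy edge) and that $\phi(13) = 24$ is the correct image, both of which are mechanical. The argument also works uniformly for all $n \ge 3$ because the hypothesis $n+1 > 3$ needed to invoke \cref{prop:B} is automatic.
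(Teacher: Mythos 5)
Your proposal is correct and is essentially identical to the paper's own proof: the paper also embeds $A_n$ into $B_{n+1}$ as the parabolic subgroup generated by $J=\{2,\dots,n+1\}$ via $\phi(i)=i+1$, invokes \cref{prop:B} to get $I_{B_{n+1}}(24,24)\se\{24,2124\}$, and applies \cref{lemm:parabolic} to discard $2124$ because its support contains $1\notin J$. No differences worth noting.
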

\begin{proof} 
    We use parabolic restriction as in the proof of Proposition \ref{prop:B}. More precisely, let $(W,S)$ and
    $(W',S')$ be the Coxeter systems of type $A_n$ and $B_{n+1}$, respectively, let $J=\{2,3,\dots,
    n+1\}$, and consider the isomorphism $\phi:W\ra W'_{J}$ with $\phi(i)=i+1$ for all $i\in S$.  We
    have $I_{W'}(24,24)\se\{24,2124\}$ by Proposition \ref{prop:B} and $I(13,13)=I_{W}(13,13)\se
    \{\phi\inverse(z):z\in I_{W'}(24,24), \supp(z)\in J\}$ by  Lemma \ref{lemm:parabolic}; therefore
    $I(13,13)\se\{\phi\inverse(24)\}=\{13\}$.
\end{proof}

\subsubsection{Type $E_{q,r}$}
We complete the proof of Theorem \ref{thm:oneI}.(5) by deducing the following result from Theorem \ref{thm:oneI}.(1):
\begin{prop}
    \label{prop:Eqr} If $X=E_{q,r}$ where $r\ge q\ge 1$, then $I(x,x)\se \{x\}$ for all $x\in \{(-1)v,1v,(-1)1\}$.
\end{prop}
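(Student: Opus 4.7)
The plan is to deduce \cref{prop:Eqr} from \cref{thm:oneI}.(1) via parabolic restriction, combined with a heap-based support-bounding argument in the spirit of the proof of \cref{prop:affineC}. For each stub $x \in \{(-1)v, 1v, (-1)1\}$, I will identify a subset $J \subseteq S$ whose induced Coxeter subdiagram is linear (so $W_J$ has type $A$) and contains the support of $x$. A natural choice is $J = S \setminus \{v\}$, giving $W_J \cong A_{q+r+1}$, when $x = (-1)1$, with analogous linear subdiagrams containing $v$ for the other two stubs. In each case $x$ corresponds to a short stub in $W_J$, so \cref{thm:oneI}.(1) together with \cref{prop:invariance} yields $I_{W_J}(x, x) = \{x\}$. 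By \cref{lemm:parabolic} this gives
\[
    \{z \in I_{E_{q, r}}(x, x) : \supp(z) \subseteq J\} = \{x\},
\]
so the remaining task is to prove the support bound $\supp(w) \subseteq J$ for every $w \in I_{E_{q,r}}(x, x)$.

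For $x = (-1)1$, the support bound asks that $v \notin \supp(w)$, and I will argue this by contradiction using heap analysis. Suppose $v \in \supp(w)$ and let $e$ be the lowest element of $H(w)$ labeled $v$. Since $\call(w) = \calr(w) = \{-1, 1\}$, the element $e$ is neither minimal nor maximal, and since $0$ is the only Coxeter-diagram neighbor of $v$, $e$ must cover some $f$ and be covered by some $g$ with both $f$ and $g$ labeled $0$. The length-$3$ chain $f \prec e \prec g$ has alternating labels $0, v, 0$ with $m(0, v) = 3$, so it cannot be convex by the FC criterion (\cref{prop:FCCriterion}). Applying \cref{lemm:horizontal} then produces an element $h \in [f, g]$ labeled by a neighbor of $0$ distinct from $v$, hence labeled by $-1$ or $1$. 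The goal is then to verify that $\{e, h\}$ is a trapped antichain relative to the special quadruple $(v, 0, 1, 2)$ when $r \ge 2$, or the symmetric $(v, 0, -1, -2)$ when $q \ge 2$, yielding a contradiction by \cref{prop:trapped}.

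The hard part of the argument will be ensuring that $\{e, h\}$ is actually an antichain; if $e$ and $h$ happened to be comparable in $H(w)$, then the chain connecting them would force a further $0$-labeled element strictly inside $(f, g)$, and one must either refine the choice of $h$ or iterate the argument on a subinterval of $[f, g]$ until an incomparable pair emerges. The base case $E_{1, 1} = D_4$ (where neither of the above special quadruples is available since $r = q = 1$) is treated separately by direct enumeration: a short computation of the right upper star closure yields $R_{(-1)1} = \{(-1)1,\, (-1)10,\, (-1)10v\}$ and $L_{(-1)1} = R_{(-1)1}^{-1} = \{(-1)1,\, 01(-1),\, v01(-1)\}$, so $I((-1)1, (-1)1) = \{(-1)1\}$, and the diagram symmetry of $D_4$ handles the stubs $(-1)v$ and $1v$. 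The stubs $(-1)v$ and $1v$ in the general $E_{q,r}$ case are addressed by analogous heap arguments, with the forbidden generators (those on the opposite side of $0$ from the support of $x$) excluded via special quadruples centered at $0$.
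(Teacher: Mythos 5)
Your core heap argument for $x=(-1)1$ --- vertical completion at an element labeled $v$, horizontal completion to produce an element $h$ labeled $-1$ or $1$, and a trapped antichain contradicting \cref{prop:trapped} --- is exactly the paper's argument, and the comparability worry you raise about $\{e,h\}$ is already settled by \cref{lemm:3chain}.(1): a chain connecting $e$ to $h$ would have to pass through an element labeled $0$, which \cref{lemm:horizontal} excludes from $(f,g)$, so no iteration or refinement of $h$ is needed. However, there are two genuine gaps. First, your case analysis does not cover $E_{1,r}$ with $r\ge 2$. There only the quadruple $(v,0,1,2)$ exists, but \cref{lemm:horizontal} gives no control over whether $h$ is labeled $1$ or $-1$; if $h$ is labeled $-1$, then since $-1$ is an end vertex when $q=1$, no special quadruple has $-1$ in the degree-two slot, and \cref{prop:trapped} yields no contradiction. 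Special-casing only $E_{1,1}$ is therefore not enough. The paper disposes of all $q=1$ cases up front via \cref{lemm:parabolic}: $E_{1,r}$ sits inside some $E_{q,r}$ with $q\ge 2$ as a parabolic subgroup, so the result for $q\ge 2$ restricts down, and the heap argument is only ever run when both $(v,0,1,2)$ and $(v,0,-1,-2)$ are available.

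Second, the claim that $(-1)v$ and $1v$ follow by ``analogous heap arguments'' does not hold up. For $x=1v$ you must exclude the whole branch $\{-1,\dots,-q\}$ from $\supp(w)$; when $q\ge 2$ the generator $-1$ is not an end vertex, so \cref{lemm:vertical} does not apply to an element labeled $-1$, and if you instead start from the end vertex $-q$, the candidate quadruple for $q=2$ would be $(-2,-1,0,d)$, which is not special because $0$ has degree $3$. The paper sidesteps this entirely: for $r\ge q\ge 2$ the set $\sw$ is a single simple-slide class (\cref{prop:desClasses}), so \cref{prop:invariance} gives $N(y,y)=N((-1)1,(-1)1)=1$ for every short stub $y$, and since $y\in I(y,y)$ trivially, $I(y,y)=\{y\}$. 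Adopting both reductions leaves only the single computation for $(-1)1$ with $q,r\ge 2$, which is the part of your proposal that is correct.
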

\begin{proof} 
    Let $x\in \{(-1)v, 1v,(-1)1\}$. To prove the proposition we may assume that $r\ge q> 1$ in the
    Coxeter system $W$ of type $E_{q,r}$: any system $(W',S')$ of type $E_{1,r}$ lies in $W$ as a
    parabolic subgroup, and Lemma \ref{lemm:parabolic} implies that if $I_{W}(x,x)\se\{x\}$ then
    $I_{W'}(x,x)\se \{x\}$. Under the assumption that $r\ge q> 1$, it further suffices to show that
    $I((-1)1,(-1)1)\se\{(-1)1\}$: since $\sw$ contains a single $\sim$-class by
    Proposition \ref{prop:desClasses}.(1), if $I((-1)1,(-1)1)\se\{(-1)1\}$ then  all 0-cells have size 1 in
    $W_2$ by Proposition \ref{prop:invariance}, which in turn implies that the 0-cell $I(y,y)$ has to contain
    only the element $y$ that is obviously in it for all short stubs $y\in \sw$.

  Let $x=(-1)1$ and let $w\in I(x,x)$. Suppose $v\in \supp(w)$, so that some element  $i$ in the heap
  $H(w)$ has label $v$. Since $\calr(w)=\call(w)=\{-1,1\}$, the element $i$ is neither minimal nor
  maximal in $H(w)$, so it must cover some element $j$ and be covered by some element $j'$ in $H(w)$
  by Lemma \ref{lemm:vertical}. The labels of $j$ and $j'$ must both be $0$, the only neighbor of $v$ in
  $S$. Since $m(v,0)=3$, the chain $j\prec i\prec j'$ cannot be convex by Proposition \ref{prop:FCCriterion},
  so Lemma \ref{lemm:horizontal} implies that $j$ and $j'$ are connected by a chain in $H(w)$ that
  contains an element $k$ labeled by either $-1$ or $1$.  Lemma \ref{lemm:3chain} then implies that the
  set $\{i,k\}$ forms a trapped antichain relative to either the special quadruple $(v,0,-1,-2)$ or
  the special quadruple $(v,0,1,2)$.  This contradicts Proposition \ref{prop:trapped}, so $v\notin\supp(w)$.
  But then $w$ lies in the parabolic subgroup of $W$ of type $A_{q+r+1}$ generated by the set
  $\{-q,\cdots,-1,0,1,\cdots,r\}$. It follows from Theorem \ref{thm:oneI}.(1), Lemma \ref{lemm:parabolic} and
  Proposition \ref{prop:invariance} that $I(x,x)$ is the singleton $\{x\}$. The proof is complete. 
\end{proof}

\subsubsection{Type $F_n$}
We complete the proof of Theorem \ref{thm:oneI}.(6)--(7) by deducing the following proposition from Proposition \ref{prop:B}. 

\begin{prop}
    \label{prop:F}
    If $X=F_n (n\ge 3)$, then $I(24,24)\se \{24\}$ if $n=4$ and $I(24,24)\se \{24,243524\}$ if $n>4$.
\end{prop}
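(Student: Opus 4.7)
The plan is to reduce $I_{F_n}(24,24)$ to the analogous 0-cell in a parabolic of type $B_{n-1}$, following the pattern by which \cref{prop:B} was deduced from \cref{prop:affineC}. First I will prove that no element $w\in I_{F_n}(24,24)$ has $1$ in its support. This places $w$ in the parabolic $W_J$ with $J=\{2,3,\ldots,n\}$, which is isomorphic to $B_{n-1}$ under the map $\phi$ induced by $\phi(i)=i-1$. Since $\phi(24)=13$, \cref{lemm:parabolic} then reduces the problem to describing $I_{B_{n-1}}(13,13)$, and passing back through $\phi^{-1}$ will produce the claimed candidates $24$ and $243524$ (with $243524=\phi^{-1}(132413)$ existing only when $n\ge 5$).

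To rule out $1\in\supp(w)$, pick $i\in H(w)$ with $e(i)=1$; since $\call(w)=\calr(w)=\{2,4\}$, the element $i$ is neither minimal nor maximal. By \cref{lemm:vertical} and its right-sided analogue, $i$ sits inside a chain $j\prec i\prec j'$ with labels $2,1,2$, and because $m(1,2)=3$ the chain cannot be convex (\cref{prop:FCCriterion}). \cref{lemm:horizontal} then produces $k\in(j,j')$ labeled by the only other neighbor of $2$, namely $3$. A quick inspection of the diagram shows $(1,2,3,4)$ is a special quadruple in $F_n$, so \cref{lemm:3chain}(1) gives an antichain $\{i,k\}$. Both $i$ and $k$ lie in $(j,j')$ and so are neither minimal nor maximal, hence $\{i,k\}$ is a trapped antichain, contradicting \cref{prop:trapped}.

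With $\phi(w)\in I_{B_{n-1}}(13,13)$ secured, the case $n=4$ is immediate from \cref{prop:B}: $I_{B_3}(13,13)\subseteq\{13\}$, so $w=24$. For $n>4$, I need $I_{B_{n-1}}(13,13)\subseteq\{13,132413\}$. This is the main obstacle of the proof, since \cref{prop:B} only supplies $I_{B_{n-1}}(24,24)=\{24,2124\}$, not the 0-cell at $(13,13)$.

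I will fill the gap by sliding from $(24,24)$ to $(13,13)$ inside $B_{n-1}$. Since $24$, $14$ and $13$ are slide equivalent via a heavy slide along $\{1,2\}$ and a simple slide along $\{3,4\}$, four successive applications of \cref{lemm:slidingMoves}(2)---two on the left followed by two on the right---transform the 0-cell according to the chain
\[
\{24,2124\}\;\to\;\{124\}\;\to\;\{3124\}\;\to\;\{34121,341\}\;\to\;\{13,132413\}.
\]
Each arrow is a routine left-coset-decomposition calculation; the cardinality can change only at the heavy slides, where both the upper and the lower star operation may be defined. By \cref{lemm:slidingMoves}(2) each intermediate set equals the next 0-cell, so $I_{B_{n-1}}(13,13)=\{13,132413\}$, and the claim $I_{F_n}(24,24)\subseteq\{24,243524\}$ follows.
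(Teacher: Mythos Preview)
Your proof is correct and follows essentially the same approach as the paper. Both arguments first show $1\notin\supp(w)$ via the trapped-antichain contradiction relative to the special quadruple $(1,2,3,4)$, and then reduce to the type-$B_{n-1}$ parabolic; the only cosmetic difference is that you translate explicitly to $B_{n-1}$ labels and compute $I_{B_{n-1}}(13,13)$ from $I_{B_{n-1}}(24,24)$, whereas the paper stays in the parabolic $W_J\subset F_n$ (with $F_n$ labels) and computes $I_{W_J}(24,24)$ from $I_{W_J}(35,35)$---the same slide computation under the relabeling $i\mapsto i-1$. Your explicit four-step chain of setwise star operations is a helpful elaboration of what the paper leaves to the reader via \cref{rmk:relating0cells}.
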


\begin{proof}
  Consider the parabolic group $W_J$ of $W$ generated by the set $J=\{2,3,\cdots, n\}$. It is a
  Coxeter group of type $B_{n-1}$, so $I_{W_J}(24,24)=\{24\}$ if $n-1=3$ and $I_{W_J}(35,35)=\{35,
  3235\}$ if $n-1>3$ by Theorem \ref{thm:oneI}.(2)--(3). In the latter case, computing the 0-cell
  $I_{W_J}(24,24)$ from $I_{W_J}(35,35)$ using the methods of Remark \ref{rmk:relating0cells} gives
  $I_{W_J}(24,24)=\{24, 243524\}$. 

  Let $w\in I(24,24)$. We claim that $1\notin \supp(w)$, so that $w$ lies in the parabolic subgroup
  of type $B_{n-1}$ generated by the set $J=\{2,3,\dots,n\}$. It follows that
  $I(24,24)=I_{W}(24,24)\se I_{W_J}(24,24)$, so $I(24,24)\se \{24\}$ if $n=4$ and $I(24,24)\se
  \{24,243524\}$ if $n>4$ by the last paragraph. To prove the claim, suppose that $1\in \supp(w)$,
  so that some element  $i$ in the heap $H(w)$ has label $1$.  This leads to a contradiction to
  Proposition \ref{prop:trapped} in the same way the element $i$ does in the proof of Proposition \ref{prop:Eqr}: Lemmas
  \ref{lemm:vertical}, \ref{lemm:horizontal} and \ref{lemm:3chain} force $i$ to be part of an
  antichain $\{i,k\}$ contained in an $2$-interval $[j,j']\se H(w)$ where $j\prec i\prec j'$ and
  $e(k)=3$, and this antichain is a trapped antichain relative to the special quadruple $(1,2,3,4)$.
  It follows that $1\notin \supp(w)$, as claimed.
\end{proof}

\subsubsection{Type $H_n$}
We complete the proof of Theorem \ref{thm:oneI}.(8)--(9) by deducing the following proposition from Proposition \ref{prop:B}. 

\begin{prop}
    \label{prop:H}
    If $X=H_n (n\ge 3)$, then $I(13,13)\se \{13\}$ if $n=3$ and $I(24,24)\se
    \{24,2124\}$ if $n>3$.
\end{prop}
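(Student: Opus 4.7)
The plan is to handle $n>3$ and $n=3$ separately, in parallel with \cref{prop:F} and \cref{prop:B} respectively; since the inclusion $K(x,y)\subseteq I(x,y)$ is already established, I only need to prove the reverse inclusion in each case. For $n>3$, let $w\in I(24,24)$. The parabolic $W_J$ of $H_n$ generated by $J=\{2,\ldots,n\}$ is of type $A_{n-1}$, and by \cref{prop:A} together with \cref{prop:invariance} (all stubs in $A_{n-1}$ lie in a single $\sim$-class, so every 0-cell $I(x,x)$ with $x$ a short stub has size $1$), we obtain $I_{W_J}(24,24)=\{24\}$; so \cref{lemm:parabolic} yields $w=24$ in the case $1\notin\supp(w)$.

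Suppose instead that $1\in\supp(w)$. I will verify that $(1,2,3,4)$ is a special quadruple in $H_n$ for $n\ge 4$, since vertex $1$ has degree $1$ and vertex $3$ has degree $2$, and the weights $m(1,2)=5$, $m(2,3)=m(3,4)=3$, $m(1,3)=m(1,4)=m(2,4)=2$ satisfy the conditions of \cref{def:quadruple}. If additionally $3\in\supp(w)$, then for any $i,k\in H(w)$ with $e(i)=1, e(k)=3$, the pair $\{i,k\}$ is automatically an antichain (because $m(1,3)=2$) and neither element is extremal in $H(w)$ (because $1,3\notin\call(w)\cup\calr(w)=\{2,4\}$), so $\{i,k\}$ would be a trapped antichain, contradicting \cref{prop:trapped}. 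Hence $3\notin\supp(w)$. Removing vertex $3$ disconnects the diagram of $H_n$ into components $\{1,2\}$ and $\{4,\ldots,n\}$, so labels from distinct components commute and $H(w)$ splits as a disjoint union of labelled posets; consequently $w=w_{12}\cdot w_{4\ldots n}$ with the two factors commuting. Matching descents forces $\call(w_{12})=\calr(w_{12})=\{2\}$ and $\call(w_{4\ldots n})=\calr(w_{4\ldots n})=\{4\}$, and the additivity $n(w)=n(w_{12})+n(w_{4\ldots n})=2$ combined with $1\in\supp(w_{12})$ forces both factors to have width $1$ and hence a unique reduced word. In the dihedral parabolic $\langle 1,2\rangle=I_2(5)$ the only such element is $w_{12}=212$ (the longest element $21212=12121$ has two reduced words by the length-$5$ braid relation, and so has width $2$), and in the $A_{n-3}$ parabolic on $\{4,\ldots,n\}$ the only width-$1$ element with $\call=\calr=\{4\}$ is $w_{4\ldots n}=4$; therefore $w=212\cdot 4=2124$, completing the $n>3$ case.

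For $n=3$, I will adapt the strategy of \cref{prop:B}. Set $W'=H_4$ and $J=\{1,2,3\}$, so $H_3\cong W'_J$. By the $n>3$ case just established, $I_{W'}(24,24)=\{24,2124\}$, and by the procedures of \cref{rmk:relating0cells} I can transport this to $I_{W'}(13,13)$ by applying setwise left star operations corresponding to the slides $24\to 14$ (along $\{1,2\}$) and $14\to 13$ (along $\{3,4\}$), and then the symmetric right-side operations. The resulting 0-cell is $\{13,\,132143\}$, and since the second element contains the generator $4\notin J$, \cref{lemm:parabolic} yields $I_{H_3}(13,13)\subseteq\{13\}$. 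The main technical care lies in this slide computation, since the slide $24\to 14$ is along the heavy edge of weight $5$ and is not simple: one must track both upper and lower left star operations (as in \cref{lemm:staropAnd0cells} and \cref{lemm:slidingMoves}), and cardinalities of 0-cells are not a priori preserved by non-simple slides, although in this instance the cardinality stays at $2$ throughout (consistent with \cref{tab:intersectionSizes} for $H_4$).
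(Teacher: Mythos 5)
Your proof has a genuine gap at the pivotal step of the $n>3$ case: the claim that any $i,k\in H(w)$ with $e(i)=1$, $e(k)=3$ form an antichain ``automatically because $m(1,3)=2$'' is false. Commuting labels only preclude a \emph{covering} relation between $i$ and $k$; the heap order is the transitive closure of coverings, and elements with commuting labels are routinely comparable through intermediate elements (already in the heap of $w=123$ in type $A$, the elements labeled $1$ and $3$ satisfy $i\prec j\prec k$ through the element labeled $2$). So you cannot invoke \cref{prop:trapped} without actually constructing an antichain, and your conclusion $3\notin\supp(w)$ --- on which the entire $1\in\supp(w)$ branch rests --- is unsupported. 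Moreover, this is not a detail one can patch with the paper's standard toolkit: in \cref{prop:Eqr} and \cref{prop:F} trapped antichains are manufactured via \cref{lemm:vertical}, \cref{lemm:horizontal} and \cref{lemm:3chain}, but that mechanism needs a chain $j\prec i\prec j'$ with labels $b,a,b$ that \emph{cannot} be convex, which is guaranteed there by $m(a,b)=3$. In $H_n$ one has $m(1,2)=5$, so a chain labeled $2,1,2$ may perfectly well be convex in an FC heap (the criterion of \cref{prop:FCCriterion} only forbids five alternating labels), and \cref{lemm:horizontal} then produces nothing. Since your $n=3$ argument consumes the $n=4$ case, the whole proposition is left unproven.

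This obstruction is exactly why the paper argues differently for type $H$: it shows $H(w)$ contains no convex chain $i\prec j\prec i'\prec j'$ with labels alternating in $\{1,2\}$, by extending a putative such chain downward with \cref{lemm:vertical} to a five-element chain $h\prec i\prec j\prec i'\prec j'$ and then using \cref{lemm:horizontal}, \cref{lemm:3chain} and \cref{prop:trapped} to show $[h,j']$ contains nothing else, so the five-chain is convex --- contradicting full commutativity since $m(1,2)=5$. Consequently any reduced word of $w$ is also the reduced word of an FC element $w'$ of $B_n$; then $\la(w')=n(w')=n(w)=2$ by \cref{prop:a.vs.n}, and the already-established computation $I_{B_n}(24,24)=\{24,2124\}$ of \cref{prop:B} forces $w\in\{24,2124\}$ via \cref{thm:CellViaDecomp}. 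For what it is worth, the rest of your proposal is sound: the $1\notin\supp(w)$ branch via parabolic restriction to the type $A_{n-1}$ subgroup is a valid (and slightly different) alternative to the paper's treatment of that situation; the dihedral analysis forcing $w_{12}=212$ and $w_{4\ldots n}=4$ is correct \emph{granted} $3\notin\supp(w)$; and your $n=3$ reduction (transport $I_{H_4}(24,24)$ to $I_{H_4}(13,13)$ by slides, then restrict to the parabolic $H_3$ via \cref{lemm:parabolic}) coincides with the paper's. But without a correct proof that $3\notin\supp(w)$, the $n>3$ case --- and with it the proposition --- remains open in your write-up.
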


\begin{proof}
  If we can show $I(24,24)=\{24,2124\}$ whenever $n>3$, then we can use Remark \ref{rmk:relating0cells} to
  obtain $I(13,13)=\{13,132413\}$ when $n=4$.  Viewing $H_3$ as the parabolic subgroup of $H_4$
  generated by $\{1,2,3\}$ naturally, we can then use Lemma \ref{lemm:parabolic} to deduce that
  $I(13,13)=\{13\}$ in $H_3$. Thus, it suffices to prove that $I(24,24)\se \{24,2124\}$ in $H_n$
  whenever $n>3$. 

  Assume $n>3$ and let $w\in I(24,24)$. Let $\ul w$ be a reduced word of $w$.  We claim that the
  heap $H(w)=H(\ul w)$ does not contain a convex chain of the form $i\prec j\prec i'\prec j'$ where
  $e(i)=e(i')=1,e(j)=e(j')=2$ or where $e(i)=e(i')=2, e(j)=e(j')=1$. By Proposition \ref{prop:FCCriterion}, the
  claim implies that $\ul w$ is the reduced word of an element $w'$ in the Coxeter group of type
  $B_n$. Moreover, we have $\la(w')=n(w')=n(w)=2$ by Proposition \ref{prop:a.vs.n}.(3),  whence
  Theorem \ref{thm:CellViaDecomp}.(3) implies that $w'\in I_{B_n}(24,24)=\{24,2124\}$ by reduced word
  considerations. It follows that $w\in \{24,2124\}$ in $H_n$, so that $I(24,24)\se \{24,2124\}$ in
  $H_n$, as desired. 

  It remains to prove the claim, which we do by contradiction. Suppose $e(i)=e(i')=1$ and
  $e(j)=e(j')=2$ in the chain $C$. Since $1\notin\call(w)$, the element $i$ must cover an element
  $h$ with label $2$ in $H(w)$ by Lemma \ref{lemm:vertical}. Let $C=\{h,i,j,i',j'\}$ and let $k\in
  [h,j']$. Applying Lemma \ref{lemm:horizontal} to the chains $h\prec i\prec j$ and $j\prec i'\prec j'$,
  we note that if $e(k)\in \{1,2\}$ then $k\in C$. Also observe that $e(k)\neq 3$, for otherwise
  $\{i,k\}$ is a trapped antichain in $H(w)$ with relative to the special quadruple $(1,2,3,4)$,
  contradicting Proposition \ref{prop:trapped}.  Finally, we cannot have $e(k)>3$ either, because otherwise
  $e(k)$ and $2$ lie on different sides of the vertex 3 in $G$, so $h$ must be connected to $k$ by a
  chain in $[h,j']$ passing through an element labeled by 3, contradicting the last observation.  It
  follows that $[h,j']=C$ as sets.  But then the chain $h\prec i\prec j\prec i'\prec j'$ is convex,
  which contradicts the fact that $w\in \fc(H_n)$ by Proposition \ref{prop:FCCriterion}. A similar
  contradiction can be derived if $e(i)=e(i')=2$ and $e(j)=e(j')=1$, and the proof is complete. 
\end{proof}

\longthanks{
We thank Dana Ernst for useful discussions. We also thank the anonymous referee for
reading the paper carefully and suggesting many improvements.}

\bibliographystyle{amsplain-ac}

\bibliography{a2cells.bib}

\end{document}